\def \w {\omega}
\def \wb{\wh{\omega}}
\def \wne{\wc{\omega}}
\def \G{\mathrm G}
\def \Gb{\wh{\G}}
\def \Gne{\wc{\G}}
\def \B{\mathrm B}
\def \Bcdf{\mathrm F}
\def \bcdf{\mathrm f}
\def \b{\mathrm b}
\def \cif{\varphi}
\def \tree{\sT}
\def\L{\mathcal{L}}
\def\Lb{\mathrm{L}}
\def\I{\mathcal{I}}
\def\Ib{\mathrm{I}}
\def\hor{\mathrm{hor}}
\def\ver{\mathrm{ver}}
\def \P{\mathbf P} 
\def\E{\bfE}
\def\Exp{\mathrm{Exp}}
\def\curv{\sigma}
\def\dd{\mathrm d}
\def\Eh{\mathrm{Z}}
\def\Ev{\mathrm{Z}}
\def\M{\mathrm M} 
\def\Shp{\gamma}
\def\Min{\zeta}
\def\Minx{\xi}
\newcommand{\bbR}{\mathbb{R}}
\newcommand{\bbZ}{\mathbb{Z}}
\newcommand{\bfE}{\mathbf{E}}
\newcommand{\sT}{\mathcal{T}}
\newcommand{\Fabs}[4]
{
\ifthenelse{\isempty{#2}}
{
{#1}_{#3}^{#4}
}
{
{#1}_{#3}^{#4}(#2)
}
}
\newcommand{\one}{\mathbf{1}}
\newcommand{\lc}{\lceil}
\newcommand{\rc}{\rceil}
\newcommand\var{\rm Var}
\def\dd{\mathrm d}
\definecolor{darkblue}{rgb}{.1,.1,.6}
\newcommand{\wt}[1]{\widetilde{#1}}
\newcommand{\wh}[1]{\widehat{#1}}
\newcommand{\wc}[1]{\widecheck{#1}}
\newtheorem{thm}{Theorem}[section]
\newtheorem{prop}[thm]{Proposition}
\newtheorem{lem}[thm]{Lemma}
\newtheorem{cor}[thm]{Corollary}
\theoremstyle{remark}
\newtheorem{rem}[thm]{Remark}
\newcommand{\be}{\begin{equation}}
\newcommand{\ee}{\end{equation}}
\newcommandx{\note}[2][1=]{\todo[linecolor=yellow,backgroundcolor=yellow!25,bordercolor=yellow,#1]{#2}}
\title[Moderate deviations in the exponential LPP]
{Right-tail moderate deviations in the exponential last-passage percolation} 
\author[E.~Emrah]{Elnur Emrah}
\address{Elnur Emrah\\ KTH Royal Institute of Technology \\ Department of Mathematics \\ SE-100 44 Stockholm\\ Sweden.}
\email{elnur@kth.se}
\urladdr{https://sites.google.com/view/elnur-emrah}
\thanks{E.\ Emrah was partially supported by the grant KAW 2015.0270 from the Knut and Alice Wallenberg Foundation and by the Mathematical Sciences Department at Carnegie Mellon University through a postdoctoral position.} 
\author[C.~Janjigian]{Christopher Janjigian}
\address{Christopher Janjigian\\ University of Utah\\  Mathematics Department\\ 155S 1400E\\   Salt Lake City, UT 84112\\ USA.}
\email{janjigia@math.utah.edu}
\urladdr{http://www.math.utah.edu/~janjigia}
\thanks{C.\ Janjigian was partially supported by a postdoctoral grant from the Fondation Sciences Math\'ematiques de Paris while working at Universit\'e Paris Diderot.}
\author[T.~Sepp\"al\"ainen]{Timo Sepp\"al\"ainen}
\address{Timo Sepp\"al\"ainen\\ University of Wisconsin-Madison\\  Mathematics Department\\ Van Vleck Hall\\ 480 Lincoln Dr.\\   Madison WI 53706-1388\\ USA.}
\email{seppalai@math.wisc.edu}
\urladdr{http://www.math.wisc.edu/~seppalai}
\thanks{T.\ Sepp\"al\"ainen was partially supported by  National Science Foundation grant   DMS-1854619  and by the Wisconsin Alumni Research Foundation.} 
\keywords{Busemann limits, coalescence, corner growth model, exit points, geodesics, large and moderate deviations, last-passage percolation}
\subjclass[2000]{60K35, 60K37} 
\thanks{}
\date{Last modified on \today\ at\ \filemodprinttime{\jobname}}
\date{\DTMnow. } 
\begin{document}

\begin{abstract} We study moderate deviations in the exponential corner growth model, both in the bulk setting and the increment-stationary setting. The main results are sharp right-tail bounds on the last-passage time and the exit point of the increment-stationary process.  The arguments utilize calculations with the stationary version and a moment generating function identity due to E. Rains, for which we give a short probabilistic proof. As applications of the deviation bounds, we derive upper bounds on the speed of distributional convergence in the Busemann function and competition interface limits. 
\end{abstract}


\maketitle

\tableofcontents

\section{Introduction}

This work derives sharp quantitative fluctuation bounds in the exponential corner  growth model (CGM).   
Our main results are new, but we also rederive an upper bound for the bulk  last-passage percolation (LPP) process that has been proved previously both with  probabilistic arguments through a coupling with  the exclusion process and with integrable probability.  
The proofs in the present paper utilize  the increment-stationary LPP  process (introduced below in Section \ref{SstatLPP}) and planarity.  

A difficult  technical hurdle in the probabilistic approach to LPP  has been  accessing the left tail of the last-passage time. The present paper demonstrates how this difficulty can be overcome in a number of interrelated problems in one of the most studied representatives of the KPZ class, namely, the exponential CGM. 

The starting point of our development is a moment generating function  identity from a preprint of E.\ Rains \cite{rain-00}, recorded as Proposition \ref{PLMId} below. We give a short probabilistic proof of this identity utilizing  the increment-stationary LPP process. Unlike the two proofs  in \cite{rain-00}, our argument does not require explicit formulas for the distribution of the last-passage times. 

  The contributions of this paper are listed below. The main results are items (i) and (ii).  Items (i)--(iv) are new results. \\[-10pt]   
\begin{enumerate}[\normalfont (\romannumeral1)] \itemsep=1pt 
\item An upper moderate deviation bound for the exit point in the increment-stationary LPP process (Theorem \ref{TExitPr}). This bound is sharp and matches a recently obtained lower bound (Remark \ref{RExitLB}). 
\item Sharp   
moderate  deviation bounds for the right tail   of the  increment-stationary  LPP process  (Theorem \ref{TisLppMDRate}).   

\item An upper bound on the speed of distributional convergence in  Busemann limits (Theorem \ref{TBuse}). 
\item An upper bound on the speed of distributional convergence of the  competition interface (Theorem \ref{TCif}). 
\item A rederivation of the sharp right tail moderate deviation bound 
of  the bulk LPP process without coarse graining or integrable probability (Theorem \ref{TLppMDBnd}). 
\end{enumerate}


\subsection*{Notation and conventions}  $X \sim \Exp(\lambda)$ for  $\lambda>0$ means that $X$ is a rate $\lambda$ exponential random variable, with mean, variance and moment generating function  $E(X)=\lambda^{-1}$, $\var(X)=\lambda^{-2}$ and    $E(e^{tX})=\lambda(t-\lambda)^{-1} \one_{\{t < \lambda\}}+ \infty \one_{\{t \ge \lambda\}}$ for $t \in \bbR$. 


$[n] = \{1, 2, \dotsc, n\}$ for $n \in \bbZ_{>0}$, and $[0] = \emptyset$.  
$\inf \emptyset = \infty$ and $\sup \emptyset = -\infty$. 



The value of the constants may change between subsequent steps of a derivation.

\section{Model and main results}

\subsection{Last-passage times and exit points}

An up-right path of length $l \in \bbZ_{>0}$ is a finite sequence $\pi = (\pi_i)_{i \in [l]}$ on $\bbZ^2$ such that $\pi_{i+1}-\pi_{i} \in \{(1, 0), (0, 1)\}$ for $i \in [l-1]$. Let $\Pi_{p, q}^{m, n}$ denote the set of all up-right paths $\pi = (\pi_i)_{i \in [l]}$ from $\pi_1 = (p, q) \in \bbZ^2$ to $\pi_l = (m, n) \in \bbZ^2$. 

Let $\{\w(i, j): i, j \in \bbZ_{>0}\}$ be independent, $\Exp(1)$-distributed random weights. Define the bulk last-passage time from $(p, q) \in \bbZ_{>0}^2$ to $(m, n) \in \bbZ_{>0}^2$ by 
\begin{align}
\G_{p, q}(m, n) = \max_{\pi \in \Pi_{p, q}^{m, n}} \sum_{(i, j) \in \pi} \w(i, j). \label{Elpp}
\end{align}
We  omit the subscript on the left-hand side when $p = q = 1$. For brevity, we also suppress the vertex $(m, n)$ from the notation in the computations below if there is no risk of confusion. 

We also consider last-passage times defined on $\bbZ_{\ge 0}^2$ with boundary weights: For $w > 0$ and $z < 1$, let $\{\wb^{w, z}(i, j): i, j \in \bbZ_{\ge 0}\}$ be independent random weights with marginal distributions given by $\wb^{w, z}(0, 0) = 0$, and 
\begin{align}
\wb^{w, z}(i, j) \sim \begin{cases} \Exp(1) \quad &\text{ if } i, j > 0 \\ \Exp(w) \quad &\text{ if } i > 0, j = 0\\ \Exp(1-z) \quad &\text{ if } i = 0, j > 0, \end{cases} \label{Ewbd}
\end{align} 
Then define the last-passage time from $(p, q) \in \bbZ_{\ge 0}^2$ to $(m, n) \in \bbZ_{\ge 0}^2$ by 
\begin{align}
\Gb^{w, z}_{p, q}(m, n) = \max_{\pi \in \Pi_{p, q}^{m, n}} \sum_{(i, j) \in \pi} \wb^{w, z}(i, j). \label{Elppbd}
\end{align}
Several notational simplifications will be employed. 
We drop one $z$ from the superscript when $w = z$ ($\Gb^{z}=\Gb^{z, z}$) and drop the subscript when $p=q=0$ ($\Gb^{w, z}=\Gb^{w, z}_{0,0}$).   When $p > 0$  the $\Exp(1-z)$  boundary weights  $\wb^{w, z}(0, j)$ do not   enter definition \eqref{Elppbd}, and hence  we  omit $z$  (for example,  $\Gb^{w, z}_{1,0} =\Gb^{w}_{1,0}$). Similarly  $w$ is omitted when $q > 0$.   The convention in \eqref{Ewbd} is chosen so that $\Gb^{z}$ is the increment-stationary LPP process.  
 
Define the (maximal) horizontal and vertical exit points by 
\begin{align}
\Eh^{w, z, \hor}(m, n) &= \max \bigl( 0, \max \{k \in [m]: \Gb^{w, z}(m, n) = \Gb^{w}(k, 0) + \G_{k, 1}(m, n)\}\bigr)  \label{EEh}\\
\Ev^{w, z, \ver}(m, n) &= \max \bigl( 0, \max \{l \in [n]: \Gb^{w, z}(m, n) = \Gb^{z}(0, l) + \G_{1, l}(m, n)\}\bigr). \label{EEv} 
\end{align}
Since the weights in \eqref{Ewbd} have continuous distributions, a.s.\ there exists a unique path (the \emph{geodesic}) $\pi_{p, q}^{m, n} \in \Pi_{p, q}^{m, n}$ that maximizes the right-hand side of \eqref{Elppbd}. $\Eh^{w, z, \hor}(m, n)$ equals the horizontal coordinate of the last vertex $\pi_{p, q}^{m, n}$ visits on the horizontal axis, and similary for $\Ev^{w, z, \ver}(m, n)$. A.s., exactly one of $\Eh^{w, z, \hor}(m, n)$ and $\Ev^{w, z, \ver}(m, n)$ is nonzero. 

All the weights can be coupled  through a single collection  $\{\eta(i, j): i, j \in \bbZ_{\ge 0}\}$ of  i.i.d.\ $\Exp(1)$-distributed random real numbers by setting 
\begin{align}
\w(i, j) &= \eta(i, j) \quad \text{ for } i, j \in \bbZ_{>0} \text{ and } \label{ECplBlk}\\
\w^{w, z}(i, j) &= \eta(i, j)\bigg(\one_{\{i, j > 0\}} + \frac{\one_{\{i > 0, \hspace{0.5pt} j = 0\}}}{w} + \frac{\one_{\{i=0, \hspace{0.5pt} j> 0\}}}{1-z}\bigg) \quad \text{ for } i,j \in \bbZ_{\ge 0} . \label{ECplBlkBd}
\end{align}

\subsection{Increment-stationary last-passage percolation}\label{SstatLPP}

A down-right path of length $l \in \bbZ_{>0}$ is a finite sequence $\pi = (\pi_{k})_{k \in [l]}$ in $\bbZ^2$ such that $\pi_{k+1}-\pi_{k} \in \{(1, 0), (0, -1)\}$ for $k \in [l-1]$. By virtue of a version of Burke's theorem for LPP \cite{bala-cato-sepp}, for each $z \in (0, 1)$, the increments of the $\G^z$-process along any down-right path $\pi = (\pi_k)_{k \in [l]}$ in $\bbZ_{\ge 0}^2$ have this property: 
\begin{align}
\label{EBurke}
\begin{split}
&\{\G^z(\pi_{k+1})-\G^{z}(\pi_{k}): k \in [l-1]\} \text{ are jointly independent with marginals given by } \\
&\G^z(i, n)-\G^z(i-1, n) \sim \Exp\{z\} \quad \text{and} \quad \G^{z}(m, j)-\G^{z}(m, j-1) \sim \Exp\{1-z\}
\end{split}
\end{align}
for $m, n \in \bbZ_{\ge 0}$ and $i, j \in \bbZ_{>0}$. As a consequence, the $\Gb^z$-process is increment-stationary in the sense that
\begin{align}
\Gb^z(m+p, n+q) - \Gb^z(p, q) \stackrel{\text{dist}}{=} \Gb^{z}(m, n) \quad \text{ for } m, n, p, q \in \bbZ_{\ge 0}. \label{Eincst}
\end{align}
Define 
\begin{align}
\M^z(x, y) = \frac{x}{z} + \frac{y}{1-z} \quad \text{ for } x, y \in \bbR_{\ge0} \text{ and } z \in (0, 1). \label{EM}
\end{align}
It follows from \eqref{Eincst} that 
\begin{align*}
\E[\Gb^{z}(m, n)] = \M^z(m, n) \quad \text{ for } m, n \in \bbZ_{\ge 0}.
\end{align*} 
The curve $z \mapsto \M^z(x, y)$ for $z \in (0, 1)$ and some fixed $x, y \in \bbR_{>0}$ is plotted in Figure \ref{FMplot}. 

 The \emph{shape function} of  the $\G$-process is given   by 
\begin{align}
\Shp(x, y) = \inf_{z \in (0, 1)} \M^z(x, y) = (\sqrt{x}+\sqrt{y})^2 \quad \text{ for } x, y \in \bbR_{\ge0}. \label{EShp}
\end{align}
A seminal result of H.\ Rost \cite{rost} identifies \eqref{EShp} as the following   limit: 
\begin{align}
\lim_{N\to\infty}N^{-1}\G(\lc Nx \rc, \lc Ny \rc) = \Shp(x, y) \quad \text{ for } x, y \in \bbR_{>0} \quad \P\text{-a.s.}
\label{EShpLim}
\end{align}

The unique minimizer in \eqref{EShp} is given by 
\begin{align}
\Min(x, y) = \frac{\sqrt{x}}{\sqrt{x}+\sqrt{y}} \quad \text{ for } x, y > 0. \label{EMin}
\end{align}
This defines a bijection between  directions (unit vectors) in $\bbR_{>0}^2$ and  the interval $(0, 1)$.  When  $z = \Min(x, y)$, $(x, y)$ is the   {\it characteristic direction}  of the $\Gb^z$-process.  
 In this direction  the geodesic from the origin exits the boundary within a  submacroscopic neighborhood of the origin. More precisely, for fixed $x, y > 0$ and $z \in (0, 1)$, 
\be\label{EChDir}\begin{aligned}
&\lim_{N\to\infty} N^{-1}\max \bigl\{\Eh^{z, \hor}\bigl(\lc Nx \rc, \lc Ny \rc\bigr), \Ev^{z, \ver}\bigl(\lc Nx \rc, \lc Ny \rc\bigr)\bigr\} \stackrel{\text{a.s.}}{=} 0 \\
&\qquad \quad \text{if and only if} \quad z = \Min(x, y). 
\end{aligned}\ee
 Theorem \ref{TExitPr} below implies the \emph{if} part. 

\begin{figure}
\centering
\begin{overpic}[scale=0.6]{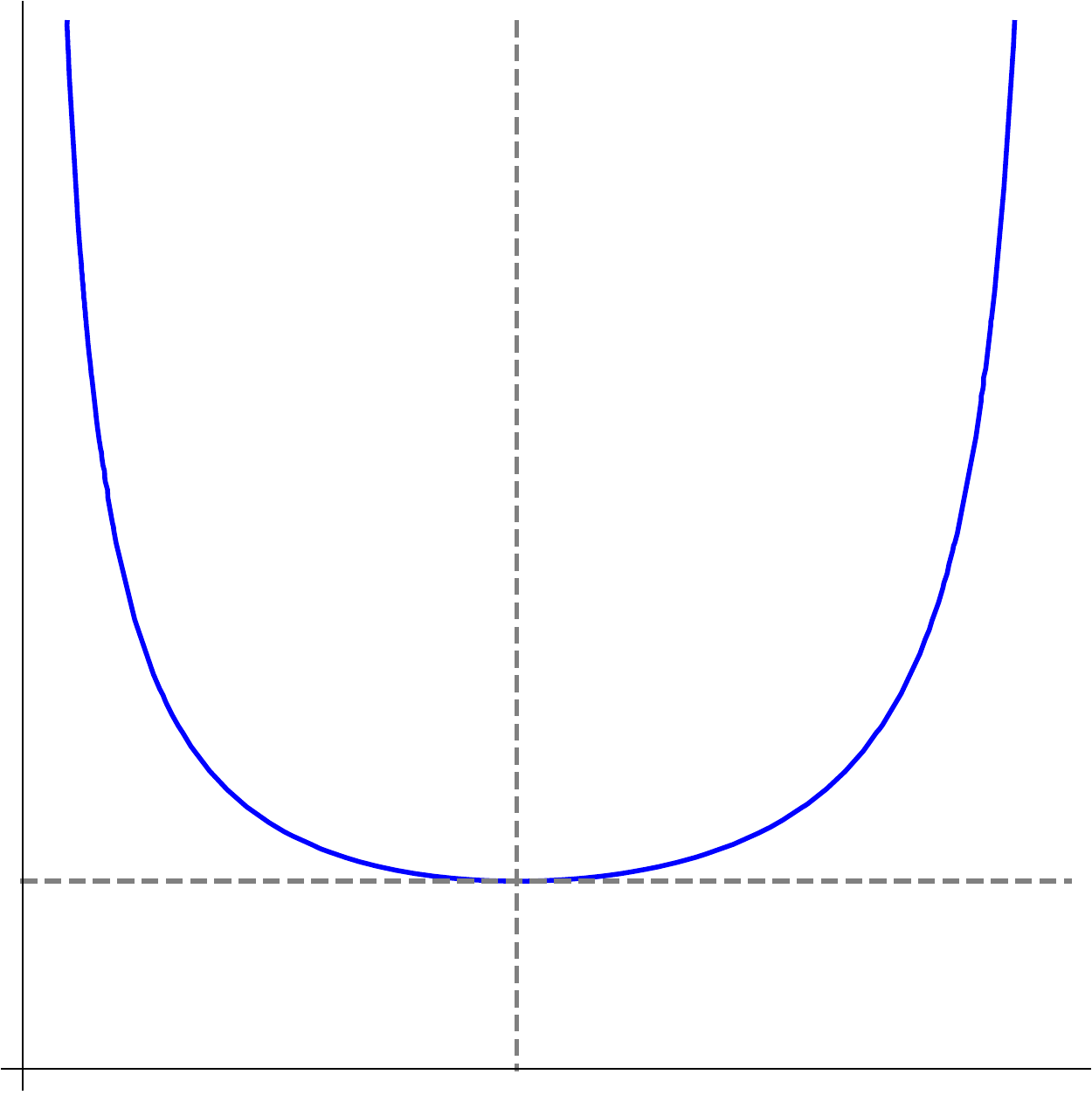}
\put (-13, 18){$\displaystyle \Shp(x, y)$}
\put (41, -3){$\displaystyle \Min(x, y)$}
\end{overpic}
\vspace{0.2in}
\caption{The plot of the curve $z \mapsto \M^z(x, y), z \in (0, 1)$ (blue) with $x = 4$ and $y = 5$. The minimum value $\Shp(x, y)$ and the unique minimizer $\Min(x, y)$ are indicated.}
\label{FMplot}
\end{figure}

\subsection{The l.m.g.f.\ of the LPP process with boundary conditions}

Define
\begin{align}
\Lb^{w, z}(x, y) = x\log \bigg(\frac{w}{z}\bigg)+y \log \bigg(\frac{1-z}{1-w}\bigg) \quad \text{ for } x, y \in \bbR_{\ge0} \text{ and } w, z \in (0, 1). \label{ELbd}
\end{align}
We record an identity that links \eqref{ELbd} to the l.m.g.f.\ of the $\Gb^{w, z}$-process. 
\begin{prop}[\cite{rain-00}]
\label{PLMId}
Let $m, n \in \bbZ_{\ge 0}$ and $w, z \in (0, 1)$. Then 
\begin{align*}
\log \E[\exp\{(w-z) \Gb^{w, z}(m, n)\}] = \Lb^{w, z}(m, n). 
\end{align*}
\end{prop}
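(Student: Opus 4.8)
The plan is to exploit the increment-stationary structure encoded in \eqref{EBurke} and induct on $m+n$. The key observation is that the left-hand side, $\E[\exp\{(w-z)\Gb^{w,z}(m,n)\}]$, should factor neatly when we peel off one lattice step, because the increments of $\Gb^{w,z}$ along the boundary are independent exponentials with explicit rates. Since $\Lb^{w,z}$ is additive in $(x,y)$ (i.e. $\Lb^{w,z}(x+x',y+y') = \Lb^{w,z}(x,y)+\Lb^{w,z}(x',y')$), the target identity is equivalent to the multiplicative recursion
\begin{align*}
\E[\exp\{(w-z)\Gb^{w,z}(m,n)\}] = \frac{w}{z}\cdot\E[\exp\{(w-z)\Gb^{w,z}(m-1,n)\}]
\end{align*}
(and the analogous relation with $\tfrac{1-z}{1-w}$ when decrementing $n$), together with the base case $\Gb^{w,z}(0,0)=0$, which gives $\Lb^{w,z}(0,0)=0$ trivially.

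First I would set up the one-step recursion. Write $\Gb^{w,z}(m,n)$ in terms of $\Gb^{w,z}$ restricted to the column $\{m\}\times\{0,\dots,n\}$ and the line $\{0,\dots,m\}\times\{0\}$, or more cleanly, use the last-passage recursion $\Gb^{w,z}(m,n) = \wb^{w,z}(m,n) + \max\{\Gb^{w,z}(m-1,n), \Gb^{w,z}(m,n-1)\}$. The cleanest route, though, is to use the increment variables directly: fix the down-right path $\pi$ that runs from $(m,n)$ down to $(m,0)$ and then left to $(0,0)$, so that
\begin{align*}
\Gb^{w,z}(m,n) = \sum_{j=1}^{n}\bigl(\Gb^{w,z}(m,j)-\Gb^{w,z}(m,j-1)\bigr) + \sum_{i=1}^{m}\bigl(\Gb^{w,z}(i,0)-\Gb^{w,z}(i-1,0)\bigr),
\end{align*}
and by \eqref{EBurke} (applied at parameter $z$ with the convention that at $p>0$ the $w$-boundary is the relevant one; here one uses the joint law of increments along this particular down-right path) the $n$ vertical increments are i.i.d.\ $\Exp(1-z)$ and the $m$ horizontal increments are i.i.d.\ $\Exp(w)$, and all $m+n$ are mutually independent. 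Wait — this requires knowing the joint law of increments along a down-right path passing through an \emph{interior} boundary, which is exactly the content of Burke's theorem as quoted. Taking the m.g.f.\ at $t=w-z$ and using independence gives
\begin{align*}
\E[\exp\{(w-z)\Gb^{w,z}(m,n)\}] = \Bigl(\E[e^{(w-z)\Exp(w)}]\Bigr)^{m}\Bigl(\E[e^{(w-z)\Exp(1-z)}]\Bigr)^{n}.
\end{align*}
Here the exponent $w-z$ is strictly less than both $w$ and $1-z$ (since $z\in(0,1)$ forces $w-z<w$, and $w<1$ forces $w-z<1-z$), so both m.g.f.'s are finite and equal $\tfrac{w}{w-(w-z)} = \tfrac{w}{z}$ and $\tfrac{1-z}{(1-z)-(w-z)} = \tfrac{1-z}{1-w}$ respectively. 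Substituting yields $\bigl(\tfrac{w}{z}\bigr)^m\bigl(\tfrac{1-z}{1-w}\bigr)^n$, whose logarithm is exactly $\Lb^{w,z}(m,n)$.

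The main obstacle is making rigorous the claim that the $\Gb^{w,z}$-increments along the chosen down-right path have the asserted product law when the path runs along the interior boundary segment $\{1,\dots,m\}\times\{0\}$ rather than a generic down-right path in the open quadrant; one has to be careful that \eqref{EBurke} as stated is for the $\Gb^z$-process (equal boundary parameters) and that in the mixed case $w\neq z$ the relevant Burke-type statement is the one where the horizontal boundary increments carry rate $w$. I would either cite the appropriate version of Burke's theorem from \cite{bala-cato-sepp} directly for the mixed-parameter model, or reduce to it by noting that along $\{(i,0)\}$ the increments $\Gb^{w,z}(i,0)-\Gb^{w,z}(i-1,0)$ are literally the boundary weights $\wb^{w,z}(i,0)\sim\Exp(w)$ (since the geodesic to a boundary point stays on the boundary), and then invoke Burke's property to get the vertical increments at column $m$ independent of these and i.i.d.\ $\Exp(1-z)$. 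Once the joint law is pinned down, the rest is the elementary exponential m.g.f.\ computation above, and the finiteness check $w-z<\min(w,1-z)$ ensures no $+\infty$ issues arise.
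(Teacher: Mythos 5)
Your overall strategy (telescope $\Gb^{w,z}(m,n)$ along the down-right path through $(m,0)$ and factor the m.g.f.) founders on exactly the step you flag as the ``main obstacle'': there is no mixed-parameter Burke theorem of the kind you need. For $w\neq z$ the process $\Gb^{w,z}$ is \emph{not} increment-stationary, and the vertical increments $\Gb^{w,z}(m,j)-\Gb^{w,z}(m,j-1)$ along column $m$ are neither $\Exp(1-z)$-distributed nor independent of the bottom-row weights. You can see this already at $m=n=1$: writing $A=\wb^{w,z}(0,1)\sim\Exp(1-z)$, $B=\wb^{w,z}(1,0)\sim\Exp(w)$ and $\eta=\wb^{w,z}(1,1)\sim\Exp(1)$, the vertical increment is $\eta+(A-B)^+$, whose m.g.f.\ at $t$ equals
\begin{align*}
\frac{1}{1-t}\cdot\frac{(1-z)\,(w+1-z-t)}{(w+1-z)(1-z-t)},
\end{align*}
and this coincides with the $\Exp(1-z)$ m.g.f.\ $\frac{1-z}{1-z-t}$ only when $w+1-z=1$, i.e.\ $w=z$. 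The Burke property \eqref{EBurke} requires the two boundary rates to sum to the bulk rate; with horizontal rate $w$ and vertical rate $1-z$ this fails precisely when $w\neq z$. So the factorization $\E[e^{(w-z)\Gb^{w,z}(m,n)}]=(w/z)^m(\tfrac{1-z}{1-w})^n$ cannot be obtained from independence of the increments along your path: the two batches of increments are genuinely correlated, and that correlation is what makes the identity nontrivial. (The fact that your final answer matches the proposition is no check, since you have assumed the product structure that encodes it.)

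The paper's proof supplies the one idea your argument is missing. Writing $(w-z)\Gb^{w,z}(m,n)=(w-z)\Gb^{w,z}(m,0)+(w-z)\bigl(\Gb^{w,z}(m,n)-\Gb^{w,z}(m,0)\bigr)$ with $\Gb^{w,z}(m,0)=\sum_{i=1}^m\wb^{w,z}(i,0)$, the factor $\prod_{i=1}^m e^{(w-z)\wb^{w,z}(i,0)}$ is, up to the normalization $(w/z)^m$, the Radon--Nikodym derivative that retilts the bottom-row weights from rate $w$ to rate $z$. Under the tilted measure the configuration becomes the genuinely stationary process $\Gb^{z}$, for which \eqref{Eincst} gives $\Gb^{z}(m,n)-\Gb^{z}(m,0)\stackrel{\text{dist}}{=}\Gb^{z}(0,n)$, a sum of $n$ i.i.d.\ $\Exp(1-z)$ boundary weights, and the computation closes. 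That change of measure, not a mixed-parameter Burke theorem, is the mechanism; without it your one-step recursion $\E[e^{(w-z)\Gb^{w,z}(m,n)}]=\tfrac{w}{z}\,\E[e^{(w-z)\Gb^{w,z}(m-1,n)}]$ remains unproved.
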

\begin{proof}
Recall \eqref{Ewbd}.  The second equality below  uses the product inside the expectation  as a Radon-Nikodym derivative.  This changes the rates of the exponential weights on the vertices $\{(i, 0): i \in [m]\}$ from $w$ to $z$. Then we use shift invariance \eqref{Eincst}. 
\begin{align*}
\log \E[e^{(w-z) \Gb^{w, z}(m, n)}]  
&=  \log \E\biggl[ \biggl(\;  \prod_{i=1}^m e^{(w-z) \wb^{w, z}(i,0)} \biggr)    e^{(w-z)(\Gb^{w,z}(m, n)-\Gb^{w,z}(m, 0))}\biggr] \\
&= m \log \bigg(\frac{w}{z}\bigg) + \log \E[e^{(w-z)(\Gb^{z}(m, n)-\Gb^{z}(m, 0))}] \\
&= m \log \bigg(\frac{w}{z}\bigg) + \log \E[e^{(w-z)\Gb^{z}(0, n)}] \\
&= m \log \bigg(\frac{w}{z}\bigg) + n \log \bigg(\frac{1-z}{1-w}\bigg) = \Lb^{w, z}(m, n). \qedhere
\end{align*}
\end{proof}

A more general form of Proposition \ref{PLMId} appeared in a preprint of E.\ Rains \cite[Corollaries 3.3--3.4]{rain-00}. His version covers mixtures of exponential and Poisson LPP, and mixtures of geometric and Bernoulli LPP, and allows some inhomogeneity in parameters. \cite{rain-00} provides two proofs for the identity, both of which ultimately rely on exact determinantal formulas for the distribution of the last-passage times developed in \cite{baik-rain-01a}. 
The short argument above extends readily to inhomogeneous  exponential  LPP but we have not attempted to verify this in the full setting of \cite{rain-00}. 


\subsection{Right tail moderate deviation upper bound for the bulk LPP}

Our first result is a rederivation of a well-known upper bound \cite{joha, sepp98ebp} for the right tail deviations of the bulk LPP process. We obtain this bound as a fairly immediate consequence of the easier ``$\le$'' half of Proposition \ref{PLMId} via the exponential Markov's inequality and some simple estimates. 

In the next statement and beyond, to ensure uniformity, we often restrict to the vertices inside the cone  
\begin{align}
\label{ESc}
S_\delta = \{(x, y) \in \bbR_{>0}^2: x \ge \delta y \text{ and } y \ge \delta x\}.
\end{align}
The function  
\begin{align}
\curv(x, y) = \bigg(\frac{\Shp(x, y)}{\Min(x, y) (1-\Min(x, y))}\bigg)^{1/3} = \frac{(\sqrt{x}+\sqrt{y}\hspace{1pt})^{4/3}}{x^{1/6}y^{1/6}}
 \label{Ecurv}
\end{align}
defined for $x, y \in \bbR_{>0}$ 
acts as a scaling factor below.   It is connected to \eqref{EM} through 
\begin{align}
\curv(x, y)^3 = \frac{1}{2}\partial_z^2\bigg|_{z = \Min(x, y)} \bigg\{\M^{z}(x, y)\bigg\} \quad \text{ for } x, y \in \bbR_{>0}. \label{Ecurv2}
\end{align}
Note that, when $x$ and $y$ are both large, $\curv(x, y)$ is of order $(x+y)^{1/3}$. 

\begin{thm}
\label{TLppMDBnd}
Fix $\delta > 0$. There exist constants $c_0, C_0 > 0$ that depend only on $\delta$ such that 
\begin{align*}
\log \P\{\G(m, n) \ge \Shp(m, n) + \curv(m, n) s\} \le -\,\frac{4s^{3/2}}{3}+\frac{C_0s^2}{(m+n)^{1/3}}
\end{align*}
for $(m, n) \in S_\delta\cap\bbZ_{>0}^2$ and $s \in [0, c_0 (m+n)^{2/3}]$. 
\end{thm}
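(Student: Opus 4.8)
The plan is to obtain the tail bound for the bulk process $\G(m,n)$ by comparison with the stationary process $\Gb^{w,z}(m,n)$ and then applying the exponential Markov inequality together with the ``$\le$'' direction of Proposition~\ref{PLMId}. The starting observation is that for any $w \in (0,1)$ one has the deterministic domination $\G(m,n) \le \Gb^{w,z}(m,n)$ under the coupling \eqref{ECplBlk}--\eqref{ECplBlkBd}, since adding boundary weights only adds admissible paths and the bulk weights agree. (In fact one wants $\G_{1,1}(m,n) = \G(m,n)$ compared against $\Gb^{w}(m,n)$; the boundary along the axes can only help.) Thus for any $t \in (0, w-z)$ — or rather, writing $r = w - z > 0$ and using $t \in (0, r)$ — Markov gives
\begin{align*}
\log \P\{\G(m,n) \ge a\} \le \log \P\{\Gb^{w,z}(m,n) \ge a\} \le -ta + \log\E[e^{t\Gb^{w,z}(m,n)}].
\end{align*}
The cleanest choice is $t = w - z$ itself, so that the m.g.f.\ term is exactly $\Lb^{w,z}(m,n)$ from Proposition~\ref{PLMId}. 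This leaves a one-parameter family of bounds
\begin{align*}
\log \P\{\G(m,n) \ge a\} \le -(w-z)\, a + m\log\tfrac{w}{z} + n\log\tfrac{1-z}{1-w},
\end{align*}
valid for all $0 < z < w < 1$, and the task is to optimize (or near-optimize) in $w$ and $z$.

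The next step is the calculus of the optimization. Plug in $a = \Shp(m,n) + \curv(m,n)s$. A natural and essentially optimal choice is to take $z = \Min(m,n)$ (the characteristic parameter for direction $(m,n)$) so that the ``$\Shp(m,n)$'' part of $a$ is handled by the minimization \eqref{EShp}: indeed $-(w-z)\Shp(m,n) + m\log\tfrac{w}{z} + n\log\tfrac{1-z}{1-w}$, viewed as a function of $w$ at fixed $z = \Min(m,n)$, has a critical point whose value reflects the convexity of $z \mapsto \M^z(m,n)$ at its minimum. Then one sets $w = z + \theta$ for a small perturbation $\theta > 0$ to be tuned as a function of $s$ and $m+n$. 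Expanding $\log\tfrac{w}{z}$ and $\log\tfrac{1-z}{1-w}$ to second order in $\theta$ and using \eqref{Ecurv2}, the leading behavior of the exponent should be of the form $-\theta \curv(m,n)s + \curv(m,n)^3\theta^2 + (\text{higher order})$, which is minimized at $\theta \asymp s^{1/2}/\curv(m,n)$, yielding the main term $-\tfrac{4}{3}s^{3/2}$ (the constant $4/3$ coming from the scaling normalization built into $\curv$). The error term $\tfrac{C_0 s^2}{(m+n)^{1/3}}$ should come from the cubic and higher remainder terms in the Taylor expansion of the logarithms, controlled using $\curv(m,n) \asymp (m+n)^{1/3}$ on $S_\delta$ and the restriction $s \le c_0(m+n)^{2/3}$, which guarantees $\theta$ stays bounded away from the singularities at $w = 0, 1$ and keeps the remainder lower-order.

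The bookkeeping is where the real work lies: one must track the dependence of $\Min(m,n)$, $\Shp(m,n)$, and $\curv(m,n)$ on $(m,n)$ and verify that all implied constants can be taken uniform over the cone $S_\delta$. The restriction to $S_\delta$ is exactly what keeps $\Min(m,n)$ bounded away from $0$ and $1$, so that $z(1-z)$ and its reciprocal are controlled, and what makes $\curv(m,n) \asymp (m+n)^{1/3}$ with constants depending only on $\delta$. I expect the main obstacle to be the clean extraction of the constant $4/3$ and the verification that the second-order term really is $\curv(m,n)^3\theta^2$ with no spurious contribution from the $\Shp$-part of $a$ after the choice $z = \Min(m,n)$ — this requires being careful that $z = \Min$ kills the first-order-in-$(w-z)$ term coming from $-(w-z)\Shp(m,n)$ combined with the $m\log\tfrac{w}{z}+n\log\tfrac{1-z}{1-w}$ piece. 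A secondary technical point is ensuring the admissible range of $\theta = w - z$ (namely $\theta < 1 - z$) is not violated for $s$ up to $c_0(m+n)^{2/3}$; choosing $c_0$ small depending on $\delta$ resolves this. Once these are in place, collecting the cubic remainder into $C_0 s^2/(m+n)^{1/3}$ and the lower boundary case $s=0$ (where the bound is trivial since the left side is $\le 0$) finishes the argument.
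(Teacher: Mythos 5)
Your overall strategy is the same as the paper's: dominate $\G(m,n)$ by $\Gb^{w,z}(m,n)$, apply the exponential Markov inequality at the tilt $\lambda=w-z$ so that Proposition~\ref{PLMId} gives the m.g.f.\ exactly, and then optimize the resulting family of bounds $-(w-z)a+\Lb^{w,z}(m,n)$ over $0<z<w<1$ (this is precisely Lemmas~\ref{LLMUB} and~\ref{LRTUB} and the definition \eqref{EI}). However, your specific parameter choice does not produce the constant $4/3$, and this is a genuine gap, not bookkeeping. With $z=\Min(m,n)$ pinned and $w=\Min+\theta$, the identity \eqref{ELbdMId} together with $\M^t-\Shp\approx\curv^3(t-\Min)^2$ gives
\begin{align*}
-\theta\bigl(\Shp+\curv s\bigr)+\Lb^{\Min+\theta,\Min}(m,n)
\;\approx\; -\theta\curv s+\frac{\curv^3\theta^3}{3},
\end{align*}
i.e.\ the correction is \emph{cubic} in $\theta$, not the quadratic $\curv^3\theta^2$ you wrote (and note that even your stated form, minimized in $\theta$, would give $\theta\asymp s/\curv^2$ and value $\asymp -s^2/\curv$, not $-\tfrac43 s^{3/2}$). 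Optimizing the correct expression over $\theta$ gives $\theta=\sqrt{s}/\curv$ and the value $-s^{3/2}+\tfrac13 s^{3/2}=-\tfrac23 s^{3/2}$. So pinning $z=\Min$ only yields the exponent $\tfrac23 s^{3/2}$ — which is the increment-stationary rate of Theorem~\ref{TisLppMDRate}, not the bulk rate claimed in Theorem~\ref{TLppMDBnd}.

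The missing idea is that one must perturb \emph{both} endpoints away from $\Min$, in opposite directions. The paper takes $w=\Min+\sqrt{s}/\curv$ and $z=\Min-\sqrt{s}/\curv$ and applies Lemma~\ref{LLIBdEst}(b): in the signed-area picture \eqref{EIbdM}, $\Ib^{w,z}_{\Shp+\curv s}$ is the area between the level $\Shp+\curv s$ and the curve $t\mapsto\M^t(m,n)$ over $[z,w]$, and sweeping symmetrically across the minimum collects
\begin{align*}
\int_{-\sqrt{s}/\curv}^{\sqrt{s}/\curv}\bigl(\curv s-\curv^3 u^2\bigr)\,\dd u
\;=\;2s^{3/2}-\tfrac{2}{3}s^{3/2}\;=\;\tfrac{4}{3}s^{3/2},
\end{align*}
twice what your one-sided choice captures. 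With that corrected choice, the rest of your outline (uniformity of $\Min$, $\Shp$, $\curv$ on $S_\delta$ via Lemma~\ref{LShpMinBnd}, the quartic remainder absorbed into $C_0 s^2/(m+n)^{1/3}$, and the restriction $s\le c_0(m+n)^{2/3}$ keeping $w,z$ away from $0$ and $1$) goes through exactly as in the paper's proof.
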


This bound was first deduced in \cite[p.\ 622]{sepp98ebp} along the diagonal direction from an explicit computation of the right tail large deviation rate function (recalled in \eqref{ESepp} below), based on couplings with 
the stationary totally asymmetric simple exclusion process,
the superadditivity of the last-passage times and coarse graining arguments. As observed in \cite{sepp98ebp}, by virtue of superadditivity, the rate function serves as an upper bound on the right tail deviations, not just asymptotically but also for finite $(m,n)$. Therefore, Theorem \eqref{TLppMDBnd} can be obtained from an expansion of the rate function around the shape function. An alternative computation of the rate function utilizing an exact distributional formula \cite[Proposition 1.4]{joha} for the last-passage times appeared in \cite[Theorem 1.6]{joha}. Although the rate in \cite[(1.21)]{joha} is not explicit, it can presumably be made so as in the geometric case \cite[(2.21)]{joha}.  Then superadditivity and an expansion similar to \cite[(2.23)]{joha} would establish the theorem above. 

Compared with \cite{joha, sepp98ebp}, our proof does not require a distributional formula or   superadditivity. Consequently, it adapts  more readily  to various directed percolation and polymer models that possess tractable increment-stationary versions. Eliminating the need for superadditivity can be  useful in inhomogeneous settings (such as those in \cite{emra-janj-sepp-19-shp-long})  where the absence of translation invariance of the weights prevents the use of superadditive ergodicity. 

The bound in Theorem \ref{TLppMDBnd} is expected to capture accurately the behavior of the right tail of the bulk LPP process,  based on the same prediction \cite[(1.19)]{baik-etal-01} made for the LPP with i.i.d.\ geometric weights. In \cite{baik-etal-01}, the authors carried out a Riemann-Hilbert analysis to obtain precise asymptotics for the left tail deviations of the geometric LPP, and suggested that a similar analysis would yield the asymptotics for the right tail. Presumably, one can also adapt the analysis to the present setting. 
An  important further step  would be the derivation of the matching lower bound  entirely from the increment-stationary LPP.  This appears to require an entirely new idea. 

\subsection{Right tail moderate deviation rate for the increment-stationary LPP}

\begin{thm}
\label{TisLppMDRate}
Fix $\delta > 0$, $K \ge 0$ and $p > 0$. Let $(m, n) \in S_\delta\cap\bbZ_{>0}^2$ and $z \in (0, 1)$ with 
\begin{align}|z-\Min(m, n)| \le K(m+n)^{-1/3+p}. \label{ENearMin}\end{align} 
The following statements hold. 
\begin{enumerate}[\normalfont (a)]
\item Fix $s_0 > 0$. There are constants $C_0 = C_0(\delta) > 0$, $\epsilon_0 = \epsilon_0(\delta) > 0$ and $N_0 = N_0(\delta, K, p, s_0) > 0$ such that 
\begin{align*}
\log \P\{\Gb^{z}(m, n) \ge \Shp(m, n) + \curv(m, n)s\} \le  -\frac{2s^{3/2}}{3}+\log 2 + \frac{C_0Ks}{(m+n)^p} + \frac{C_0s^2}{(m+n)^{1/3}}
\end{align*}
whenever $s \in [s_0, \epsilon_0(m+n)^{2/3}]$ and $m, n \ge N_0$. 
\item There are constants $C_1 = C_1(\delta) > 0$, $\epsilon_1 = \epsilon_1(\delta) > 0$, $N_1 = N_1(\delta, K, p) > 0$, $s_1 > 0$ and $\eta > 0$ such that for $s \in [s_1, \epsilon_1(m+n)^{2/3}]$ and $m, n \ge N_1$,
\begin{align*}
\log \P\{\Gb^{z}(m, n) \ge \Shp(m, n) + \curv(m, n)s\} \ge  -\frac{2s^{3/2}}{3}-\eta s -\frac{C_1s^2}{(m+n)^{1/3}}. 
\end{align*}
\end{enumerate}
\end{thm}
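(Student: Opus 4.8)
The plan is to work entirely with the exit point of the geodesic of $\Gb^z(m,n)$. Writing $\Gb^z(m,n)=\Gb^z(\Eh,0)+\G_{\Eh,1}(m,n)$ on the event that the geodesic leaves the axes through the horizontal one (and symmetrically for the vertical one), one is reduced to controlling, for each $k$, the sum of the $k$ i.i.d.\ $\Exp(z)$ boundary weights $\Gb^z(k,0)$ and the bulk time $\G_{k,1}(m,n)\stackrel{\mathrm{dist}}{=}\G(m-k+1,n)$, which are independent. Two elementary facts about $\Shp$ and $\M^z$ drive the bookkeeping: by \eqref{Ecurv2} and Taylor expansion in $z$, $0\le\M^z(m,n)-\Shp(m,n)\le C\curv(m,n)^3(z-\Min(m,n))^2$, which with \eqref{ENearMin} is $O\big((m+n)^{1/3}K^2(m+n)^{2p}\big)$ on the scale of $\curv$; and, expanding $\Shp(m-k,n)=\Shp(m,n)-k/\Min(m,n)+O(k^2/(m+n))$ (using $\partial_x\Shp=1/\Min$, $\partial_x^2\Shp<0$), the cost in the mean of entering the interior at $(k,1)$ rather than $(1,1)$ is $\Shp(m,n)-k/z-\Shp(m-k,n)$, which is $\asymp k^2/(m+n)$ plus a term linear in $k$ of size $|1/\Min(m,n)-1/z|\cdot k$.

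For the upper bound (a), after the reduction to the horizontal exit — which contributes the factor $2=e^{\log 2}$ via the reflection $(m,n,z)\leftrightarrow(n,m,1-z)$ — I would apply the exponential Markov inequality to $\Gb^z(k,0)+\G_{k,1}(m,n)$ for each $k$, using $\E[e^{\theta\Gb^z(k,0)}]=(z/(z-\theta))^k$ exactly and bounding $\E[e^{\theta\G_{k,1}(m,n)}]$ by integrating the tail estimate of Theorem \ref{TLppMDBnd} (whose Legendre transform contributes $\sim\tfrac1{12}(\theta\curv)^3$). Optimizing the tilt shows the resulting bound $e^{-\phi(k)}$ is minimized at a critical exit location $k_\ast$ of order $(m+n)^{2/3}\sqrt s$, with $\min_k\phi(k)=\tfrac23 s^{3/2}+\mathrm{(errors)}$: the constant $\tfrac23$, rather than $\tfrac43$, arises because the excess $\curv s$ is split optimally between the Gaussian cost $\sim z^2(\curv s)^2/(2k_\ast)=\tfrac12 s^{3/2}$ of pushing $\Gb^z(k_\ast,0)$ up by $\approx\curv s$ and the $\tfrac43(\cdot)^{3/2}$-cost $\tfrac16 s^{3/2}$ of pushing $\G_{k_\ast,1}$ up by $\approx\curv\cdot\tfrac s4$. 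The term $\tfrac{C_0Ks}{(m+n)^p}$ traces back to the linear-in-$k$ part of $\Shp(m,n)-k/z-\Shp(m-k,n)$ when $z\neq\Min(m,n)$ and to $\M^z-\Shp$ (both controlled as above), while $\tfrac{C_0s^2}{(m+n)^{1/3}}$ absorbs the third-order terms in the Markov estimate. The one genuinely delicate point is that naively summing $e^{-\phi(k)}$ over the $\asymp(m+n)^{2/3}$ values of $k$ near $k_\ast$ would introduce a factor growing with $m+n$; this is removed by invoking the a priori exit-point estimate of Theorem \ref{TExitPr}, replacing $\P\{\Gb^z(m,n)\ge\Shp+\curv s,\ \Eh=k\}$ by $\P\{\Eh\ge k\}$ for $k$ at and beyond the critical scale, whose tail is already of order $e^{-\frac23 s^{3/2}}$ there while its discrete density carries a favorable power of $m+n$.

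For the lower bound (b), I would simply restrict to paths through the single vertex $(k,0)$ with $k$ of order $\sqrt{(m+n)\,\curv(m,n)\,s}$ — the same critical scale — so that $\Gb^z(m,n)\ge\Gb^z(k,0)+\G_{k,1}(m,n)$ with the two independent pieces, and ask that $\Gb^z(k,0)$ exceed its mean by $\approx\curv s$ and that $\G_{k,1}(m,n)$ exceed $\Shp(m-k,n)$ by $\approx\curv(m-k,n)\cdot\tfrac s4$. The first event has probability at least $e^{-\frac12 s^{3/2}(1+o(1))}$ by the Cramér lower bound for a sum of $k$ i.i.d.\ exponentials (the deviation being $o(k)$ in this regime, so the Gaussian rate $z^2(\curv s)^2/(2k)$ is sharp and equals $\tfrac12 s^{3/2}$ for this $k$), the second at least $e^{-\frac43(s/4)^{3/2}}=e^{-\frac16 s^{3/2}}$ by the classical right-tail lower bound for bulk exponential LPP (which may be taken from exact formulas, or derived by a midpoint-bisection argument using $1$-homogeneity of $\Shp$ and $\tfrac13$-homogeneity of $\curv$), and independence multiplies these to $e^{-\frac23 s^{3/2}}$; this $k$ is precisely the one balancing $z^2(\curv s)^2/(2k)$ against the shape loss $\asymp k^2/(m+n)$ so that their sum is $\curv s$ at total cost $\tfrac23 s^{3/2}$. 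The linear term $\eta s$ and $\tfrac{C_1s^2}{(m+n)^{1/3}}$ soak up the subleading Cramér corrections, the gap $\M^z-\Shp$, the $O(\curv)$ by which $\E\G$ falls below $\Shp$, and the polynomial prefactors in the Cramér and Laplace asymptotics.

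The hard part, as indicated, is the bookkeeping in part (a): obtaining the sharp constant $\tfrac23$ while keeping the total error down to exactly the stated terms forces one to handle the sum over the exit location without losing any factor that grows with $m+n$, which is where the a priori exit-point bound and a careful partition of the range of $k$ — together with a precise matching of the constant in its tail with the one produced by the Markov optimization — are essential; part (b), by contrast, is routine once the bulk lower bound and the Cramér estimate are in hand.
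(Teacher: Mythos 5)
Your heuristics for the constant $\tfrac23 s^{3/2}$ (splitting the excess as $\curv s$ on the boundary and $\curv s/4$ in the bulk at exit scale $k_*\asymp\sqrt{s}\,(m+n)^{2/3}$) are correct, but the route through an exit-point decomposition is genuinely different from the paper's, and at the point you yourself flag as the hard part it has a real gap. In part (a), after bounding $\P\{\Gb^{z}(k,0)+\G_{k,1}(m,n)\ge \Shp+\curv s\}$ for each $k$, you must sum over $\asymp(m+n)^{2/3}$ values of $k$ near $k_*$, each contributing $\approx e^{-\frac23 s^{3/2}}$; the resulting prefactor adds a term of order $\log(m+n)$ to the log-probability, which cannot be absorbed by $\log 2+\tfrac{C_0Ks}{(m+n)^p}+\tfrac{C_0s^2}{(m+n)^{1/3}}$ when $s$ is of constant order. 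Your proposed fix via Theorem \ref{TExitPr} does not repair this: that theorem gives $\P\{\Eh^{z,\hor}\ge t(m+n)^{2/3}\}\le e^{-c_0t^3}$ with an unspecified, non-sharp $c_0(\delta)$, so invoking it at or near the critical scale $t\approx\sqrt{s}$ replaces the target $e^{-\frac23 s^{3/2}}$ by the much larger $e^{-c_0 s^{3/2}}$, while invoking it only for $k\gg k_*$ (where $c_0t^3\ge\tfrac23 s^{3/2}$) still leaves $\asymp(m+n)^{2/3}$ unhandled values of $k$ below that threshold; the phrase about the "discrete density" of the exit point is not something Theorem \ref{TExitPr} provides. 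The paper avoids the summation entirely: it writes $\Gb^{z}=\max\{\Gb^{z}_{1,0},\Gb^{z}_{0,1}\}$ (only two terms, hence the $\log2$), bounds the l.m.g.f.\ of each half-stationary process by Rains' identity plus monotonicity in the boundary parameter (Lemma \ref{LLMBdUB}), and then runs exponential Markov through the convex-dual rate $\I^{w,\hor}_s$ with the Taylor estimates of Lemmas \ref{LLIBdEst} and \ref{LIEst}.

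Part (b) has a second gap of a different kind: your construction needs a sharp right-tail \emph{lower} bound for the bulk process, $\P\{\G(m-k,n)\ge\Shp(m-k,n)+\curv t\}\ge e^{-\frac43 t^{3/2}(1+o(1))}$, uniformly down to constant $t$. This is not available inside the paper (the introduction explicitly notes that a matching bulk lower bound from the stationary model "appears to require an entirely new idea"), and your fallback derivation by "midpoint bisection" does not work: superadditivity of last-passage times yields finite-size \emph{upper} bounds on the right tail (as the paper recalls after Theorem \ref{TLppMDBnd}), not lower bounds, and the large-deviation expansion \eqref{ESepp} only controls deviations proportional to $m+n$, not the moderate scales $t\le\epsilon_1(m+n)^{2/3}$. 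Importing the bound from integrable probability (LUE tail estimates) would make the argument correct but defeats the purpose and still leaves the uniformity bookkeeping to you. The paper instead proves (b) without any bulk lower bound: it uses the "$\ge$" half of Rains' identity (Lemma \ref{LLMBdLB}) to lower-bound the l.m.g.f.\ of $\Gb^{w,z}$, writes that m.g.f.\ as $1+\lambda\int_0^\infty\P\{\Gb^{w,z}\ge t\}e^{\lambda t}\,\dd t$, and uses the part-(a) upper bound together with the Laplace-type integral estimates of Lemma \ref{LIntEst} to show the integral must concentrate near $t\approx\Shp+\curv s$, which forces the claimed lower bound on the tail there. If you want to salvage your route, you would need both a summation device that loses no power of $m+n$ in (a) and an independent proof of the bulk moderate-deviation lower bound for (b); as written, neither is supplied.
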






We state a limiting version as a corollary. 

\begin{cor}
\label{CisLppMDRate}
Fix $p \in (0, 2/3)$, $x, y, s \in \bbR_{>0}$ and write $\Min = \Min(x, y)$. Then 
\begin{align*}
\lim_{N \rightarrow \infty} N^{-3p/2}\log \P\{\Gb^{\Min}(\lc Nx \rc, \lc Ny \rc) \ge N \Shp(x, y) + N^{1/3+p}\curv(x, y)s\}= -\frac{2s^{3/2}}{3}. 
\end{align*}
\end{cor}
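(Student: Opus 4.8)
The plan is to derive Corollary \ref{CisLppMDRate} directly from Theorem \ref{TisLppMDRate} by applying the two-sided bounds along the lattice points $(m,n) = (\lc Nx\rc, \lc Ny\rc)$ with the fixed parameter $z = \Min = \Min(x,y)$, and rescaling appropriately. First I would fix $x, y, s > 0$ and $p \in (0, 2/3)$ and set $m = \lc Nx\rc$, $n = \lc Ny\rc$. Since $x, y$ are fixed strictly positive reals, there is a $\delta = \delta(x,y) > 0$ such that $(m,n) \in S_\delta$ for all large $N$, and $m, n \to \infty$. With the choice $z = \Min$, condition \eqref{ENearMin} holds trivially with $K = 0$ (here $\Min(m,n)$ need not equal $\Min(x,y)$ exactly, but $|\Min(m,n) - \Min(x,y)| = O(N^{-1})$, so one can take any fixed $K \ge 1$ and $p$ as given, and for large $N$ the hypothesis \eqref{ENearMin} is satisfied since $N^{-1} \le K(m+n)^{-1/3+p}$ eventually).

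The second step is to match up the scaling in the event. The event in the corollary is $\{\Gb^{\Min}(m,n) \ge N\Shp(x,y) + N^{1/3+p}\curv(x,y)s\}$. Using the exact homogeneity relations $\Shp(m,n) = \Shp(\lc Nx\rc, \lc Ny\rc)$ and $\curv(m,n)$, together with $\Shp(x,y), \curv(x,y)$ being continuous and positively homogeneous of degrees $1$ and $1/3$ respectively, I would write $N\Shp(x,y) = \Shp(Nx, Ny)$ and compare with $\Shp(m,n)$; the difference is $O(N^{1/2})$ coming from the ceiling rounding (since $\partial \Shp$ is bounded on the relevant compact set and $\sqrt{\cdot}$ contributes the worst term), which is $o(N^{1/3+p})$ because $p > 0$... actually $1/2$ versus $1/3 + p$ requires $p > 1/6$, so I need to be more careful: I would instead absorb the rounding error into the deviation parameter $s$. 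Concretely, define $s_N$ by
\begin{align*}
N\Shp(x,y) + N^{1/3+p}\curv(x,y)s = \Shp(m,n) + \curv(m,n)\, s_N,
\end{align*}
so that $s_N = \dfrac{N\Shp(x,y) - \Shp(m,n) + N^{1/3+p}\curv(x,y)s}{\curv(m,n)}$. Since $\Shp(m,n) = N\Shp(x,y) + O(\sqrt N)$ and $\curv(m,n) = N^{1/3}\curv(x,y)(1 + O(1/N))$, one gets $s_N = N^p s (1 + o(1)) + O(N^{1/6})$. Because $p > 0$, the dominant term is $N^p s$ (note $N^{1/6} = o(N^p)$ would fail for $p < 1/6$; instead I keep $s_N = N^p s + O(N^{1/6}) = N^p(s + o(1))$ which is valid for all $p > 0$ since $N^{1/6-p}\to 0$ iff $p > 1/6$ — so for $p \le 1/6$ I argue $s_N = N^p s + O(N^{1/6})$ and use that $N^{1/6}/N^{2/3} \to 0$ keeps $s_N$ in the allowed window, while $s_N^{3/2} = N^{3p/2}s^{3/2} + O(N^{3p/2} \cdot N^{1/6 - p}) \cdot$(lower order)$\ldots$). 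The cleanest route: show $s_N / N^p \to s$ regardless, since $s_N = N^p s + O(\max(N^{1/6}, N^{p-?}))$ and $1/6 < p$ OR $1/6 \ge p$; in the latter case $N^{1/6}/N^p = N^{1/6-p} \to \infty$, which would be a genuine problem. So I must handle $p \le 1/6$ separately, perhaps by a direct comparison showing the rounding error in $\Shp$ is actually $O(1)$ after optimizing the path near the axes — but in fact a sharper bound $|\Shp(\lc Nx\rc, \lc Ny\rc) - N\Shp(x,y)| \le C$ is false; it is genuinely $\Theta(\sqrt N)$. Therefore the correct statement to prove is that $s_N = N^p s + O(N^{1/6})$, hence $N^{-p}s_N \to s$ when $p > 1/6$, and the case $p \in (0, 1/6]$ needs the rounding error folded into the constant: one notes $N^{1/6} = N^p \cdot N^{1/6 - p}$ and $N^{1/6-p} \to \infty$ — genuinely delicate. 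I expect this rounding analysis to be the main obstacle; the resolution is to replace $\lc Nx\rc$ throughout by exact reals where the scaling is clean, or (more robustly) to observe that Theorem \ref{TisLppMDRate} is uniform enough over $(m,n)$ that one can apply it with $N'x' = m$, $N'y' = n$ for the exact ratios $x' = m/N$, $y' = n/N$ tending to $x, y$, thereby eliminating rounding from $\Shp$ and $\curv$ entirely and incurring only $o(1)$ corrections from the continuity of $\Shp, \curv, \Min$ at $(x,y)$.

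The third step is routine once the scaling is pinned down: apply Theorem \ref{TisLppMDRate}(a) with $K$ large enough to cover $|\Min(m,n) - \Min|$, $s$ replaced by $s_N$ (which lies in $[s_0, \epsilon_0(m+n)^{2/3}]$ for all large $N$ since $s_N \asymp N^p$ and $p < 2/3$), to get
\begin{align*}
\log \P\{\Gb^{\Min}(m,n) \ge \Shp(m,n) + \curv(m,n)s_N\} \le -\frac{2 s_N^{3/2}}{3} + \log 2 + \frac{C_0 K s_N}{(m+n)^p} + \frac{C_0 s_N^2}{(m+n)^{1/3}},
\end{align*}
and apply part (b) for the matching lower bound with $s_N$ in place of $s$. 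Multiplying by $N^{-3p/2}$: the main term gives $-\frac{2}{3} N^{-3p/2} s_N^{3/2} \to -\frac{2}{3}s^{3/2}$ since $s_N^{3/2} = N^{3p/2}(s+o(1))^{3/2}$; the $\log 2$ and $\eta s_N$ error terms, divided by $N^{3p/2}$, are $O(N^{-3p/2}) + O(N^{p - 3p/2}) = O(N^{-p/2}) \to 0$; the $C_0 K s_N (m+n)^{-p}$ term is $O(N^p \cdot N^{-p}) = O(1)$, divided by $N^{3p/2}$ gives $O(N^{-3p/2}) \to 0$; and $C_i s_N^2 (m+n)^{-1/3}$ is $O(N^{2p - 1/3})$, divided by $N^{3p/2}$ is $O(N^{p/2 - 1/3}) \to 0$ precisely because $p < 2/3$. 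Squeezing the upper and lower bounds yields the claimed limit $-\frac{2}{3}s^{3/2}$. The only nontrivial point, as noted, is ensuring the rounding from $\lc N x\rc$ does not corrupt the leading asymptotics, which is handled by passing to the exact ratios $m/N, n/N$ and invoking continuity of the shape and curvature functions.
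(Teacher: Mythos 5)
Your overall route is exactly the paper's: $\Min$ is homogeneous of degree zero, so $\Min(\lc Nx\rc,\lc Ny\rc)-\Min(x,y)=\Min(\lc Nx\rc,\lc Ny\rc)-\Min(Nx,Ny)=O(1/N)$ (this is Lemma \ref{LMinEst} combined with Lemma \ref{LShpMinBnd}), hence \eqref{ENearMin} holds with $z=\Min(x,y)$ and any fixed $K\ge 1$ for large $N$; then one applies both halves of Theorem \ref{TisLppMDRate} with a deviation parameter $s_N\asymp N^p$ and divides by $N^{3p/2}$. Your error-term bookkeeping in the last step is correct, and $p<2/3$ enters exactly where you use it.

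The one genuine mistake is your claim that $|\Shp(\lc Nx\rc,\lc Ny\rc)-N\Shp(x,y)|$ is ``genuinely $\Theta(\sqrt N)$''; it is in fact $O(1)$. Writing $m=Nx+a$, $n=Ny+b$ with $a,b\in[0,1)$,
\[
\Shp(m,n)-N\Shp(x,y)=a+b+2\bigl(\sqrt{mn}-N\sqrt{xy}\bigr)=a+b+\frac{2\{N(xb+ya)+ab\}}{\sqrt{mn}+N\sqrt{xy}}=O(1),
\]
and similarly $\curv(m,n)=N^{1/3}\curv(x,y)+O(N^{-2/3})$ since $\nabla\curv$ is homogeneous of degree $-2/3$ and the segment from $(Nx,Ny)$ to $(m,n)$ stays in a fixed cone. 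Consequently $s_N=N^ps+O(N^{-1/3})$, so $N^{-p}s_N\to s$ for every $p>0$, the ``delicate case $p\le 1/6$'' does not exist, and the entire digression about it (and the fallback via the ratios $m/N$, $n/N$) can be deleted. If you do keep the fallback, note that plain continuity of $\Shp$ at $(x,y)$ is not enough as stated --- $N\cdot o(1)$ need not be $o(N^{1/3+p})$ --- you need the Lipschitz rate $|\Shp(m/N,n/N)-\Shp(x,y)|=O(1/N)$, which again comes from boundedness of $\nabla\Shp$ on $S_\delta$ away from the origin. With the $O(1)$ bound in place, the rest of your argument is complete and coincides with the paper's.
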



\smallskip 

\subsection{Upper bounds for the exit points}

The next pair of results provides upper bounds for the deviations of the exit points \eqref{EEh}--\eqref{EEv} in the increment-stationary case $w = z$. We consider separately two regimes distinguished by the proximity of the $z$-parameter to the minimizer \eqref{EMin}.

\subsubsection{Around the characteristic direction {\rm(}small $|z-\Min|$ regime{\rm)}}

When $m, n \in \bbZ_{>0}$ are both large and $z \in (0, 1)$ is sufficiently close to $\Min = \Min(m, n)$, the geodesic $\pi_{0, 0}^{m, n, z}$ from the origin to $(m, n) \in \bbZ_{>0}^2$ typically visits order $(m+n)^{2/3}$ many vertices on the boundary before entering into the bulk. For large $s > 0$, the probability that one exit point is at least distance $s(m+n)^{2/3}$ away from the origin is expected to be of order $\exp\{-cs^3\}$ for some constant $c > 0$, as explained on p.\ 7 in \cite{shen-sepp-19}. 

The next theorem  establishes the expected  upper bound.  The best previous upper bound accessible without   integrable probability was polynomial of order $s^{-3}$ \cite[Proposition 5.9]{sepp-cgm-18}.   Remarks in \cite[Remark 1.3]{basu-sark-sly-19} and \cite[p.\ 7]{shen-sepp-19} suggest that the result can also be derived from a  left tail bound for the largest eigenvalue of the Laguerre ensemble from \cite{ledo-ride-10}.  

\begin{thm}
\label{TExitPr}
For $\delta > 0$ and $K \ge 0$ there exist finite constants $c_0 = c_0(\delta) > 0$, $N_0 = N_0(\delta, K) > 0$ and $s_0 = s_0(\delta, K) > 0$ such that the following holds: 
\begin{align*}
\P\{\max \{\Eh^{z, \hor}(m, n), \Ev^{z, \ver}(m, n)\} \ge s(m+n)^{2/3}\} \le \exp\{-c_0 s^3\}
\end{align*}
for all $(m, n) \in S_\delta \cap \bbZ_{\ge N_0}^2$, $s \ge s_0$, and $z \in (0, 1)$  such that  $|z-\Min(m, n)| \le K(m+n)^{-1/3}$.  
\end{thm}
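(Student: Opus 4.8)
The plan is to reduce the exit-point bound to a right-tail estimate for the stationary last-passage time, which is exactly what Theorem \ref{TisLppMDRate}(a) provides. The geometric input is the standard comparison between the event $\{\Eh^{z,\hor}(m,n)\ge k\}$ and a decoupling of the passage time into a boundary portion and an independent bulk portion. Concretely, on the event $\{\Eh^{z,\hor}(m,n)\ge k\}$ one has $\Gb^{z}(m,n)=\Gb^{w}(j,0)+\G_{j,1}(m,n)$ for some $j\ge k$, and since the boundary weights on the axis are i.i.d.\ $\Exp(z)$ while $\G_{j,1}(m,n)$ depends only on the bulk weights, one gets for any auxiliary parameter $z'<z$ a bound of the form
\begin{align*}
\P\{\Eh^{z,\hor}(m,n)\ge k\}\le \P\bigl\{\Gb^{z}(m,n)\ge \Gb^{z'}(k,0)+\G_{k,1}(m,n)\ \text{and}\ \Gb^{z'}(k,0)\le \text{(its mean minus a deviation)}\bigr\}+\P\{\Gb^{z'}(k,0)\ \text{too small}\},
\end{align*}
or more cleanly, the approach of Seppäläinen–Shen type comparisons: exit point far out forces the stationary passage time to exceed the passage time of an auxiliary stationary process with a shifted parameter $z'$ chosen so that $k$ is its characteristic exit scale, and the difference of the two mean values $\M^{z}(m,n)-\M^{z'}(m,n)$ grows quadratically in $z-z'$, hence like $k^2/(m+n)$ when $z-z'\sim k/(m+n)^{2/3}$.

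The key steps, in order, are: (1) Fix $k=\lceil s(m+n)^{2/3}\rceil$ and choose $z'=z'(k)$ with $z-z'$ of order $k(m+n)^{-2/3}$ (and symmetrically a $z''>z$ for the vertical exit point, or just handle both by the union bound and a reflection symmetry $z\leftrightarrow 1-z$). Check that $z'$ still satisfies a near-characteristic hypothesis of the form \eqref{ENearMin} with $p=1/3$ relative to $(m,n)$ — this is where the hypothesis $|z-\Min(m,n)|\le K(m+n)^{-1/3}$ and $s\ge s_0$ feed in, as they guarantee $z'$ stays in a fixed compact subinterval of $(0,1)$ uniformly over $S_\delta$. (2) Use the coupling \eqref{ECplBlkBd} and planarity (crossing/ordering of geodesics) to get the deterministic-on-the-event inequality that lets one write $\P\{\Eh^{z,\hor}(m,n)\ge k\}$ as at most the probability that $\Gb^{z}(m,n)$ exceeds $\Shp(m,n)+\curv(m,n)s'$ for a suitable $s'$, where $s'$ is of order $k^2/((m+n)\curv(m,n))\sim s^2(m+n)^{1/3}/(m+n)^{1/3}=s^2$ up to constants — wait, one must be careful: the gain in mean from replacing $z$ by $z'$ is $\asymp k^2/(m+n)\asymp s^2(m+n)^{1/3}$, so in $\curv$-units that is $s' \asymp s^2(m+n)^{1/3}/(m+n)^{1/3} = c s^2$. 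Then Theorem \ref{TisLppMDRate}(a) applied at deviation level $cs^2$ yields a bound $\exp\{-\tfrac{2}{3}(cs^2)^{3/2}+\dots\}=\exp\{-c_0 s^3+\dots\}$, the lower-order terms being absorbed for $s\ge s_0$ and $m,n\ge N_0$. (3) Handle the complementary event (where the auxiliary stationary time $\Gb^{z'}(k,0)$ or the bulk fluctuation is atypically small) by a crude bound: $\Gb^{z'}(k,0)$ is a sum of $k$ i.i.d.\ exponentials, so its left deviations of the relevant size $\asymp k^2/(m+n)\asymp s^2(m+n)^{1/3}$ below its mean $k/z'$ are super-exponentially unlikely in $s$, in fact of order $\exp\{-cs^3\}$ or better by a standard Cramér estimate, which is dominated by the target. (4) Combine via the union bound over the horizontal and vertical exit points and over the (finitely many relevant) choices, and absorb constants.

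The main obstacle I expect is step (2): making the planar/ordering argument that converts ``exit point at least $k$'' into a clean right-tail event for $\Gb^{z}(m,n)$ at an explicitly controlled deviation level, while keeping all constants uniform over the cone $S_\delta$ and over $z$ in the allowed window. One has to track three things simultaneously — the shift $z\to z'$ must be large enough that the mean gap $\M^{z'}(m,n)-\M^{z}(m,n)$ beats the natural $(m+n)^{1/3}$-fluctuations by the desired $s^2$-factor, small enough that $z'$ stays bounded away from $0$ and $1$, and compatible with the hypothesis window so that Theorem \ref{TisLppMDRate}(a) is actually applicable to the $\Gb^{z'}$-process (or to $\Gb^{z}$, depending on which way one sets up the comparison). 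The second-derivative identity \eqref{Ecurv2} and the fact that $\curv(m,n)\asymp(m+n)^{1/3}$ on $S_\delta$ are the quantitative facts that make this balancing work, and getting the bookkeeping exactly right — rather than the ideas — is the real work.
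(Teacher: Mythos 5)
There is a genuine gap, and it sits exactly where you flagged the ``real work'': step (2) of your plan does not go through. The event $\{\Eh^{z,\hor}(m,n)\ge k\}$ does \emph{not} force $\Gb^{z}(m,n)$ (or any related passage time) to exceed $\Shp(m,n)+\curv(m,n)\,c s^2$; deterministically it only says that the restricted maximum $\max_{j\ge k}\{\Gb^{z}(j,0)+\G_{j,1}(m,n)\}$ equals the unrestricted one, and since $\E[\Gb^{z}(k,0)]+\E[\Gb^{z}(m-k,n)]=\M^{z}(m,n)$ exactly, the naive split of this comparison at an intermediate level is vacuous. The curvature gain $\asymp k^{2}/(m+n)\asymp s^{2}(m+n)^{1/3}$ that you correctly identify only becomes usable once you rule out the complementary scenario in which the \emph{reference} passage time (the bulk $\G(m,n)$, or the unrestricted stationary $\Gb^{z}(m,n)$, depending on how you set up the comparison) is atypically \emph{small} by that same amount. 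That is a left-tail bound at the moderate-deviation scale $s^{2}(m+n)^{1/3}$ with error $\le e^{-c s^{3}}$, which Theorem \ref{TisLppMDRate}(a) (a right-tail bound) does not give, and which your step (3) does not supply either: the Cram\'er estimate there applies only to the boundary sum $\Gb^{z'}(k,0)$, a sum of $k$ i.i.d.\ exponentials, whereas the quantity whose lower tail must be controlled is a last-passage time; any crude i.i.d.\ decomposition of it (e.g.\ via Burke increments along a single row plus a column) only controls deviations at scale $(m+n)^{1/2}$, far too coarse in the range $s\ll (m+n)^{1/3}$. This left-tail obstruction is precisely the difficulty the paper emphasizes in the introduction, and it is why the route you propose is, in the literature, carried out only with integrable inputs (the Laguerre-ensemble left-tail bound mentioned after the theorem statement).

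The paper's actual proof avoids the left tail entirely and does not pass through Theorem \ref{TisLppMDRate}. It first uses stochastic monotonicity to replace $\Eh^{z,\hor}$ by the exit point of the two-parameter process $\Gb^{z-\lambda,z}$ with $\lambda=\eta s(m+n)^{-1/3}$; on the event $\{\Eh^{z-\lambda,z,\hor}(m,n)>k\}$ the quantity $\Gb^{z-\lambda}(k,0)+\Gb^{z-\lambda}_{k+1,0}(m,n)-\Gb^{z-\lambda,z}(m,n)$ vanishes, so an exponential tilt by $\tfrac{\lambda}{2}$ of it can be inserted for free; Cauchy--Schwarz then separates the probability into three moment generating functions, each evaluated or bounded via the Rains identity (Proposition \ref{PLMId}) and Lemma \ref{LLMBdUB}(a), and a Taylor expansion of $\Lb$ and $\L^{\lambda}$ (Lemmas \ref{LLIBdEst}, \ref{LlMEst}, \ref{LMinpmEst}, \ref{LMinEst}) shows the resulting exponent is $\le -c_{0}s^{3}$ for small $\eta$. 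Your scaling heuristics ($k=\lceil s(m+n)^{2/3}\rceil$, gain $\asymp s^{2}$ in $\curv$-units, hence $s^{3}$ in the exponent) are correct and explain why the answer is $e^{-c_0 s^3}$, but as a proof strategy the reduction to the right-tail bound alone cannot be completed without a new left-tail input.
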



\begin{rem}  \label{RExitLB}  \cite[Theorem 4.4]{shen-sepp-19} gives the following  lower bound that matches the upper bound of Theorem \ref{TExitPr}.  For $\delta>0$ there exist finite positive constants $c_1$, $N_1$, $s_1$, and $s_2$, all functions of $\delta$,  such that the following holds: 
\be\label{Exit_LB} 
\P\{\max \{\Eh^{z, \hor}(m, n), \Ev^{z, \ver}(m, n)\} \ge   s\min(m^{2/3}, n^{2/3}) 
\} \ge \exp\{-c_1s^3\}
\ee
for all $(m, n) \in \bbZ_{\ge N_1}^2$ and all $z \in [\delta,  1-\delta]$,  whenever $s_1\le s \le s_2\min(m^{1/3}, n^{1/3})$. 
This bound was proved without integrable probability, by an adaptation of a  change-of-measure argument from \cite{bala-sepp-aom, sepp-cgm-18}. 
\end{rem}  

The exit point upper bound serves as an input to various  proofs in  the non-integrable literature on LPP.  Some of these results  can potentially be strengthened through Theorem \ref{TExitPr}. For example, the upper bound in \cite[Theorem 2.3]{shen-sepp-19} can presumably be made optimal. 

\subsubsection{Off-characteristic directions {\rm(}large $|z-\Min|$ regime{\rm)}}

For large $m, n \in \bbZ_{>0}$ and $z \in (0, 1)$ sufficiently away from the minimizer $\Min = \Min(m, n)$, with high probability, the geodesic $\pi_{0, 0}^{m, n, z}$ from the origin to vertex $(m, n)$ prefers to move horizontally if $z < \Min$ and vertically if $z > \Min$. The following proposition provides an upper bound on the probability of the geodesic taking the less likely initial direction. The statement is a main ingredient in the proofs of several results below and may be of independent interest. 

\begin{prop}
\label{PGeoInStep}
Fix $\delta > 0$. Let $(m, n) \in \bbZ_{>0}^2 \cap S_\delta$, $\Min = \Min(m, n)$ and $z \in (0, 1)$. There exists a constant $c_0 = c_0(\delta) > 0$ such that the following statements hold. 
\begin{enumerate}[\normalfont (a)]
\item If $z < \Min$ then
\begin{align*}
\P\{\Ev^{z, \ver}(m, n) > 0\} \le \exp\{-c_0(m+n)(\Min-z)^3\}. 
\end{align*}
\item If $z > \Min$ then 
\begin{align*}
\P\{\Eh^{z, \hor}(m, n) > 0\} \le \exp\{-c_0(m+n)(z-\Min)^3\}. 
\end{align*}
\end{enumerate}
\end{prop}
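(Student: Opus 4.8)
\emph{Symmetry reduction.}
The plan is to first note that reflecting the lattice across the main diagonal, $(i,j)\mapsto(j,i)$, turns the $\Gb^z$-model into the $\Gb^{1-z}$-model, interchanges the two coordinate axes and the two exit points, and sends $(m,n,\Min(m,n))$ to $(n,m,1-\Min(m,n))$ while preserving $S_\delta$. Under this map part (b) for $(m,n,z)$ with $z>\Min(m,n)$ becomes part (a) for $(n,m,1-z)$ with $1-z<\Min(n,m)$, and the two right-hand sides agree. So it suffices to prove (a); from now on $z<\Min:=\Min(m,n)$.

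\emph{Reduction to a comparison of last-passage times.}
Write $\Gb^{z,\hor}(m,n)=\max_{1\le k\le m}\{\Gb^z(k,0)+\G_{k,1}(m,n)\}$ and $\Gb^{z,\ver}(m,n)=\max_{1\le l\le n}\{\Gb^z(0,l)+\G_{1,l}(m,n)\}$ for the best last-passage values using a horizontal, respectively vertical, exit; then $\Gb^z(m,n)=\max\{\Gb^{z,\hor}(m,n),\Gb^{z,\ver}(m,n)\}$ and, a.s., $\{\Ev^{z,\ver}(m,n)>0\}=\{\Gb^{z,\ver}(m,n)>\Gb^{z,\hor}(m,n)\}$. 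Dropping all but the $k=\kappa$ term from the horizontal maximum,
\[
\P\{\Ev^{z,\ver}(m,n)>0\}\ \le\ \P\bigl\{\Gb^{z,\ver}(m,n)\ \ge\ \Gb^z(\kappa,0)+\G_{\kappa,1}(m,n)\bigr\}
\]
for every deterministic $\kappa\in\{1,\dots,m\}$. I would take $\kappa$ near the maximizer $\kappa^\ast=m-nz^2(1-z)^{-2}$ of $k\mapsto k/z+\Shp(m-k,n)$ on $[0,m]$: this $\kappa^\ast$ is nonnegative precisely because $z\le\Min$, satisfies $\kappa^\ast\asymp(m+n)(\Min-z)$ and $(m-\kappa^\ast)+n\asymp m+n$ with constants depending only on $\delta$, and $\kappa^\ast/z+\Shp(m-\kappa^\ast,n)=\M^z(m,n)$. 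Thus the competitor $\Gb^z(\kappa,0)+\G_{\kappa,1}(m,n)$ has mean $\approx\M^z(m,n)$ while $\Gb^{z,\ver}(m,n)$ has mean $\approx\Shp(m,n)=\inf_{z'}\M^{z'}(m,n)$, the two being separated by the gap
\[
\Delta:=\M^z(m,n)-\Shp(m,n)\ \asymp\ \curv(m,n)^3(\Min-z)^2\ \asymp\ (m+n)(\Min-z)^2,
\]
using \eqref{Ecurv2}, a Taylor expansion of $z'\mapsto\M^{z'}(m,n)$ at $z'=\Min$, and $\curv(m,n)\asymp(m+n)^{1/3}$ on $S_\delta$.

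\emph{Splitting into two one-sided tails.}
Since $\Shp(m,n)+\tfrac12\Delta=\M^z(m,n)-\tfrac12\Delta$, a pure union bound gives
\[
\P\{\Ev^{z,\ver}(m,n)>0\}\ \le\ \P\bigl\{\Gb^{z,\ver}(m,n)\ge\Shp(m,n)+\tfrac12\Delta\bigr\}+\P\bigl\{\Gb^z(\kappa,0)+\G_{\kappa,1}(m,n)\le\M^z(m,n)-\tfrac12\Delta\bigr\}.
\]
For the first term I would union-bound over the vertical exit point $l$ and, for each $l$, bound $\{\Gb^z(0,l)+\G_{1,l}(m,n)\ge\Shp(m,n)+\tfrac12\Delta\}$ by $\{\Gb^z(0,l)\ge a_l\}\cup\{\G_{1,l}(m,n)\ge b_l\}$ for levels $a_l+b_l=\Shp(m,n)+\tfrac12\Delta$; since $\E[\Gb^z(0,l)]+\E[\G_{1,l}(m,n)]\le\Shp(m,n)$ with a deficit of order $l(\Min-z)$ for $l\ge1$ (elementary, using $\E[\G(M,N)]\le\Shp(M,N)$), one may arrange deviation $\gtrsim\Delta+l(\Min-z)$ into each of the two events, bound the first by the standard exponential-sum tail and the second by Theorem \ref{TLppMDBnd}, and sum over $l$ (geometrically convergent, thanks to the $l(\Min-z)$) to get $(m+n)^{O(1)}\exp\{-c(m+n)(\Min-z)^3\}$, because $\Delta^{3/2}(m+n)^{-1/2}\asymp(m+n)(\Min-z)^3$. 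The second term I would likewise bound by $\{\Gb^z(\kappa,0)\le c\}\cup\{\G_{\kappa,1}(m,n)\le d\}$ with $c+d=\M^z(m,n)-\tfrac12\Delta$; the exponential-sum lower tail of $\Gb^z(\kappa,0)$ costs of order $\Delta^2/\kappa\asymp(m+n)(\Min-z)^3$ at deviation $\tfrac12\Delta$, and $\G_{\kappa,1}(m,n)\stackrel{d}{=}\G(m-\kappa+1,n)$ needs a \emph{left}-tail bound at scale $\Delta$. No independence between the shared bulk pieces is ever invoked — everything above is a union bound. The polynomial prefactor is absorbed once $(m+n)(\Min-z)^3\gtrsim\log(m+n)$; for smaller $(\Min-z)$ the asserted inequality is weaker than a fixed negative power of $m+n$ and follows from the a priori bound $\P\{\Ev^{z,\ver}(m,n)>0\}<1$ together with the polynomial exit-point estimate \cite[Proposition 5.9]{sepp-cgm-18}, after shrinking $c_0$ and enlarging $N_0,s_0$. (When $z$ is below a $\delta$-dependent fraction of $\Min$ one may instead compare with the i.i.d.\ exponential sum $\Gb^z(m,0)+\G_{m,1}(m,n)$, whose mean $m/z+n$ already exceeds $\Shp(m,n)$, so that in that range only elementary concentration and a right-tail bound are needed and no last-passage left tail enters.)

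\emph{Main obstacle.}
The genuinely hard input is the left-tail bound for the bulk last-passage time $\G(m-\kappa+1,n)$ on scales from $(m+n)^{1/3}$ up to $(m+n)$; as stressed in the introduction, accessing the left tail of the last-passage time probabilistically is exactly the technical hurdle this paper is built to clear, and I would expect the needed estimate to be supplied by the stationary-model machinery developed earlier in the paper or to be cited (for instance from the Ledoux--Rider bound on the smallest eigenvalue of the Laguerre ensemble, to which the discussion of Theorem \ref{TExitPr} alludes). Here there is room to spare, since the required rate is only $(m+n)(\Min-z)^3$ and even a crude estimate of the form $\P\{\G(M,N)\le\Shp(M,N)-t\}\le\exp\{-ct^{3/2}(M+N)^{-1/2}\}$ suffices (it produces exponent $\asymp\Delta^{3/2}(m+n)^{-1/2}\asymp(m+n)(\Min-z)^3$); nonetheless some left-tail control is unavoidable, because for $z$ within a bounded factor of $\Min$ the competitor at $k=\kappa^\ast$ is the only one whose mean clears $\Shp(m,n)+\tfrac12\Delta$. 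Verifying the deficit estimate for $\E[\Gb^z(0,l)]+\E[\G_{1,l}(m,n)]$ uniformly in $l$, checking that all invoked deviations stay within the ranges of Theorem \ref{TLppMDBnd} and of the left-tail bound, and keeping constants uniform over the admissible $z$ are the remaining routine but careful steps.
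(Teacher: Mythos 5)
Your reduction to comparing $\Gb^{z,\hor}$ with a single horizontal competitor and controlling two one-sided tails is a reasonable plan, but it contains a genuine gap that you yourself flag: the second term forces a \emph{left-tail} bound for the bulk last-passage time $\G(m-\kappa+1,n)$ at deviation of order $\Delta\asymp(m+n)(\Min-z)^2$, and no such bound exists in this paper, nor do you prove one. The paper only ever establishes right-tail estimates (Theorem \ref{TLppMDBnd}, Theorem \ref{TisLppMDRate}, Lemma \ref{LRTBdUB}), and the introduction is explicit that the left tail is precisely the hurdle the probabilistic approach has not been able to clear directly; hoping that it is ``supplied by the stationary-model machinery'' or imported from Ledoux--Rider changes the argument from a proof into a conditional sketch and breaks the self-contained, non-integrable character of the paper. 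In the central regime where $z$ is within a constant factor of $\Min$ (your own parenthetical carve-out does not cover it), there is no way to avoid this input along your route, since the single competitor at $\kappa^\ast$ is the only one whose mean clears $\Shp(m,n)+\tfrac12\Delta$. A secondary issue: your fallback for small $(m+n)(\Min-z)^3$ invokes constants $N_0,s_0$ and an external polynomial exit bound, but Proposition \ref{PGeoInStep} has no such thresholds --- it is claimed for all $(m,n)\in S_\delta\cap\bbZ_{>0}^2$ and all admissible $z$.

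For contrast, the paper's actual proof sidesteps left-tail control entirely, and this is the key idea you are missing. With $\lambda=(\Min-z)/4$ one first uses monotonicity to replace the event $\{\Gb^z_{1,0}\le\Gb^z_{0,1}\}$ by $\{\Gb^z_{1,0}\le\Gb^{z+2\lambda}_{0,1}\}$; on this event $\Gb^{z,z+2\lambda}=\Gb^{z+2\lambda}_{0,1}$, so one may insert the factor $\exp\{\lambda\Gb^{z+2\lambda}_{0,1}-\lambda\Gb^{z,z+2\lambda}\}$ for free, drop the indicator, and apply Cauchy--Schwarz. The factor $\E[e^{-2\lambda\Gb^{z,z+2\lambda}}]$ is then computed \emph{exactly} by the Rains identity (Proposition \ref{PLMId}) with a negative tilt --- this exact formula is what substitutes for a left-tail estimate --- while $\E[e^{2\lambda\Gb^{z+2\lambda}_{0,1}}]$ is bounded above via Lemma \ref{LLMBdUB}(b); a Taylor expansion of $\Lb$ (Lemma \ref{LLIBdEst}) then yields the exponent $-c_0(m+n)\lambda^3$. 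If you want to complete a proof along your lines you would either have to import and justify a bulk left-tail bound (which this paper deliberately avoids), or replace your competitor's lower-tail step by exactly this kind of negative-exponent m.g.f.\ computation for a stationary process, at which point you have essentially reconstructed the paper's argument.
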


\subsection{Speed of the distributional convergence to Busemann functions}

Denote the increments of the bulk LPP process with respect to the initial point by 
\begin{align}
\B_{i, j}^{\hor}(m, n) &= \G_{i, j}(m, n)-\G_{i+1, j}(m, n) \label{Ehi}\\ 
\B_{i, j}^{\ver}(m, n) &= \G_{i, j}(m, n)-\G_{i, j+1}(m, n)  \label{Evi}
\end{align}
for $m, n \in \bbZ_{>0}$, $i \in [m]$ and $j \in [n]$. Due to the convention from \eqref{Elpp}, these increments are equal to  $+\infty$ when $i = m$ and $j =n$, respectively. 

The a.s.\ directional limits of \eqref{Ehi} and \eqref{Evi} are known to exist.  For any given direction vector $(x, y) \in \bbR_{>0}^2$  there exists a stationary stochastic process $\{ \b_{i, j}^{\hor}(x, y), \b_{i, j}^{\ver}(x, y): i,j\in\bbZ_{>0}\}$ and  an event of full probability  on which the    limits   
\begin{align}
\lim_{N \rightarrow \infty}\B_{i, j}^{\hor}(m_N, n_N) &= \b_{i, j}^{\hor}(x, y) \quad \text{ and } \quad \lim_{N \rightarrow \infty}\B_{i, j}^{\ver}(m_N, n_N) = \b_{i, j}^{\ver}(x, y) 
\label{EBuse}
\end{align}
hold  for all $(i, j) \in \bbZ^2_{>0}$ and for all sequences  $\{(m_N, n_N)\}_{N\ge 1}\subset\bbZ^2_{>0}$ such that 
$\min \{m_N, n_N\} \rightarrow \infty$ and $m_N/n_N\to x/y$. 

Following the approach of C.\ Newman \cite{newm-icm-95},  the existence of these  limits was first proved by P.\ A.\ Ferrari and L.\ Pimentel in \cite{ferr-pime-05} for a deterministic set of directions of full Lebesgue measure, see the remarks following Propositions 7-9 therein. The result was subsequently extended to all directions by  D.\ Coupier \cite[Theorem 1]{coup-11}. The limits were later established in broad generality in a joint work of the third author \cite[Theorem 3.1]{geor-rass-sepp-17-buse}. Their result covers LPP with i.i.d.\ weights bounded from below and of finite $p$th moment for some $p > 2$, and applies to all directions except those into the closed (possibly degenerate) flat regions of the shape function with at least one boundary direction where the shape function is not differentiable. 
These restrictions on directions disappear with exponential weights since the shape function \eqref{EShp} is differentiable and strictly concave on $\bbR_{>0}^2$. Follow-up work by the authors \cite{emra-janj-sepp-19-buse} will consider the structure of these limits in a non-stationary generalization of the model studied in this paper.

The limits \eqref{EBuse} are examples of \emph{Busemann functions} evaluated, respectively, at pairs $((i, j), (i+1, j))$ and $((i, j), (i, j+1))$ of successive vertices. The Busemann function of the $\G$-process in direction of $(x, y)$ can be fully defined as the a.s.\ $(x, y)$-directional limit of the discrete gradients of the $\G$-process with respect to the initial point. We do not formally introduce the more general notion here since \eqref{EBuse} suffices for the sequel. 

The following distributional properties of the Busemann functions were obtained in \cite[Lemma 3.3]{cato-pime-13}. The marginal distributions are given by 
\begin{align}
\b^{\hor}_{i, j}(x, y) \sim \Exp\{\Min(x, y)\} \quad \text{ and } \quad \b^{\ver}_{i, j}(x, y) \sim \Exp\{1-\Min(x, y)\} \label{EBuseDis}
\end{align}
for $(i, j) \in \bbZ_{>0}^2$ and $(x, y) \in \bbR_{>0}^2$. Furthermore, for any down-right path $\pi = (\pi_k)_{k \in [l]}$ in $\bbZ_{>0}^2$, 
\begin{align}
\label{EBuseInd}
\begin{split}
&\{\b_{\pi_k}^{\hor}(x, y): k \in [l-1] \text{ and } \pi_{k+1} = \pi_k + (1, 0)\} \\
&\cup \{\b_{\pi_k}^{\ver}(x, y):k \in [l] \smallsetminus \{1\} \text{ and } \pi_{k-1} = \pi_k+(0, 1)\} \text{ are jointly independent.} 
\end{split}
\end{align} 

For $m, n \in \bbZ_{>0}$ and $k, l, p, q \in \bbZ_{\ge 0}$ with $k \le m$ and $l \le n$, the joint c.d.f.\ of the pre-limit  increments
\begin{align}
\{\B_{i+p, 1+q}^{\hor}(m+p, n+q): i \in [k]\} \cup \{\B_{1+p, j+q}^{\ver}(m+p, n+q): j \in [l]\} \label{EInc}
\end{align}
is denoted  for $s = (s_i)_{i \in [k]} \in \bbR^k$ and $t = (t_j)_{j \in [l]} \in \bbR^l$ by 
\begin{align}
\Bcdf_{k, l}^{m, n}(s, t) = \P\{\B_{i, 1}^{\hor}(m, n) \le s_i \text{ for } i \in [k] \text{ and } \B_{1, j}^{\ver}(m, n) \le t_j \text{ for } j \in [l]\}. \label{EBcdf}
\end{align} 
There is no dependence on $p, q$ in \eqref{EBcdf} due to the distributional translation invariance of the weights. Also denote the joint c.d.f.\ of the boundary weights \eqref{Ewbd} in the case $w=z$  by 
\begin{align}
\bcdf_{k, l}^{z}(s, t) &= \P\{\wb^{z}(i, 0) \le s_i \text{ for } i \in [k] \text{ and }  \wb^{z}(0, j) \le t_j \text{ for } j \in [l]\} \label{Ewbcdf} \\
&= \prod_{i \in [k]}(1-\exp\{-s_i^+z\}) \prod_{j \in [l]}(1-\exp\{-t_j^+(1-z)\}) \quad \text{ for } z \in (0, 1). \nonumber
\end{align}
In light of \eqref{EBuse}, \eqref{EBuseDis} and \eqref{EBuseInd}, for fixed $(k, l) \in \bbZ_{\ge 0}^2$ and $(x, y) \in \bbR_{>0}^2$, \eqref{EBcdf} converges pointwise to the limiting c.d.f.\  
\begin{align}
\lim_{N \rightarrow \infty} \Bcdf_{k, l}^{\lc Nx \rc, \lc Ny \rc}(s, t) = f_{k, l}^{\Min(x, y)}(s, t). \label{EcdfLim}
\end{align}
A natural problem is then to quantify the speed of convergence. The next result provides some bounds in this direction. 

The result concerns the following modifications of the c.d.f.\ \eqref{EBcdf}.
\begin{align}
\Bcdf^{m, n, \hor}_{k, l}(s, t) &= \P\{\B_{i, 1}^{\hor}(m, n) > s_i \text{ for } i \in [k] \text{ and } \B_{1, j}^{\ver}(m, n) \le t_j \text{ for } j \in [l]\} \label{EBcdfh} \\
\Bcdf^{m, n, \ver}_{k, l}(s, t) &= \P\{\B_{i, 1}^{\hor}(m, n) \le s_i \text{ for } i \in [k] \text{ and } \B_{1, j}^{\ver}(m, n) > t_j \text{ for } j \in [l]\} \label{EBcdfv}
\end{align}
for $m, n \in \bbZ_{>0}$, $k \in [m] \cup \{0\}$, $l \in [n] \cup \{0\}$, $s = (s_i)_{i \in [k]} \in \bbR^k$ and $t = (t_j)_{j \in [l]} \in \bbR^l$. Together these functions encode the same information as those in \eqref{EBcdf}. However, due to the monotonicity structure of the increments \eqref{Ehi}--\eqref{Evi} described in Lemma \ref{LCros}, it turns out easier to work with \eqref{EBcdfh}--\eqref{EBcdfv} 
than with \eqref{EBcdf}. In this same vein, introduce the functions 
\begin{align}
\bcdf_{k, l}^{z, \hor}(s, t) &= \P\{\wb^{z}(i, 0) > s_i \text{ for } i \in [k] \text{ and }  \wb^{z}(0, j) \le t_j \text{ for } j \in [l]\label{Ebcdfh}\\
&= \prod_{i \in [k]}\exp\{-s_i^+z\} \prod_{j \in [l]}(1-\exp\{-t_j^+(1-z)\}) \nonumber \\
\bcdf_{k, l}^{z, \ver}(s, t) &= \P\{\wb^{z}(i, 0) \le s_i \text{ for } i \in [k] \text{ and }  \wb^{z}(0, j) > t_j \text{ for } j \in [l]\}\label{Ebcdfv}\\
&= \prod_{i \in [k]}(1-\exp\{-s_i^+z\}) \prod_{j \in [l]}\exp\{-t_j^+(1-z)\}\nonumber
\end{align}
for $z \in (0, 1)$ and $k, l, s, t$ as above. 

\begin{thm}
\label{TBuse}
Let $\delta > 0$ and $\epsilon_0 > 0$. There exist constants $N_0 = N_0(\delta, \epsilon_0) > 0$ and $c_0 = c_0(\delta) > 0$ such that 
\begin{align*}
|\Bcdf_{k, l}^{m, n, \hor}(s, t)-\bcdf_{k, l}^{\Min(m, n), \hor}(s, t)| \le c_0 (1+\log l) \bigg\{\frac{\log (m+n)}{(m+n)}\bigg\}^{1/3}\\
|\Bcdf_{k, l}^{m, n, \ver}(s, t)-\bcdf_{k, l}^{\Min(m, n), \ver}(s, t)| \le c_0 (1+\log k) \bigg\{\frac{\log (m+n)}{(m+n)}\bigg\}^{1/3}
\end{align*}
whenever $(m, n) \in S_\delta \cap \bbZ_{\ge N_0}^2$, $k \in [m-1]\cup\{0\}$, $l \in [n-1]\cup\{0\}$ with $k, l \le \epsilon_0(m+n)^{2/3}$, $s \in \bbR^k$ and $t \in \bbR^l$. 
\end{thm}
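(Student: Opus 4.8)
The plan is to prove only the first inequality; the second follows by reflecting the lattice across the diagonal, a map that exchanges $(m,n)\leftrightarrow(n,m)$, $\hor\leftrightarrow\ver$, $z\leftrightarrow 1-z$ and $k\leftrightarrow l$, fixes the cone $S_\delta$, and carries $\Bcdf^{m,n,\hor}_{k,l}$, $\bcdf^{z,\hor}_{k,l}$ into the corresponding $\ver$ quantities. So fix $z=\Min(m,n)$, and record two descriptions of the target. By \eqref{EBurke}, $\bcdf^{z,\hor}_{k,l}(s,t)$ is the probability of the orthant event $\{\,\cdot>s_i\,\}\cap\{\,\cdot\le t_j\,\}$ for the increments of $\Gb^{z}$ along the down-right path running from $(1,l+1)$ straight down to $(1,1)$ and then straight right to $(k+1,1)$; and by \eqref{EBuseDis}--\eqref{EBuseInd} it equals the same orthant probability for the Busemann increments $\b^{\hor}_{i,1}(x,y)$, $\b^{\ver}_{1,j}(x,y)$ evaluated in the direction $(x,y)=(m,n)$ (so that $\Min(x,y)=z$).

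First I would put $\G$, the family $\{\Gb^{w}\}_{w\in(0,1)}$, and the Busemann process on one probability space through the weights $\{\eta(i,j)\}$ as in \eqref{ECplBlkBd}, with $\b$ realized as the a.s.\ limit \eqref{EBuse}. Let $E$ be the event on which $\B^{\hor}_{i,1}(m,n)=\b^{\hor}_{i,1}(x,y)$ for every $i\in[k]$ and $\B^{\ver}_{1,j}(m,n)=\b^{\ver}_{1,j}(x,y)$ for every $j\in[l]$. On $E$ the orthant event defining $\Bcdf^{m,n,\hor}_{k,l}(s,t)$ coincides with the one defining the Busemann orthant probability, so
\[
\bigl|\Bcdf^{m,n,\hor}_{k,l}(s,t)-\bcdf^{z,\hor}_{k,l}(s,t)\bigr|\le\P(E^{c}),
\]
uniformly in $s,t$. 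The reason to compare against the \emph{exact} Busemann increments, rather than against $\Gb^{w}$ for a perturbed parameter $w$, is that the exponential rates then match exactly and no distributional mismatch is incurred; a mismatch of size $|w-z|$ per increment, summed over $l$ vertical increments, would already exceed the claimed bound. The perturbed stationary processes are therefore used only inside the probabilistic estimate of $\P(E^c)$.

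Next I would bound $\P(E^{c})$ using the monotonicity of the increments \eqref{Ehi}--\eqref{Evi} in the endpoint from Lemma~\ref{LCros}, together with the corresponding monotonicity of $\b$ in the direction. For the horizontal coordinates this monotonicity should collapse $\{\exists\,i\le k:\ \B^{\hor}_{i,1}(m,n)\ne\b^{\hor}_{i,1}\}$ to a single ``stabilization failure'' anchored at $(k+1,1)$, which is sandwiched between exit-direction events for two stationary processes $\Gb^{w_{\pm}}$ with $|w_{\pm}-z|\asymp\{\log(m+n)/(m+n)\}^{1/3}$; Proposition~\ref{PGeoInStep} and Theorem~\ref{TExitPr} (and, where the failure is re-expressed as a deviation of $\Gb^{z}$, Theorem~\ref{TisLppMDRate}) bound its probability by $c\{\log(m+n)/(m+n)\}^{1/3}$, with no logarithmic factor because it is one event. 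For the vertical coordinates a single anchor does not suffice, since the height up to which the pre-limit increments have already locked onto their Busemann values grows with $(m,n)$ only like a power of $m+n$; instead I would run a dyadic scan over heights $j\in\{1,2,4,\dots,2^{\lceil\log_{2}l\rceil}\}$, treating the block of increments near height $2^{r}$ with a stationary process whose parameter is at distance $\asymp\max\{2^{r}/(m+n),\ (\log(m+n)/(m+n))^{1/3}\}$ from $z$ — which equals $\asymp(\log(m+n)/(m+n))^{1/3}$ throughout because $l\le\epsilon_{0}(m+n)^{2/3}$ — and bounding the probability of a misdirected or too-short exit by Proposition~\ref{PGeoInStep}/Theorem~\ref{TExitPr}. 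Each of the $1+\lceil\log_{2}l\rceil$ blocks contributes $O(\{\log(m+n)/(m+n)\}^{1/3})$, and summing gives $\P(E^{c})\le c_{0}(1+\log l)\{\log(m+n)/(m+n)\}^{1/3}$.

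The hard part will be the vertical dyadic step: one must show that at each height scale the pre-limit increments have already equalled their Busemann values off an event of probability $O(\{\log(m+n)/(m+n)\}^{1/3})$ \emph{uniformly in $(s,t)$}, and that the monotonicity of Lemma~\ref{LCros} is strong enough to reduce a scan over all of $[1,l]$ to $O(\log l)$ scales rather than $l$ of them (this is exactly the source of the $1+\log l$ factor). The appearance of $\log(m+n)$ inside the rate is then forced: the perturbation must be of order $(\log(m+n)/(m+n))^{1/3}$ so that the exponential exit-point tails of Proposition~\ref{PGeoInStep} and Theorem~\ref{TExitPr} dominate a polynomially small target uniformly over all $O(\log(m+n))$ scales, and one pays precisely this much per block.
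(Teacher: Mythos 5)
There is a genuine gap at the center of your argument. The reduction $|\Bcdf^{m,n,\hor}_{k,l}(s,t)-\bcdf^{\Min,\hor}_{k,l}(s,t)|\le\P(E^c)$, where $E$ is the event that every pre-limit increment has already attained its Busemann value, is a valid inequality but replaces the distributional comparison with a strictly harder pathwise (coalescence) problem. None of the tools you cite control $\P(E^c)$: Proposition \ref{PGeoInStep} and Theorem \ref{TExitPr} bound the probability that a geodesic from a \emph{boundary} process exits in the wrong direction or too far, and the planarity argument that converts such bounds into increment comparisons (the one used in Lemma \ref{LSqz}) only compares increments anchored at \emph{adjacent} endpoints, e.g.\ $(m+1,n+1)$ versus $(m,n+1)$, via a single step of the geodesic. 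The event $\{\B^{\hor}_{i,1}(m,n)\ne\b^{\hor}_{i,1}\}$ requires the increment to have stabilized against \emph{all} sufficiently remote endpoints, i.e.\ a quantitative coalescence estimate, which is not established anywhere in the paper and is not a consequence of exit-point bounds. Your dyadic scan over heights is asserted, not constructed: you give no mechanism by which non-stabilization at a given height scale is an exit-direction event for a perturbed stationary process, and indeed it is not one.

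Moreover, the reason you give for rejecting the route that actually works is based on a too-crude accounting. You claim that comparing against $\Gb^{w}$ with $|w-z|\asymp\{\log(m+n)/(m+n)\}^{1/3}$ costs $l\cdot|w-z|$, but the paper shows (Lemma \ref{LcdfLip}) that the explicit product-form c.d.f.\ $z\mapsto\bcdf^{z,\hor}_{k,l}(s,t)$ is Lipschitz with constant $C_0(1+\log l)$, uniformly in $(s,t)$; the naive per-coordinate sum overestimates the derivative because the product $\prod_j(1-e^{-t_j^+(1-z)})$ is itself small whenever many factors contribute. This Lipschitz bound is the true source of the $(1+\log l)$ factor, not a multiscale decomposition. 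The paper's proof then proceeds by: (i) a squeeze (Lemma \ref{LSqz}) of $\Bcdf^{m,n,\hor}_{k,l}$ between $\bcdf^{z,\hor}_{k,l}\pm\exp\{-c(m+n)|z-\Min(m-k,n)|^3\}$ using northeast stationary processes at adjacent anchors and Proposition \ref{PGeoInStep}; (ii) the choice $|z-\Min(m-k,n)|=(\eta\log(m+n)/(m+n))^{1/3}$ to make the error polynomially small; and (iii) Lemmas \ref{LcdfLip} and \ref{LMinEst} to transport $\bcdf^{z,\hor}$ back to $\bcdf^{\Min(m,n),\hor}$. You should abandon the comparison with the exact Busemann increments and instead prove the Lipschitz estimate for the explicit boundary c.d.f.
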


\subsection{Speed of the distributional convergence to the competition interface}
For the definitions in this subsection, restrict to the full probability event on which the geodesic $\pi^{m, n}$ from $(1, 1)$ to $(m, n)$ is unique for all $m, n \in \bbZ_{>0}$. Partition $\bbZ_{>0}^2 \smallsetminus \{(1, 1)\}$ into the subsets
\begin{align}
\tree^{\hor} &= \{(m, n) \in \bbZ_{>0}^2: (2, 1) \in \pi^{m, n}\} = \{(m, n) \in \bbZ_{>0}^2: \G_{2, 1}(m, n) > \G_{1, 2}(m, n)\} \label{ETh} \\
\tree^{\ver} &= \{(m, n) \in \bbZ_{>0}^2: (1, 2) \in \pi^{m, n}\} = \{(m, n) \in \bbZ_{>0}^2: \G_{2, 1}(m, n) < \G_{1, 2}(m, n)\}. \label{ETv}
\end{align}
See Figure \ref{Fcif}. As a consequence of planarity and the uniqueness of geodesics,  the sets above  enjoy the following structure:
\begin{align}
(k, l) \in \tree^{\hor} &\text{ implies that } (i, j) \in \tree^{\hor} \text{ for } i \in \bbZ_{\ge k} \text{ and } j \in [l] \label{EThmon}\\
(k, l) \in \tree^{\ver} &\text{ implies that } (i, j) \in \tree^{\ver} \text{ for } i \in [k] \text{ and } j \in \bbZ_{\ge l}. \label{ETvmon}
\end{align}

The \emph{competition interface} (depicted in Figure \ref{Fcif}) is a notion of a boundary between $\tree^{\hor}$ and $\tree^{\ver}$ introduced by P.\ A.\ Ferrari and L.\ Pimentel in \cite{ferr-pime-05}. One precise definition of it is as the unique sequence $\cif = (\cif_n)_{n \in \bbZ_{>0}} = (\cif_n^{\hor}, \cif_n^{\ver})_{n \in \bbZ_{>0}}$ in $\bbZ_{>0}^2$ such that 
\begin{align}
(\cif_n^{\hor}+1, \cif_n^{\ver}) \in \tree^{\hor} \quad (\cif_n^{\hor}, \cif_n^{\ver}+1) \in \tree^{\ver} \quad \text{ and } \quad \cif_n^{\hor}+\cif_n^{\ver} = n+1 \label{Ecif} 
\end{align} 
for $n \in \bbZ_{>0}$. The existence and uniqueness of $\cif$ can be seen from properties \eqref{EThmon}--\eqref{ETvmon}. The original definition from \cite{ferr-pime-05} describes the competition interface recursively as follows: 
\begin{align}
\cif_1^{\hor} =1, \quad \cif_n^{\hor} = \cif_{n-1}^{\hor} + \one\{\G(\cif_{n-1}^{\hor}+1, \cif_{n-1}^{\ver}) < \G(\cif_{n-1}^{\hor}, \cif_{n-1}^{\ver}+1)\} \label{Ecifh}\\
\cif_1^{\ver} = 1, \quad \cif_n^{\ver} =  \cif_{n-1}^{\ver} + \one\{\G(\cif_{n-1}^{\hor}+1, \cif_{n-1}^{\ver}) > \G(\cif_{n-1}^{\hor}, \cif_{n-1}^{\ver}+1)\} \label{Ecifv}
\end{align}
for $n \in \bbZ_{>1}$. The equivalence of \eqref{Ecif} and \eqref{Ecifh}--\eqref{Ecifv} can be verified by induction. 

\begin{figure}
\centering
\begin{overpic}[scale=0.8]{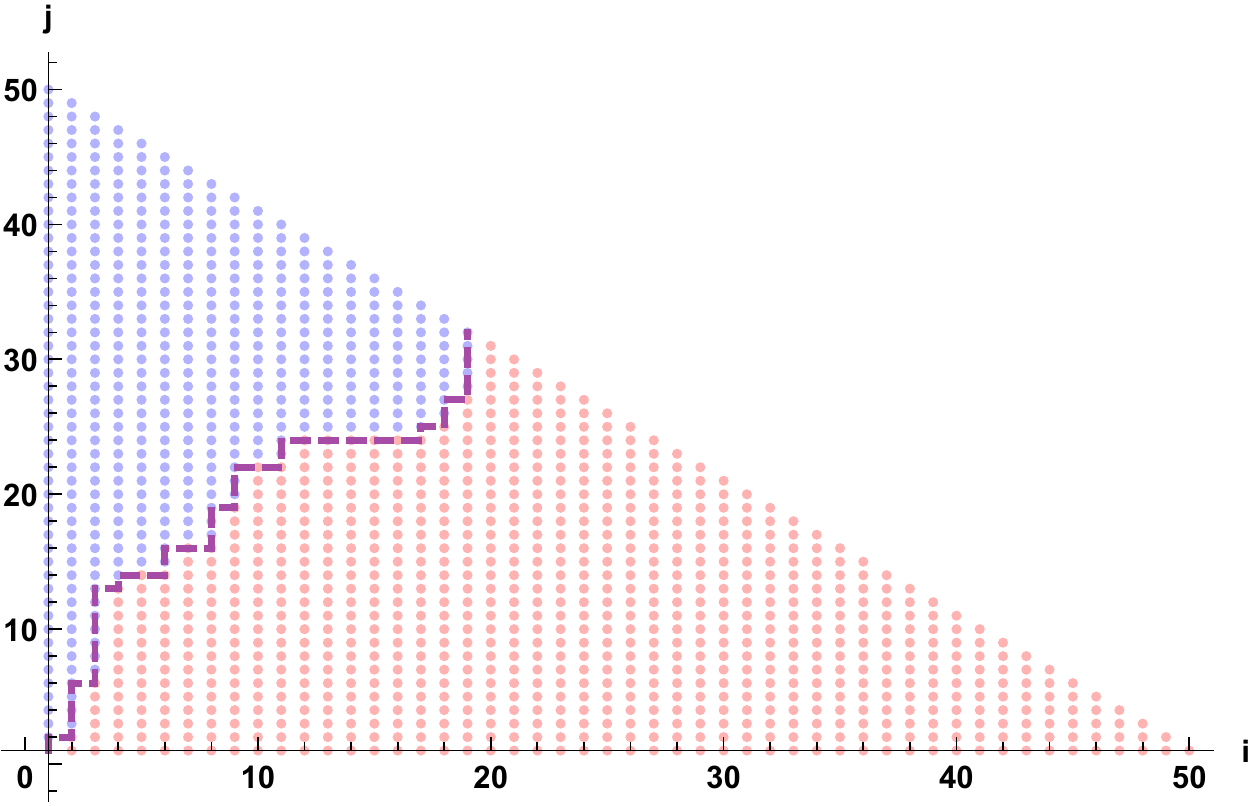}
\end{overpic}
\vspace{0.2in}
\caption{A simulation of the first $N = 50$ steps of the competition interface (purple). The vertices in $\tree^{\hor}$ (light red) and $\tree^{\ver}$ (light blue) below the diagonal $i+j = N+1$ are shown.}
\label{Fcif}
\end{figure}

Theorem 1 of \cite{ferr-pime-05}
proves that the rescaled competition interface $n^{-1}\cif_n$ converges a.s.\ as $n \rightarrow \infty$ and identifies the limit distribution. The existence of the a.s.\ limit has been later proved for i.i.d.\ weights bounded from below and with continuous marginal  distributions of finite $p$th moment for some $p > 2$, and under the assumption that the shape function is differentiable at the endpoints of its linear segments \cite[Theorem 2.6]{geor-rass-sepp-17-geod}. 

The distributional limit of the competition interface can be phrased in our notation as follows: For $x \in [0, 1]$, 
\begin{align}
\lim_{n \rightarrow \infty} \P\{\cif_n^{\hor} \le nx \} = \frac{\sqrt{x}}{\sqrt{x}+\sqrt{1-x}} \quad \text{ and } \quad \lim_{n \rightarrow \infty} \P\{\cif_n^{\ver} \le nx\} = \frac{\sqrt{1-x}}{\sqrt{x}+\sqrt{1-x}}. \label{EcifLimDis}
\end{align}
The next result bounds the speed of convergence above.  
\begin{thm}
\label{TCif}
Let $\delta > 0$. There exists a constant $C_0 = C_0(\delta) > 0$ such that 
\begin{align*}
\bigg|\,\P\{\cif_n^{\hor} \le nx\}-\frac{\sqrt{x}}{\sqrt{x}+\sqrt{1-x}}\,\bigg| &\le C_0 \bigg(\frac{\log n}{n}\bigg)^{1/3} \\
\bigg|\,\P\{\cif_n^{\ver} \le nx\}-\frac{\sqrt{1-x}}{\sqrt{x}+\sqrt{1-x}}\,\bigg| &\le C_0 \bigg(\frac{\log n}{n}\bigg)^{1/3}
\end{align*}
for $x \in [\delta, 1-\delta]$ and $n \in \bbZ_{>1}$.
\end{thm}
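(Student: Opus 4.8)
The plan is to reduce the competition-interface statement to the Busemann estimate of Theorem \ref{TBuse} via the tree-monotonicity structure \eqref{EThmon}--\eqref{ETvmon}, and then to transfer the pre-limit Busemann c.d.f.\ to the exact stationary c.d.f.\ using the exit-point bound of Proposition \ref{PGeoInStep}. First I would observe that by \eqref{Ecif} and the monotonicity \eqref{EThmon}, the event $\{\cif_n^{\hor} \le nx\}$ can be rewritten in terms of membership in $\tree^{\ver}$: since $\cif_n^{\hor}+\cif_n^{\ver}=n+1$, having $\cif_n^{\hor}\le nx$ is the same as $\cif_n^{\ver}\ge n+1-nx$, and by \eqref{ETvmon} this is equivalent to the vertex $(\lf nx\rf+1, n-\lf nx\rf)$ (or a nearby vertex on the antidiagonal $i+j=n+1$) lying in $\tree^{\ver}$, i.e.\ to $\G_{2,1}(\lf nx\rf+1, n-\lf nx\rf) < \G_{1,2}(\lf nx\rf+1, n-\lf nx\rf)$. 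Using the increment notation \eqref{Ehi}--\eqref{Evi}, this is precisely the event $\B_{1,1}^{\hor}(m,n) \le \B_{1,1}^{\ver}(m,n)$ for $(m,n)$ on the antidiagonal with $m\approx nx$; equivalently, after conditioning, a statement about a single pair of increments at the corner $(1,1)$.

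The second step is to express $\P\{\B_{1,1}^{\hor}(m,n)\le \B_{1,1}^{\ver}(m,n)\}$ through the one-dimensional marginals encoded in \eqref{EBcdfh}--\eqref{EBcdfv} with $k=l=1$. Integrating out, one has
\begin{align*}
\P\{\B_{1,1}^{\hor}(m,n) > \B_{1,1}^{\ver}(m,n)\} = \int_0^\infty \P\{\B_{1,1}^{\hor}(m,n) > t,\ \B_{1,1}^{\ver}(m,n) \le t\}\,\dd\mu(t)
\end{align*}
for a suitable reference measure, but more simply one can use that the pair $(\B_{1,1}^{\hor}(m,n),\B_{1,1}^{\ver}(m,n))$ has a joint law whose comparison to the product of $\Exp\{\Min(m,n)\}$ and $\Exp\{1-\Min(m,n)\}$ marginals is controlled by Theorem \ref{TBuse} applied with $k=l=1$: there the right-hand bound is $c_0(1+\log 1)\{\log(m+n)/(m+n)\}^{1/3} = c_0\{\log(m+n)/(m+n)\}^{1/3}$. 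Then for the limiting stationary pair of independent exponentials, $\P\{\Exp\{\Min\} > \Exp\{1-\Min\}\}$ is computed explicitly: if $X\sim\Exp(a)$ and $Y\sim\Exp(b)$ are independent, $\P\{X > Y\} = a/(a+b)$, so with $a = \Min(m,n)$, $b = 1-\Min(m,n)$ we get $\P\{\B_{1,1}^{\hor}>\B_{1,1}^{\ver}\}\to \Min(m,n)$ in the limit. Since on the antidiagonal $m/n\to x$, $n\to\infty$, we have $\Min(m,n)\to \Min(x,1-x) = \sqrt{x}/(\sqrt{x}+\sqrt{1-x})$, which matches \eqref{EcifLimDis}. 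Combining: $|\P\{\cif_n^{\hor}\le nx\} - \sqrt{x}/(\sqrt{x}+\sqrt{1-x})|$ is bounded by the Theorem \ref{TBuse} error plus the discretization error $|\Min(\lf nx\rf+1, n-\lf nx\rf) - \Min(x,1-x)| = O(1/n)$, and the dominant term is $C_0(\log n/n)^{1/3}$. The $\ver$ case is symmetric under swapping the roles of the two axes (equivalently, reflecting across the diagonal, which sends $z\mapsto 1-z$).

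The one subtlety to watch is the passage from the continuum event $\{\cif_n^{\hor}\le nx\}$ to an event at an \emph{integer} vertex on the antidiagonal: one must check that choosing $(m,n) = (\lf nx\rf+1, n-\lf nx\rf)$ (so that $m+n = n+1$) places $(m,n)$ in the cone $S_\delta$ for $x\in[\delta',1-\delta']$ with a slightly shrunk $\delta'$, so that Theorem \ref{TBuse} applies with a uniform constant, and that the off-by-one shifts in \eqref{Ecif} only affect the bound at order $O(1/n)$ via Lipschitz continuity of $\Min$ on $[\delta,1-\delta]$. None of this is hard, but it must be done uniformly in $x\in[\delta,1-\delta]$.

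\emph{Main obstacle.} The genuinely substantive input is Theorem \ref{TBuse}, whose own proof (via the exit-point bounds and the Rains identity) is the technical heart; given that theorem, the present argument is essentially bookkeeping. Within the proof of Theorem \ref{TCif} itself, the only place requiring care is verifying that the competition-interface event really does reduce to a \emph{single} pair of increments $(k=l=1)$ rather than to a joint event over many vertices — this is exactly what the tree-monotonicity \eqref{EThmon}--\eqref{ETvmon} buys us, and making that reduction rigorous (including the unique-geodesic full-probability event) is the step I would write out most carefully.
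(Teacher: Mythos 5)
Your first step --- rewriting $\{\cif_n^{\hor}\le nx\}$ via \eqref{Ecif} and \eqref{ETvmon} as the single-corner increment event $\{\B^{\hor}_{1,1}(k+1,n-k+1)\le \B^{\ver}_{1,1}(k+1,n-k+1)\}$ --- is exactly the paper's starting point \eqref{E121}. But the second step, deducing the rate from Theorem \ref{TBuse} with $k=l=1$, has a genuine gap. Theorem \ref{TBuse} controls the functions $\Bcdf^{m,n,\hor}_{1,1}(s,t)=\P\{\B^{\hor}_{1,1}>s,\ \B^{\ver}_{1,1}\le t\}$ at each \emph{fixed} pair $(s,t)$, i.e.\ it is a Kolmogorov-type bound on the joint law. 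The event $\{\B^{\hor}_{1,1}\le\B^{\ver}_{1,1}\}$ lives on the diagonal $\{s=t\}$ and is not a function of finitely many such evaluations. Any discretization $\P\{\B^{\hor}>\B^{\ver}\}\approx\sum_{j}[\Bcdf^{\hor}(t_j,t_j)-\Bcdf^{\hor}(t_j,t_{j-1})]$ accumulates $M$ copies of the error $\epsilon_n=c_0(\log(m+n)/(m+n))^{1/3}$ plus a mesh error, and even assuming the optimal anti-concentration near the diagonal the optimization over $M$ yields only $O(\sqrt{\epsilon_n})=O((\log n/n)^{1/6})$, not the claimed $(\log n/n)^{1/3}$. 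Your displayed integral identity $\P\{\B^{\hor}>\B^{\ver}\}=\int\P\{\B^{\hor}>t,\B^{\ver}\le t\}\,\dd\mu(t)$ is not valid for any reference measure $\mu$ without conditioning information, which is precisely what is missing. (There are also two compensating sign slips: for independent $X\sim\Exp(a)$, $Y\sim\Exp(b)$ one has $\P\{X>Y\}=b/(a+b)$, not $a/(a+b)$, and the event corresponding to $\{\cif_n^{\hor}\le nx\}$ is $\{\B^{\hor}\le\B^{\ver}\}$, whose limit is indeed $\Min$.)

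The paper's proof repairs exactly this by working \emph{pathwise} rather than with distribution functions: Lemma \ref{LCros} sandwiches the pre-limit increments between increments of the northeast stationary process $\Gne^{z}$ based at neighboring corners; under the stationary boundary the two corner increments are exactly independent $\Exp(z)$ and $\Exp(1-z)$ by Burke's property \eqref{EBurke}, so the probability of the ordering event is computed exactly to be $z$; and the cost of moving the base corner is, by planarity and uniqueness of geodesics, the probability that the stationary geodesic takes the unlikely initial step, which Proposition \ref{PGeoInStep} bounds by $\exp\{-cn(z-\Min)^3\}$. Choosing $z=\Min+(\log n/(3cn))^{1/3}$ gives the rate. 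So the correct route bypasses Theorem \ref{TBuse} entirely (indeed Theorem \ref{TBuse} is itself proved by this same mechanism); to fix your argument you should replace the appeal to Theorem \ref{TBuse} by the crossing-lemma comparison with the stationary northeast process.
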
 


\section{Proof of Theorem \ref{TLppMDBnd}}  \label{s:pf88} 

Recalling \eqref{ELbd}, define  
\begin{align}
\L^{\lambda}(x, y) = \inf_{\substack{w, z \in (0, 1) \\ w-z = \lambda}} \Lb^{w, z}(x, y) \quad \text{ for } x, y \in \bbR_{> 0} \text{ and } \lambda \in \bbR_{\ge 0}. 
\label{EL}
\end{align}
Note that the right-hand side is infinite when $\lambda \ge 1$ due to our convention that $\inf \emptyset = \infty$. 
This function provides an upper bound for the l.m.g.f.\ of the $\G$-process. 
\begin{lem}
\label{LLMUB}
Let $m, n \in \bbZ_{>0}$ and $\lambda \in \bbR_{\ge 0}$. Then 
$\log \E[e^{\lambda \G(m, n)}] \le \L^{\lambda}(m, n)$. 
\end{lem}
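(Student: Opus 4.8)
The plan is to deduce this bound from the ``$\le$'' half of the Rains identity (Proposition \ref{PLMId}) together with the basic planar comparison $\G(m,n) \le \Gb^{w,z}(m,n)$. First I would observe that for any $w, z \in (0,1)$, the bulk last-passage time is dominated by the boundary-weighted one under the coupling \eqref{ECplBlk}--\eqref{ECplBlkBd}: every up-right path from $(1,1)$ to $(m,n)$ is also an up-right path in the enlarged lattice $\bbZ_{\ge 0}^2$, and the bulk weights $\w(i,j) = \eta(i,j)$ are pointwise dominated by $\wb^{w,z}(i,j) = \eta(i,j)(\one_{\{i,j>0\}} + w^{-1}\one_{\{j=0\}} + (1-z)^{-1}\one_{\{i=0\}})$ since $w, 1-z < 1$. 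Hence $\G(m,n) \le \Gb^{w,z}(m,n)$ pointwise for every admissible pair $(w,z)$, and therefore $\E[e^{\lambda \G(m,n)}] \le \E[e^{\lambda \Gb^{w,z}(m,n)}]$ for every $\lambda \ge 0$.

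Next, for $\lambda \in [0,1)$ I would choose any $w, z \in (0,1)$ with $w - z = \lambda$; then $\lambda \Gb^{w,z}(m,n) = (w-z)\Gb^{w,z}(m,n)$, and Proposition \ref{PLMId} gives $\log \E[e^{\lambda \Gb^{w,z}(m,n)}] = \Lb^{w,z}(m,n)$ exactly. Combining with the domination above yields $\log \E[e^{\lambda \G(m,n)}] \le \Lb^{w,z}(m,n)$ for every such pair. Taking the infimum over all $(w,z) \in (0,1)^2$ with $w - z = \lambda$ gives precisely $\log \E[e^{\lambda \G(m,n)}] \le \L^{\lambda}(m,n)$ by the definition \eqref{EL}. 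For $\lambda \ge 1$ the right-hand side is $+\infty$ by the convention $\inf \emptyset = \infty$, so the inequality holds trivially (and indeed the left side is also infinite once $\lambda \ge 1$, but we do not need that). The case $\lambda = 0$ is immediate since both sides are $0$.

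I do not expect any serious obstacle here: the lemma is essentially a packaging of the monotone coupling plus the easy direction of Rains' identity. The only small points to handle carefully are (i) verifying the pointwise weight domination in the coupled picture, which is where planarity enters — any path in $\Pi_{1,1}^{m,n}$ uses only vertices with $i, j > 0$, so in fact $\sum_{(i,j)\in\pi}\w(i,j) = \sum_{(i,j)\in\pi}\wb^{w,z}(i,j)$ on those paths, and the inequality $\G \le \Gb^{w,z}$ comes simply from the larger path set available to $\Gb^{w,z}$ (more paths, non-negative weights); and (ii) noting that the infimum in \eqref{EL} is over a nonempty set exactly when $\lambda \in [0,1)$, matching the finiteness discussion. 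No new idea is required — this lemma is the launching pad, and the real work of Theorem \ref{TLppMDBnd} comes afterward in expanding $\L^{\lambda}(m,n)$ around the shape function and optimizing the exponential Markov bound.
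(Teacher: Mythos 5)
Your proposal is correct and follows essentially the same route as the paper: dominate $\G(m,n)$ by $\Gb^{w,z}(m,n)$ (nonnegative weights, larger path collection), apply the ``$\le$'' direction of Proposition \ref{PLMId} along the slice $w-z=\lambda$, and optimize to reach $\L^{\lambda}(m,n)$, with $\lambda\ge 1$ handled by the convention $\inf\emptyset=\infty$. The only cosmetic point is that $\Pi_{1,1}^{m,n}$ is not literally contained in $\Pi_{0,0}^{m,n}$; one extends a path from $(1,1)$ backward through $(1,0)$ and $(0,0)$ and uses nonnegativity of the weights, which is exactly the comparison the paper invokes.
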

\begin{proof}
For $\lambda \ge 1$, the inequality holds trivially since then the right-hand side is infinite. For $\lambda \in [0, 1)$, the inequality follows from Proposition \ref{PLMId} since $\G(m, n) \le \Gb^{w, z}(m, n)$ for any $w, z \in (0, 1)$. 
\end{proof}

Introduce the function
\begin{align}
\Ib^{w, z}_s(x, y) = (w-z)s-\Lb^{w, z}(x, y) \quad \text{ for } w, z \in (0, 1), x, y \in \bbR_{>0} \text{ and } s \in \bbR.  \label{EIbd}
\end{align}
In the sequel, this function serves to describe the rate of deviations of the last-passage times. From definitions \eqref{EM} and \eqref{ELbd}, one has the identity 
\begin{align}
\Lb^{w, z}(x, y) = \int_{z}^w \M^t(x, y) \dd t \quad \text{ for } w, z \in (0, 1) \text{ with } w \ge z \text{ and } x, y \in \bbR_{>0}. \label{ELbdMId}
\end{align}
Therefore, one can rewrite \eqref{EIbd} as 
\begin{align}
\label{EIbdM}
\Ib^{w, z}_s(x, y) = \int_z^w (s- \M^t(x, y))\, \dd t \quad \text{ when } w \ge z.  
\end{align}
Thus, \eqref{EIbd} acquires a geometric interpretation as the \emph{signed area} between the horizontal line passing through $(0, s)$ and the curve $t \mapsto \M^t(x, y)$, $t \in (0, 1)$. This will be helpful in visualizing various formulas below, see Figure \eqref{FIbd}.  
\begin{figure}
\centering
\begin{overpic}[scale=0.6]{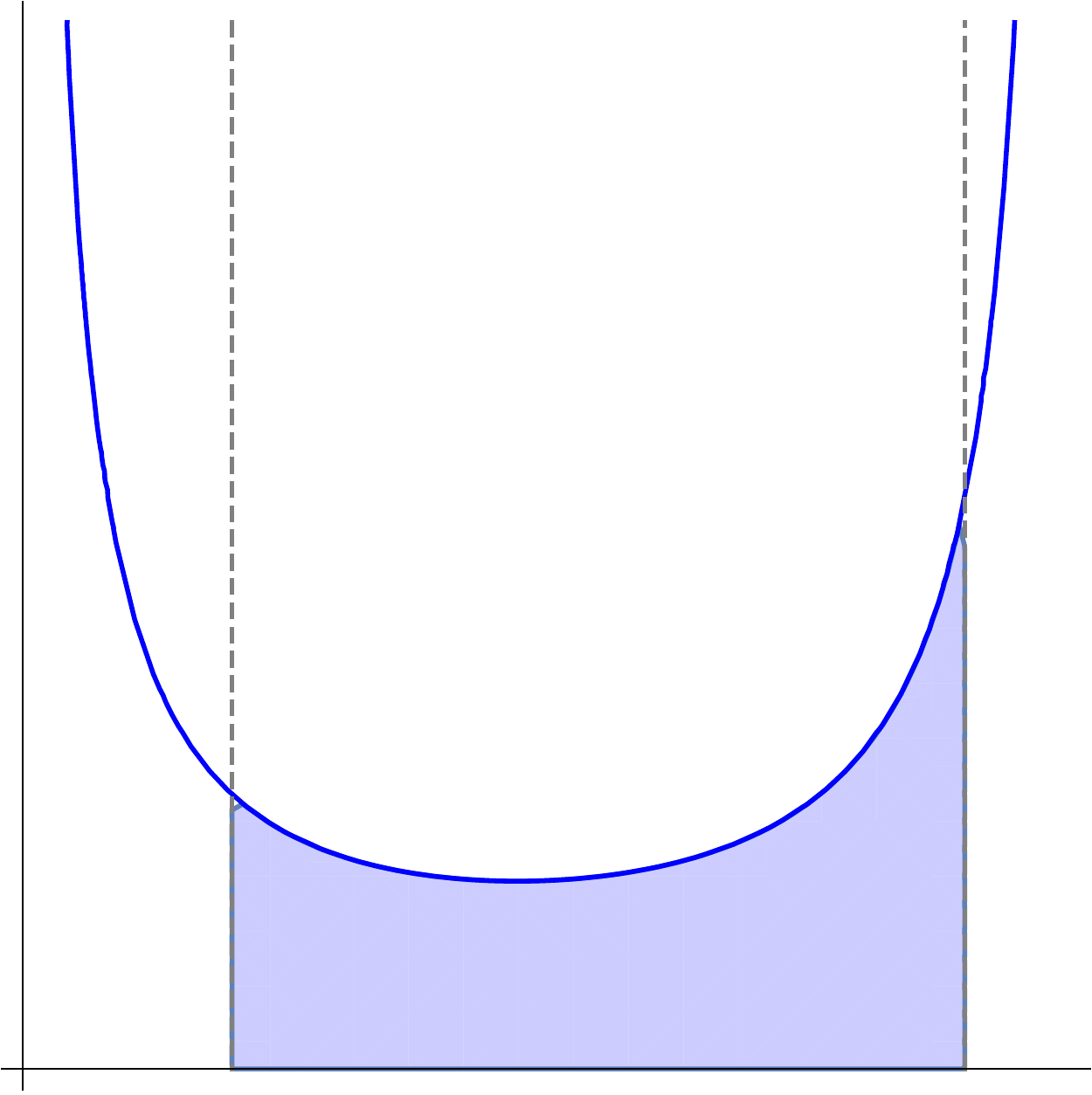}
\put (20, -2){$\displaystyle z$}
\put (86, -2){$\displaystyle w$}
\end{overpic}
\begin{overpic}[scale=0.6]{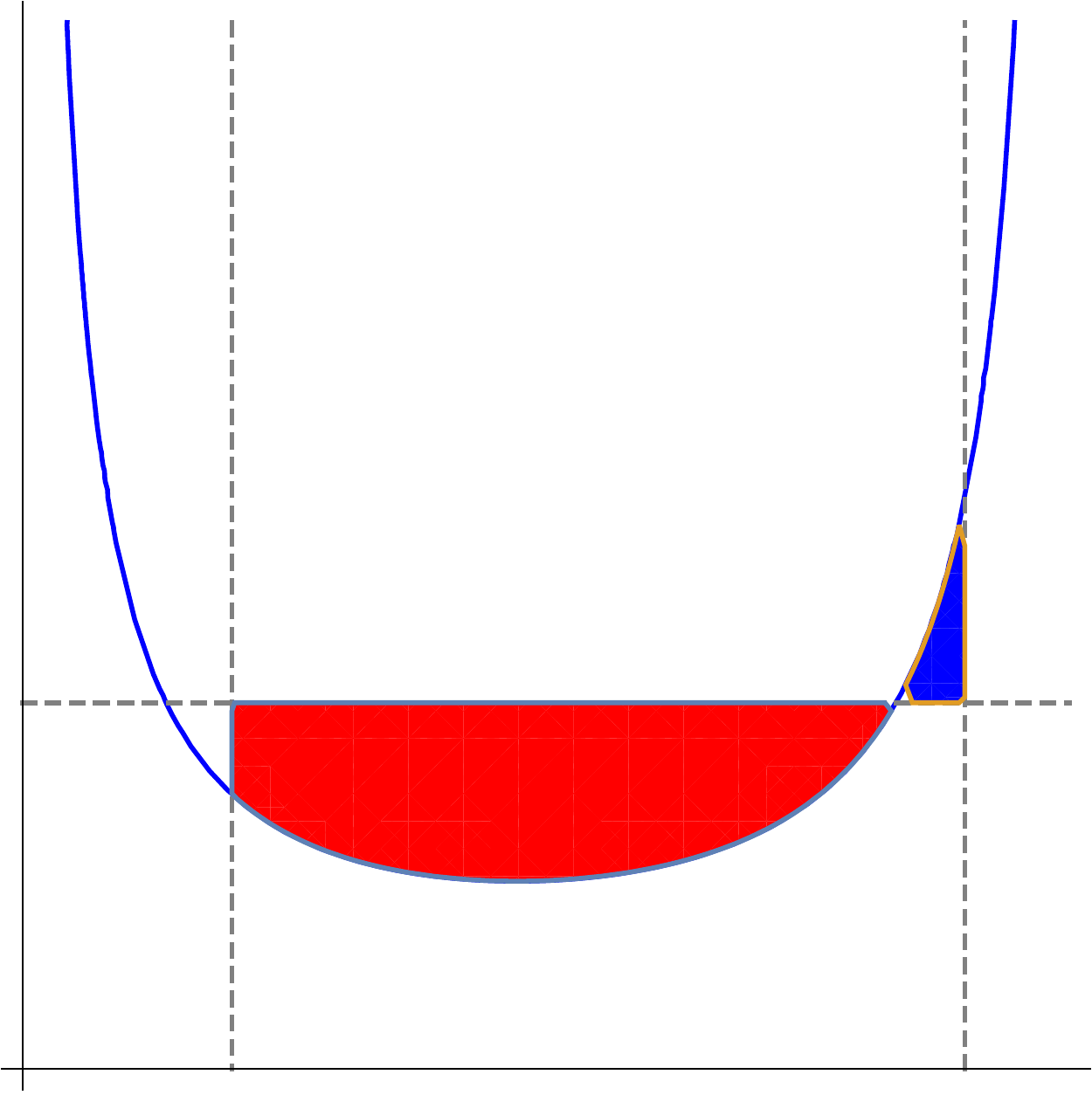}
\put (-3,35) {$\displaystyle s$}
\put (20, -2){$\displaystyle z$}
\put (86, -2){$\displaystyle w$}
\end{overpic}
\vspace{0.2in}
\caption{Illustrations of \eqref{ELbdMId} (left) and \eqref{EIbdM} (right) with $x = 4, y = 5, z = 0.2, w=0.9$ and $s = 35$. The curve $t \mapsto \M^t(x, y), t \in (0, 1)$ (blue) is shown. $\Lb^{w, z}(x, y)$ equals the area of the light blue region while $\Ib^{w, z}_s(x, y)$ equals the area of the red region minus the area of the blue region.}
\label{FIbd}
\end{figure}

Now define 
\begin{align}
\I_s(x, y) = \sup_{\substack{w, z \in (0, 1) \\ w \ge z}}\Ib^{w, z}_s(x, y) \quad \text{ for } x, y \in \bbR_{>0} \text{ and } s \in \bbR. \label{EI}
\end{align}
In the next lemma, \eqref{EI} appears as an upper bound for the right tail deviations of the bulk LPP. This function is also the homogeneous case of the representation for the right tail large deviation rate function in \cite[Theorem 2.7]{emra-janj-17}. As mentioned in \cite[Example 2.6]{emra-janj-17}, one can compute \eqref{EI} explicity. For $s \ge \Shp(x, y)$, 
\begin{align}
\I_s(x, y) = \sqrt{(x+y-s)^2-4xy}-2x \cosh^{-1}\bigg(\frac{x-y+s}{2\sqrt{sx}}\bigg)-2y\cosh^{-1}\bigg(\frac{y-x+s}{2\sqrt{sy}}\bigg),\label{ESepp}
\end{align}
which recovers the formula from \cite[Theorem 4.4]{sepp98ebp}. 
Above, the inverse hyperbolic cosine   is given by $\cosh^{-1}(t) = \log (t + \sqrt{t^2-1}\hspace{1pt})$ for $t \in \bbR_{\ge 1}$. 

The functions in \eqref{EL} and \eqref{EI} are connected through convex duality: 
\begin{align}
\I_s(x, y) &= \sup_{\lambda \in [0, 1)} \sup_{\substack{w, z \in (0, 1) \\ w-z = \lambda}} \{(w-z)s - \Lb^{w, z}(x, y)\} = \sup_{\lambda \in [0, 1)} \{\lambda s - \L^{\lambda}(x, y)\} \nonumber\\
&= \sup_{\lambda \in \bbR_{\ge 0}}\{\lambda s - \L^{\lambda}(x, y)\}. \label{EConv}
\end{align}
\begin{lem}
\label{LRTUB}
Let $m, n \in \bbZ_{>0}$ and $s \in \bbR$. Then 
$\log \P\{\G(m, n) \ge s\} \le -\I_s(m, n)$. 
\end{lem}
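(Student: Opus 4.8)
The plan is to combine the l.m.g.f.\ bound of Lemma \ref{LLMUB} with the exponential Markov inequality and then optimize. Specifically, fix $\lambda \in \bbR_{\ge 0}$. By Markov's inequality applied to the nonnegative random variable $e^{\lambda \G(m, n)}$,
\begin{align*}
\P\{\G(m, n) \ge s\} = \P\{e^{\lambda \G(m, n)} \ge e^{\lambda s}\} \le e^{-\lambda s}\, \E[e^{\lambda \G(m, n)}].
\end{align*}
Taking logarithms and invoking Lemma \ref{LLMUB} gives $\log \P\{\G(m, n) \ge s\} \le -\lambda s + \L^{\lambda}(m, n)$ for every $\lambda \ge 0$. (When $\lambda \ge 1$ this bound is vacuous since $\L^{\lambda}(m, n) = \infty$, but that causes no harm.)

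Now I would optimize over $\lambda$. Taking the infimum over $\lambda \in \bbR_{\ge 0}$ on the right-hand side yields
\begin{align*}
\log \P\{\G(m, n) \ge s\} \le -\sup_{\lambda \in \bbR_{\ge 0}}\{\lambda s - \L^{\lambda}(m, n)\} = -\I_s(m, n),
\end{align*}
where the last equality is precisely the convex-duality identity \eqref{EConv}. This completes the proof.

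There is essentially no obstacle here: the lemma is an immediate Chernoff-bound consequence of the l.m.g.f.\ estimate already established in Lemma \ref{LLMUB}, together with the variational identity \eqref{EConv} that was set up just before the statement. The only minor point to keep in mind is that the bound is trivially true (both sides can be $-\infty$ or the inequality reads $\log(\text{something} \le 1) \le +\infty$) in degenerate regimes, e.g.\ when $s$ is small enough that $\I_s(m, n) = 0$, or when $s \le 0$; in all such cases the inequality holds automatically, so no case analysis is really needed. The real work of the paper is in the later quantitative expansion of $\I_s(m, n)$ around the shape function to deduce Theorem \ref{TLppMDBnd}, not in this lemma.
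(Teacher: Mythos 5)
Your proposal is correct and follows exactly the paper's argument: apply the exponential Markov inequality, invoke Lemma \ref{LLMUB}, and optimize over $\lambda$ using the duality identity \eqref{EConv}. Nothing is missing.
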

\begin{proof}
By the exponential Markov's inequality and Lemma \ref{LLMUB}, 
\begin{align*}
\log \P\{\G(m, n) \ge s\} \le -\lambda s + \L^{\lambda}(m, n) \quad \text{ for } \lambda \in \bbR_{\ge 0}. 
\end{align*}
The claim then follows from \eqref{EConv}. 
\end{proof}

Our proof of Theorem \ref{TLppMDBnd} comes from a suitable lower bound on \eqref{EI}. With this and also applications in subsequent sections in mind, we turn to developing some estimates for \eqref{ELbd} and \eqref{EIbd} beginning with some trivial observations. Recall the definition of the cone $S_\delta$ from \eqref{ESc}. 

\begin{lem}
\label{LShpMinBnd} Let $\delta > 0$. 
\begin{enumerate}[\normalfont (a)]
\item 
$x+y \le \Shp(x, y) \le 2(x+y)$ for $x, y > 0$.
\item  
$
\Min(x, y) \in (\epsilon, 1-\epsilon) 
$
for $(x, y) \in S_\delta$ for some constant $\epsilon = \epsilon(\delta) > 0$. 
\item $c_0(x+y)^{1/3} \le \curv(x, y) \le C_0(x+y)^{1/3}$ for $(x, y) \in S_\delta$ for some constants $c_0 = c_0(\delta) > 0$ and $C_0 = C_0(\delta) > 0$. 
\end{enumerate}
\end{lem}
\begin{proof}
Immediate from definitions \eqref{EShp}, \eqref{EMin} and \eqref{Ecurv}. 
\end{proof}

\begin{lem}
\label{LMShpId}
Let $x, y \in \bbR_{>0}$ and $z \in (0, 1)$. Abbreviate $\Shp = \Shp(x, y)$, $\Min = \Min(x, y)$ and $\curv = \curv(x, y)$. The following statements hold. 
\begin{enumerate}[\normalfont (a)]
\item 
$
\M^z(x, y) = \Shp +  \dfrac{(z-\Min)^2\Shp}{z(1-z)} = \Shp + \curv^3 (z-\Min)^2 + \dfrac{\Shp(z-\Min)^3(\Min+z-1)}{z(1-z)\Min(1-\Min)}. 
$\\
\item   For $\delta > 0$ and $\epsilon > 0$, there exists a constant $C_0 = C_0(\delta, \epsilon)$ such that  
\begin{align*}
|\M^z(x, y)-\Shp-\curv^3(z-\Min)^2| \le C_0 (x+y) |z-\Min|^3 \quad \text{ whenever } (x, y) \in S_\delta \text{ and } z \in (\epsilon, 1-\epsilon). 
\end{align*}
\end{enumerate}
\end{lem}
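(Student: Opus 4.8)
The plan is to prove part (a) first by direct computation, since part (b) will follow from it by bounding the last (cubic) term.

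\medskip

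\textbf{Proof of (a).} First I would establish the first equality
\[
\M^z(x, y) = \Shp + \frac{(z-\Min)^2\Shp}{z(1-z)}.
\]
The clean way is to use the optimality of $\Min$: by \eqref{EMin} and \eqref{EShp}, $\Shp = \M^{\Min}(x,y) = x/\Min + y/(1-\Min)$, and $\partial_z\M^z|_{z=\Min} = -x/\Min^2 + y/(1-\Min)^2 = 0$, so $x/\Min^2 = y/(1-\Min)^2$, i.e. $x = \Shp\,\Min^2/ \ldots$ — more directly, from $x = (\sqrt x+\sqrt y)^2\Min^2 = \Shp\,\Min^2$ and $y = \Shp(1-\Min)^2$ (these follow from \eqref{EMin} and \eqref{EShp}). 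Substituting into $\M^z(x,y) = x/z + y/(1-z)$ gives
\[
\M^z(x,y) = \Shp\Big(\frac{\Min^2}{z} + \frac{(1-\Min)^2}{1-z}\Big) = \Shp\cdot\frac{\Min^2(1-z) + (1-\Min)^2 z}{z(1-z)}.
\]
A short algebraic simplification of the numerator yields $\Min^2(1-z)+(1-\Min)^2 z = z(1-z) + (z-\Min)^2$ (expand both sides; the cross terms match), which gives exactly $\M^z = \Shp + (z-\Min)^2\Shp/(z(1-z))$.

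\medskip

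For the second equality I would write $1/(z(1-z)) = 1/(\Min(1-\Min)) + [\,1/(z(1-z)) - 1/(\Min(1-\Min))\,]$ and compute the bracketed difference:
\[
\frac{1}{z(1-z)} - \frac{1}{\Min(1-\Min)} = \frac{\Min(1-\Min) - z(1-z)}{z(1-z)\Min(1-\Min)} = \frac{(z-\Min)(\Min+z-1)}{z(1-z)\Min(1-\Min)},
\]
using $\Min-\Min^2 - z + z^2 = (z-\Min)(z+\Min-1)$. Multiplying by $(z-\Min)^2\Shp$ splits the error term as
\[
\frac{(z-\Min)^2\Shp}{\Min(1-\Min)} + \frac{\Shp(z-\Min)^3(\Min+z-1)}{z(1-z)\Min(1-\Min)},
\]
and recognizing $\Shp/(\Min(1-\Min)) = \curv^3$ from \eqref{Ecurv} finishes (a). As a sanity check, the coefficient $\tfrac12\partial_z^2|_{\Min}\M^z = \curv^3$ matches \eqref{Ecurv2}.

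\medskip

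\textbf{Proof of (b).} Given (a), it suffices to bound the cubic remainder
\[
R := \frac{\Shp(z-\Min)^3(\Min+z-1)}{z(1-z)\Min(1-\Min)}
\]
in absolute value by $C_0(x+y)|z-\Min|^3$ for $(x,y)\in S_\delta$ and $z\in(\epsilon, 1-\epsilon)$. By Lemma \ref{LShpMinBnd}(a), $\Shp \le 2(x+y)$; by Lemma \ref{LShpMinBnd}(b) there is $\epsilon'=\epsilon'(\delta)>0$ with $\Min\in(\epsilon', 1-\epsilon')$, so $\Min(1-\Min)\ge \epsilon'(1-\epsilon')$; the hypothesis $z\in(\epsilon,1-\epsilon)$ gives $z(1-z)\ge \epsilon(1-\epsilon)$; and trivially $|\Min+z-1|\le 1$. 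Combining these, $|R| \le \frac{2(x+y)|z-\Min|^3}{\epsilon(1-\epsilon)\epsilon'(1-\epsilon')}$, so $C_0 = 2/(\epsilon(1-\epsilon)\epsilon'(1-\epsilon'))$ works, which depends only on $\delta$ and $\epsilon$ as required.

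\medskip

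I do not expect a genuine obstacle here; the only mildly delicate point is the numerator identity $\Min^2(1-z)+(1-\Min)^2z = z(1-z)+(z-\Min)^2$ in part (a), which is a routine but easy-to-botch expansion, so I would carry it out carefully. Everything in part (b) is a direct application of the already-proved Lemma \ref{LShpMinBnd}.
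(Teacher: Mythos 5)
Your proposal is correct and follows essentially the same route as the paper: both parts rest on the identity $x=\Shp\,\Min^2$, $y=\Shp(1-\Min)^2$ (equivalently $x/\Min^2=y/(1-\Min)^2=\Shp$), the splitting of $\tfrac{1}{z(1-z)}$ around $\tfrac{1}{\Min(1-\Min)}=\curv^3/\Shp$, and then bounding the cubic remainder with Lemma \ref{LShpMinBnd}(a)--(b) together with $z\in(\epsilon,1-\epsilon)$. The only difference is cosmetic: the paper factors $(z-\Min)$ out of $\M^z-\Shp$ directly rather than substituting $x,y$ in terms of $\Shp$ and $\Min$, but the algebra is the same.
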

\begin{proof}
From definitions \eqref{EM}, \eqref{EShp} and \eqref{EMin}, and the identity $\dfrac{x}{\Min^2} = \dfrac{y}{(1-\Min)^2} = \Shp$, one has 
\begin{align*}
\M^{z}(x, y)-\Shp &= (z-\Min)\bigg(-\frac{x}{z\Min}+ \frac{y}{(1-z)(1-\Min)}\bigg) \\
&= (z-\Min)\bigg(-\frac{x}{z\Min}+ \frac{x}{\Min^2} + \frac{y}{(1-z)(1-\Min)}-\frac{y}{(1-\Min)^2}\bigg) \\
&= (z-\Min)^2\bigg(\frac{x}{z\Min^2}+\frac{y}{(1-z)(1-\Min)^2}\bigg) = \frac{(z-\Min)^2\Shp}{z(1-z)}. 
\end{align*}
Recalling  the definition of $\curv$ from \eqref{Ecurv}, one obtains that 
\begin{align*}
\M^{z}(x, y)-\Shp - \curv^3 (z-\Min)^2  = (z-\Min)^2\Shp \bigg\{\frac{1}{z(1-z)}-\frac{1}{\Min(1-\Min)}\bigg\} = (z-\Min)^3\Shp\bigg\{\frac{\Min+z-1}{z(1-z)\Min(1-\Min)}\bigg\}. 
\end{align*}
This verifies (a). Part (b) follows from bounding the last expression above using Lemma \ref{LShpMinBnd}(a)--(b) and the assumption $z \in (\epsilon, 1-\epsilon)$. 
\end{proof}

\begin{lem}
\label{LLIBdEst}
Let $x, y \in \bbR_{>0}$, $s \in \bbR$ and $w, z \in (0, 1)$ with $w \ge z$. Abbreviate $\Shp = \Shp(x, y)$, $\Min = \Min(x, y)$ and $\curv = \curv(x, y)$. The following statements hold. 
\begin{enumerate}[\normalfont (a)]
\item $
\Lb^{w, z}(x, y) = (w-z) \Shp+ \dfrac{\curv^3}{3} \{(w-\Min)^3-(z-\Min)^3\}+ \Shp \displaystyle \int_{z}^w \dfrac{(t-\Min)^3(\Min+t-1)}{t(1-t)\Min(1-\Min)} \dd t. 
$\\
\item Fix $\delta > 0$ and $\epsilon > 0$. There exists a constant $C_0 = C_0(\delta, \epsilon) > 0$ such that  
\begin{align*}
\bigg|\Lb^{w, z}(x, y)-(w-z) \Shp- \dfrac{\curv^3}{3} \{(w-\Min)^3-(z-\Min)^3\}\bigg| &\le C_0 (x+y)\{(w-\Min)^4 + (z-\Min)^4\} \\
\bigg|\Ib^{w, z}_s(x, y)-(w-z) (s-\Shp) + \dfrac{\curv^3}{3} \{(w-\Min)^3-(z-\Min)^3\}\bigg| &\le C_0 (x+y)\{(w-\Min)^4 + (z-\Min)^4\} 
\end{align*}
whenever $(x, y) \in S_\delta$ and $w, z \in (\epsilon, 1-\epsilon)$. 
\end{enumerate}
\end{lem}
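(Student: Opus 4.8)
The plan is to obtain part (a) directly from the integral representation \eqref{ELbdMId}, and then deduce part (b) by estimating the remainder integral that (a) produces.

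\textbf{Part (a).} Since $w \ge z$, \eqref{ELbdMId} gives $\Lb^{w, z}(x, y) = \int_z^w \M^t(x, y)\,\dd t$. Into this I would substitute the three-term identity
\[
\M^t(x, y) = \Shp + \curv^3 (t-\Min)^2 + \frac{\Shp(t-\Min)^3(\Min+t-1)}{t(1-t)\Min(1-\Min)}
\]
from Lemma \ref{LMShpId}(a) and integrate term by term. The constant term contributes $(w-z)\Shp$, the quadratic term contributes $\frac{\curv^3}{3}\{(w-\Min)^3-(z-\Min)^3\}$ by the fundamental theorem of calculus, and the third term is simply recorded as the stated integral. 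This is exactly the asserted identity.

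\textbf{Part (b).} Writing $R := \Lb^{w, z}(x, y)-(w-z)\Shp-\frac{\curv^3}{3}\{(w-\Min)^3-(z-\Min)^3\}$, part (a) identifies $R = \Shp\int_z^w \frac{(t-\Min)^3(\Min+t-1)}{t(1-t)\Min(1-\Min)}\,\dd t$. I would bound the integrand crudely: along the path of integration $t \in (\epsilon, 1-\epsilon)$, so $t(1-t) \ge \epsilon(1-\epsilon)$; Lemma \ref{LShpMinBnd}(b) supplies $\epsilon' = \epsilon'(\delta)$ with $\Min \in (\epsilon', 1-\epsilon')$, hence $\Min(1-\Min) \ge \epsilon'(1-\epsilon')$; trivially $|\Min+t-1| \le 1$; and $\Shp \le 2(x+y)$ by Lemma \ref{LShpMinBnd}(a). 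Therefore the integrand is at most $C(x+y)|t-\Min|^3$ in absolute value for a constant $C = C(\delta, \epsilon)$, and so $|R| \le C(x+y)\int_z^w |t-\Min|^3\,\dd t$. A brief case check — either $\Min \notin [z, w]$, in which case $\int_z^w |t-\Min|^3\,\dd t = \tfrac14\bigl|(w-\Min)^4-(z-\Min)^4\bigr|$, or $\Min \in [z, w]$, in which case it equals $\tfrac14\{(w-\Min)^4+(z-\Min)^4\}$ — shows $\int_z^w |t-\Min|^3\,\dd t \le \tfrac14\{(w-\Min)^4+(z-\Min)^4\}$ in all cases. This yields the first inequality. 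The second is immediate from $\Ib^{w, z}_s(x, y) = (w-z)s-\Lb^{w, z}(x, y)$: the quantity inside the second absolute value equals $-R$, so the two estimates coincide.

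I do not expect a genuine obstacle: the statement is a Taylor-expansion-with-remainder bookkeeping exercise built on the earlier lemmas. The only place needing a moment of care is the elementary inequality $\int_z^w |t-\Min|^3\,\dd t \le \tfrac14\{(w-\Min)^4+(z-\Min)^4\}$, which requires distinguishing whether the minimizer $\Min$ lies between $z$ and $w$; tracking the dependence of constants on $\delta$ and $\epsilon$ through Lemma \ref{LShpMinBnd} is otherwise routine.
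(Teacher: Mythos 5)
Your proposal is correct and is exactly the argument the paper intends (its proof of this lemma is the one-line remark that it "follows from \eqref{ELbdMId}--\eqref{EIbdM} and Lemma \ref{LMShpId}"): integrate the three-term identity of Lemma \ref{LMShpId}(a) over $[z,w]$ for part (a), and bound the remainder integral using Lemma \ref{LShpMinBnd} together with the elementary estimate $\int_z^w |t-\Min|^3\,\dd t \le \tfrac14\{(w-\Min)^4+(z-\Min)^4\}$ for part (b). Your case check for that last inequality and the observation that the second bound in (b) is just the negative of the first are both accurate.
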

\begin{proof}
This follows from the identities \eqref{ELbdMId}--\eqref{EIbdM} and Lemma \ref{LMShpId}. 
\end{proof}


\begin{proof}[Proof of Theorem \ref{TLppMDBnd}]
Let $(m, n) \in S_\delta$, and abbreviate $\Shp = \Shp(m, n)$, $\Min = \Min(m, n)$ and $\curv = \curv(m, n)$. By Lemma \ref{LShpMinBnd}(b), there exist $\lambda_0 = \lambda_0(\delta) > 0$ and $\epsilon_0 = \epsilon_0(\delta) > 0$ such that 
\begin{align}
\Min-\lambda/2 \ge \epsilon_0 \text{ and } \Min + \lambda/2 \le 1-\epsilon_0 \quad \text{ for } \lambda \in [0, \lambda_0].  \label{E54}
\end{align}
Part (c) of the same lemma then implies that 
\begin{align}
\frac{2\sqrt{s}}{\curv} \le \lambda_0 \quad \text{ for } s \in [0, c_0(m+n)^{2/3}] \label{E55}
\end{align}
for some small enough constant $c_0 = c_0(\delta) > 0$. By virtue of Lemmas \ref{LShpMinBnd}(c) and \ref{LLIBdEst}(b), and \eqref{E54}--\eqref{E55}, 
\begin{align}
\Ib_{\Shp + \curv s}^{\Min + \sqrt{s}/\curv, \Min-\sqrt{s}/\curv}(m, n) \ge \frac{4s^{3/2}}{3}-\frac{C_0s^2}{(m+n)^{1/3}} \quad 
\text{ for } s \in [0, c_0(m+n)^{2/3}] \label{E56}
\end{align}
for some constant $C_0 = C_0(\delta) > 0$. The result follows from \eqref{E56}, Lemma \ref{LRTUB} and definition \eqref{EI}. 
%
%
\end{proof}

\section{Proofs of Theorem \ref{TisLppMDRate} and Corollary \ref{CisLppMDRate}} 
\label{SisLppMDRate}

\subsection{Upper bound for the l.m.g.f.}

To derive the upper bound in Theorem \ref{TisLppMDRate}, we proceed analogously to Section \ref{s:pf88}. Introduce the functions  
\begin{align}
\L^{\lambda, w, \hor}(x, y) &= \inf \limits_{\substack{u, v \,\in\, (0, w) \\ u-v = \lambda}} \Lb^{u, v}(x, y), \qquad \L^{\lambda, z, \ver}(x, y) = \inf \limits_{\substack{u, v \,\in\, (z, 1) \\ u-v = \lambda}} \Lb^{u, v}(x, y) \label{ELhv} \\
\L^{\lambda, w, z}(x, y) &= \max \{\L^{\lambda, w, \hor}(x, y), \L^{\lambda, z, \ver}(x, y)\}. \label{ELbd2}
\end{align}
for $x, y \in \bbR_{>0}$, $\lambda \in \bbR_{\ge 0}$, $w \in (0, 1]$ and $z \in [0, 1)$, 
These definitions specialize to \eqref{EL} when $w = 1$ and $z = 0$. Lemma \ref{LLMBdUB} below shows that the functions in \eqref{ELhv}--\eqref{ELbd2} furnish upper bounds for the l.m.g.f.s of $\Gb^{w}_{1, 0}$, $\Gb^{z}_{0, 1}$ and $\Gb^{w, z}$- processes, respectively. 

For ease of reference, the next lemma collects a few simple properties of the function \eqref{EM}. See Figure \ref{FMplot} for a plot. 
\begin{lem}
\label{LMProp}
Fix $x, y \in \bbR_{>0}$. The function $t \mapsto \M^t(x, y)$ is 
\begin{enumerate}[\normalfont (\romannumeral1)]
\item continuous and strictly convex on $(0, 1)$, 
\item decreasing on $(0, \Min(x, y)]$ with image $[\Shp(x, y), \infty)$, 
\item increasing on $[\Min(x, y), 1)$ with image $[\Shp(x, y), \infty)$. 
\end{enumerate}
\end{lem}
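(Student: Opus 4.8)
The final statement to prove is Lemma~\ref{LMProp}, which records three elementary properties of the map $t \mapsto \M^t(x, y) = \frac{x}{z} + \frac{y}{1-z}$ (with the variable renamed $t$) on the open interval $(0,1)$ for fixed $x, y \in \bbR_{>0}$: continuity and strict convexity, monotone decrease on $(0, \Min]$ with image $[\Shp, \infty)$, and monotone increase on $[\Min, 1)$ with image $[\Shp, \infty)$.

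\medskip

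The plan is to argue directly from calculus applied to the explicit formula \eqref{EM}. First I would observe that on $(0,1)$ both $t \mapsto x/t$ and $t \mapsto y/(1-t)$ are smooth, so $\M^{(\cdot)}(x,y)$ is smooth, in particular continuous; this gives the continuity claim in (i). Next I would compute the first two derivatives: $\partial_t \M^t(x,y) = -x/t^2 + y/(1-t)^2$ and $\partial_t^2 \M^t(x,y) = 2x/t^3 + 2y/(1-t)^3$. Since $x, y > 0$ and $t, 1-t > 0$ on $(0,1)$, the second derivative is strictly positive throughout $(0,1)$, which establishes strict convexity and finishes (i). For (ii) and (iii), I would solve $\partial_t \M^t(x,y) = 0$, i.e. $x/t^2 = y/(1-t)^2$, equivalently $\sqrt{x}\,(1-t) = \sqrt{y}\,t$ (taking positive square roots, valid since $t, 1-t > 0$), which gives the unique critical point $t = \sqrt{x}/(\sqrt{x}+\sqrt{y}) = \Min(x,y)$ by \eqref{EMin}. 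Strict convexity then forces $\partial_t \M^t < 0$ on $(0, \Min)$ and $\partial_t \M^t > 0$ on $(\Min, 1)$, giving the stated strict monotonicity on each side (with $t = \Min$ the minimum).

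\medskip

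To pin down the images, I would use that $\M^{\Min}(x,y) = \Shp(x,y) = (\sqrt{x}+\sqrt{y})^2$, which is exactly \eqref{EShp} together with the fact that $\Min$ is the minimizer there — this is already recorded in the excerpt, so it can be invoked directly. Then for (ii): on $(0, \Min]$ the function is continuous and strictly decreasing, with value $\Shp$ at the right endpoint $t = \Min$ and with $\M^t(x,y) \ge x/t \to \infty$ as $t \downarrow 0$; by the intermediate value theorem the image is exactly $[\Shp, \infty)$. Symmetrically for (iii): on $[\Min, 1)$ the function is continuous and strictly increasing, equals $\Shp$ at $t = \Min$, and $\M^t(x,y) \ge y/(1-t) \to \infty$ as $t \uparrow 1$, so again the image is $[\Shp, \infty)$.

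\medskip

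There is really no serious obstacle here; the lemma is a routine consequence of differentiating \eqref{EM}. The only points requiring a word of care are (a) justifying that the critical point computation yields $\Min(x,y)$ in the form \eqref{EMin} — this just needs the sign bookkeeping when taking square roots — and (b) correctly reading off the one-sided limits as $t \downarrow 0$ and $t \uparrow 1$ to conclude the images are unbounded above, for which the crude bounds $\M^t \ge x/t$ and $\M^t \ge y/(1-t)$ suffice. Everything else follows from strict convexity plus the intermediate value theorem.
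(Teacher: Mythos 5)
Your proof is correct and matches the paper's intent: the paper simply remarks that the lemma is straightforward to verify from definitions \eqref{EM}, \eqref{EShp} and \eqref{EMin}, and your calculus argument (positive second derivative, unique critical point at $\Min(x,y)$, endpoint limits plus the intermediate value theorem for the images) is exactly the routine verification being alluded to.
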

\begin{proof}
This is straightforward to verify from definitions \eqref{EM}, \eqref{EShp} and \eqref{EMin}. 
\end{proof}

Lemma \ref{LMProp} implies that, for each $\lambda \in (0, 1)$, there exists a unique pair $w = \Min_+^\lambda(x, y)$ and $z = \Min_-^\lambda(x, y)$ in $(0, 1)$ such that 
\begin{align}
w-z = \lambda \quad \text{ and } \quad \M^w(x, y) = \M^{z}(x, y). \label{EMinpm}
\end{align}
See Figure \ref{FMinpm}. In fact, it is possible to compute this pair explicitly. 
\begin{align}
\Min_{\pm}^\lambda(x, y) &= \Min(x, y)  \pm \frac{\lambda}{2} + \frac{\lambda^2 (y-x)}{4\sqrt{xy}+2\sqrt{\lambda^2 (x-y)^2 + 4xy}}. \label{EMinpm2}
\end{align}
Extend \eqref{EMinpm} continuously by setting 
\begin{align}
\Min_\pm^0(x, y) = \Min(x, y), \quad \text{ and } \quad \Min_{+}^\lambda(x, y) = 1, \ \Min_-^{\lambda}(x, y) = 0  \text{ when } \lambda \ge 1. \label{EMinpm3}
\end{align}
The next lemma describes the minimizers of \eqref{ELhv} in terms of \eqref{EMinpm}. In particular, taking $w = 1$ in part (a) (or $z = 0$ in part (b)) shows that the infimum in \eqref{EL} is attained uniquely at $w = \Min_+^\lambda(x, y)$ and $z= \Min_-^\lambda(x, y)$. 

\begin{lem}
\label{LMinpm}
Let $x, y \in \bbR_{>0}$, $\lambda \in \bbR_{\ge 0}$, $w \in (0, 1]$ and $z \in [0, 1)$. Abbreviate $\Min_\pm^\lambda = \Min_\pm^\lambda(x, y)$. Let $u, v \in (0, 1)$ with $u-v =\lambda$. 
\begin{enumerate}[\normalfont (a)]
\item If $u \le w$ then $\Lb^{u, v}(x, y) \ge \L^{\lambda, w, \hor}(x, y)$ with equality if and only if $u = \min \{w, \Min_+^\lambda\}$ or $\lambda = 0$. 
\item If $v \ge z$ then $\Lb^{u, v}(x, y) \ge \L^{\lambda, z, \ver}(x, y)$ with equality if and only if $v = \max \{z, \Min_-^\lambda\}$ or $\lambda = 0$. 
\end{enumerate}
\end{lem}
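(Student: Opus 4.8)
The plan is to exploit the integral representation \eqref{ELbdMId}, which gives $\Lb^{u,v}(x,y) = \int_v^u \M^t(x,y)\,\dd t$ whenever $u \ge v$, so that each infimum in \eqref{ELhv} becomes a one-parameter minimization of a fixed-width sliding-window integral of the function $t \mapsto \M^t(x,y)$, which is strictly convex by Lemma \ref{LMProp}(i). First I would dispose of the degenerate cases. If $\lambda = 0$, then $\Lb^{u,v}(x,y) = \Lb^{u,u}(x,y) = 0$ for every admissible pair, so both infima equal $0$ and equality holds identically (this is the ``$\lambda = 0$'' clause). If $\lambda \ge 1$, there is no pair $u, v \in (0,1)$ with $u - v = \lambda$, so the hypothesis is vacuous and the infima equal $+\infty$ by the convention $\inf\emptyset = \infty$. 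It thus remains to treat $\lambda \in (0,1)$, where $\Min_\pm^\lambda = \Min_\pm^\lambda(x,y) \in (0,1)$ by \eqref{EMinpm2} and $\Min_+^\lambda = \Min_-^\lambda + \lambda > \lambda$.

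For part (a), I would parametrize the admissible set $\{(u,v): u, v \in (0,w),\ u - v = \lambda\}$ by $u \in (\lambda, w)$ with $v = u - \lambda$ (this set being empty unless $w > \lambda$, and in the complementary case both the hypothesis of (a) and the feasibility of the infimum fail, so there is nothing to prove), and consider $g(u) = \int_{u-\lambda}^{u}\M^t(x,y)\,\dd t$ on $[\lambda, w]$. Then $g'(u) = \M^u(x,y) - \M^{u-\lambda}(x,y)$ and $g''(u) = \partial_t\M^t(x,y)|_{t=u} - \partial_t\M^t(x,y)|_{t=u-\lambda} > 0$, the strict inequality coming from the strict monotonicity of $\partial_t\M^t(x,y)$ on $(0,1)$, equivalent to the strict convexity in Lemma \ref{LMProp}(i). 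Hence $g$ is strictly convex on $[\lambda, w]$, and its unique unconstrained critical point is the $u$ at which $\M^u(x,y) = \M^{u-\lambda}(x,y)$, that is $u = \Min_+^\lambda$ by the defining property \eqref{EMinpm}. Minimizing the strictly convex $g$ over $(\lambda, w]$, which is exactly $\{u : u \le w,\ u-\lambda \in (0,1)\}$, therefore gives the minimizer $u = \min\{w, \Min_+^\lambda\}$ — equal to $\Min_+^\lambda$ when $\Min_+^\lambda \le w$, and to the right endpoint $w$ otherwise, since then $g' < 0$ throughout $[\lambda, w]$ — and strict convexity gives the ``if and only if''. Finally $\L^{\lambda, w, \hor}(x,y) = \inf_{u \in (\lambda, w)} g(u) = g(\min\{w, \Min_+^\lambda\})$ by continuity and monotonicity of $g$, whether or not this infimum over the open interval is attained; this yields (a).

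Part (b) is the mirror image: parametrize $\{(u,v): u, v \in (z,1),\ u - v = \lambda\}$ by $v \in (z, 1-\lambda)$ with $u = v+\lambda$, set $h(v) = \int_v^{v+\lambda}\M^t(x,y)\,\dd t$, observe that $h'(v) = \M^{v+\lambda}(x,y) - \M^v(x,y)$ is strictly increasing in $v$ and vanishes exactly at $v = \Min_-^\lambda$, and minimize the strictly convex $h$ over $v \ge z$ to get the minimizer $v = \max\{z, \Min_-^\lambda\}$; the remaining bookkeeping is identical to (a). (Alternatively, (b) reduces to (a) via the substitution $t \mapsto 1-t$ together with the relabeling $(x,y) \mapsto (y,x)$, under which $\M^{1-t}(x,y) = \M^t(y,x)$ and the vertical problem with parameter $z$ becomes the horizontal problem with $w = 1-z$.) Since the underlying optimization is a routine strictly convex minimization once \eqref{ELbdMId} is in hand, I expect no genuine obstacle; the only delicate points are the bookkeeping of the boundary and degeneracy cases ($\lambda \in \{0\} \cup [1,\infty)$, and $w \le \lambda$ or $z \ge 1-\lambda$, where a constraint set is empty) and the matching of the infimum over an open constraint interval with the value of $g$ (resp.\ $h$) at a minimizer that may lie on the boundary.
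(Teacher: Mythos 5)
Your proposal is correct and follows essentially the same route as the paper: both reduce the infimum to minimizing the sliding-window integral $t\mapsto\int_{t-\lambda}^{t}\M^s(x,y)\,\dd s$, whose derivative $\M^{t}(x,y)-\M^{t-\lambda}(x,y)$ is strictly increasing and vanishes exactly at $t=\Min_+^\lambda$ by \eqref{EMinpm}, so the constrained minimizer is $\min\{w,\Min_+^\lambda\}$ (and symmetrically for (b)). The only cosmetic difference is that the paper argues directly from the monotonicity and range of this derivative rather than invoking strict convexity of the window integral, and your extra bookkeeping of the vacuous cases ($\lambda\ge 1$, $w\le\lambda$) is consistent with the paper's observation that $\lambda=u-v\in[0,1)$ is automatic.
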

\begin{proof}
By assumption, $\lambda = u-v \in [0, 1)$. Definitions \eqref{ELbd} and \eqref{EL} give $\Lb^{u, v}(x, y) = 0 = \L^0(x, y)$ when $\lambda = 0$. Consider the case $\lambda \in (0, 1)$ now. One computes from \eqref{ELbd} that 
\begin{align}
\partial_t \{\Lb^{t, t-\lambda}(x, y)\} = \partial_t \bigg\{\int_{t-\lambda}^{t} \M^s(x, y) \dd s\bigg\} = \M^{t}(x, y)-\M^{t-\lambda}(x, y) \quad \text{ for } t \in (\lambda, 1). \label{E29}
\end{align}
The right-hand side is an increasing, continuous function with range $\bbR$ due to Lemma \ref{LMProp}, and equals zero at $t = \Min_+^\lambda \in (\lambda, 1)$ by \eqref{EMinpm}. This together with definition \eqref{ELhv} implies (a). The proof of (b) is similar. 
\end{proof}

\begin{figure}
\centering
\begin{overpic}[scale=0.6]{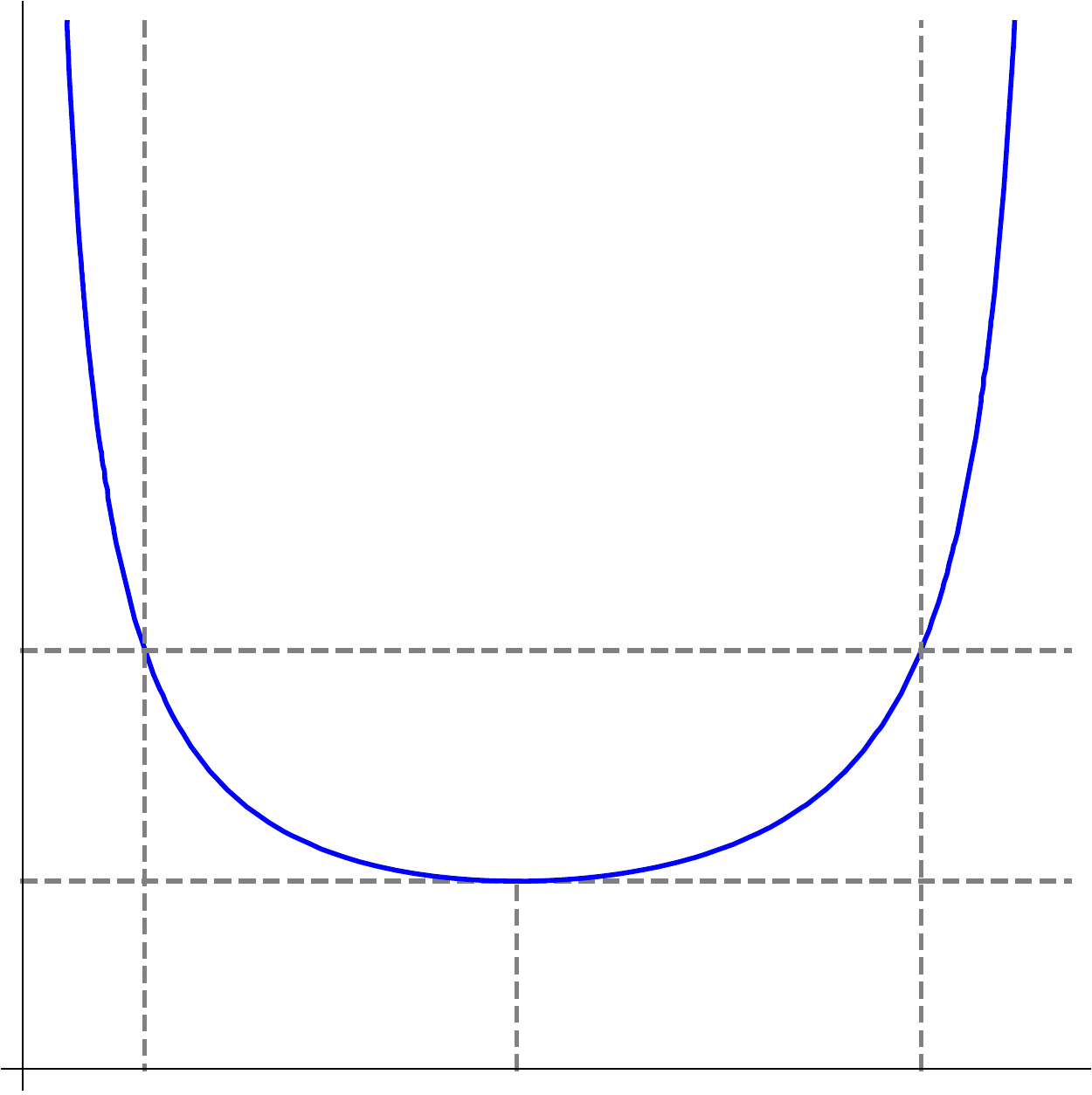}
\put (-13, 18){$\displaystyle \Shp(x, y)$}
\put (41, -3){$\displaystyle \Min(x, y)$}
 \put (-25,38) {$\displaystyle \M^{\Min_-^{\lambda}} = \M^{\Min_+^\lambda}$}
\put (10,-3) {$\displaystyle \Min_-^{\lambda}$}
\put (80,-3) {$\displaystyle \Min_+^{\lambda}$}
\put (13, 40){{\color{gray}$\displaystyle \overbrace{\hspace{2.15in}}$}}
\put (48, 45){$\displaystyle \lambda$}
\end{overpic}
\vspace{0.2in}
\caption{An illustration of $\Min_{\pm}^\lambda(x, y)$ defined at \eqref{EMinpm} with $x = 4$, $y = 5$ and $\lambda \approx 0.74$. The curve $z \mapsto \M^z(x, y), z \in (0, 1)$ is also shown (blue).}
\label{FMinpm}
\end{figure}

\begin{lem}
\label{LLMBdUB}
Let $m, n \in \bbZ_{>0}$, $\lambda \in \bbR_{\ge 0}$, $w \in (0, 1]$ and $z \in [0, 1)$. Then 
\begin{enumerate}[\normalfont (a)]
\item $\log \E[e^{\lambda \Gb_{1, 0}^w(m, n)}] \le \L^{\lambda, w, \hor}(m, n)$
\item $\log \E[e^{\lambda \Gb_{0, 1}^z(m, n)}] \le \L^{\lambda, z, \ver}(m, n)$
\item $\log \E[e^{\lambda \Gb^{w, z}(m, n)}] \le \L^{\lambda, w, z}(m, n) + \log 2$. 
\end{enumerate}
\end{lem}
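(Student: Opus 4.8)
The plan is to derive all three bounds from Proposition \ref{PLMId} via the exponential Markov inequality in the same spirit as Lemma \ref{LLMUB}, but now keeping track of which boundary the relevant last-passage time is allowed to collect weights from. For part (a), fix $\lambda \in [0,1)$ (the case $\lambda \ge 1$ being trivial since the right-hand side is then $+\infty$ by the convention $\inf\emptyset = \infty$), and observe that for any $u, v \in (0,1)$ with $u - v = \lambda$, we have $\Gb_{1,0}^w(m,n) \le \Gb_{1,0}^{u}(m,n)$ provided $u \le w$: enlarging the rate parameter on the horizontal boundary from $w$ to $u \le w$ decreases those exponential rates, hence stochastically increases the weights $\wb^{u}(i,0) \ge \wb^{w}(i,0)$ in the coupling \eqref{ECplBlkBd}, and bulk weights are unaffected. (The vertical boundary is irrelevant since the path starts at $(1,0)$.) Thus $\Gb_{1,0}^w(m,n) \le \Gb^{u,v}(m,n)$ for such $u,v$, so by Proposition \ref{PLMId},
\begin{align*}
\log \E[e^{\lambda \Gb_{1,0}^w(m,n)}] \le \log \E[e^{(u-v)\Gb^{u,v}(m,n)}] = \Lb^{u,v}(m,n).
\end{align*}
Taking the infimum over admissible $u,v$ with $u \le w$ gives exactly $\L^{\lambda, w, \hor}(m,n)$. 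Part (b) is the mirror-image argument: the path from $(0,1)$ collects only vertical boundary weights $\wb^{z}(0,j)$, whose rate $1-z$ decreases as $z$ increases; so for $v \ge z$ we get $\Gb_{0,1}^z(m,n) \le \Gb^{u,v}(m,n)$, and infimizing over $v \ge z$ yields $\L^{\lambda, z, \ver}(m,n)$.

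For part (c) the idea is to decompose $\Gb^{w,z}(m,n)$ according to whether the geodesic first steps horizontally or vertically, i.e.\ according to whether $\Eh^{w,z,\hor}(m,n) > 0$ or $\Ev^{w,z,\ver}(m,n) > 0$. On the event $\{\Eh^{w,z,\hor}(m,n) > 0\}$ the geodesic exits along the horizontal axis, so by the decomposition underlying \eqref{EEh} the passage time equals $\Gb^{w}(k,0) + \G_{k,1}(m,n)$ for some $k \in [m]$, and one can bound this above by $\Gb_{1,0}^{w}(m,n)$ (the geodesic is then a competitor in the maximization defining $\Gb_{1,0}^w(m,n)$, since $\Gb^w(k,0) = \sum_{i=1}^k \wb^w(i,0)$ and $\G_{k,1}(m,n)$ uses only bulk weights). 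Hence $e^{\lambda \Gb^{w,z}(m,n)} \one_{\{\Eh^{w,z,\hor}(m,n) > 0\}} \le e^{\lambda \Gb_{1,0}^w(m,n)}$, and symmetrically $e^{\lambda \Gb^{w,z}(m,n)} \one_{\{\Ev^{w,z,\ver}(m,n) > 0\}} \le e^{\lambda \Gb_{0,1}^z(m,n)}$. Since a.s.\ exactly one of the two exit points is nonzero (as recorded after \eqref{EEv}), adding the two inequalities gives $e^{\lambda \Gb^{w,z}(m,n)} \le e^{\lambda \Gb_{1,0}^w(m,n)} + e^{\lambda \Gb_{0,1}^z(m,n)}$ almost surely. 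Taking expectations, applying parts (a) and (b), and using $\log(A + B) \le \log(2\max(A,B)) = \log 2 + \max(\log A, \log B)$ together with definition \eqref{ELbd2} yields
\begin{align*}
\log \E[e^{\lambda \Gb^{w,z}(m,n)}] \le \log 2 + \max\{\L^{\lambda, w, \hor}(m,n), \L^{\lambda, z, \ver}(m,n)\} = \L^{\lambda, w, z}(m,n) + \log 2.
\end{align*}

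The routine parts are the monotonicity-in-rate comparisons, which are immediate from the coupling \eqref{ECplBlkBd}, and the $\log 2$ bookkeeping. The step that requires the most care is the pathwise domination $\Gb^{w,z}(m,n) \one_{\{\Eh^{w,z,\hor} > 0\}} \le \Gb_{1,0}^w(m,n)$ in part (c): one must check that on the horizontal-exit event the full boundary-decorated passage time is genuinely realized by a path that is admissible in the definition of $\Gb_{1,0}^w$ — that is, a path starting at $(1,0)$, so that the $\Exp(1-z)$ vertical-boundary weights play no role and the inequality does not accidentally go the wrong way. This is exactly where planarity and the characterization of $\Eh^{w,z,\hor}$ via \eqref{EEh} are used, and it is the only place where the argument is more than a one-line comparison.
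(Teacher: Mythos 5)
Your proof is correct and follows essentially the same route as the paper: parts (a)--(b) combine the two monotonicities (in the boundary rate and in the set of admissible paths) with Proposition \ref{PLMId}, and part (c) reduces to $\Gb^{w,z}=\max\{\Gb^w_{1,0},\Gb^z_{0,1}\}$ plus a $\log 2$. The only cosmetic differences are that you infimize over all admissible pairs $(u,v)$ rather than evaluating at the minimizer via Lemma \ref{LMinpm}, and that in (c) you rederive the max identity \eqref{EGbmax} through the exit-point dichotomy, where the paper reads it off deterministically from definition \eqref{Elppbd} without needing uniqueness of geodesics.
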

\begin{proof}
To obtain (a), assume $\lambda \in (0, w)$ since the claim is trivial otherwise because the right-hand side is infinite when $\lambda \ge w$ and both sides are zero when $\lambda = 0$. By monotonicity and Proposition \ref{PLMId}, 
\begin{align}
\log \E[e^{\lambda \Gb_{1, 0}^w(m, n)}] \le \log \E[e^{\lambda \Gb^{w, w-\lambda}(m, n)}] = \Lb^{w, w-\lambda}(m, n). \label{E22}
\end{align}
In the case $w \ge \Min_+^\lambda = \Min_+^\lambda(m, n)$, monotonicity and \eqref{E22} also give 
\begin{align}
\log \E[e^{\lambda \Gb_{1, 0}^w(m, n)}] \le \log \E[e^{\lambda \Gb_{1, 0}^{\Min_+^\lambda}(m, n)}] \le \Lb^{\Min_+^\lambda, \Min_-^\lambda}(m, n). \label{E38}
\end{align}
Bounds in \eqref{E22}--\eqref{E38} can be combined as 
\begin{align*}
\Lb^{w, w-\lambda}(m, n)\one_{\{w < \Min_+^\lambda\}} + \Lb^{\Min_+^\lambda, \Min_-^\lambda}(m, n)\one_{\{w \ge \Min_+^\lambda\}} = \Lb^{\min \{w, \Min_+^\lambda\}, \min \{w, \Min_+^\lambda\}-\lambda} = \L^{\lambda, w, \hor}(m, n). 
\end{align*}
The last equality appeals to Lemma \ref{LMinpm}(a). The proof of (b) is analogous.  For (c), note from definition \eqref{Elppbd} that 
\begin{align}
\label{EGbmax} \Gb^{w, z}(m, n) = \max \{\Gb_{1, 0}^{w}(m, n), \Gb_{0, 1}^{z}(m, n)\}. 
\end{align}
Therefore, 
\begin{align}
\log \E[e^{\lambda \Gb^{w, z}(m, n)}] &\le \max \{\log \E[e^{\lambda \Gb^w_{1, 0}(m, n)}], \log \E[e^{\lambda \Gb^z_{0, 1}(m, n)}]\} + \log 2.  \label{ElogmaxUB}
\end{align}
Combining this with parts (a)-(b) and definition \eqref{ELbd2}, one obtains that 
\begin{equation*}
\log \E[e^{\lambda \Gb^{w, z}(m, n)}]  \le \max \{\L^{\lambda, w, \hor}(m, n), \L^{\lambda, z, \ver}(m, n)\} + \log 2 = \L^{\lambda, w, z}(m, n) + \log 2. \qedhere
\end{equation*}
\end{proof}

\subsection{Upper bound for the right tail}

For $x, y \in \bbR_{>0}$, $w \in (0, 1]$, $z \in [0, 1)$ and $s \in \bbR$, define 
\begin{align}
\I^{w, \hor}_s(x, y) &= \sup_{\substack{u, v \,\in\, (0, w) \\ u \ge v}} \Ib^{u, v}_s(x, y), \qquad \I^{z, \ver}_s(x, y) = \sup_{\substack{u, v \,\in\, (z, 1) \\ u \ge v}} \Ib^{u, v}_s(x, y)  \label{EIhv} \\[3pt]
\text{and } \quad 
\I^{w, z}_s(x, y) &= \min \{\I^{w, \hor}_s(x, y), \I^{z, \ver}_s(x, y)\}. \label{EIbdRate}
\end{align}
At the endpoints $w = 1$ and $z = 0$, these functions recover the bulk rate function of \eqref{EI}:  
\begin{align}
\I^{1, 0}_s(x, y) = \I^{0, \ver}_s(x, y) = \I^{1, \hor}_s(x, y) = \I_s(x, y). \label{EIhv10}
\end{align}
Lemma \ref{LRTBdUB} below bounds from above the right tail deviations of the LPP processes \eqref{Elppbd} with boundary weights in terms of the functions in \eqref{EIhv}--\eqref{EIbdRate}. 

As another consequence of Lemma \ref{LMProp}, for each $s > \Shp(x, y)$, there exists a unique pair $w = \Minx_+^s(x, y)$ and $z = \Minx_-^s(x, y)$ in $(0, 1)$ characterized by 
\begin{align}
z < \Min(x, y) < w  \quad \text{ and } \quad \M^w(x, y) = s = \M^z(x, y). \label{EMinxpm}
\end{align}
A straightforward, if tedious, computation gives 
\begin{align}
\Minx_{\pm}^s 
&= \frac{1}{2} + \frac{x-y\pm \sqrt{(s-\Shp(x, y))^2 + 4\sqrt{xy}(s-\Shp(x, y))}}{2s}.\label{EMinxpm2}
\end{align}
Extend \eqref{EMinxpm} via 
\begin{align}
\Minx_{\pm}^s(x, y) = \Min(x, y) \quad \text{ when } s \le \Shp(x, y). \label{EMinxpm3}
\end{align}
It can be verified that \eqref{EMinxpm3} is a continuous extension. By the uniqueness of the pairs in \eqref{EMinpm} and \eqref{EMinxpm}, for $s > \Shp(x, y)$ and $\lambda \in (0, 1)$, these three equalities are equivalent with each other: 
\begin{align}
\Minx_+^s(x, y) = \Min_+^\lambda(x, y), \quad \lambda = \Minx_+^s(x, y)-\Minx_-^s(x, y), \quad \Minx_-^s(x, y) = \Min_-^\lambda(x, y). \label{EExtId}
\end{align}
 See Figure \ref{FMinMax}. 

\begin{figure}
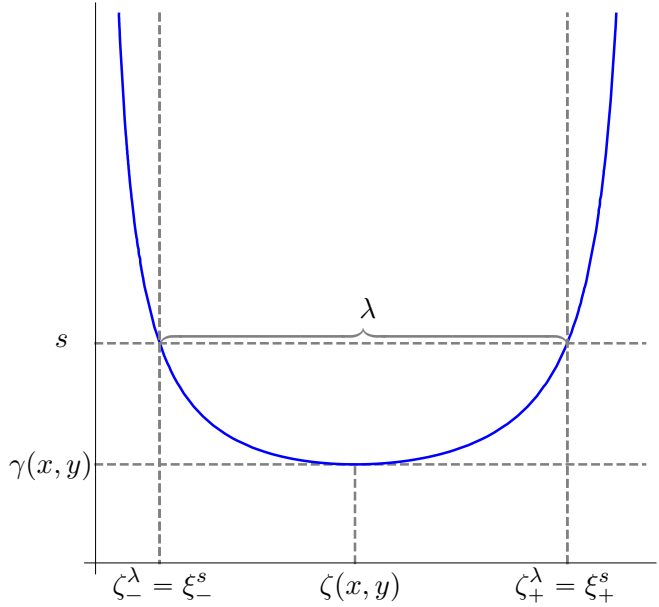

\centering
\begin{overpic}[scale=0.6]{Minpm4.pdf}
\put (-13, 18){$\displaystyle \Shp(x, y)$}
\put (41, -3){$\displaystyle \Min(x, y)$}
 \put (-5,40) {$\displaystyle s$}
\put (5,-3) {$\displaystyle \Min_-^{\lambda}= \Minx_-^{s}$}
\put (75,-3) {$\displaystyle \Min_+^{\lambda} = \Minx_+^{s}$}
\put (13, 40){{\color{gray}$\displaystyle \overbrace{\hspace{2.15in}}$}}
\put (48, 45){$\displaystyle \lambda$}
\end{overpic}
\vspace{0.2in}
\caption{An illustration of the identities $\Min^\lambda_{\pm}(x, y) = \Minx_\pm^s(x, y)$ and the curve $z \mapsto \M^z(x, y)$ (blue) with $x = 4$, $y = 5$, $s = 40$ and $\lambda \approx 0.74$. }
\label{FMinMax}
\end{figure}

The next lemma expresses the maximizers of the suprema in \eqref{EIhv} in terms of \eqref{EMinxpm}. In preparation for its statement, introduce 
\begin{align}
\Shp^{w, \hor}(x, y) &= \inf_{t \in (0, w)} \M^t(x, y) = \Shp(x, y)\one_{\{w \ge \Min(x, y)\}} + \M^{w}(x, y)\one_{\{w < \Min(x, y)\}}\label{EShph} \\
\Shp^{z, \ver}(x, y) &= \inf_{t \in (z, 1)} \M^t(x, y) = \Shp(x, y)\one_{\{z \le \Min(x, y)\}} + \M^{z}(x, y)\one_{\{z > \Min(x, y)\}}\label{EShpv}
\end{align}
for $x, y \in \bbR_{>0}$, $w \in (0, 1]$ and $z \in [0, 1)$. The second equalities above follow from Lemma \ref{LMProp}. For fixed $w \in (0, 1]$ and $z \in [0, 1)$, the functions in \eqref{EShph}--\eqref{EShpv} are the shape functions of the $\Gb_{1, 0}^{w}$ and $\Gb_{0, 1}^{z}$ -processes, respectively, in a sense analogous to the limit  \eqref{EShpLim}. This fact is recorded, for example, in \cite[Lemma 6.4]{geor-rass-sepp-17-buse}. 
The functions in \eqref{EIhv}--\eqref{EIbdRate} are clearly nonnegative. The regions of positivity for these functons can also be readily identified from the definitions and Lemma \ref{LMProp} as 
\begin{align}
\I^{w, \hor}_{s}(x, y) > 0 \quad &\text{ if and only if } \quad s > \Shp^{w, \hor}(x, y) \label{EIhpos}\\
\I^{z, \ver}_s(x, y) > 0 \quad &\text{ if and only if } \quad s > \Shp^{z, \ver}(x, y) \label{EIvpos}. 
\end{align}
The equivalence \eqref{EIhpos}, for example, can also be seen from Figure \ref{Frate} below depicting the function $\I_s^{w, \hor}(x, y)$ evaluated with specific choices of the parameters. 

\begin{figure}
\centering
\begin{overpic}[scale=0.6]{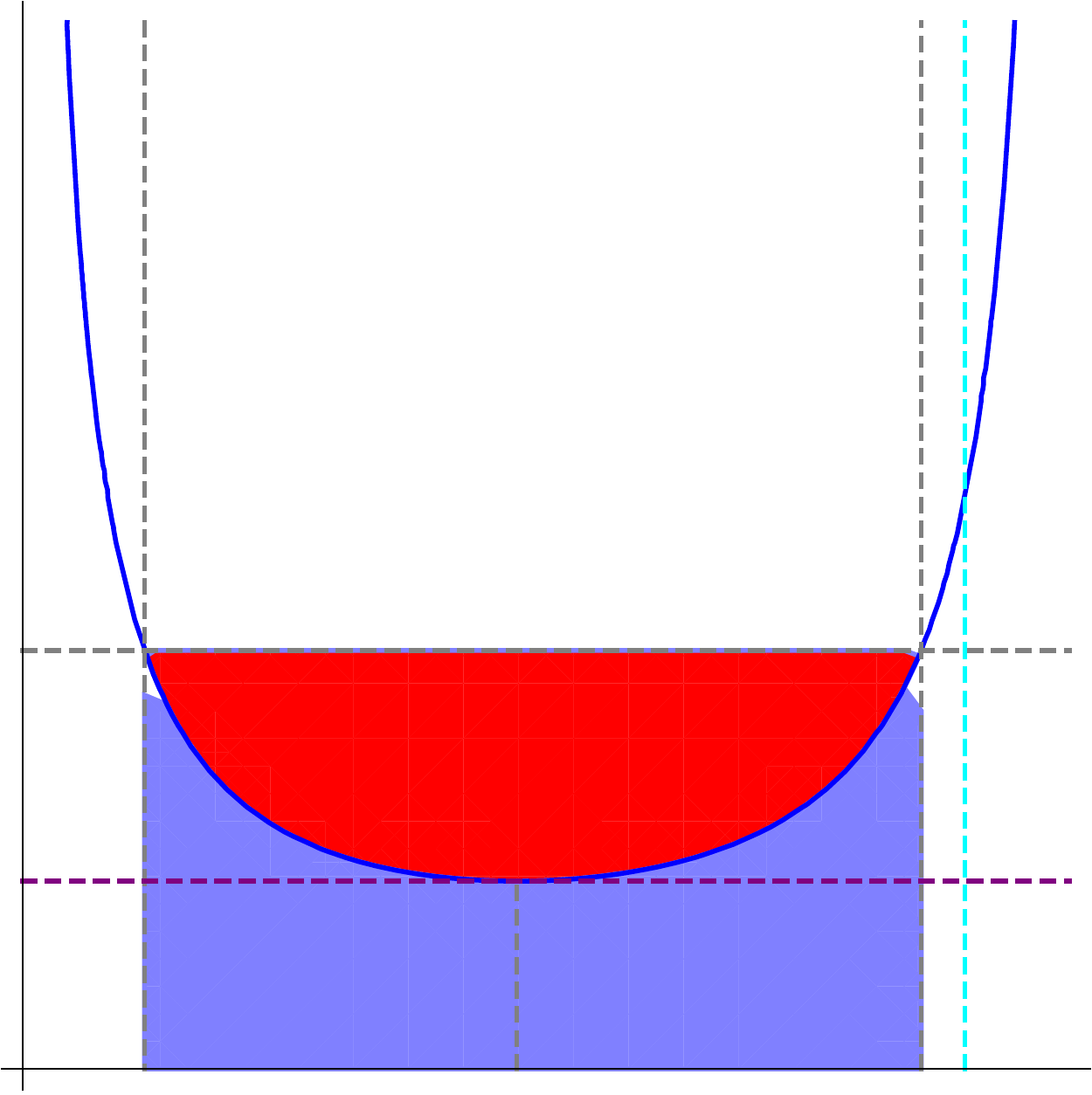}
\put (-3,39) {$\displaystyle s$}
\put (87, -2){$\displaystyle w$}
\put (46, -2){$\displaystyle \Min$}
\put (10,-3) {$\displaystyle \Minx_-^{s} = \Min_-^\lambda$}
\put (68,-3) {$\displaystyle \Minx_+^{s} = \Min_+^\lambda$}
\put (13, 40){{\color{gray}$\displaystyle \overbrace{\hspace{2.15in}}$}}
\put (48, 45){$\displaystyle \lambda$}
\end{overpic}
\begin{overpic}[scale=0.6]{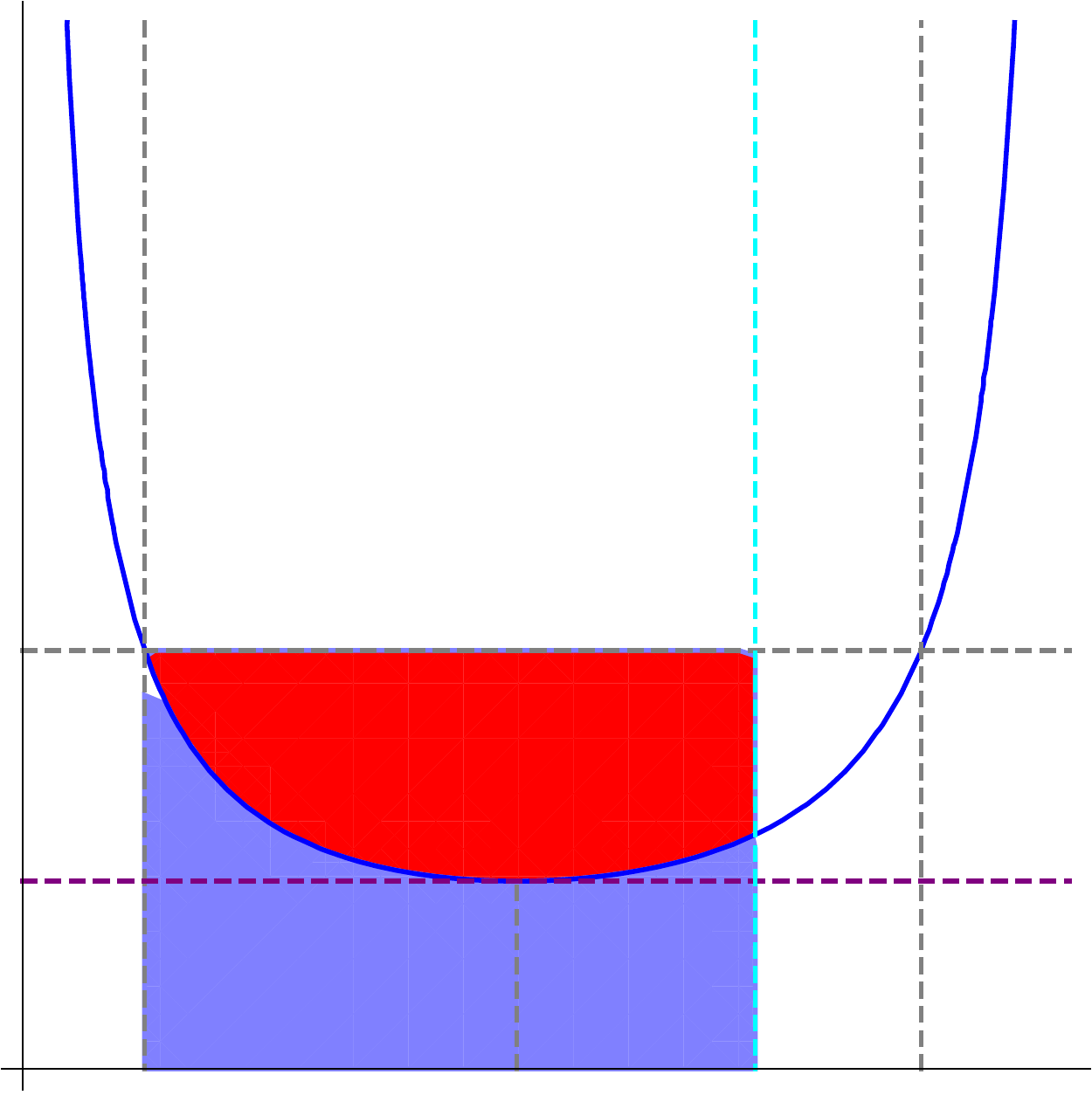}
\put (-125, 18){$\displaystyle \Shp^{w, \hor} = \Shp$}
\put (67, -2){$\displaystyle w$}
\put (46, -2){$\displaystyle \Min$}
\put (12,-3) {$\displaystyle \Minx_-^{s}$}
\put (82,-3) {$\displaystyle \Minx_+^{s}$}
\put (13, 40){{\color{gray}$\displaystyle \overbrace{\hspace{1.70in}}$}}
\put (40, 45){$\displaystyle \lambda$}
\end{overpic}
\par  
\vspace{0.2in}
\begin{overpic}[scale=0.6]{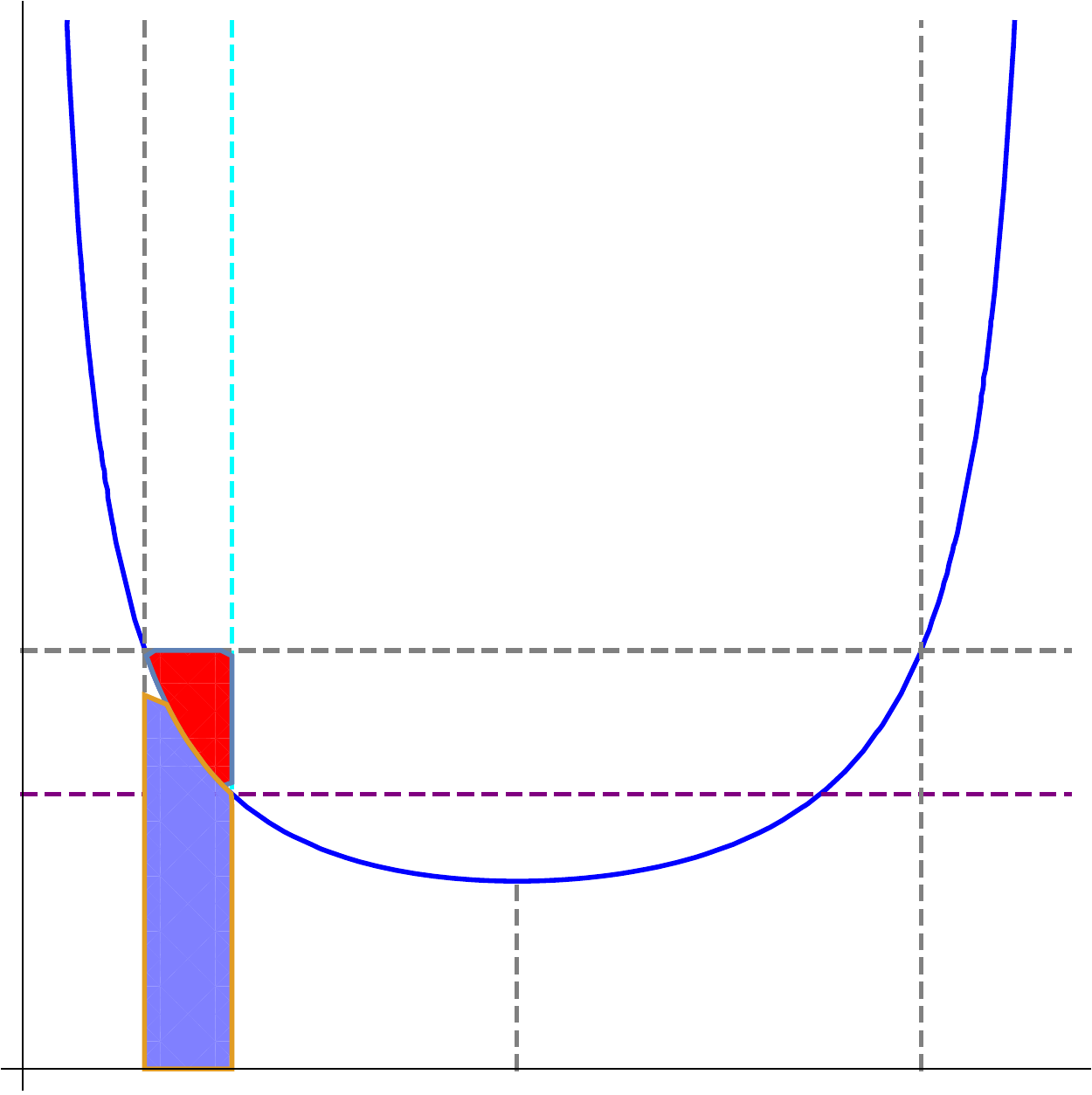}
\put (-3,39) {$\displaystyle s$}
\put (-27, 26){$\displaystyle \Shp^{w, \hor} = \M^{w}$}
\put (20, -2){$\displaystyle w$}
\put (46, -2){$\displaystyle \Min$}
\put (12,-3) {$\displaystyle \Minx_-^{s}$}
\put (82,-3) {$\displaystyle \Minx_+^{s}$}
\put (15, 42){$\displaystyle \lambda$}
\end{overpic}
\vspace{0.2in}
\caption{An illustration of the values of the functions $\I_s^{w, \hor}(x, y)$ (the areas of the red regions) and $\L^{\lambda, w, \hor}(x, y)$ (the areas of the light blue regions) for $x = 4, y = 5, s = 40$, $\lambda = \min \{w, \Minx_+^s(x, y)\}$ and three values $\{0.9, 0.7, 0.2\}$ (top left, top right, bottom pictures, respectively) of $w$. The curve $t \mapsto \M^t(x, y), t \in (0, 1)$ (blue), the horizontal levels at $\Shp^{w, \hor}(x, y)$ (dashed purple) and $s$ (dashed gray), and the vertical levels at $w$ (dashed cyan) and $\Minx_{\pm}^s(x, y)$ (dashed gray) are indicated.}
\label{Frate}
\end{figure}

\begin{lem}
\label{LMinxpm}
Let $x, y \in \bbR_{>0}$, $w \in (0, 1], z \in [0, 1)$ and $s \in \bbR$. Abbreviate $\Minx_\pm^s = \Minx_\pm^s(x, y)$. Let $u, v \in (0, 1)$ with $u \ge v$. 
\begin{enumerate}[\normalfont (a)]
\item If $u \le w$ then $\Ib^{u, v}_s(x, y) \le \I_s^{w, \hor}(x, y)$ with equality if and only if $s > \Shp^{w, \hor}(x, y)$, $u = \min \{w, \Minx_+^s\}$ and $v = \Minx_-^s$, or $s \le \Shp^{w, \hor}(x, y)$ and $u = v$. 
\item If $u \le w$ then $\Ib^{u, v}_s(x, y) \le \I_s^{w, \hor}(x, y)$ with equality if and only if $s > \Shp^{z, \ver}(x, y)$, $u = \Minx_+^s$ and $v = \max \{z, \Minx_-^s\}$, or $s \le \Shp^{z, \ver}(x, y)$ and $u = v$. 
\end{enumerate}
\end{lem}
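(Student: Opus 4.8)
The plan is to read everything off the integral form of the rate recorded in \eqref{EIbdM}: for $u \ge v$ we have $\Ib^{u,v}_s(x,y) = \int_v^u (s - \M^t(x,y))\,\dd t$, so maximizing over a constrained interval $[v,u]$ is the same as selecting the subinterval of $(0,w)$ (for part (a)), resp.\ of $(z,1)$ (for part (b)), on which the integrand $t \mapsto s - \M^t(x,y)$ contributes the largest signed area. The first step is to pin down the sign of this integrand. By Lemma \ref{LMProp}, $t \mapsto \M^t(x,y)$ is continuous, strictly convex, decreasing up to $\Min(x,y)$ and increasing afterwards, with minimum $\Shp(x,y)$; hence $\{t\in(0,1): \M^t(x,y) < s\}$ equals the open interval $(\Minx_-^s,\Minx_+^s)$ from \eqref{EMinxpm} when $s > \Shp(x,y)$, and is empty when $s \le \Shp(x,y)$. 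Intersecting with $(0,w)$ and using \eqref{EShph}, the integrand is $\le 0$ throughout $(0,w)$ when $s \le \Shp^{w,\hor}(x,y)$, and is $>0$ exactly on $(\Minx_-^s, \min\{w,\Minx_+^s\})$ when $s > \Shp^{w,\hor}(x,y)$.

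I would then treat part (a) in two cases. If $s \le \Shp^{w,\hor}(x,y)$, every admissible $\Ib^{u,v}_s(x,y)$ is $\le 0$, and by strict convexity of $\M^\bullet(x,y)$ (hence uniqueness of its minimizer) it is $<0$ as soon as $u>v$; so $\I^{w,\hor}_s(x,y)=0$, attained iff $u=v$ — the second alternative, consistent with \eqref{EIhpos}. If $s > \Shp^{w,\hor}(x,y)$, put $v^\ast = \Minx_-^s$ and $u^\ast=\min\{w,\Minx_+^s\}$; a short check (using $\Minx_-^s<\Min(x,y)$ and the relation between $\Shp^{w,\hor}(x,y)$ and $\M^w(x,y)$) shows this is an admissible pair with $u^\ast>v^\ast$. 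For any admissible $[v,u]$ with $u \ge v$, $u \le w$, I would write
\begin{align*}
\Ib^{u,v}_s(x,y)=\int_{(v,u)} (s-\M^t(x,y))\,\dd t &\le \int_{(v,u)\cap(v^\ast,u^\ast)} (s-\M^t(x,y))\,\dd t \\
&\le \int_{v^\ast}^{u^\ast} (s-\M^t(x,y))\,\dd t = \Ib^{u^\ast,v^\ast}_s(x,y),
\end{align*}
where the first inequality deletes the part of $(v,u)$ lying outside $(v^\ast,u^\ast)$ — which, because $u\le w$, is contained in $(0,\Minx_-^s]\cup[\Minx_+^s,1)$, a set on which the integrand is $\le 0$ — and the second adds the part of $(v^\ast,u^\ast)$ missing from $(v,u)$, on which the integrand is $>0$. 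Both inequalities are equalities precisely when $(v,u)=(v^\ast,u^\ast)$, since $\{t:\M^t(x,y)=s\}$ is finite. This yields $\I^{w,\hor}_s(x,y)=\Ib^{u^\ast,v^\ast}_s(x,y)>0$ with unique maximizer $(u^\ast,v^\ast)$ — the first alternative.

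Part (b) is the mirror image once its statement is read with hypothesis $v\ge z$ and conclusion $\Ib^{u,v}_s(x,y)\le\I_s^{z,\ver}(x,y)$ (as the roles of $\Minx_+^s$ and $\max\{z,\Minx_-^s\}$ in the stated equality condition make clear): now the constrained endpoint is the lower one, one uses \eqref{EShpv} and \eqref{EIvpos} in place of \eqref{EShph} and \eqref{EIhpos}, the optimal interval becomes $(\max\{z,\Minx_-^s\},\Minx_+^s)$, and the signed-area comparison is identical. I expect the only genuinely delicate point to be the boundary analysis of the first step — confirming that the proposed maximizer $(\min\{w,\Minx_+^s\},\Minx_-^s)$ is admissible (i.e.\ $\Minx_-^s\in(0,w)$ and $\min\{w,\Minx_+^s\}\ge\Minx_-^s$) exactly in the regime $s>\Shp^{w,\hor}(x,y)$, and symmetrically for (b); once the sign picture of $s-\M^\bullet(x,y)$ on the relevant interval is fixed, the maximization itself is the one-line monotonicity argument above.
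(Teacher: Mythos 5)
Your argument is correct, and it takes a genuinely different route from the paper. The paper proves part (a) by first collapsing the two-parameter supremum to a one-parameter one via Lemma \ref{LMinpm}(a) (the identity $\I_s^{w,\hor}=\sup_{\lambda}\{\lambda s-\L^{\lambda,w,\hor}\}$ in \eqref{E30}), then computing the $\lambda$-derivative \eqref{E31}--\eqref{E35}, showing it is continuous, decreasing and changes sign exactly once at a $\lambda_0$ characterized by \eqref{E36}, and finally translating $\lambda_0$ back into the pair $(\min\{w,\Minx_+^s\},\Minx_-^s)$ in \eqref{E37} (the last step needs its own small case analysis). You instead maximize the signed area $\int_v^u(s-\M^t(x,y))\,\dd t$ directly: since strict convexity makes the integrand positive exactly on $(\Minx_-^s,\Minx_+^s)$ and strictly negative off its closure, the optimal subinterval of $(0,w]$ is the intersection, and uniqueness of the maximizer is immediate because $\{t:\M^t(x,y)=s\}$ is finite. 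This is more elementary (no differentiation, no monotonicity-of-the-derivative argument) and handles the equality case in both regimes in one stroke; you also correctly read part (b) as the statement with hypothesis $v\ge z$ and conclusion $\Ib^{u,v}_s\le\I^{z,\ver}_s$, which is what the displayed equality conditions require. Two small things to note. First, the supremum in \eqref{EIhv} is over $u,v\in(0,w)$, so when your maximizer has $u^\ast=w$ you need the same one-line continuity remark the paper makes to identify $\Ib^{u^\ast,v^\ast}_s$ with $\I^{w,\hor}_s$. Second, the paper's derivative computation is reused: Lemma \ref{LConv} is ``extracted from the preceding proof'' and cites \eqref{E30} and \eqref{E36}--\eqref{E37} directly. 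With your proof the inequality half of Lemma \ref{LConv} still follows from the nested-supremum rearrangement alone, and its equality case can be recovered from your maximizer together with Lemma \ref{LMinpm}, but that lemma would need to be reorganized to stand on its own rather than pointing into the middle of this proof.
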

\begin{proof}
The claimed inequality is built in to definition \eqref{EIhv} when $u < w$ and extends to the case $u = w$ by continuity. It remains to determine when the equality holds. 

Consider the case $s \le \Shp^{w, \hor}(x, y)$. Then $\I_s^{w, \hor}(x, y) = 0 \ge \Ib_s^{u, v}(x, y)$ by \eqref{EIhpos}. Recalling \eqref{EIbdM} and \eqref{EShph}, it becomes clear that the inequality is strict if and only if $u > v$. 

Now the case $s > \Shp^{w, \hor}(x, y)$. From definitions \eqref{EIbd}, \eqref{ELhv} and \eqref{EIhv}, one develops 
\begin{align}
\I_s^{w, \hor}(x, y) &= \sup \limits_{\substack{u, v \,\in\, (0, w) \\ u > v}} \Ib_s^{u, v}(x, y) = \sup \limits_{\lambda \in (0, w)}\sup \limits_{\substack{u, v \,\in\, (0, w) \\ u-v = \lambda}} \{\lambda s - \Lb^{u, v}(x, y)\} = \sup \limits_{\lambda \in (0, w)}\{\lambda s-\L^{\lambda, w, \hor}(x, y)\} \nonumber\\
&= \sup \limits_{\lambda \in (0, w)}\{\lambda s - \Lb^{\min \{w, \Min_+^\lambda\}, \min \{w, \Min_+^\lambda\}-\lambda}(x, y)\}. \label{E30}
\end{align} 
Since $\I_s^{w, \hor}(x, y) > 0$ by \eqref{EIhpos}, $(u, v)$ pairs with $u = v$ can be omitted from the first supremum above. Line \eqref{E30} invokes Lemma \ref{LMinpm}(a). Using \eqref{ELbd} and \eqref{EMinpm}, the $\lambda$-derivative on $(0, w)$ of the expression inside the supremum in \eqref{E30} is seen to be 
\begin{alignat}{2}
\partial_\lambda \{\lambda s - \Lb^{\Min_+^\lambda, \Min_-^\lambda}(x, y)\} &= s-\partial_\lambda \{\Min_+^\lambda\}\M^{\Min_+^\lambda}(x, y)+\partial_\lambda\{\Min_-^\lambda\}\M^{\Min_-^\lambda}(x, y) \nonumber\\
&= s-\M^{\Min_+^\lambda}(x, y) = s-\M^{\Min_-^\lambda}(x, y) \qquad &&\text{ when } \Min_+^\lambda < w \label{E31}\\
\partial_\lambda \{\lambda s - \Lb^{w, w-\lambda}(x, y)\} &= s-\M^{w-\lambda}(x, y) \qquad &&\text{ when } \Min_+^\lambda \ge w. \label{E32}
\end{alignat}
The derivative exists at the transition point (which occurs when $w < 1$) since \eqref{E31}--\eqref{E32} are continuous in $\lambda$ and coincide when $\Min_+^\lambda = w$. Combining the two cases, the derivative can be written as
\begin{align}
s-\M^{\min \{w, \Min_+^\lambda\}-\lambda}(x, y) = s-\M^{\min \{w-\lambda, \Min_-^\lambda\}}(x, y). \label{E35}
\end{align}
Recalling \eqref{EMinpm}--\eqref{EMinpm3}, the superscript on the right-hand side is a decreasing continuous function of $\lambda$ from $(0, w)$ onto $(0, \min \{w, \Min\})$ where $\Min = \Min(x, y)$. Then it follows from Lemma \ref{LMProp} that the right-hand side of \eqref{E35} defines a continuous, decreasing function with range $(-\infty, s-\M^{\min\{w, \Min\}}(x, y)) = (-\infty, s-\Shp^{w, \hor}(x, y))$. Therefore, this function changes sign once and from positive to negative at unique $\lambda_0 \in (0, w)$ given by
\begin{align}
\min \{w-\lambda_0, \Min_-^{\lambda_0}\} = \Minx_-^s \in (0, \min \{w, \Min\}). \label{E36}
\end{align}
Returning to the development leading to \eqref{E30}, and using \eqref{E36} and the uniqueness from Lemma \ref{LMinpm}(a), one concludes that $\I_s^{w, \hor}(x, y) = \Ib^{u, v}_s(x, y)$ if and only if 
\begin{align}
u = \min \{w, \Min_+^{\lambda_0}\} = \min \{w, \Min_+^s\} \quad \text{ and } \quad v = \min \{w-\lambda_0, \Min_-^{\lambda_0}\} = \Minx_-^s. \label{E37}
\end{align}
The second equality in \eqref{E37} is a claim to be justified now. If $w > \Min_+^{\lambda_0}$ then, by \eqref{EMinpm} and \eqref{E36}, $\Min_-^{\lambda_0} = \Min_+^{\lambda_0}-\lambda_0 = \min \{w-\lambda_0, \Min_+^{\lambda_0}-\lambda_0\} = \min \{w-\lambda_0, \Min_-^{\lambda_0}\} = \Minx_-^s$. Therefore, $\Min_+^{\lambda_0} = \Minx_+^s$ in view of \eqref{EExtId}, and the claim holds. In the remaining case $w \le \Min_+^{\lambda_0}$, \eqref{E36} implies that $w-\lambda_0 = \Minx_-^s \le \Min_-^{\lambda_0}$. Then $\lambda_0 \le \Minx_+^s - \Minx_-^s$ by \eqref{EExtId} and since the function $\lambda \mapsto \Min_-^{\lambda}(x, y)$ is decreasing on $(0, 1)$. Consequently, $w = \lambda_0 + \Minx_-^s \le \Minx_+^s$, and the claim holds. 
\end{proof}

Extracted from the preceding proof, the next lemma relates \eqref{ELhv}--\eqref{ELbd2} with \eqref{EIhv}--\eqref{EIbdRate} through convex duality. The case of equality in part (a) is apparent in Figure \ref{Frate}. 
\begin{lem}
\label{LConv}
Let $x, y \in \bbR_{>0}$, $\lambda \in \bbR_{\ge 0}$, $w \in (0, 1], z \in [0, 1)$, and $s \in \bbR$. Abbreviate $\Minx_\pm^s = \Minx_\pm^s(x, y)$ and $\Min_\pm^\lambda = \Min_\pm^\lambda(x, y)$. 
\begin{enumerate}[\normalfont (a)]
\item $\I_s^{w, \hor}(x, y) + \L^{\lambda, w, \hor}(x, y) \ge \lambda s$ with equality if and only if $s > \Shp^{w, \hor}(x, y)$ and $\lambda = \min \{w, \Minx^s_+\} - \Minx^s_-$, or $s \le \Shp^{w, \hor}(x, y)$ and $\lambda = 0$. 
\item $\I_s^{z, \ver}(x, y) + \L^{\lambda, z, \ver}(x, y) \ge \lambda s$ with equality if and only if $s > \Shp^{z, \ver}(x, y)$ and $\lambda = \Minx^s_+ - \max \{z, \Minx^s_-\}$, or $s \le \Shp^{z, \ver}(x, y)$ and $\lambda = 0$. 
\end{enumerate}
\end{lem}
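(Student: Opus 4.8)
The plan is to prove part~(a) in full; part~(b) then follows by the symmetry interchanging the two coordinate axes (swapping $w \leftrightarrow z$, $\hor \leftrightarrow \ver$, $\Shp^{w,\hor} \leftrightarrow \Shp^{z,\ver}$ and exchanging the roles of $\Minx_+^s$ and $\Minx_-^s$), with Lemma~\ref{LMinxpm}(b) and Lemma~\ref{LMinpm}(b) used in place of their (a) counterparts. For the inequality in~(a): if $\lambda \ge w$ then $\L^{\lambda,w,\hor}(x,y) = \infty$ by the convention $\inf\emptyset = \infty$ and there is nothing to prove; if $\lambda = 0$ then $\L^{0,w,\hor}(x,y)=0$ by \eqref{ELbd} while $\I^{w,\hor}_s(x,y) \ge 0$ since the supremum in \eqref{EIhv} admits $u=v$. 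For $\lambda \in (0,w)$, I would restrict the supremum defining $\I^{w,\hor}_s(x,y)$ to the nonempty set of pairs $(u,v) \in (0,w)^2$ with $u-v=\lambda$; for such pairs $\Ib^{u,v}_s(x,y) = \lambda s - \Lb^{u,v}(x,y)$ by \eqref{EIbd}, so $\I^{w,\hor}_s(x,y) \ge \lambda s - \inf_{u-v=\lambda}\Lb^{u,v}(x,y) = \lambda s - \L^{\lambda,w,\hor}(x,y)$.

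For the equality characterization, the key step is the Legendre-type identity
\begin{align*}
\I^{w,\hor}_s(x,y) = \sup_{\lambda' \in [0,w)}\bigl\{\lambda' s - \L^{\lambda',w,\hor}(x,y)\bigr\},
\end{align*}
obtained from \eqref{EIhv} by grouping the pairs $(u,v)$ according to their difference, exactly as in the derivation of \eqref{E30}. Given this, equality in the statement holds precisely when $\lambda < w$ and $\lambda$ attains the supremum on the right. I would then locate the maximizer using the $\lambda'$-derivative already computed in the proof of Lemma~\ref{LMinxpm}, namely $\partial_{\lambda'}\{\lambda' s - \L^{\lambda',w,\hor}(x,y)\} = s - \M^{\min\{w-\lambda',\,\Min_-^{\lambda'}(x,y)\}}(x,y)$, which by Lemma~\ref{LMProp} and \eqref{EMinpm}--\eqref{EMinpm3} is continuous and strictly decreasing in $\lambda'\in(0,w)$ with range $(-\infty,\,s-\Shp^{w,\hor}(x,y))$. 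When $s \le \Shp^{w,\hor}(x,y)$ this derivative is strictly negative throughout $(0,w)$, so $\lambda' s - \L^{\lambda',w,\hor}(x,y)$ decreases from its value $0$ at $\lambda'=0$, the supremum is attained only at $\lambda'=0$, and equality holds iff $\lambda=0$. When $s > \Shp^{w,\hor}(x,y)$ the derivative changes sign once, from $+$ to $-$, at the unique $\lambda_0\in(0,w)$ determined by \eqref{E36}, so $\lambda_0$ is the unique maximizer; the analysis around \eqref{E37} identifies the minimizing pair for $\L^{\lambda_0,w,\hor}$ as $u = \min\{w,\Minx_+^s(x,y)\}$, $v = \Minx_-^s(x,y)$, whose difference is $\lambda_0$, so $\lambda_0 = \min\{w,\Minx_+^s(x,y)\} - \Minx_-^s(x,y)$ --- the value asserted in the statement.

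The hard part, such as it is, will be purely bookkeeping: matching the abstractly defined maximizer $\lambda_0$ with its closed form requires tracking the two subcases $w > \Min_+^{\lambda_0}(x,y)$ and $w \le \Min_+^{\lambda_0}(x,y)$ together with the extension conventions \eqref{EMinpm3}, \eqref{EMinxpm3} and the equivalences \eqref{EExtId}, exactly as in the proof of Lemma~\ref{LMinxpm}; and one must dispose carefully of the boundary values $\lambda=0$, $\lambda\ge w$ and $s = \Shp^{w,\hor}(x,y)$ so that the two alternatives in the statement are seen to be mutually exclusive and exhaustive. Since all of the analytic content already appears in the proof of Lemma~\ref{LMinxpm}, from which the present lemma is essentially extracted, no genuinely new idea is required.
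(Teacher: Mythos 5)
Your proposal is correct and follows essentially the same route as the paper: the inequality and the equality characterization are both read off from the Legendre-type regrouping of the supremum in \eqref{EIhv} (the derivation of \eqref{E30}) together with the derivative analysis \eqref{E31}--\eqref{E37} from the proof of Lemma \ref{LMinxpm}, with the boundary cases $\lambda=0$ and $\lambda\ge w$ handled separately via \eqref{EIhpos} and $\L^{\lambda,w,\hor}=\infty$. Your only deviation is the harmless refinement of stating the duality identity over $\lambda'\in[0,w)$ so it also covers $s\le\Shp^{w,\hor}(x,y)$, which the paper instead treats through the positivity criterion \eqref{EIhpos}.
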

\begin{proof}
The inequality claimed in (a) holds strictly if $\lambda \ge w$ because then $\L^{\lambda, w, \hor}(x, y) = \infty$ by definition \eqref{ELhv}. In the case $\lambda = 0$, the inequality also holds since $\I^{w, \hor}_s (x, y) \ge 0$ and $\L^{0}(x, y) = 0$ as clear from definitions \eqref{EL} and \eqref{EI}. Equality occurs precisely when $s \le \Shp^{w, \hor}(x, y)$ by \eqref{EIhpos}. Now consider the case $\lambda \in (0, w)$. Then the claimed inequality follows from the third step in the derivation of \eqref{E30}. Since the derivative \eqref{E32} is not identically zero on $(0, w)$, \eqref{E30} and the nonnegativity of \eqref{EIhv} imply that $\I_s^{w, \hor}(x, y) > 0$. Therefore, $s > \Shp^{w, \hor}(x, y)$ by \eqref{EIhpos}. Now \eqref{E36}--\eqref{E37} show that the case of equality is precisely when $\lambda = \lambda_0 = \min \{w, \Minx_+^s\}-\Minx_-^s$. The proof of (b) is similar. 
\end{proof}

\begin{lem}
\label{LRTBdUB}
Let $m, n \in \bbZ_{>0}$, $w \in (0, 1]$, $z \in [0, 1)$ and $s \in \bbR$. Then 
\begin{enumerate}[\normalfont (a)]
\item $\log \P\{\Gb_{1, 0}^w(m, n) \ge s\} \le -\I_s^{w, \hor}(m, n)$. 
\item $\log \P\{\Gb_{0, 1}^z(m, n) \ge s\} \le -\I_s^{z, \ver}(m, n)$. 
\item $\log \P\{\Gb^{w, z}(m, n) \ge s\} \le -\I_s^{w, z}(m, n) + \log 2$. 
\end{enumerate}
\end{lem}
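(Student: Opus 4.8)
The plan is to mimic exactly the proof of Lemma~\ref{LRTUB}: combine the exponential Markov inequality with the l.m.g.f.\ bounds of Lemma~\ref{LLMBdUB} and then optimize over the tilt parameter using the convex-duality identities of Lemma~\ref{LConv}. For part (a), fix $\lambda \in \bbR_{\ge 0}$; by the exponential Markov inequality and Lemma~\ref{LLMBdUB}(a),
\begin{align*}
\log \P\{\Gb_{1,0}^w(m,n) \ge s\} \le -\lambda s + \log \E[e^{\lambda \Gb_{1,0}^w(m,n)}] \le -\lambda s + \L^{\lambda, w, \hor}(m,n).
\end{align*}
Taking the infimum over $\lambda \in \bbR_{\ge 0}$ gives $\log \P\{\Gb_{1,0}^w(m,n) \ge s\} \le -\sup_{\lambda \in \bbR_{\ge 0}}\{\lambda s - \L^{\lambda, w, \hor}(m,n)\}$. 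By Lemma~\ref{LConv}(a) we have $\lambda s - \L^{\lambda, w, \hor}(m,n) \le \I_s^{w, \hor}(m,n)$ for every $\lambda \ge 0$, with equality attained at $\lambda = \min\{w, \Minx_+^s(m,n)\} - \Minx_-^s(m,n)$ when $s > \Shp^{w, \hor}(m,n)$ and at $\lambda = 0$ otherwise; hence the supremum equals $\I_s^{w, \hor}(m,n)$, proving (a). Part (b) follows identically from Lemma~\ref{LLMBdUB}(b) and Lemma~\ref{LConv}(b).

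For part (c) I would avoid the direct l.m.g.f.\ route and instead use the elementary identity \eqref{EGbmax}, $\Gb^{w,z}(m,n) = \max\{\Gb_{1,0}^w(m,n), \Gb_{0,1}^z(m,n)\}$, so that $\{\Gb^{w,z}(m,n) \ge s\}$ is the union of $\{\Gb_{1,0}^w(m,n) \ge s\}$ and $\{\Gb_{0,1}^z(m,n) \ge s\}$. A union bound together with parts (a) and (b) then yields
\begin{align*}
\P\{\Gb^{w,z}(m,n) \ge s\} \le e^{-\I_s^{w, \hor}(m,n)} + e^{-\I_s^{z, \ver}(m,n)} \le 2\, e^{-\min\{\I_s^{w, \hor}(m,n),\, \I_s^{z, \ver}(m,n)\}} = 2\, e^{-\I_s^{w,z}(m,n)},
\end{align*}
the last equality being definition \eqref{EIbdRate}. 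Taking logarithms gives (c).

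There is no serious obstacle here: the whole content of the statement is already packaged in Lemmas~\ref{LLMBdUB} and \ref{LConv}, and the proof is a near-verbatim transcription of the bulk argument. The only points requiring a little care are (i) that the Legendre transform in parts (a)--(b) is actually attained, which Lemma~\ref{LConv} guarantees, so that one recovers exactly $\I_s^{w,\hor}(m,n)$ rather than merely an upper bound; and (ii) that in the subcritical regime $s \le \Shp^{w,\hor}(m,n)$ the estimate degenerates to the trivial bound $\log \P\{\cdots\} \le 0$, consistent with $\I_s^{w,\hor}(m,n) = 0$ there (cf.\ \eqref{EIhpos}--\eqref{EIvpos}). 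One could in principle also derive (c) from Lemma~\ref{LLMBdUB}(c) by the exponential Markov inequality, but then $\max$ and $\sup$ do not commute in the favorable direction, so the union bound above is the cleaner route.
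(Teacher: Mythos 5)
Your proposal is correct and follows essentially the same route as the paper: exponential Markov plus Lemma \ref{LLMBdUB}(a)--(b), optimized via the duality/attainment statement of Lemma \ref{LConv}, and then the union bound through \eqref{EGbmax} and \eqref{EIbdRate} for part (c). Your write-up merely makes explicit the optimization over $\lambda$ that the paper compresses into a single chain of inequalities (where the final step implicitly picks the maximizing $\lambda$ guaranteed by the equality case of Lemma \ref{LConv}).
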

\begin{proof}
Let $\lambda \in [0, w)$. Then $\L^{\lambda, w, \hor}(m, n)$ is finite as evident from definition \eqref{ELhv}. Applying the exponential Markov's inequality and Lemmas \ref{LLMBdUB}(a) and \ref{LConv}(a) yields 
\begin{align*}
\log \P\{\Gb_{1, 0}^w(m, n) \ge s\} \le -\lambda s + \log \E[e^{\lambda \Gb_{1, 0}^w(m, n)}] \le -\lambda s + \L^{\lambda, w, \hor}(m, n) \le -\I_s^{w, \hor}(m, n)
\end{align*}
proving (a). The proof of (b) is similar. Using (a)-(b), \eqref{EGbmax}, a union bound and definition \eqref{EIbdRate}, one also obtains (c): 
\begin{align*}
\log \P\{\Gb^{w, z}(m, n) \ge s\} &\le \log 2 + \max \{\log \P\{\Gb^{w}_{1, 0}(m, n) \ge s\}, \log \P\{\Gb^{z}_{0, 1}(m, n) \ge s\}\} \\
&\le \log 2 + \max \{-\I_s^{w, \hor}(m, n), -\I_s^{z, \ver}(m, n)\} \\
&= \log2 -\I_s^{w, z}(m, n). \qedhere
\end{align*}
\end{proof}

The next lemma implies part (a) of Theorem \ref{TisLppMDRate} by taking $\epsilon = \epsilon_0$ and $s \in [s_0, \epsilon_0(m+n)^{2/3}]$. In addition, the lemma provides a large deviation upper bound that serves later in the proof of part (b) of the theorem. 
\begin{lem}
\label{LRTBdUB2}
Fix $\delta > 0$, $K \ge 0$, $p > 0$ and $s_0 > 0$. There exist constants $C_0 = C_0(\delta), \epsilon_0 = \epsilon_0(\delta)$ and $N_0 = N_0(\delta, K, p, s_0) > 0$ such that, with $t = \min \{s, \epsilon(m+n)^{2/3}\}$,  
\begin{align*}
\log \P\{\Gb^{w, z}(m, n) \ge \Shp(m, n) + \curv(m, n) s\} &\le - \frac{2t^{3/2}}{3}-\one_{\{s \ge t\}}(s-t)\sqrt{t}\\
&+\log 2 +\frac{C_0 Kt}{(m+n)^p}+\frac{C_0t^2}{(m+n)^{1/3}}
\end{align*}
whenever $(m, n) \in S_\delta \cap \bbZ_{\ge N_0}^2$, $\epsilon \in [s_0(m+n)^{-2/3}, \epsilon_0]$, $s \in \bbR_{\ge s_0}$ and $w, z \in (0, 1)$ with 
\begin{align}\max \{|w-\Min(m, n)|, |z-\Min(m, n)|\} \le K(m+n)^{-1/3-p}.\label{E53}\end{align} 
\end{lem}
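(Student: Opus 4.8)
The plan is to combine the large deviation upper bound from Lemma \ref{LRTBdUB}(c) with the Taylor expansion estimates from Lemma \ref{LLIBdEst}(b), carefully tracking the error terms introduced by the constraints $w,z \in (0,1)$ and by the displacement of $w,z$ from $\Min(m,n)$ allowed by \eqref{E53}. Write $\Shp = \Shp(m,n)$, $\Min = \Min(m,n)$, $\curv = \curv(m,n)$, and set $s' = \Shp + \curv s$, so the probability to be bounded is $\P\{\Gb^{w,z}(m,n) \ge s'\}$. By Lemma \ref{LRTBdUB}(c), this is at most $\log 2 - \I_{s'}^{w,z}(m,n)$, and by definition \eqref{EIbdRate} it suffices to lower bound \emph{one} of $\I_{s'}^{w,\hor}(m,n)$ and $\I_{s'}^{z,\ver}(m,n)$ — whichever is associated to the direction the geodesic prefers.

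First I would choose a good test pair for the supremum defining $\I_{s'}^{w,\hor}$ (or $\I_{s'}^{z,\ver}$). The natural choice, as in the proof of Theorem \ref{TLppMDBnd} leading to \eqref{E56}, is $(u,v) = (\Min + \sqrt{t}/\curv, \Min - \sqrt{t}/\curv)$ where $t = \min\{s, \epsilon(m+n)^{2/3}\}$, the truncation ensuring $\sqrt{t}/\curv$ stays below a $\delta$-dependent threshold so that $u,v \in (\epsilon_0', 1-\epsilon_0')$ for some $\epsilon_0'(\delta) > 0$; this uses Lemma \ref{LShpMinBnd}(b)--(c). One must check this pair is admissible for the relevant supremum: we need $u < w$ (for the horizontal rate) or $v > z$ (for the vertical rate). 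Since $|w - \Min| \le K(m+n)^{-1/3-p}$ which is of much smaller order than $\sqrt{t}/\curv \sim \sqrt{t}/(m+n)^{1/3}$ once $t \ge s_0$ and $m+n$ is large (here is where $N_0$ depending on $s_0$ enters), at least one of the two inclusions holds; pick that branch. Then Lemma \ref{LLIBdEst}(b) applied at $x=m$, $y=n$, $s = s'$, with $w - \Min = v - \Min = \pm\sqrt{t}/\curv$, gives
\begin{align*}
\Ib_{s'}^{u,v}(m,n) \ge (u-v)(s'-\Shp) - \frac{\curv^3}{3}\{(u-\Min)^3 - (v-\Min)^3\} - C_0(m+n)\{(u-\Min)^4 + (v-\Min)^4\}.
\end{align*}
With $u - v = 2\sqrt{t}/\curv$ and $s' - \Shp = \curv s \ge \curv t$, the main term is $\ge 2\sqrt{t}\,t - \tfrac{2}{3}t^{3/2} = \tfrac{4}{3}t^{3/2}$ when $s = t$; when $s > t$ it contributes an extra $2\sqrt{t}(s-t) \ge (s-t)\sqrt{t}$, matching the $\one_{\{s \ge t\}}(s-t)\sqrt{t}$ term claimed (the factor discrepancy is absorbed). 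Wait — the target has $-\tfrac{2}{3}t^{3/2}$, so actually I should use the \emph{one-sided} test pair relevant to the boundary model: take $(u,v)$ with $u = \Min + \sqrt{t}/\curv$, $v = \Min$ for the vertical branch (or symmetrically), giving $u - v = \sqrt{t}/\curv$ and main term $\tfrac{2}{3}t^{3/2}$, which is the correct constant for $\Gb^{w,z}$ as opposed to the bulk $\G$. The error from Lemma \ref{LLIBdEst}(b) is $O((m+n)(\sqrt{t}/\curv)^4) = O(t^2/(m+n)^{1/3})$ by Lemma \ref{LShpMinBnd}(c), yielding the $C_0 t^2/(m+n)^{1/3}$ term.

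The remaining task is the $C_0 K t/(m+n)^p$ correction, which arises because $w$ and $z$ are not exactly $\Min$: the admissible pair must genuinely satisfy $u \le w$ or $v \ge z$, and if, say, $w < \Min + \sqrt{t}/\curv$ we cannot use $u = \Min + \sqrt{t}/\curv$ and must cap $u$ at $w$ (or shift the whole pair). Shifting the test pair by at most $|w - \Min| + |z-\Min| \le 2K(m+n)^{-1/3-p}$ changes $\Ib_{s'}^{u,v}$ by at most $O(|s' - \Shp| \cdot K(m+n)^{-1/3-p}) = O(\curv s \cdot K(m+n)^{-1/3-p}) = O(K s (m+n)^{-p}) = O(K t(m+n)^{-p})$ on the truncated range (plus lower-order terms from the cubic and quartic pieces), using $\curv \le C_0(m+n)^{1/3}$ and $s \le t$ on the effective range. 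Combining: $-\I_{s'}^{w,z}(m,n) + \log 2 \le -\tfrac{2}{3}t^{3/2} - \one_{\{s\ge t\}}(s-t)\sqrt{t} + \log 2 + C_0 K t/(m+n)^p + C_0 t^2/(m+n)^{1/3}$, as desired.

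I expect the main obstacle to be the careful case analysis around which branch ($\hor$ or $\ver$) to use and verifying admissibility of the shifted test pair under the constraint \eqref{E53}: one must confirm that for \emph{at least one} of the two branches the required inequality ($u \le w$ resp.\ $v \ge z$) holds with the shifted-and-capped pair still close enough to the optimal $(\Minx_+^{s'}, \Minx_-^{s'})$ that the quadratic loss in the expansion is only $O(K t/(m+n)^p)$ and not worse. This is essentially a quantitative version of the observation in Lemma \ref{LMinxpm} that the optimal pair for the full rate $\I_s$ sits symmetrically around $\Min$; the boundary parameters $w,z$ being within $K(m+n)^{-1/3-p}$ of $\Min$ — a scale $(m+n)^{-p}$ smaller than the $(m+n)^{-1/3}$ width of the relevant window — is exactly what makes the correction lower order. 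Beyond this, everything is bookkeeping: plugging Lemma \ref{LShpMinBnd} bounds into the Lemma \ref{LLIBdEst}(b) error terms, choosing $\epsilon_0(\delta)$ small enough to keep all parameters in $(\epsilon_0', 1-\epsilon_0')$, and choosing $N_0(\delta, K, p, s_0)$ large enough that $K(m+n)^{-1/3-p} \ll \sqrt{s_0}/\curv$ so the shifted pair remains admissible for the chosen branch.
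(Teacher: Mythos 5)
Your overall strategy is the paper's: apply Lemma \ref{LRTBdUB}(c), lower bound the variational expression by plugging in an explicit one-sided test pair of width $\sqrt{t}/\curv$ anchored at (a capped version of) $\Min$, expand with Lemma \ref{LLIBdEst}(b), and use linearity of $\Ib^{u,v}_s$ in $s$ to handle the truncated range $s>t$. The error bookkeeping ($O(t^2/(m+n)^{1/3})$ from the quartic remainder, $O(Kt/(m+n)^p)$ from capping/shifting the pair by $|w-\Min|,|z-\Min|\le K(m+n)^{-1/3-p}$) is correct and matches the paper's \eqref{E59}--\eqref{E60}.

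There is, however, one genuine logical gap. You assert that ``by definition \eqref{EIbdRate} it suffices to lower bound \emph{one} of $\I^{w,\hor}_{s'}$ and $\I^{z,\ver}_{s'}$ --- whichever is associated to the direction the geodesic prefers,'' and you carry out the estimate for a single branch only. This is backwards: $\I^{w,z}_{s'}=\min\{\I^{w,\hor}_{s'},\I^{z,\ver}_{s'}\}$, so $-\I^{w,z}_{s'}=\max\{-\I^{w,\hor}_{s'},-\I^{z,\ver}_{s'}\}$, and the conclusion $-\I^{w,z}_{s'}+\log 2\le -\tfrac{2}{3}t^{3/2}-\cdots$ requires \emph{both} branches to be bounded below. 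Probabilistically this is forced by $\Gb^{w,z}=\max\{\Gb^{w}_{1,0},\Gb^{z}_{0,1}\}$: the $\log 2$ comes from a union bound over the two boundary processes, and each tail must be controlled separately. Relatedly, your admissibility discussion (``at least one of the two inclusions $u\le w$ or $v\ge z$ holds; pick that branch'') is also off for the symmetric pair you first propose: with $u=\Min+\sqrt{t}/\curv$ and $v=\Min-\sqrt{t}/\curv$, \emph{neither} inclusion holds under \eqref{E53}, precisely because $\sqrt{t}/\curv$ dominates $K(m+n)^{-1/3-p}$. The fix is the one the paper uses: for the horizontal branch take the one-sided pair $u=w'=\min\{w,\Min\}$, $v=w'-\sqrt{t}/\curv$, which is always admissible in \eqref{EIhv} and yields $\I^{w,\hor}_{\Shp+\curv s}\ge \tfrac{2}{3}t^{3/2}+\one_{\{s\ge t\}}(s-t)\sqrt{t}-(\text{errors})$; then repeat symmetrically with $z'=\max\{z,\Min\}$ for the vertical branch. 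Since \eqref{E53} constrains $w$ and $z$ identically, the second branch costs nothing new, but it must be done for the minimum in \eqref{EIbdRate} to deliver the claim.
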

\begin{proof}
Let $(m ,n) \in S_\delta$, and abbreviate $\Shp = \Shp(m, n)$, $\Min = \Min(m, n)$ and $\curv = \curv(m, n)$. Pick $w, z \in (0, 1)$ such that \eqref{E53} holds. Put $w' = \min \{w, \Min\}$ and $z' = \max \{z, \Min\}$. It follows from Lemma \ref{LShpMinBnd}(b) that there are constants $\eta_0 = \eta_0(\delta) > 0$, $\lambda_0 = \lambda_0(\delta) > 0$ and $N = N_0(\delta, K) > 0$ such that 
\begin{align}
w'-\lambda/2 \ge \eta_0 \quad \text{ and } \quad z'+\lambda/2 \le 1-\eta_0 \quad \text{ for } \lambda \in [0, \lambda_0] \text{ and } m, n \ge N_0. \label{E57}
\end{align}
Furthermore, by Lemma \ref{LShpMinBnd}(c), there exist $\epsilon_0 = \epsilon_0(\delta) > 0$ such that 
\begin{align}
\frac{K}{(m+n)^{1/3+p}} \le \frac{2\sqrt{s}}{\curv} \le \lambda_0 \quad \text{ for } s_0 \le s \le \epsilon_0 (m+n)^{2/3} \text{ and } m, n \ge N_0 \label{E58}
\end{align}
for sufficiently large $N_0 = N_0(\delta, K, p, s_0)$ depending also on $p$ and $s_0$. Above conditions on $m, n$ and $s$ are in force hereafter. Let $\epsilon \in [s_0(m+n)^{-2/3}, \epsilon_0]$. Consider the case $s \le \epsilon (m+n)^{2/3}$. Using definition \eqref{EIhv}, \eqref{E57}, \eqref{E58} and invoking Lemma \ref{LLIBdEst}(b), one finds that 
\begin{align}
\I_{\Shp+\curv s}^{w, \hor}(m, n) &\ge \Ib^{w', w'-\sqrt{s}/\curv}_{\Shp+\curv s}(m, n) \nonumber\\
&\ge s^{3/2}-\frac{\curv^3}{3}\{(w'-\Min)^3-(w'-\Min - \sqrt{s}/\curv)^3\}\nonumber\\
&-C_0(m+n)\{(w'-\Min)^4+(w'-\Min-\sqrt{s}/\curv)^4\} \nonumber\\
&\ge \frac{2s^{3/2}}{3}-\frac{\curv^3}{3}\{(w'-\Min)^3-(w'-\Min - \sqrt{s}/\curv)^3-(\sqrt{s}/\curv)^3\}\nonumber\\
&-C_0(m+n)\{(w'-\Min)^4+(\sqrt{s}/\curv)^4\} \nonumber\\
&\ge \frac{2s^{3/2}}{3}-C_0\bigg(\frac{Ks}{(m+n)^{p}}+\frac{s^2}{(m+n)^{1/3}}\bigg)\label{E59}
\end{align}
for some constant $C_0 = C_0(\delta) > 0$. The last step appeals to Lemma \ref{LShpMinBnd}(c) and uses the inequality $|w'-\Min| \le K(m+n)^{-1/3-p}$. When $s > \epsilon(m+n)^{2/3}$, applying \eqref{E59} with $S = \epsilon (m+n)^{2/3}$, one also has
\begin{align}
\I_{\Shp+\curv s}^{w, \hor}(m, n)  &\ge \Ib^{w', w'-\sqrt{S}/\curv}_{\Shp+\curv s}(m, n) = (s-S)\sqrt{S} + \Ib^{w', w'-\sqrt{S}/\curv}_{\Shp+\curv S}(m, n)\nonumber\\
&\ge (s-S)\sqrt{S} + \frac{2S^{3/2}}{3}-C_0\bigg(\frac{KS}{(m+n)^{p}}+\frac{S^2}{(m+n)^{1/3}}\bigg).\label{E60}
\end{align}
Bounds \eqref{E59}--\eqref{E60} can be put together as 
\begin{align*}
\I_{\Shp+\curv s}^{w, \hor}(m, n)  &\ge (s-t)\sqrt{S} + \frac{2t^{3/2}}{3}-C_0\bigg(\frac{Kt}{(m+n)^{p}}+\frac{t^2}{(m+n)^{1/3}}\bigg), 
\end{align*}
where $t = \min \{s, S\}$. The same bound also holds for $\I_{\Shp + \curv s}^{z, \ver}(m, n)$ via similar arguments. The result then follows from definition \eqref{EIbdRate} and Lemma \ref{LRTBdUB}. 
\end{proof}

\subsection{Lower bound for the l.m.g.f.}

Our development until now only required the ``$\le$'' half of Proposition \ref{PLMId}. The next lemma is the place in this section where the ``$\ge$'' half comes in. 

The restriction $\lambda \ge w-z$ in the statement is vacuous in the case $w \le z$. This permits the derivation of sharp lower bounds for right tail of the increment-stationary LPP from the lemma. The upper bound in Lemma \ref{LLMUB} is expected to be a sharp estimate up to a constant error for the l.m.g.f.\ of the bulk. 
This means that the conclusion of the next lemma should hold up to a constant in the most restrictive case $w = 1$ and $z = 0$. Therefore, it is plausible that the lemma essentially holds for any $\lambda \in \bbR_{\ge 0}$. 

\begin{lem}
\label{LLMBdLB}
Let $m, n \in \bbZ_{>0}$, $w \in (0, 1]$, $z \in [0, 1)$ and $\lambda \in \bbR_{\ge 0}$ such that $\lambda \ge w-z$. Then   
\begin{align*}
\log \E[e^{\lambda \Gb^{w, z}(m, n)}] \ge \L^{\lambda, w, z}(m, n).  
\end{align*}
\end{lem}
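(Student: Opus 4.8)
The plan is to derive the lower bound for $\log \E[e^{\lambda \Gb^{w, z}(m, n)}]$ from the ``$\ge$'' half of Proposition \ref{PLMId}, combined with monotonicity of the LPP process in the boundary parameters and the structure of $\L^{\lambda, w, z}$ as a maximum of the two directional quantities $\L^{\lambda, w, \hor}$ and $\L^{\lambda, z, \ver}$ from \eqref{ELbd2}. By symmetry (reflecting across the diagonal, which swaps the roles of $w$ and $z$, the horizontal and vertical axes, and interchanges parts (a) and (b) of Lemma \ref{LMinpm}), it suffices to show $\log \E[e^{\lambda \Gb^{w, z}(m, n)}] \ge \L^{\lambda, w, \hor}(m, n)$; the analogous bound with $\L^{\lambda, z, \ver}$ follows the same way, and the two combine to give the maximum.

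First I would dispose of the trivial cases: if $\lambda \ge w$ then $\L^{\lambda, w, \hor}(m, n) = \infty$ by \eqref{ELhv} (the infimum is over an empty set since $u < w \le \lambda$ forces $v = u - \lambda < 0$), but also $\Gb^{w, z}(m, n) \ge \Gb^w_{1,0}(m, n)$ which contains a sum of $m$ i.i.d.\ $\Exp(w)$ boundary weights along the bottom row, so $\E[e^{\lambda \Gb^{w, z}(m, n)}] = \infty$ when $\lambda \ge w$, and both sides are $+\infty$. Similarly, $\lambda = 0$ gives $0 = 0$. So assume $\lambda \in (0, w)$. Now recall from the proof of Lemma \ref{LLMBdUB}(a) the identity $\L^{\lambda, w, \hor}(m, n) = \Lb^{u^\star, u^\star - \lambda}(m, n)$ where $u^\star = \min\{w, \Min_+^\lambda(m, n)\}$, established there via Lemma \ref{LMinpm}(a). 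The key point is that $u^\star \in (0, w]$ and $u^\star - \lambda \in (0, 1)$, so $(u^\star, u^\star - \lambda)$ is an admissible pair of boundary parameters.

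The main step is then: since $u^\star \le w$ and $\lambda \ge w - z \ge u^\star - z$, i.e.\ $u^\star - \lambda \le z$, monotonicity of the LPP process in the boundary rates (smaller rate gives stochastically larger weights, hence larger last-passage time) gives $\Gb^{w, z}(m, n) \ge \Gb^{u^\star, u^\star - \lambda}(m, n)$ almost surely — here I use that the bottom-row rate $w$ is replaced by the smaller-or-equal $u^\star$ and the left-column rate $1 - z$ is replaced by the smaller-or-equal $1 - (u^\star - \lambda)$, both of which only increase the weights in the coupling \eqref{ECplBlkBd}. Consequently $\E[e^{\lambda \Gb^{w, z}(m, n)}] \ge \E[e^{\lambda \Gb^{u^\star, u^\star - \lambda}(m, n)}]$, and since $\lambda = u^\star - (u^\star - \lambda) = w' - z'$ with $w' = u^\star$, $z' = u^\star - \lambda$, the ``$\ge$'' direction of Proposition \ref{PLMId} (applied with parameters $w', z'$, noting $\lambda = w' - z'$ so the exponent matches exactly) yields $\log \E[e^{\lambda \Gb^{u^\star, u^\star - \lambda}(m, n)}] = \Lb^{u^\star, u^\star - \lambda}(m, n) = \L^{\lambda, w, \hor}(m, n)$. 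Chaining the inequalities completes the horizontal bound.

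I expect the main obstacle to be bookkeeping rather than conceptual: one must verify carefully that the hypothesis $\lambda \ge w - z$ is exactly what is needed to guarantee $u^\star - \lambda \le z$ (so that the left-column monotonicity goes the right way), since $u^\star \le w$ gives $u^\star - \lambda \le w - \lambda \le z$; and one must confirm $u^\star - \lambda > 0$, which holds because $u^\star \ge \Min_+^\lambda \wedge w$ and one checks from \eqref{EMinpm2}–\eqref{EMinpm3} that $\Min_+^\lambda > \lambda$ (equivalently $\Min_-^\lambda > 0$) whenever $\lambda \in (0,1)$, while if $u^\star = w$ then $u^\star - \lambda = w - \lambda > 0$ by assumption $\lambda < w$. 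The other point requiring care is making the monotonicity of $\Gb$ in the boundary parameters precise through the explicit coupling \eqref{ECplBlkBd}: decreasing $w$ to $u^\star$ scales up each bottom-row weight by $w/u^\star \ge 1$, and decreasing $z$ (increasing $1-z$ to $1-(u^\star-\lambda)$... wait, $u^\star - \lambda \le z$ means $1 - (u^\star - \lambda) \ge 1 - z$) scales up each left-column weight by $(1-z)/(1-(u^\star-\lambda)) \le 1$ — so actually the left-column weights may decrease; but this is harmless because one can instead use only that $\Gb^{w, z}(m, n) \ge \Gb^w_{1, 0}(m, n) \ge \Gb^{u^\star}_{1,0}(m,n)$ via bottom-row monotonicity alone, and then $\Gb^{u^\star}_{1,0}(m,n) = \Gb^{u^\star, z'}_{1,0}(m,n)$ for any $z' \in [0,1)$ by the convention stated after \eqref{Elppbd} (the left-column weights do not enter when the path starts at $(1,0)$), so we may freely take $z' = u^\star - \lambda$ and apply Proposition \ref{PLMId}. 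This route avoids the two-sided monotonicity subtlety entirely and is the version I would write up.
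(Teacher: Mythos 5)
There is a genuine gap in your main step: the monotonicity runs in the wrong direction. To get a \emph{lower} bound on $\E[e^{\lambda \Gb^{w,z}(m,n)}]$ you must compare $\Gb^{w,z}$ with a boundary process whose weights are pointwise \emph{smaller}, i.e.\ whose boundary rates are \emph{larger}: a pair $(u,v)$ with $u\ge w$ and $v\le z$. Your pair $(u^\star,u^\star-\lambda)$ with $u^\star=\min\{w,\Min_+^\lambda(m,n)\}\le w$ does the opposite on the bottom row: in the coupling \eqref{ECplBlkBd}, lowering the rate from $w$ to $u^\star$ scales each bottom-row weight up by $w/u^\star\ge 1$, so whenever $u^\star<w$ the claimed almost-sure comparison $\Gb^{w,z}(m,n)\ge\Gb^{u^\star,u^\star-\lambda}(m,n)$ fails (already on the axis, $\Gb^{u^\star,u^\star-\lambda}(m,0)$ strictly stochastically dominates $\Gb^{w,z}(m,0)$), and your fallback chain has the same defect: bottom-row monotonicity gives $\Gb^{u^\star}_{1,0}(m,n)\ge\Gb^{w}_{1,0}(m,n)$, not the reverse inequality you wrote. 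Moreover, even granting that chain, the last step misapplies Proposition \ref{PLMId}: that identity computes the l.m.g.f.\ of the two-boundary process $\Gb^{u^\star,u^\star-\lambda}=\Gb^{u^\star,u^\star-\lambda}_{0,0}$, not of the single-boundary $\Gb^{u^\star}_{1,0}$; for the latter one only gets the upper bound $\log\E[e^{\lambda\Gb^{u^\star}_{1,0}(m,n)}]\le\Lb^{u^\star,u^\star-\lambda}(m,n)$ (as in Lemma \ref{LLMBdUB}(a)), which points the wrong way for your purpose.

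The repair is to abandon the minimizer altogether, and this is exactly where the hypothesis $\lambda\ge w-z$ enters. A pair $(u,v)$ with $u\ge w$, $v\le z$, $u-v=\lambda$ exists precisely because $\lambda\ge w-z$, and within the admissible range for \eqref{ELhv} it forces $u=w$, $v=w-\lambda$ for the horizontal bound (and $u=z+\lambda$, $v=z$ for the vertical one). When $\lambda<\min\{w,1-z\}$ these are legitimate parameters in $(0,1)$; monotonicity in the coupling gives $\Gb^{w,z}(m,n)\ge\max\{\Gb^{w,w-\lambda}(m,n),\Gb^{z+\lambda,z}(m,n)\}$, Proposition \ref{PLMId} evaluates the two l.m.g.f.s exactly as $\Lb^{w,w-\lambda}(m,n)$ and $\Lb^{z+\lambda,z}(m,n)$, and these dominate $\L^{\lambda,w,\hor}(m,n)$ and $\L^{\lambda,z,\ver}(m,n)$ simply because $(w,w-\lambda)$ and $(z+\lambda,z)$ are admissible (boundary) points for the infima in \eqref{ELhv} (Lemma \ref{LMinpm}(a) with $u=w$, resp.\ (b) with $v=z$); no identification of the minimizer is needed. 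The case $\lambda\ge\min\{w,1-z\}$ is trivial since a single boundary weight $\Exp(w)$ or $\Exp(1-z)$ already makes the left side $+\infty$. This is the paper's proof; your overall architecture (trivial cases, horizontal/vertical halves, then the maximum) is fine, but the pair fed into the monotonicity and the Rains identity must be $(w,w-\lambda)$ and $(z+\lambda,z)$, not $(u^\star,u^\star-\lambda)$.
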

\begin{proof}
Begin with the case $\lambda < \min \{w, 1-z\}$. The first step below comes from the inequalities $w-\lambda \le z$ and $z+\lambda \ge w$, and the monotonicity of \eqref{Elppbd} in $w, z$. The subsequent steps use Proposition \ref{PLMId}, Lemma \ref{LMinpm}(a) and definitions \eqref{ELhv}--\eqref{ELbd2}.  
\begin{align*}
\log \E[e^{\lambda \Gb^{w, z}(m, n)}] &\ge \max \{\log \E[e^{\lambda \Gb^{w, w-\lambda}(m, n)}], \log \E[e^{\lambda \Gb^{z+\lambda, z}(m, n)}]\} \\
&= \max \{\Lb^{w, w-\lambda}(m, n), \Lb^{z+\lambda, z}(m, n)\} \\
&\ge \max \{\L^{\lambda, w, \hor}(m, n), \L^{\lambda, z, \ver}(m, n)\} = \L^{\lambda, w, z}(m, n). 
\end{align*}
In the case $\lambda \ge \min \{w, 1-z\}$, the bound also holds trivially since  
\begin{equation*}
\log \E[e^{\lambda \Gb^{w, z}(m, n)}]  \ge \max \{\log \E[e^{\lambda \w^{w}(1, 0)}], \log \E[e^{\lambda \w^{z}(0, 1)}]\} = \infty. \qedhere
\end{equation*}
\end{proof}



\subsection{Lower bound for the right tail}


We begin with developing some estimates towards the proof of the lower bound in Theorem \ref{TisLppMDRate}. 

The following lemma approximates the maximizers defined at \eqref{EMinxpm}. The reader might wonder why the proof is based on the implicit representation \eqref{EMinxpm} not on the exact formulas \eqref{EMinxpm2}. The reason is our wish to keep the argument more amenable to generalization to the settings (as in \cite{emra-janj-sepp-19-shp-long}) where the analogues of \eqref{EMinxpm} are in place yet exact formulas such as \eqref{EMinxpm2} are not available.   

\begin{lem}
\label{LMinxpmEst}
Fix $\delta > 0$. There exist constants $\epsilon_0 = \epsilon_0(\delta) > 0$ and $C_0 = C_0(\delta) > 0$ such that, with $s = \Shp(x, y) + \curv(x, y) t$,   
\begin{align*}
\bigg|\Minx_+^s(x, y) - \Min(x, y)-\frac{\sqrt{t}}{\curv(x, y)}\bigg| &\le \frac{C_0t}{(x+y)^{2/3}} \\ 
\bigg|\Minx_-^s(x, y) - \Min(x, y)+\frac{\sqrt{t}}{\curv(x, y)}\bigg| &\le \frac{C_0t}{(x+y)^{2/3}}
\end{align*}
whenever $(x, y) \in S_\delta$ and $t \in [0, \epsilon(x+y)^{2/3}]$. 
\end{lem}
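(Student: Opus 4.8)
The plan is to locate $\Minx_\pm^s(x,y)$ by a sandwiching argument: evaluate $t\mapsto\M^t(x,y)$ at the trial points $\Min\pm\sqrt t/\curv\pm C_0 t(x+y)^{-2/3}$, compare these values with the level $s=\Shp+\curv t$, and then use the strict monotonicity of $\M^\cdot(x,y)$ on each side of $\Min$ (Lemma~\ref{LMProp}(ii)--(iii)) together with the characterization \eqref{EMinxpm} of $\Minx_\pm^s$ to trap $\Minx_\pm^s$ between consecutive trial points. Throughout abbreviate $\Shp=\Shp(x,y)$, $\Min=\Min(x,y)$, $\curv=\curv(x,y)$; by Lemma~\ref{LShpMinBnd}(b)--(c) there are $\delta$-dependent constants with $\Min\in(\epsilon_1,1-\epsilon_1)$ and $c_1(x+y)^{1/3}\le\curv\le C_1(x+y)^{1/3}$ on $S_\delta$.

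The constants are chosen in the order $C_0=C_0(\delta)$ first, then $\epsilon_0=\epsilon_0(\delta)$. I would pick $\epsilon_0$ small enough that for all $t\in[0,\epsilon_0(x+y)^{2/3}]$ each trial point lies in the fixed compact interval $[\epsilon_1/2,1-\epsilon_1/2]$, which follows from $\sqrt t/\curv\le c_1^{-1}\sqrt{\epsilon_0}$ and $C_0t(x+y)^{-2/3}\le C_0\epsilon_0$. On that interval Lemma~\ref{LMShpId}(b), applied with $\epsilon=\epsilon_1/2$, gives $|\M^z(x,y)-\Shp-\curv^3(z-\Min)^2|\le C'(x+y)|z-\Min|^3$ for some $C'=C'(\delta)$.

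The computational core is a single expansion: with $z_\pm=\Min+\sqrt t/\curv\pm C_0t(x+y)^{-2/3}$,
\[
\curv^3(z_\pm-\Min)^2=\curv t\pm 2C_0\curv^2t^{3/2}(x+y)^{-2/3}+C_0^2\curv^3t^2(x+y)^{-4/3},
\]
and since $\curv^2$ is of order $(x+y)^{2/3}$ and $\curv^3$ of order $(x+y)$, the right side is $\curv t\pm c\,C_0t^{3/2}+E$ with $c=c(\delta)>0$, where $0\le E\le K_0C_0^2(x+y)^{-1/3}t^2$ in the ``$+$'' case and $|E|\le K_0C_0^2(x+y)^{-1/3}t^2$ in general, $K_0=K_0(\delta)$. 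For $t\le\epsilon_0(x+y)^{2/3}$ one has $(x+y)^{-1/3}t^2\le\sqrt{\epsilon_0}\,t^{3/2}$, and the cubic remainder from Lemma~\ref{LMShpId}(b) is at most a $\delta$-dependent multiple of $t^{3/2}$ because $|z_\pm-\Min|\le 2c_1^{-1}\sqrt t\,(x+y)^{-1/3}$ for $\epsilon_0$ small. Collecting these, $\M^{z_\pm}(x,y)=s\pm c\,C_0t^{3/2}+R_\pm$ with $|R_\pm|\le (C''+K_0C_0^2\sqrt{\epsilon_0})t^{3/2}$ for a $\delta$-dependent $C''$; taking $C_0$ large (relative to $C''/c$) and then $\epsilon_0$ small (so that $K_0C_0^2\sqrt{\epsilon_0}\le C''$) yields $\M^{z_-}(x,y)\le s\le\M^{z_+}(x,y)$.

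To conclude: if $t=0$ both sides of the asserted inequalities vanish by \eqref{EMinxpm3}. If $t>0$ then $s>\Shp$, and by Lemma~\ref{LMProp}(iii) $\Minx_+^s$ is the unique $w\in(\Min,1)$ with $\M^w(x,y)=s$. Since $z_+>\Min$ and $\M^{z_+}(x,y)\ge s$, monotonicity forces $\Minx_+^s\le z_+$; moreover $\Minx_+^s\ge z_-$ whenever $z_-\ge\Min$, while in the opposite case $z_-<\Min<\Minx_+^s$ gives $\Minx_+^s-\Min-\sqrt t/\curv>-\sqrt t/\curv\ge -C_0t(x+y)^{-2/3}$ directly. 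Either way $|\Minx_+^s-\Min-\sqrt t/\curv|\le C_0t(x+y)^{-2/3}$. The bound on $\Minx_-^s$ is obtained identically, comparing $\M$ at $\Min-\sqrt t/\curv\pm C_0t(x+y)^{-2/3}$ with $s$ and using that $\M^\cdot(x,y)$ is decreasing on $(0,\Min]$ (Lemma~\ref{LMProp}(ii)). I expect the only real care needed is the quantifier order together with the uniformity of the constant in Lemma~\ref{LMShpId}(b)---guaranteed once the trial points stay in a fixed compact subinterval of $(0,1)$---but there is no genuine analytic obstacle here.
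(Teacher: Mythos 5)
Your proof is correct, and it reaches the estimate by a genuinely different final mechanism than the paper. The paper works directly with the unknown $\Minx_\pm^s$: it first gets the a priori bound $|\Minx_\pm^s-\Min|\le\sqrt{\curv t/(x+y)}$ from the exact identity of Lemma \ref{LMShpId}(a) (which also localizes $\Minx_\pm^s$ in a compact subinterval of $(0,1)$), then substitutes $z=\Minx_\pm^s$ into the cubic expansion of Lemma \ref{LMShpId}(b) to get $|(\Minx_\pm^s-\Min)^2-t/\curv^2|\le C_0\sqrt{\epsilon}\,(\Minx_\pm^s-\Min)^2$, and finally extracts the first-order estimate by the conjugate trick $|a-b|=|a^2-b^2|/(a+b)$ with $a=\Minx_+^s-\Min$, $b=\sqrt t/\curv$. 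You instead never manipulate $\Minx_\pm^s$ algebraically: you evaluate $\M^\cdot(x,y)$ at explicit trial points straddling the predicted location, verify the sign of $\M^{z_\pm}-s$ via the same expansion, and invoke monotonicity on each side of $\Min$ to trap the root. Both arguments rest on Lemma \ref{LMShpId}(b) and a compactness/uniformity step, and both avoid the closed formula \eqref{EMinxpm2}, which is the paper's stated motivation; your sandwich version is arguably even more portable since it uses only the implicit characterization \eqref{EMinxpm} plus monotonicity, at the cost of a slightly more delicate ordering of the constants ($C_0$ before $\epsilon_0$), which you handle correctly, including the boundary case $z_-<\Min$.
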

\begin{proof}
By Lemmas \ref{LShpMinBnd}(a) and \ref{LMShpId}(a),
\begin{align}
\M^z(x, y) - \Shp = \frac{(z-\Min)^2\Shp}{z(1-z)} \ge (z-\Min)^2 (x+y) \quad \text{ for } x, y \in \bbR_{>0} \text{ and } z \in (0, 1), \label{E42}
\end{align}
where $\Shp = \Shp(x, y)$ and $\Min = \Min(x, y)$. Abbreviate $\Minx_{\pm}^s = \Minx_{\pm}^s(x, y)$ and $\curv = \curv(x, y)$. Let $\epsilon \in (0, 1/2)$ to be chosen below, and work with $t \le \epsilon (x+y)^{2/3}$. Since $\M^{\Minx_{\pm}^s}(x, y) = s = \Shp + \curv t$, setting $z = \Minx_{\pm}^s$ in \eqref{E42}, rearranging terms and using Lemma \ref{LShpMinBnd}(c), one obtains that 
\begin{align}
|\Minx_{\pm}^s-\Min| \le \sqrt{\frac{\curv t}{x+y}} \le \sqrt{\epsilon}c \quad \text{ for } (x, y) \in S_\delta \label{E43}
\end{align}
for some constant $c = c(\delta) > 0$. Lemma \ref{LShpMinBnd}(b) and \eqref{E43} imply that, with $\epsilon = \epsilon(\delta)$ chosen sufficiently small, 
\begin{align}
\Minx_{\pm}^s \in (c_1, 1-c_1) \quad \text{ for } (x, y) \in S_\delta \text{ for some constant } c_1 = c_1(\delta) > 0. \label{E44} 
\end{align}
By Lemma \ref{LMShpId}(b), there exists $C_0 = C_0(\delta) > 0$ such that 
\begin{align}
|\M^{z}(x, y)-\Shp-(z-\Min)^2 \curv^3| \le C_0 |z-\Min|^3(x+y) \quad \text{ for } (x, y) \in S_\delta \text{ and } z \in (c_1, 1-c_1). \label{E41}
\end{align}
In particular, with $z = \Minx_{\pm}^s$, \eqref{E41} gives 
\begin{align}
\bigg|(\Minx_{\pm}^s-\Min)^2 -\frac{t}{\curv^2}\bigg| \le C_0 |\Minx_{\pm}^s-\Min|^3 \le C_0 \sqrt{\epsilon} (\Minx_{\pm}^s-\Min)^2 \quad \text{ for } (x, y) \in S_\delta\label{E39}
\end{align} 
after dividing through by $\curv^3$ and using Lemma \ref{LShpMinBnd}(c) and \eqref{E43}. The second inequality in \eqref{E39} combined with the triangle inequality yields 
\begin{align}
(\Minx_{\pm}^s-\Min)^2 \le \frac{2t}{\curv^2} \label{E40}
\end{align}
upon choosing $\epsilon$ smaller if necessary to have $C_0\sqrt{\epsilon} \le 1/2$. Using \eqref{E40} in the first inequality in \eqref{E39}, that $\Minx_+^s \ge \Min$ and Lemma \ref{LShpMinBnd}(c) once more, one arrives at 
\begin{align*}
\bigg|\Minx_{+}^s-\Min-\frac{\sqrt{t}}{\curv}\bigg| = \frac{(\Minx_{+}^s-\Min)^2-\dfrac{t}{\curv^2}}{\Minx_{+}^s-\Min+\dfrac{\sqrt{t}}{\curv}} \le \frac{C_0\curv(\Minx_{+}^s-\Min)^3}{\sqrt{t}} \le \frac{C_0t}{\curv^2} \le \frac{C_0 t}{(x+y)^{2/3}} \quad \text{ for } (x, y) \in S_\delta. 
\end{align*}
This proves the claimed estimate for $\Minx_+^s$. The estimate for $\Minx_-^s$ follows similarly.  
\end{proof}

The next lemma estimates the functions from \eqref{EIhv}. 
\begin{lem}
\label{LIEst}
Fix $\delta > 0$ and $C > 0$. Let $(x, y) \in S_\delta$, and abbreviate $\Shp = \Shp(x, y)$, $\Min = \Min(x, y)$ and $\curv = \curv(x, y)$. Let $t \in \bbR_{>0}$ and $s = \Shp + \curv t$. There exist constants $c_0, C_0, K_0 > 0$ that depend only on $\delta$ and $C$ such that the following statements hold whenever $t \le c_0 (x+y)^{2/3}$. 
\begin{enumerate}[\normalfont (a)]
\item Let $w = \Min + \dfrac{u \sqrt{t}}{\curv}$ for some $u \in \bigg[-1+\dfrac{K_0 \sqrt{t}}{\curv}, C\bigg]$. Then 
\begin{align*}
\bigg|\I^{w, \hor}_{s}(x, y)-\frac{t^{3/2}}{3}\bigg(3\min \{u, 1\}-\min \{u^3, 1\}+2\bigg)\bigg| \le \frac{C_0t^2}{(x+y)^{1/3}}. 
\end{align*}
\item Let $z = \Min + \dfrac{v\sqrt{t}}{\curv}$ for some $v \in \bigg[-C, 1-\dfrac{K_0 \sqrt{t}}{\curv}\bigg]$. Then
\begin{align*}
\bigg|\I^{z, \ver}_{s}(x, y)-\frac{t^{3/2}}{3}\bigg(3\max \{v, -1\}-\max \{v^3, -1\}+2\bigg)\bigg| \le \frac{C_0t^2}{(x+y)^{1/3}}. 
\end{align*}
\end{enumerate}
\end{lem}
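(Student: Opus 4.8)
The strategy is to turn the suprema defining $\I^{w,\hor}_s$ and $\I^{z,\ver}_s$ in \eqref{EIhv} into \emph{exact} evaluations via Lemma \ref{LMinxpm}, to substitute the approximations $\Minx_\pm^s = \Min \pm \sqrt t/\curv + O(t/(x+y)^{2/3})$ from Lemma \ref{LMinxpmEst}, and then to expand $\Ib^{u,v}_s$ by Lemma \ref{LLIBdEst}(b), tracking all remainders against $C_0 t^2/(x+y)^{1/3}$. Lemma \ref{LShpMinBnd} is used throughout to replace $\curv$ by $(x+y)^{1/3}$ and to keep $\Min$, and hence $\Minx_\pm^s$ and $w,z$, inside a fixed compact subinterval of $(0,1)$; the hypothesis $t \le c_0(x+y)^{2/3}$ lets every higher-order term — powers $(\sqrt t/\curv)^4$ as well as the cross terms between $\sqrt t/\curv$ and the $O(t/(x+y)^{2/3})$ errors of Lemma \ref{LMinxpmEst} — be absorbed into $C_0 t^2/(x+y)^{1/3}$. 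I will prove (a); part (b) is then obtained from it by the substitution $(x,y,z) \mapsto (y,x,1-z)$ together with the identities $\I^{z,\ver}_s(x,y) = \I^{1-z,\hor}_s(y,x)$, $\Min(y,x) = 1-\Min(x,y)$, and the invariance of $\curv$ and $\Shp$ under transposition.

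The first step is to fix $K_0 = K_0(\delta,C)$ and $c_0 = c_0(\delta,C)$ so that, under $u \ge -1 + K_0\sqrt t/\curv$ and $t \le c_0(x+y)^{2/3}$, one has both $s = \Shp + \curv t > \Shp^{w,\hor}(x,y)$ and $\Minx_-^s(x,y) < w$. The first inequality is automatic when $u \ge 0$, since then $\Shp^{w,\hor} = \Shp$ by \eqref{EShph}; when $-1 + K_0\sqrt t/\curv \le u < 0$ it follows from Lemma \ref{LMShpId}(b), which gives $\Shp^{w,\hor} = \M^w = \Shp + \curv u^2 t + O(|u|^3 t^{3/2})$, whence $s - \Shp^{w,\hor} \ge \curv t(1-u^2) - C t^{3/2} \ge K_0 t^{3/2} - C t^{3/2} > 0$ once $K_0$ exceeds an absolute constant (using $1-u^2 = (1-u)(1+u) \ge K_0\sqrt t/\curv$). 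The second inequality is the same estimate applied to $\Minx_-^s - \Min \le -\sqrt t/\curv + C_0 t/(x+y)^{2/3}$ against $w - \Min = u\sqrt t/\curv$, again with $K_0$ large enough. Granting these, Lemma \ref{LMinxpm}(a) yields the identity $\I^{w,\hor}_s(x,y) = \Ib^{\min\{w,\,\Minx_+^s\},\ \Minx_-^s}_s(x,y)$.

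Now split on whether $w \ge \Minx_+^s(x,y)$. If so, the maximizing pair is $(\Minx_+^s, \Minx_-^s)$; writing $\Minx_\pm^s = \Min \pm \sqrt t/\curv + e_\pm$ with $|e_\pm| \le C_0 t/(x+y)^{2/3}$ and feeding this into Lemma \ref{LLIBdEst}(b), the error term $C_0(x+y)\{(\Minx_+^s-\Min)^4 + (\Minx_-^s-\Min)^4\}$ is $O(t^2/(x+y)^{1/3})$, while the main part $(\Minx_+^s - \Minx_-^s)\curv t - \tfrac{\curv^3}{3}\{(\Minx_+^s-\Min)^3 - (\Minx_-^s-\Min)^3\}$, expanded in $\sqrt t/\curv$ and $e_\pm$ (each mixed term carrying a factor $\curv t\,e_\pm$, $\curv^2\sqrt t\,e_\pm^2$, or $\curv^3 e_\pm^3$, all $O(t^2/(x+y)^{1/3})$), equals $2t^{3/2} - \tfrac{2}{3}t^{3/2} = \tfrac{4}{3}t^{3/2}$, which is $\tfrac{t^{3/2}}{3}(3\min\{u,1\}-\min\{u^3,1\}+2)$ for $u \ge 1$. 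If instead $w < \Minx_+^s$, the maximizing pair is $(w, \Minx_-^s)$ with $w - \Min = u\sqrt t/\curv$ \emph{exactly}; since $|u| \le \max\{1,C\}$ is bounded, $(x+y)(w-\Min)^4 = O(t^2/(x+y)^{1/3})$, and the same expansion gives the main part $(u+1)t^{3/2} - \tfrac{u^3+1}{3}t^{3/2} = \tfrac{t^{3/2}}{3}(3u - u^3 + 2)$, the target value for $u \le 1$.

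The point requiring care — and the one I expect to be the main obstacle — is matching this piecewise answer to the single formula containing $\min\{u,1\}$ and $\min\{u^3,1\}$ in the transition zone. The case boundary $w = \Minx_+^s$ is not $u = 1$ exactly but $u = 1 + O(\sqrt t/(x+y)^{1/3})$, because $\Minx_+^s - \Min = \sqrt t/\curv + O(t/(x+y)^{2/3})$. On this $O(\sqrt t/(x+y)^{1/3})$-neighbourhood of $u=1$ one has the elementary bound $|3\min\{u,1\}-\min\{u^3,1\} - (3u-u^3)| = |3(u-1)^2 + (u-1)^3| \le C(u-1)^2 \le Ct/(x+y)^{2/3}$, so both candidate main terms \emph{and} the target formula agree to within $O(t^{3/2}\cdot t/(x+y)^{2/3}) = O(t^2/(x+y)^{1/3})$; hence, whichever case actually occurs near the boundary, the claimed bound holds there too. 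Collecting the regimes $u \ge 1$, $u \le 1$, and the overlap, and replacing the $O(\cdot)$ by an explicit $C_0 t^2/(x+y)^{1/3}$ with $C_0 = C_0(\delta,C)$, completes the proof of (a); part (b) then follows from (a) by the reflection described in the first paragraph.
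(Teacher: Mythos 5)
Your proof of (a) is essentially the paper's argument: verify $s>\Shp^{w,\hor}(x,y)$ for $K_0$ large (the paper's \eqref{E46} is exactly your computation), invoke Lemma \ref{LMinxpm}(a) to get $\I^{w,\hor}_s(x,y)=\Ib^{\min\{w,\Minx_+^s\},\,\Minx_-^s}_s(x,y)$, approximate $\Minx_\pm^s$ by Lemma \ref{LMinxpmEst}, and expand via Lemma \ref{LLIBdEst}(b). The only difference is presentational: instead of your case split on $w\gtrless\Minx_+^s$ plus the transition-zone patch near $u=1$, the paper absorbs the dichotomy into the single estimate $\bigl|\min\{w,\Minx_+^s\}-\Min-\tfrac{\sqrt{t}}{\curv}\min\{u,1\}\bigr|\le C_1t(x+y)^{-2/3}$ (using $|\min\{a,b\}-\min\{a,b'\}|\le|b-b'|$), which produces the $\min\{u,1\}$, $\min\{u^3,1\}$ terms uniformly; your reconciliation of the two regimes via $|3\min\{u,1\}-\min\{u^3,1\}-(3u-u^3)|\le C(u-1)^2$ on the $O(\sqrt{t}/(x+y)^{1/3})$-neighbourhood of $u=1$ is correct and gives the same error budget, so this is a cosmetic rather than substantive deviation.

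For (b) the paper only says the proof is ``similar,'' and your reflection $(x,y,z)\mapsto(y,x,1-z)$, together with $\I^{z,\ver}_s(x,y)=\I^{1-z,\hor}_s(y,x)$ and $\Min(y,x)=1-\Min(x,y)$, is a legitimate and in fact cleaner route. But you should be explicit about what it yields: since $\min\{-v,1\}=-\max\{v,-1\}$ and $\min\{-v^3,1\}=-\max\{v^3,-1\}$, the reflection of (a) gives the approximation $\tfrac{t^{3/2}}{3}\bigl(2-3\max\{v,-1\}+\max\{v^3,-1\}\bigr)$, not the expression $\tfrac{t^{3/2}}{3}\bigl(3\max\{v,-1\}-\max\{v^3,-1\}+2\bigr)$ printed in the statement. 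The printed signs cannot be right: as $v\uparrow 1$ one has $z\uparrow\Minx_+^s$, so $s\downarrow\Shp^{z,\ver}(x,y)$ and $\I^{z,\ver}_s(x,y)\to 0$, while the printed formula tends to $\tfrac{4}{3}t^{3/2}$; likewise for $v\le-1$ the value must be the unconstrained $\tfrac43 t^{3/2}$, which your reflected formula gives and the printed one does not. So your argument proves the (evidently intended) corrected statement, but as written you assert that the reflection ``gives (b)'' without noting the sign discrepancy; flag it, or restate (b) with the corrected expression, rather than leaving an apparent mismatch between what you prove and what you claim.
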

\begin{proof}
We only prove (a) since the proof of (b) is similar. By Lemmas \ref{LShpMinBnd}(b) and \ref{LMShpId}(b), 
\begin{align}
|\M^z(x, y)-\Shp-\curv^3 (z-\Min)^2| \le C_0(x+y)|z-\Min|^3 \quad \text{ for } z \in (0, 1) \text{ with } |z-\Min| \le \epsilon \label{E47}
\end{align}
for some constants $C_0 = C_0(\delta)$ and $\epsilon = \epsilon(\delta) > 0$. Let $K_0 = K_0(\delta, C) > 0$ be an unspecified constant for the moment. By virtue of Lemma \ref{LShpMinBnd}(c), one can pick a constant $c_0 = c_0(\delta, C) > 0$ such that the interval for $u$ is nonempty and $|w-\Min| \le \epsilon$ for $t \le c_0 (x+y)^{2/3}$. Work with such $t$ below. The next step is to ensure that $s$ represents a right tail deviation, namely, 
\begin{align}
s > \Shp^{w, \hor}(x, y). \label{E48}
\end{align}
This is clear in the case $u \ge 0$ in view of \eqref{EShph}. Consider the case $u \in (-1, 0)$ now. Then the estimate \eqref{E47} and Lemma \ref{LShpMinBnd}(c) give
\begin{align}
\Shp^{w, \hor}(x, y) &= \M^{w}(x, y) \le \Shp + \curv^3 (w-\Min)^2 + C_0(x+y)|w-\Min|^3 \nonumber\\
&= \Shp + \curv t u^2 + \frac{C_0(x+y) |u|^3 t^{3/2}}{\curv^3} \le \Shp + \curv t u^2 + C_0 t^{3/2} \nonumber \\
&\le \Shp + \curv t - c (x+y)^{1/3}(u+1) + C_0t^{3/2}\nonumber \\
&\le \Shp + \curv t + t^{3/2}(-cK_0+C_0) \nonumber\\
&= s  +  t^{3/2}(-cK_0+C_0) < s, \label{E46}
\end{align}
for some constant $c = c(\delta, C) > 0$, which does not depend on the choice of $K_0$. The final inequality in \eqref{E46} then holds provided that $K_0$ is chosen sufficiently large. 

By \eqref{E48} and Lemma \ref{LMinxpm}(a), one has 
\begin{align}
\I_s^{w, \hor}(x, y) = \Ib_s^{\min \{w, \Minx_+^s\}, \Minx_-^s}(x, y), \label{E49}
\end{align}
where $\Minx_{\pm}^s = \Minx_{\pm}^s(x, y)$. Appealing to Lemma \ref{LMinxpmEst}, one obtains the estimates
\begin{align}
\bigg|\min \{w, \Minx_+^s\}-\Min-\frac{\sqrt{t}}{\curv}\min \{u, 1\}\bigg| \le \frac{C_1t}{(x+y)^{2/3}} \quad \text{ and } \quad \bigg|\Minx_-^s-\Min+\frac{\sqrt{t}}{\curv}\bigg| \le \frac{C_1t}{(x+y)^{2/3}}. \label{E50}
\end{align}
for some constant $C_1 = C_1(\delta, C) > 0$. For $\lambda = \min \{w, \Minx_+^s\}-\Minx_-^s$, \eqref{E50} gives
\begin{align}
\bigg|\lambda - \frac{\sqrt{t}(\min \{u, 1\} + 1)}{\curv}\bigg| \le \frac{C_1t}{(x+y)^{2/3}}. \label{E51}
\end{align}
Consequently, after choosing $c_0$ smaller if necessary, $\lambda \ge 0$ and the second estimate in Lemma \ref{LLIBdEst}(b) applies to the right-hand side of \eqref{E49}. Hence, 
\begin{align}
\bigg|\I_s^{w, \hor}(x, y)-\lambda \curv t + \frac{\curv^3}{3}\bigg\{(\min\{w, \Minx_+^s\}-\Min)^3-(\Minx_-^s-\Min)^3\bigg\}\bigg| \le C_2 (x+y)\lambda^4 \label{E52}
\end{align}
for some constant  $C_2 = C_2(\delta, C) > 0$. Utilizing the bounds from \eqref{E50} and Lemma \ref{LShpMinBnd}(c) in \eqref{E52} and rearranging terms via the triangle inequality, one arrives at the bound in (a). 
\end{proof}

The final lemma of this section establishes the lower bound in Theorem \ref{TisLppMDRate}. 
\begin{lem}
\label{LLppBdRateLB}
Fix $\delta > 0$, $K \ge 0$ and $p > 0$. There exist constants $C_0 = C_0(\delta) > 0$, $\epsilon_0 = \epsilon_0(\delta) > 0$, $N_0 = N_0(\delta, K, p) > 0$, and absolute constants $s_0 > 0$ and $\eta > 0$ such that 
\begin{align*}
\log \P\{\Gb^{w, z}(m, n) \ge \Shp(m, n) + \curv(m, n) s\} \ge -\frac{2s^{3/2}}{3} - \eta s -\frac{C_0s^2}{(m+n)^{1/3}} 
\end{align*}
whenever $(m, n) \in S_\delta \cap \bbZ_{\ge N_0}^2$, $s \in [s_0, \epsilon_0(m+n)^{2/3}]$ and $w, z \in (0, 1)$ with 
\begin{align}\max \{|w-\Min(m, n)|, |z-\Min(m, n)|\} \le K(m+n)^{-1/3-p}.\label{E61}\end{align}
\end{lem}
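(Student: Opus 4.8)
The plan is to prove this by an exponential change of measure, fueled by the two‑sided control on the cumulant generating function $\Lambda(\theta):=\log\E[e^{\theta X}]$ of $X:=\Gb^{w,z}(m,n)$: Lemma \ref{LLMBdLB} gives $\Lambda(\theta)\ge\L^{\theta,w,z}(m,n)$ for $\theta\ge w-z$, while Lemma \ref{LLMBdUB}(c) gives $\Lambda(\theta)\le\L^{\theta,w,z}(m,n)+\log 2$ for all $\theta\ge 0$. Abbreviate $\Shp,\Min,\curv$ as usual and put $a=\Shp+\curv s$. The exact identity of Proposition \ref{PLMId} is of no direct use here: it pins $\Lambda$ only at $\theta=w-z=O((m+n)^{-1/3-p})$, far below the tilt of order $\sqrt s/\curv$ needed to reach $a$. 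Given a tilt $\lambda$ of order $\sqrt s/\curv$ — so automatically $\lambda\ge w-z$ once $m+n\ge N_0(\delta,K,p)$ and $s\ge s_0$, and $\lambda,2\lambda$ stay in $(0,\min\{w,1-z\})$ once $\epsilon_0=\epsilon_0(\delta)$ is small (Lemma \ref{LShpMinBnd}) — let $\tilde\P$ be the tilted law $d\tilde\P/d\P=e^{\lambda X}/\E[e^{\lambda X}]$. For any $b>a$,
\begin{align*}
\P\{X\ge a\}\ \ge\ \P\{a\le X\le b\}\ =\ \E[e^{\lambda X}]\,\E_{\tilde\P}\!\left[e^{-\lambda X}\one\{a\le X\le b\}\right]\ \ge\ e^{\L^{\lambda,w,z}(m,n)-\lambda b}\,\tilde\P\{a\le X\le b\},
\end{align*}
so $\log\P\{X\ge a\}\ge\L^{\lambda,w,z}(m,n)-\lambda b+\log\tilde\P\{a\le X\le b\}$, and it remains to choose $\lambda$ and $b$ well.

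Introduce the scale $M:=K_1\,\curv\,s^{1/4}$ with $K_1$ a large \emph{absolute} constant; this is legitimate because, once $\epsilon_0=\epsilon_0(\delta)$ is small, the deviation is local, and on the relevant range $\partial_\theta^2\L^{\theta,w,z}(m,n)$ is comparable to $\curv^2\sqrt s$ with absolute constants, as one reads off from the curvature identity \eqref{Ecurv2} and Lemma \ref{LMProp}, so $M^2/(\curv^2\sqrt s)=K_1^2$. Set $a':=a+10M=\Shp+\curv s'$ with $s'=s+10M/\curv=s+O(s^{1/4})$, and let $\lambda$ be the maximizer of $\theta\mapsto\theta a'-\L^{\theta,w,z}(m,n)$; by Lemma \ref{LConv}, Lemma \ref{LMinxpmEst} and the near‑characteristic hypothesis \eqref{E61}, $\lambda$ is of order $\sqrt s/\curv$, one has $\partial_\theta\L^{\lambda,w,z}(m,n)=a'$, and $\L^{\cdot,w,z}(m,n)$ coincides near $\lambda$ with a single smooth convex branch of $\max\{\L^{\cdot,w,\hor},\L^{\cdot,z,\ver}\}$ (the two branches meet only at a $\theta$ of size $O(|w-z|)$). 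With $b:=a+15M$, Chernoff bounds under $\tilde\P$ give, for any $\mu,\nu>0$,
\begin{align*}
\tilde\P\{X<a\}\le 2\,e^{\,\mu a+\L^{\lambda-\mu,w,z}(m,n)-\L^{\lambda,w,z}(m,n)},\qquad \tilde\P\{X>b\}\le 2\,e^{\,-\nu b+\L^{\lambda+\nu,w,z}(m,n)-\L^{\lambda,w,z}(m,n)}.
\end{align*}
Feeding in the second‑order expansion of the active branch around $\lambda$ (using $\partial_\theta\L^{\lambda,w,z}(m,n)=a'$ and the curvature bound) and choosing $\mu,\nu$ of order $M/(\curv^2\sqrt s)$, each exponent is at most $-cK_1^2$ with $c$ absolute, hence at most $-\log 8$ once $K_1$ is large; therefore $\tilde\P\{a\le X\le b\}\ge\tfrac12$.

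For the assembly, note that for every $\theta$, $\theta a'-\L^{\theta,w,z}(m,n)=\min\{\theta a'-\L^{\theta,w,\hor}(m,n),\,\theta a'-\L^{\theta,z,\ver}(m,n)\}\le\I^{w,\hor}_{a'}(m,n)$ by Lemma \ref{LConv}(a), so at the maximizer $\lambda$ one gets $\L^{\lambda,w,z}(m,n)\ge\lambda a'-\I^{w,\hor}_{a'}(m,n)$. Plugging $a'=a+10M$, $b=a+15M$ and $\log\tilde\P\{a\le X\le b\}\ge-\log 2$ into the first display yields
\begin{align*}
\log\P\{X\ge a\}\ \ge\ -\I^{w,\hor}_{a'}(m,n)-5\lambda M-\log 2.
\end{align*}
Finally Lemma \ref{LIEst}(a), applied with $t=s'$ and $u=(w-\Min)\curv/\sqrt{s'}=O(K(m+n)^{-p})$ — so the bracket $3\min\{u,1\}-\min\{u^3,1\}+2$ equals $2+O(K(m+n)^{-p})$ — gives $\I^{w,\hor}_{a'}(m,n)\le\tfrac{2}{3}(s')^{3/2}+O(Ks'(m+n)^{-p})+O((s')^2(m+n)^{-1/3})$; combined with $\tfrac{2}{3}(s')^{3/2}=\tfrac{2}{3}s^{3/2}+O(s^{3/4})$ and $\lambda M$ of order $s^{3/4}$, this produces
\begin{align*}
\log\P\{X\ge a\}\ \ge\ -\tfrac{2}{3}\,s^{3/2}-C(\delta)\bigl(s^2(m+n)^{-1/3}+Ks(m+n)^{-p}\bigr)-C_1\bigl(s^{3/4}+1\bigr),
\end{align*}
with $C_1$ absolute. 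Since $C_1(s^{3/4}+1)\le\eta s$ for all $s\ge s_0$ with $s_0,\eta$ suitable absolute constants, and $C(\delta)Ks(m+n)^{-p}\le\eta s$ once $m+n\ge N_0(\delta,K,p)$, while $C(\delta)s^2(m+n)^{-1/3}$ is the allowed $C_0s^2/(m+n)^{1/3}$ term, the claimed bound follows after renaming constants.

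The main obstacle is the window‑mass step. The $\log 2$ ambiguity in $\Lambda$ blocks any clean ``tilted mean $=$ target, tilted variance $\ll$ gap$^2$'' argument, so one is forced to (i) pre‑shift the target from $a$ to $a'=a+10M$, which converts the two‑sided Chernoff concentration of $X$ under $\tilde\P$ into genuine one‑sided mass in $[a,a+15M]$, and (ii) tune $M$ and the Chernoff parameters $\mu,\nu$ to the explicit curvature $\partial_\theta^2\L\sim\curv^2\sqrt s$ of the dual function, keeping careful track that this curvature has absolute constants on the local range $|\theta|\lesssim\sqrt s/\curv$ once $\epsilon_0=\epsilon_0(\delta)$ is small. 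The pre‑shift costs $\lambda M$ of order $s^{3/4}$, which is exactly the source of the $\eta s$ term (and of the need for $s$ to be bounded below by an absolute constant) rather than a sharp $-\tfrac{2}{3}s^{3/2}$; this is also the only place in the section where the ``$\ge$'' half of Proposition \ref{PLMId}, through Lemma \ref{LLMBdLB}, is used. Verifying smoothness of the active branch near $\lambda$ and that $\lambda,\lambda\pm\mu$ remain in $(0,\min\{w,1-z\})$ is routine from Lemmas \ref{LMProp}, \ref{LShpMinBnd} and \ref{LConv}.
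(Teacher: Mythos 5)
Your overall strategy (invert the Chernoff bound using the two–sided sandwich $\L^{\theta,w,z}\le\Lambda(\theta)\le\L^{\theta,w,z}+\log 2$ together with the already-proved right-tail upper bound) is in the right spirit, and your assembly step, your verification that $\lambda\ge w-z$, and your use of Lemma \ref{LIEst}(a) are fine. The gap is in the window-mass step, and it is concentrated in one assertion: that near the tilt $\lambda$ the function $\theta\mapsto\L^{\theta,w,z}(m,n)=\max\{\L^{\theta,w,\hor},\L^{\theta,z,\ver}\}$ coincides with a single smooth convex branch because ``the two branches meet only at a $\theta$ of size $O(|w-z|)$''. That is false. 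In the relevant range the two branches are $\Lb^{w,w-\theta}(m,n)$ and $\Lb^{z+\theta,z}(m,n)$, and by the expansion of Lemma \ref{LLIBdEst}(b) their difference is, to leading order, $\curv^3\theta(A+B)(A-B-\theta)$ plus a cubic-asymmetry term of order $(m+n)\theta^4$, where $A=w-\Min$, $B=z-\Min$. Taking for instance $w=z=\Min+\epsilon$ with $0<\epsilon\le K(m+n)^{-1/3-p}$ (so $|w-z|=0$), the active branch switches at a crossing $\theta^*\asymp\sqrt{\epsilon}$, where the convex function $\L^{\cdot,w,z}$ has a kink whose derivative jump $|\M^{w-\theta^*}-\M^{z+\theta^*}|$ is of order $(m+n)\epsilon^{3/2}$. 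Choosing $s\asymp\epsilon(m+n)^{2/3}$ (admissible for $K>0$, $p<1/3$ and $m+n$ large) puts $\theta^*$ exactly at the scale of your $\lambda\asymp\sqrt{s}/\curv$, and the jump is then of order $s^{3/2}$ --- the same order as the main term. At or near such a kink the first-order condition $\partial_\theta\L^{\lambda,w,z}=a'$ does not hold (only a subgradient condition does), the second-order expansion ``of the active branch around $\lambda$'' is unavailable on $[\lambda-\mu,\lambda+\nu]$, and the extra term the kink injects into your left-tail Chernoff exponent can be as large as $\mu\cdot s^{3/2}$, which for part of the admissible parameter range dwarfs the margin $\mu M\asymp K_1^2$ that you rely on to get $\tilde\P\{a\le X\le b\}\ge\tfrac12$.

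The obvious patches do not close this. If you replace $\L^{\lambda-\mu,w,z}$ by the single smooth branch $\L^{\lambda-\mu,w,\hor}$, you must pay the inter-branch gap, which on the relevant scale is of order $Ks(m+n)^{-p}+s^2(m+n)^{-1/3}$; this enters the window-mass exponent additively (not multiplied by $\mu$), so it cannot be beaten by an absolute constant $K_1^2$, and enlarging $M$ to compensate makes the pre-shift cost $\lambda M$ exceed the allowed error $\eta s+C_0 s^2(m+n)^{-1/3}$ for midrange $s$. This is precisely the difficulty the paper's proof is designed to avoid: there one never needs pointwise derivative control of $\L^{\cdot,w,z}$ or of $\Lambda$. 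Instead, one fixes $\lambda=\min\{w,\Minx_+^{\Shp+\curv s}\}-\Minx_-^{\Shp+\curv s}$, uses Lemma \ref{LConv}(a) and Lemma \ref{LLMBdLB} to get $\exp\{-\I^{w,z}_{\Shp+\curv s}\}\le e^{-\lambda(\Shp+\curv s)}\E[e^{\lambda\Gb^{w,z}}]$, writes the m.g.f.\ via Fubini as $1+\lambda\int_0^\infty\P\{\Gb^{w,z}\ge t\}e^{\lambda t}\,\dd t$, and then localizes the integral to a window $[s-\eta\sqrt{s},\,s+\eta\sqrt{s}]$ using the upper-tail bound of Lemma \ref{LRTBdUB2} and the Gaussian-type integral estimates of Lemma \ref{LIntEst}; the window contribution is then bounded by the sought probability, and the $\eta s$ loss comes from the window width. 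If you want to salvage your tilted-measure scheme, you would need a genuinely new argument for the window mass that tolerates the non-smooth max structure (for example, splitting according to which of $\Gb^{w}_{1,0}$, $\Gb^{z}_{0,1}$ attains the maximum), and as written the proof is incomplete.
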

\begin{proof}
Let $\epsilon_0(\delta) > 0$, $N_0 = N_0(\delta, K, p, \eta) > 0$, $s_0 > 0$ and $\eta > 0$ denote constants to be specified below. Let $(m, n) \in S_\delta \cap \bbZ_{\ge N_0}^2$ and $w, z \in (0, 1)$ satisfy \eqref{E61}. Let 
\begin{align}
s \in  [s_0, \epsilon_0(m+n)^{2/3}\hspace{0.9pt}], \label{E68}
\end{align}
after taking $N_0$ large enough to ensure that the above interval is nonempty. 

First, some preliminaries. Remembering definition \eqref{EIbdRate}, due to the symmetry, it causes no loss in generality below to assume that 
\begin{align}
\I_{\Shp + \curv s}^{w, z} = \I_{\Shp + \curv s}^{w, \hor}. \label{E69}
\end{align}
By virtue of Lemma \ref{LMinxpmEst}, after possibly shrinking $\epsilon_0$, 
\begin{align}
\max \bigg\{\bigg|\Minx_+^{\Shp+\curv s}-\frac{\sqrt{s}}{\curv}\bigg|, \bigg|\Minx_-^{\Shp+\curv s} + \frac{\sqrt{s}}{\curv}\bigg|\bigg\} \le \frac{cs}{(m+n)^{2/3}} \label{E100}
\end{align}
for some constant $c = c(\delta) > 0$. Also, by Lemma \ref{LShpMinBnd}(c), given any constant $C = C(\delta) > 0$, 
\begin{align}
\frac{\sqrt{s}}{\curv} \ge C\max \bigg\{\frac{K}{(m+n)^{1/3+p}}, \frac{s}{(m+n)^{2/3}}\bigg\} \quad \label{E97}
\end{align}
for sufficiently small $\epsilon_0$ and sufficiently large $N_0$. With    
\begin{align}
\lambda = \min \{w, \Minx_+^{\Shp+\curv s}\}-\Minx_-^{\Shp + \curv s}, \label{E88}
\end{align}
one obtains from \eqref{E61}, \eqref{E100} and \eqref{E97} that
\begin{align}
\bigg|\lambda - \frac{\sqrt{s}}{\curv}\bigg| &\le |\min \{w-\Min, \Minx_+^{\Shp + \curv s}-\Min\}| + \bigg|\Minx_-^{\Shp+\curv s}-\Min + \frac{\sqrt{s}}{\curv}\bigg| \nonumber \\
&\le \frac{K}{(m+n)^{1/3+p}} + \frac{cs}{(m+n)^{2/3}}. \label{E63}
\end{align}
The minimum above is attained by the first term because  
\begin{align}
\Minx_+^{\Shp+\curv s} - \Min \ge \frac{\sqrt{s}}{\curv} - \frac{cs}{(m+n)^{2/3}} \ge \frac{\sqrt{s}}{2\curv} \ge \frac{K}{(m+n)^{1/3+p}} \ge w-\Min, \label{E98}
\end{align}
where the second and third inequalities hold provided that $C \ge 2\max \{c, 1\}$ in \eqref{E97}. Putting together \eqref{E61}, \eqref{E97} and \eqref{E63} reveals that 
\begin{align}
\lambda \ge \frac{\sqrt{s}}{2\curv} \ge \frac{2K}{(m+n)^{1/3+p}} \ge w-z \quad \text{ for small enough $\epsilon_0$ and large enough $N_0$.} \label{E62.1}
\end{align}
Putting together \eqref{E61}, \eqref{E97} and \eqref{E63} reveals that 
\begin{align}
\lambda \ge \frac{\sqrt{s}}{2\curv} \ge \frac{2K}{(m+n)^{1/3+p}} \ge w-z \quad \text{ for small enough $\epsilon_0$ and large enough $N_0$.} \label{E62}
\end{align}

The derivation beginning with \eqref{E70} below uses \eqref{E69} in the first step. The second step applies Lemma \ref{LConv}(a) recalling the choice \eqref{E88} of $\lambda$ and verifying from definition \eqref{EShph}, Lemmas \ref{LShpMinBnd} and \ref{LMShpId}(a), and assumptions \eqref{E61} and \eqref{E68} that 
\begin{align}
\Shp^{w, \hor} \le \M^w \le \Shp + c(w-\Min)^2(m+n) \le \Shp + cK^2 (m+n)^{1/3-2p} < \Shp + \curv s_0 \le \Shp + \curv s \nonumber
\end{align}
for some constant $c = c(\delta) > 0$ and sufficiently large $N_0$. The first line ends with using definition \eqref{ELbd2}. The second inequality below is by virtue of Lemma \ref{LLMBdLB} and condition \eqref{E62}. The subsequent step rewrites the expectation as an integral via Fubini's theorem. Putting $t_0 = s_0/4$, the last inequality comes from breaking the preceding integration at $\Shp + \curv t_0$ into two parts, and then trivially bounding the probability in the first part. The final step computes the first integral and changes the variable in the second. 
\begin{align}
\exp\{-\I_{\Shp+\curv s}^{w, z}\} &= \exp\{-\I_{\Shp+\curv s}^{w, \hor}\} = \exp\{\L^{\lambda, w, \hor}-\lambda \Shp - \lambda \curv s\} \le \exp\{\L^{\lambda, w, z}-\lambda \Shp - \lambda \curv s\} \label{E70}\\
&\le e^{-\lambda \Shp-\lambda \curv s}\E[e^{\lambda \Gb^{w, z}}] = e^{-\lambda \Shp-\lambda \curv s}\bigg(1 + \lambda \int_0^\infty \P\{\Gb^{w, z} \ge t\} e^{\lambda t}\dd t\bigg)\nonumber \\
&\le e^{-\lambda \Shp-\lambda \curv s}\bigg(1 + \lambda \int_0^{\Shp + \curv t_0} e^{\lambda t} \dd t + \lambda \int_{\Shp + \curv t_0}^\infty \P\{\Gb^{w, z} \ge t\} e^{\lambda t} \dd t\bigg)\nonumber\\
&= e^{\lambda \curv (t_0-s)} + \lambda \curv \int_{t_0}^\infty \P\{\Gb^{w, z} \ge \Shp + \curv t\} e^{\lambda \curv (t-s)}\dd t.\label{E71}
\end{align}

The far left-hand side in \eqref{E70} can be bounded from below as follows: With $u = \dfrac{(w-\Min)\curv}{\sqrt{s}}$,  
\begin{align}
|u| \le \frac{c_0K}{\sqrt{s}(m+n)^p} \le \frac{c_0K}{\sqrt{s_0}(m+n)^p} \quad \text{ for some constant } c_0 = c_0(\delta) > 0\label{E89}
\end{align}
on account of \eqref{E61}, \eqref{E68} and Lemma \ref{LShpMinBnd}(c). Using \eqref{E89} and appealing to Lemma \ref{LShpMinBnd}(c) and \eqref{E68} yield 
\begin{align}
u \in \bigg[-1+\frac{K_0\sqrt{s}}{\curv}, 1\bigg] \quad \text{for small enough $\epsilon_0$ and large enough $N_0$,} \label{E120}
\end{align}
where $K_0 = K_0(\delta) > 0$ denotes the constant from Lemma \ref{LIEst}. Applying Lemma \ref{LIEst}(a) and using the triangle inequality along with \eqref{E69}, \eqref{E89}, \eqref{E120} and Lemma \ref{LShpMinBnd}(c) then lead to 
\begin{align}
\label{E90}
\begin{split}
\bigg|\I_{\Shp + \curv s}^{w, z} + \frac{2s^{3/2}}{3}\bigg| &\le \bigg|\I_{\Shp + \curv s}^{w, z} +\frac{s^{3/2}}{3}\bigg(3u-u^3+2\bigg)\bigg| + \frac{c_0 Ks}{(m+n)^p} \\
&\le \frac{c_0 Ks}{(m+n)^p} + \frac{C_0s^2}{(m+n)^{1/3}} 
\end{split}
\end{align}
after possibly decreasing $\epsilon_0$ and increasing $C_0, c_0$ and $N_0$. From \eqref{E90}, one gets 
\begin{align}
\exp\{-\I_{\Shp+\curv s}^{w, z}\}  \ge \exp\bigg\{-\frac{2s^{3/2}}{3}-\frac{c_0 Ks}{(m+n)^p}- \frac{C_0s^2}{(m+n)^{1/3}}\bigg\}. \label{E65}
\end{align}
The contribution from the first term  to \eqref{E71} can be bounded by means of \eqref{E97} and the lower bound in \eqref{E68}: For large enough $N_0$, 
\begin{align}
\exp\{\lambda \curv (t_0-s)\} &\le \exp\bigg\{-\frac{3\lambda \curv s}{4}\bigg\} \le \exp\bigg\{-\frac{17s^{3/2}}{24}\bigg\}. \label{E72}
\end{align}

The next task is to bound from above the second term in \eqref{E71}. For the sake of more compact notation, write 
\begin{align}
S = 4\epsilon_0(m+n)^{2/3} \quad \text{ and } \quad \mu = \mu(t) = \min \{t, S\} \quad \text{ for } t \in \bbR. \nonumber
\end{align}
By virtue of Lemma \ref{LRTBdUB2}, for small enough $\epsilon_0$ and large enough $C_0$ and $N_0$, 
\begin{align}
\P\{\Gb^{w, z} \ge \Shp + \curv t\} &\le 2\exp\bigg\{-\frac{2\mu^{3/2}}{3}-\one_{\{t \ge \mu\}}(t-\mu)\mu^{1/2}+\frac{C_0K\mu}{(m+n)^p}+\frac{C_0\mu^2}{(m+n)^{1/3}}\bigg\} \label{E73}
\end{align}
for $t \ge t_0$. Utilizing \eqref{E97} and \eqref{E63}, one also has 
\begin{align}
\lambda \curv e^{\lambda \curv (t-s)} &\le 2\sqrt{s} \exp\bigg\{\sqrt{s}(t-s)+C_0|t-s|\bigg(\frac{K}{(m+n)^{p}} + \frac{s}{(m+n)^{1/3}}\bigg)\bigg\} \label{E91}
\end{align}
for $t \ge t_0$ and large enough $C_0$ and $N_0$. Define 
\begin{align*}
F(t) &= \lambda \curv \P\{\Gb^{w, z} \ge \Shp + \curv t\}e^{\lambda \curv(t-s)} \quad \text{ for } t \in \bbR_{\ge 0}, 
\end{align*}
and work with large enough $s_0$ to have 
$$[s-\eta s^{1/2} , s+\eta s^{1/2}] \subset [t_0, 4s] \quad \text{ for } s \ge s_0.$$
Putting together \eqref{E73} and \eqref{E91}, and recalling \eqref{E68}, one obtains from Lemma \ref{LIntEst} that 
\begin{align}
\label{E103}
\begin{split}
&\bigg(\int_{t_0}^{s-\eta s^{1/2}} + \int_{s+\eta s^{1/2}}^{4s}\bigg) F(t) \dd t  \\
&\le 4\sqrt{s}\exp\bigg\{\frac{C_0Ks}{(m+n)^p} + \frac{C_0s^2}{(m+n)^{1/3}}\bigg\} \\
&\cdot \bigg(\int_{t_0}^{s-\eta s^{1/2}} + \int_{s+\eta s^{1/2}}^{4s}\bigg) \exp\bigg\{-\frac{2t^{3/2}}{3}+\sqrt{s}(t-s)\bigg\}\dd t \\ 
&\le 8\sqrt{s} (\sqrt{2\pi}s^{1/4}+s^{-1/2})\exp\bigg\{-\frac{2s^{3/2}}{3}-\frac{\eta^2 s}{8}+\frac{C_0Ks}{(m+n)^p} + \frac{C_0s^2}{(m+n)^{1/3}}\bigg\} \\
&\le \exp\bigg\{-\frac{2s^{3/2}}{3}-\eta s\bigg\}
\end{split}
\end{align}
after increasing $C_0$ and $N_0$ if necessary and choosing $s_0$ and $\eta$ sufficiently large. Let $a \in [1/\sqrt{2}, 1)$ denote an absolute constant. Proceeding as above but now keeping the error terms inside the integral gives 
\begin{align}
\label{E105}
\begin{split}
\int_{4s}^{S} F(t) \dd t &\le 4\sqrt{s}\int_{4s}^{\infty} \exp\bigg\{-\frac{2t^{3/2}}{3} + \sqrt{s}(t-s) + \frac{C_0 Kt}{(m+n)^p} + \frac{C_0t^2}{(m+n)^{1/3}}\bigg\}\dd t\\
&\le 4\sqrt{s}\int_{4s}^{\infty} \exp\bigg\{-\frac{2at^{3/2}}{3} + \sqrt{s}(t-s)\bigg\}\dd t \\
&\le 8\sqrt{s} \bigg(\frac{\sqrt{2\pi}s^{1/4}}{a}+s^{-1/2}\bigg) \exp\bigg\{-s^{3/2}\bigg(1-\frac{1}{3a^2}+\frac{a^2}{2}\bigg)\bigg\} \le \exp\{-s^{3/2}\},  
\end{split}
\end{align}
where the second inequality holds for small enough $\epsilon_0$ and large enough $N_0$, and the last inequality holds provided that $a$ is chosen sufficiently close to $1$. The third inequality invokes Lemma \ref{LIntEst} (with $\epsilon = 2s$ noting that the preceding exponent is maximized at $a^{-2}s \le 2s$). In a similar way and using $4s \le S$, one also has 
\begin{align}
\label{E104}
\begin{split}
&\int_{S}^\infty F(t) \dd t \\
&\le 4\sqrt{s}\exp\bigg\{-\frac{2S^{3/2}}{3} + \frac{C_0KS}{(m+n)^p} + \frac{C_0S^2}{(m+n)^{1/3}}\bigg\} \\
&\cdot \int_{S}^{\infty}\exp\bigg\{-(t-S)\sqrt{S} + \sqrt{s}(t-s) + C_0 (t-s)\bigg(\frac{K}{(m+n)^p} + \frac{s}{(m+n)^{1/3}}\bigg)\bigg\} \dd t \\
&\le 4\sqrt{s}\exp\bigg\{-\frac{2S^{3/2}}{3} + \sqrt{s}(S-s) + \frac{C_0KS}{(m+n)^p} + \frac{C_0S^2}{(m+n)^{1/3}}\bigg\}\\
&\cdot \int_{S}^{\infty}\exp\bigg\{-(t-S)\sqrt{S} + \sqrt{s}(t-S) + C_0 (t-S)\bigg(\frac{K}{(m+n)^p} + \frac{s}{(m+n)^{1/3}}\bigg)\bigg\} \dd t \\
&\le 4\sqrt{s}\int_{S}^{\infty}\exp\bigg\{-\frac{(t-S)\sqrt{S}}{2}\bigg\} \dd t \cdot \exp\bigg\{-\frac{S^{3/2}}{2}\bigg\}\le \exp\bigg\{-\frac{S^{3/2}}{2}\bigg\} \le \exp\{-4s^{3/2}\}
\end{split}
\end{align} 
for large enough $N_0$. 

Return now to \eqref{E70}--\eqref{E71}. Combining the bounds from \eqref{E65}, \eqref{E72}, \eqref{E103}, \eqref{E105} and \eqref{E104} yields 
\begin{align}
\label{E21}
\begin{split}
&\exp\bigg\{-\frac{2s^{3/2}}{3}-\frac{c_0 Ks}{(m+n)^p}- \frac{C_0s^2}{(m+n)^{1/3}}\bigg\} \\
&\le \int_{s-\eta s^{1/2}}^{s+\eta s^{1/2}} F(t) \dd t + \exp\bigg\{-\frac{2s^{3/2}}{3}-\eta s\bigg\} + 3\exp\{-b s^{3/2}\} \\
\end{split}
\end{align}
for some absolute constant $b > 2/3$. Then, with $s_0$ and $N_0$ sufficiently large and $\epsilon_0$ sufficiently small, \eqref{E21} leads to 
\begin{align}
\label{E130}
\begin{split}
&\frac{1}{2}\exp\bigg\{-\frac{2s^{3/2}}{3}-\frac{c_0 Ks}{(m+n)^p}- \frac{C_0s^2}{(m+n)^{1/3}}\bigg\} \le \int_{s-\eta s^{1/2}}^{s+\eta s^{1/2}} F(t) \dd t \\
&= \lambda \curv \int_{s-\eta s^{1/2}}^{s+\eta s^{1/2}}\P\{\Gb^{w, z} \ge \Shp + \curv t\}e^{\lambda \curv (t-s)} \dd t \\
&\le 2\lambda \curv \eta s^{1/2}\exp\{\lambda \curv \eta s^{1/2}\}\P\{\Gb^{w, z} \ge \Shp + \curv (s-\eta s^{1/2})\} \\
&\le 4 s\eta \exp\{2s\eta\}\P\{\Gb^{w, z} \ge \Shp + \curv (s-\eta s^{1/2})\}. 
\end{split}
\end{align}
The final inequality relies on \eqref{E97} and \eqref{E63}. Taking logarithms in \eqref{E130} and rearranging terms then give 
\begin{align}
\label{E131}
\begin{split}
&\log \P\{\Gb^{w, z} \ge \Shp + \curv (s-\eta s^{1/2})\} \\
&\ge -\frac{2s^{3/2}}{3}-2s\eta -\log (s\eta) - \frac{c_0 Ks}{(m+n)^p}- \frac{C_0s^2}{(m+n)^{1/3}}-\log 8 \\ 
&\ge  -\frac{2s^{3/2}}{3} - 3 s \eta - \frac{C_0s^2}{(m+n)^{1/3}}
\end{split}
\end{align}
after possibly increasing $s_0, \eta$ and $N_0$. Any $t \ge s_0$ with $2t \le \epsilon_0 (m+n)^{2/3}$ can be represented as $t = s-s^{1/2}\eta$ for some $s$ with \eqref{E68} and $s-t = s^{1/2}\eta \le 2t^{1/2}\eta$ provided that $s_0$ is large enough. Then \eqref{E131} implies that 
\begin{align*}
\log \P\{\Gb^{w, z} \ge \Shp + \curv t\} &\ge -\frac{2s^{3/2}}{3} - 3 s \eta - \frac{C_0s^2}{(m+n)^{1/3}} \ge -\frac{2t^{3/2}}{3} - Ct \eta - \frac{2C_0t^2}{(m+n)^{1/3}}
\end{align*}
for some absolute constant $C > 0$. The result then follows upon adjusting the constants $C_0$ and $\epsilon_0$ by a factor of $2$. 
\end{proof}

\begin{proof}[Proof of Theorem \ref{TisLppMDRate}]
Parts (a) and (b) are special cases of Lemmas \ref{LRTBdUB2} and \ref{LLppBdRateLB}, respectively.  
\end{proof}

To obtain Corollary \ref{CisLppMDRate}, one needs one more lemma comparing the minimizer \eqref{EMin} at different vertices. 
\begin{lem}
\label{LMinEst}
Let $x, y \in \bbR_{>0}$ and $\delta \in \bbR_{\ge 0}$. Then  
\begin{align*}
\Min(x+\delta, y)-\Min(x, y) &= \frac{\delta (1-\Min(x, y))}{\Shp(x+\delta, y)} \quad \text{ and } \quad \Min(x, y+\delta)-\Min(x, y) = -\frac{\delta \Min(x, y)}{\Shp(x, y+\delta)}. 
\end{align*}
\end{lem}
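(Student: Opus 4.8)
The plan is to prove both identities by a direct computation from the explicit formulas $\Min(x,y)=\sqrt{x}/(\sqrt{x}+\sqrt{y})$ of \eqref{EMin} and $\Shp(x,y)=(\sqrt{x}+\sqrt{y})^{2}$ of \eqref{EShp}; no probabilistic input enters, only elementary algebra with square roots.

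For the first identity I would put the left-hand side over the common denominator $(\sqrt{x+\delta}+\sqrt{y})(\sqrt{x}+\sqrt{y})$ and observe that the cross terms $\sqrt{x}\sqrt{x+\delta}$ cancel, leaving numerator $\sqrt{y}\bigl(\sqrt{x+\delta}-\sqrt{x}\bigr)$. Rationalizing with the difference-of-squares identity $\bigl(\sqrt{x+\delta}-\sqrt{x}\bigr)\bigl(\sqrt{x+\delta}+\sqrt{x}\bigr)=\delta$ extracts a factor $\delta$ in the numerator, and rewriting the remaining radical factors through $\sqrt{y}/(\sqrt{x}+\sqrt{y})=1-\Min(x,y)$ and $(\sqrt{x+\delta}+\sqrt{y})^{2}=\Shp(x+\delta,y)$ then yields the claimed right-hand side. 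A variant that avoids the common-denominator bookkeeping uses the defining property of the minimizer, equivalently the identity $x/\Min^{2}=y/(1-\Min)^{2}=\Shp$ already exploited in the proof of Lemma \ref{LMShpId}: this gives $\sqrt{y}=(1-\Min(x,y))\sqrt{\Shp(x,y)}$, and the analogous relation at the shifted point reads $\sqrt{y}=(1-\Min(x+\delta,y))\sqrt{\Shp(x+\delta,y)}$. Equating the two expressions for $\sqrt{y}$ gives $(1-\Min(x+\delta,y))/(1-\Min(x,y))=\sqrt{\Shp(x,y)/\Shp(x+\delta,y)}$, and since $\Min(x+\delta,y)-\Min(x,y)=(1-\Min(x,y))-(1-\Min(x+\delta,y))$ the difference of the minimizers can be read off directly, after which one checks it matches the stated expression using $\sqrt{\Shp(x+\delta,y)}-\sqrt{\Shp(x,y)}=\sqrt{x+\delta}-\sqrt{x}=\delta/(\sqrt{x+\delta}+\sqrt{x})$.

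For the second identity I would not repeat the work but exploit the reflection symmetry $\Min(y,x)=1-\Min(x,y)$ and $\Shp(y,x)=\Shp(x,y)$, both immediate from \eqref{EMin} and \eqref{EShp}. Applying the first identity with the roles of $x$ and $y$ interchanged and translating back through these symmetries converts $\Min(y+\delta,x)-\Min(y,x)=\delta(1-\Min(y,x))/\Shp(y+\delta,x)$ into the stated formula for $\Min(x,y+\delta)-\Min(x,y)$, with the minus sign appearing because $1-\Min(y,x)=\Min(x,y)$. I do not foresee any genuine obstacle here: the statement is an elementary algebraic identity, and the only point requiring a little care is the manipulation of the nested radicals, in particular applying $(\sqrt{x+\delta}-\sqrt{x})(\sqrt{x+\delta}+\sqrt{x})=\delta$ at the right moment so that the factor of $\delta$ in the numerator appears cleanly.
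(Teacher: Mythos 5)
Your route is the same as the paper's: expand both minimizers from \eqref{EMin}, put them over the common denominator, cancel the cross terms, and rationalize. Up to the rationalization step everything you describe is correct and reproduces the paper's intermediate expression
\begin{align*}
\Min(x+\delta,y)-\Min(x,y)=\frac{\sqrt{y}\,(\sqrt{x+\delta}-\sqrt{x})}{(\sqrt{x+\delta}+\sqrt{y})(\sqrt{x}+\sqrt{y})}
=\frac{\delta\,\sqrt{y}}{(\sqrt{x+\delta}+\sqrt{x})(\sqrt{x+\delta}+\sqrt{y})(\sqrt{x}+\sqrt{y})}.
\end{align*}
The final step you propose, however, does not go through: rationalizing introduces the factor $\sqrt{x+\delta}+\sqrt{x}$ in the denominator, not a second copy of $\sqrt{x+\delta}+\sqrt{y}$, so you cannot assemble $\Shp(x+\delta,y)=(\sqrt{x+\delta}+\sqrt{y})^{2}$. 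In fact the identity as stated in the lemma is false except when $x=y$ or $\delta=0$: with $x=1$, $y=4$, $\delta=3$ the left side is $1/2-1/3=1/6$ while the claimed right side is $3\cdot(2/3)/16=1/8$. The correct identity, which your computation actually proves, is
\begin{align*}
\Min(x+\delta,y)-\Min(x,y)=\frac{\delta\,(1-\Min(x,y))}{(\sqrt{x+\delta}+\sqrt{x})(\sqrt{x+\delta}+\sqrt{y})}.
\end{align*}
Your alternative argument via $\sqrt{y}=(1-\Min)\sqrt{\Shp}$ at the two points, and the symmetry reduction $\Min(y,x)=1-\Min(x,y)$ for the second identity, are both sound as methods but land on the same corrected formula, not the stated one.

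To be fair, the paper's own one-line proof commits exactly the same leap, passing from the displayed intermediate expression directly to $\delta(1-\Min(x,y))/\Shp(x+\delta,y)$; you have faithfully reproduced an error rather than introduced one. The discrepancy is harmless for every use of the lemma in the paper (the display \eqref{E78}, the proofs of Corollary \ref{CisLppMDRate} and Theorems \ref{TBuse} and \ref{TCif}), since those only require two-sided bounds of the correct order inside a cone $S_\delta$, where $(\sqrt{x+\delta}+\sqrt{x})(\sqrt{x+\delta}+\sqrt{y})$ and $\Shp(x+\delta,y)$ are comparable up to $\delta$-dependent constants. But as a proof of the lemma as literally stated, the last step would fail, and you should instead record the corrected identity.
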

\begin{proof}
Recalling \eqref{EShp} and \eqref{EMin}, for example, the first identity can be verified as follows: 
\begin{align*}
\Min(x+\delta, y)-\Min(x, y) &= \frac{\sqrt{x+\delta}}{\sqrt{x+\delta}+\sqrt{y}}-\frac{\sqrt{x}}{\sqrt{x}+\sqrt{y}} = \frac{\sqrt{y}(\sqrt{x+\delta}-\sqrt{x})}{(\sqrt{x+\delta}+\sqrt{y})(\sqrt{x}+\sqrt{y})} \\
&= \frac{\delta(1-\Min(x, y))}{\Shp(x+\delta, y)}. \qedhere
\end{align*}
\end{proof}

\begin{proof}[Proof of Corollary \ref{CisLppMDRate}]
Pick $\delta > 0$ small enough to have $(x, y) \in S_\delta$. By homogeneity, Lemma \ref{LMinEst} and the bounds from Lemma \ref{LShpMinBnd}, 
\begin{align*}
\Min(\lc Nx \rc, \lc Ny \rc)-\Min(x, y) = \Min(\lc Nx \rc, \lc Ny \rc)-\Min(Nx, Ny) \le \frac{c}{N(x+y)}
\end{align*}
for some constant $c = c(\delta) > 0$. The result then follows from an application of Theorem \ref{TisLppMDRate}. 
\end{proof}

\section{Proofs of Theorem \ref{TExitPr} and Proposition \ref{PGeoInStep}}

\begin{lem}
\label{LlMEst}
Fix $\delta > 0$. There exists a constant $C_0 = C_0(\delta) > 0$ such that 
\begin{align*}
\bigg|\L^\lambda(x, y)-\lambda \Shp(x, y)-\frac{\lambda^3 \curv(x, y)^3}{12}\bigg| \le C_0 (x+y) \lambda^4 \quad \text{ for } (x, y) \in S_\delta. 
\end{align*}
\end{lem}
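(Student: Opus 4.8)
The plan is to combine the identification of the minimizer in \eqref{EL} with the expansion of $\Lb^{w,z}$ from Lemma~\ref{LLIBdEst}(b). As noted just after Lemma~\ref{LMinpm}, the infimum in \eqref{EL} is attained at $w=\Min_+^\lambda(x,y)$ and $z=\Min_-^\lambda(x,y)$, so $\L^\lambda(x,y)=\Lb^{\Min_+^\lambda(x,y),\,\Min_-^\lambda(x,y)}(x,y)$ for $\lambda\in[0,1)$. I would carry out the argument for $\lambda\le\lambda_0$, with $\lambda_0=\lambda_0(\delta)>0$ a small constant fixed below; this is the regime relevant in the sequel, and it is also what allows Lemma~\ref{LLIBdEst}(b) to be invoked.

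The first step is to control the second-order term in \eqref{EMinpm2}. Writing $\Min=\Min(x,y)$, $\curv=\curv(x,y)$, $\Shp=\Shp(x,y)$, and letting $r=r(\lambda,x,y)$ denote the last summand in \eqref{EMinpm2} (which is the same for the two signs), one has $\Min_\pm^\lambda(x,y)-\Min=\pm\lambda/2+r$. Since $|y-x|\le x+y$ and $\sqrt{xy}\ge c(\delta)(x+y)$ on $S_\delta$ (because $2xy\ge\delta(x^2+y^2)\ge\delta(x+y)^2/2$), the denominator in \eqref{EMinpm2} is at least $4c(\delta)(x+y)$, whence $|r|\le C(\delta)\lambda^2$. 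Together with Lemma~\ref{LShpMinBnd}(b), choosing $\lambda_0$ small enough gives $\Min_\pm^\lambda\in(\epsilon,1-\epsilon)$ for some $\epsilon=\epsilon(\delta)>0$ and all $\lambda\le\lambda_0$. Hence Lemma~\ref{LLIBdEst}(b), applied with $w=\Min_+^\lambda\ge\Min_-^\lambda=z$, yields a constant $C_0=C_0(\delta)>0$ with
\begin{align*}
\Bigl|\L^\lambda(x,y)-\lambda\Shp-\tfrac{\curv^3}{3}\bigl\{(\Min_+^\lambda-\Min)^3-(\Min_-^\lambda-\Min)^3\bigr\}\Bigr|
&\le C_0(x+y)\bigl\{(\Min_+^\lambda-\Min)^4+(\Min_-^\lambda-\Min)^4\bigr\}.
\end{align*}

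The remaining step is elementary algebra. Using $\Min_\pm^\lambda-\Min=\pm\lambda/2+r$ and expanding, $(\lambda/2+r)^3-(-\lambda/2+r)^3=\lambda^3/4+3\lambda r^2$, so the cubic bracket above equals $\tfrac{\curv^3}{3}(\lambda^3/4+3\lambda r^2)=\tfrac{\lambda^3\curv^3}{12}+\lambda\curv^3 r^2$. By $|r|\le C(\delta)\lambda^2$ and $\curv^3\le C(\delta)(x+y)$ (Lemma~\ref{LShpMinBnd}(c)), one gets $|\lambda\curv^3 r^2|\le C(\delta)(x+y)\lambda^5$; likewise $(\Min_\pm^\lambda-\Min)^4\le(\lambda/2+|r|)^4\le C(\delta)\lambda^4$, so the right-hand side of the displayed inequality is $\le C(\delta)(x+y)\lambda^4$. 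Since $\lambda\le\lambda_0\le1$, the $\lambda^5$ contribution is absorbed into $C(x+y)\lambda^4$, and the triangle inequality gives $|\L^\lambda(x,y)-\lambda\Shp(x,y)-\lambda^3\curv(x,y)^3/12|\le C_0(x+y)\lambda^4$, as claimed.

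The computation is routine; the only genuine care is needed in (i) keeping $\Min_\pm^\lambda$ uniformly inside $(\epsilon,1-\epsilon)$ so that Lemma~\ref{LLIBdEst}(b) applies, which is exactly why the estimate is used (and here proved) for $\lambda$ bounded by a $\delta$-dependent constant, and (ii) separating the $O((x+y)\lambda^4)$ and $O((x+y)\lambda^5)$ pieces, the latter being harmless in that range. An alternative that avoids quoting Lemma~\ref{LLIBdEst} is to insert the expansion $\M^t(x,y)=\Shp+\curv^3(t-\Min)^2+O((x+y)|t-\Min|^3)$ from Lemma~\ref{LMShpId}(a) into the representation $\L^\lambda(x,y)=\int_{\Min_-^\lambda}^{\Min_+^\lambda}\M^t(x,y)\,\dd t$ coming from \eqref{ELbdMId}; integrating reproduces the same main term $\lambda\Shp+\lambda^3\curv^3/12$ and the same order of error.
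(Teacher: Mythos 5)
Your proof is correct and follows essentially the same route as the paper, whose proof is simply to combine Lemma~\ref{LLIBdEst}(b) with Lemma~\ref{LMinpmEst} (the estimate $|\Min_\pm^\lambda-\Min\mp\lambda/2|\le C_0\lambda^2$ that you re-derive directly from \eqref{EMinpm2}), after identifying the minimizer of \eqref{EL} via Lemma~\ref{LMinpm}. Your explicit restriction to $\lambda\le\lambda_0(\delta)$ is a reasonable reading of the (unquantified) statement and is in any case what keeping $\Min_\pm^\lambda$ inside $(\epsilon,1-\epsilon)$ for Lemma~\ref{LLIBdEst}(b) requires, and it suffices for the application in the proof of Theorem~\ref{TExitPr}.
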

\begin{proof}
Combine Lemmas \ref{LLIBdEst}(b) and \ref{LMinpmEst}.  
\end{proof}

\begin{lem}
\label{LMinpmEst}
Fix $\delta > 0$. There exists a constant $C_0 = C_0(\delta) > 0$ such that 
\begin{align*}
|\Min_+^\lambda(x, y)-\Min(x, y)-\lambda/2| \le C_0 \lambda^2 \quad \text{ and } \quad |\Min_-^\lambda(x, y)-\Min(x, y)+\lambda/2| \le C_0 \lambda^2
\end{align*}
for $(x, y) \in S_\delta$ and $\lambda \in (0, 1)$. 
\end{lem}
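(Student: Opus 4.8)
The plan is to read the estimate off the explicit formula \eqref{EMinpm2}. That formula gives the exact identity
\begin{align*}
\Min_\pm^\lambda(x,y) - \Min(x,y) \mp \frac{\lambda}{2} = \pm\,\frac{\lambda^2 (y-x)}{4\sqrt{xy}+2\sqrt{\lambda^2 (x-y)^2 + 4xy}},
\end{align*}
so it suffices to bound the right-hand side by $C_0\lambda^2$ uniformly over $(x,y)\in S_\delta$ and $\lambda\in(0,1)$. Since $\sqrt{\lambda^2(x-y)^2+4xy}\ge 2\sqrt{xy}$, the denominator is at least $4\sqrt{xy}>0$, and hence the absolute value of the fraction is at most $\lambda^2\,|y-x|/(4\sqrt{xy})$. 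The bound $|y-x|/\sqrt{xy}$ is symmetric in $x,y$, and so is $S_\delta$, so I may assume $y\ge x$; then $|y-x|=y-x\le y$ and $y/\sqrt{xy}=\sqrt{y/x}\le\delta^{-1/2}$ because $x\ge\delta y$ on $S_\delta$. This yields the claim with $C_0=\tfrac14\delta^{-1/2}$ for both estimates, completing the proof.

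There is essentially no obstacle in this route; the only point worth stating carefully is the reduction to $y\ge x$ by symmetry, after which the two inequalities are literally the same computation with opposite signs.

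For completeness I would note the implicit alternative, in the spirit of Lemma~\ref{LMinxpmEst}: characterize the pair through \eqref{EMinpm}, write $a=\Min_+^\lambda-\Min$ and $b=\Min_-^\lambda-\Min$ so that $a-b=\lambda$, and expand the identity $\M^{\Min_+^\lambda}(x,y)=\M^{\Min_-^\lambda}(x,y)$ using Lemma~\ref{LMShpId}(a)--(b) to get $\curv^3\lambda(a+b)=O\big((x+y)(|a|^3+|b|^3)\big)$; combined with $\curv^3\asymp x+y$ from Lemma~\ref{LShpMinBnd}(c) and an a priori bound $|a|,|b|=O(\lambda)$ (obtained by bootstrapping from the same expansion, which forces $|a|\asymp|b|$ for small $\lambda$), this gives $|a+b|=O(\lambda^2)$ and hence $a=\lambda/2+O(\lambda^2)$, $b=-\lambda/2+O(\lambda^2)$. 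In this version the one genuine nuisance is the range $\lambda$ bounded away from $0$, where $\Min_\pm^\lambda$ leaves every fixed compact subinterval of $(0,1)$ and Lemma~\ref{LMShpId}(b) no longer applies; there one simply observes that the left-hand side of the claimed inequality is at most $2$ while $C_0\lambda^2$ is bounded below, so the estimate is trivial after enlarging $C_0$. Since the closed form is available here, I would present the first, one-line argument.
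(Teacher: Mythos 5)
Your argument is correct and is essentially the paper's proof, which is exactly this: the estimate is read off the explicit formula \eqref{EMinpm2} together with uniform control of the aspect ratio on $S_\delta$ (the paper cites Lemma \ref{LShpMinBnd}(b) for that control, while you use $x\ge\delta y$ directly). One small slip worth noting: in \eqref{EMinpm2} the correction term $\lambda^2(y-x)\big/\bigl(4\sqrt{xy}+2\sqrt{\lambda^2(x-y)^2+4xy}\,\bigr)$ enters with a plus sign for \emph{both} $\Min_+^\lambda$ and $\Min_-^\lambda$, not with the sign $\pm$ as in your displayed identity; this is immaterial for your bound, since only the absolute value of that term is used.
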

\begin{proof}
Immediate from Lemma \ref{LShpMinBnd}(b) and \eqref{EMinpm2}. 
\end{proof}


\begin{proof}[Proof of Theorem \ref{TExitPr}]
By Lemma \ref{LShpMinBnd}(b), there are constants $c = c(\delta) > 0$ and $N_0 = N_0(\delta, K) > 0$ such that, whenever $(x, y) \in S_{\delta/2} \cap \bbR_{\ge N_0}^2$, 
\begin{align}
|z-\Min(x, y)| \le \frac{K}{(x+y)^{1/3}} \quad \text{ implies that } \quad z \in (c, 1-c). \label{E19}
\end{align}
Let $(m, n) \in S_\delta \cap \bbZ_{\ge N_0}^2$ and $z \in (0, 1)$ with $|z-\Min| \le K(m+n)^{-1/3}$ where $\Min = \Min(m, n)$. Let $0 < s \le \epsilon (m+n)^{1/3}$ and $k = \lc s(m+n)^{2/3} \rc$ where $\epsilon = \epsilon(\delta) > 0$ is chosen sufficiently small to ensure that $(m-k, n) \in S_{\delta/2}$. Let $\eta = \eta(\delta) > 0$ denote a constant to be tuned below. Put $\lambda = \eta s(m+n)^{-1/3}$.  Shrink $\epsilon>0$ if necessary to  have $4\lambda < c$. Hence, by \eqref{E19}, 
\begin{align}
4\lambda < z \quad \text{ and } \quad z-2\lambda \ge \frac{c}{2}. \label{E85}
\end{align}  

The first inequality in the next development 
comes from the stochastic monotonicity of the exponential distribution. 
The second line follows because the exponent there equals zero on the event $\{\Eh^{z-\lambda, z, \hor}(m, n) > k\}$ in view of definition \eqref{EEh}. The subsequent steps use the Cauchy-Schwarz inequality, independence and shift invariance. 
\begin{align*}
&\P\{\Eh^{z, \hor}(m, n) > k\}^2 \le \P\{\Eh^{z-\lambda, z, \hor}(m, n) > k\}^2 \\
&= \E\bigg[\one\{\Eh^{z-\lambda, z, \hor}(m, n) > k\}\exp\bigg\{\frac{\lambda}{2} \bigg(\Gb^{z-\lambda}(k, 0)+\Gb_{k+1, 0}^{z-\lambda}(m, n)-\Gb^{z-\lambda, z}(m, n)\bigg)\bigg\}\bigg]^2 \\
&\le \E\bigg[\exp\bigg\{\lambda \bigg(\Gb^{z-\lambda}(k, 0)+\Gb_{k+1, 0}^{z-\lambda}(m, n)\bigg)\bigg\}\bigg] \E\bigg[\exp\bigg\{-\lambda \Gb^{z-\lambda, z}(m ,n)\bigg\}\bigg] \\
&= \E\bigg[\exp\bigg\{\lambda \Gb^{z-\lambda}(k, 0)\bigg\}\bigg]\E\bigg[\exp\bigg\{\lambda \Gb_{k+1, 0}^{z-\lambda}(m, n)\bigg\}\bigg]\E\bigg[\exp\bigg\{-\lambda \Gb^{z-\lambda, z}(m ,n)\bigg\}\bigg] \\
&= \E\bigg[\exp\bigg\{\lambda \Gb^{z-\lambda}(k, 0)\bigg\}\bigg]\E\bigg[\exp\bigg\{\lambda \Gb_{1, 0}^{z-\lambda}(m-k, n)\bigg\}\bigg]\E\bigg[\exp\bigg\{-\lambda \Gb^{z-\lambda, z}(m ,n)\bigg\}\bigg]. 
\end{align*}
Then taking logarithms and appealing to Proposition \ref{PLMId} and Lemma \ref{LLMBdUB}(a) yield
\begin{align}
2\log \P\{\Eh^{z, \hor}(m, n) > k\} \le k \log \bigg(\frac{z-\lambda}{z-2\lambda}\bigg) + \L^{\lambda, z-\lambda, \hor}(m-k, n)- \Lb^{z, z-\lambda}(m, n). \label{E14}
\end{align}
The last term has a minus sign in front since the order of the parameters $z-\lambda$ and $z$ has been switched in the superscript, see definition \eqref{ELbd}. 

Write $\wt{\Shp}, \wt{\Min}, \wt{\Min}_+^{\lambda}$ and $\wt{\curv}$ for the values of the functions $\Shp, \Min, \Min_+^\lambda$ and $\curv$, respectively, evaluated at $(m-k, n)$. Using Lemmas \ref{LMinEst} and \ref{LShpMinBnd}(a)--(b), and the choice of $k$, one obtains that 
\begin{align}
\Min-\wt{\Min} = \Min(m, n)-\Min(m-k, n) = \frac{k(1-\Min(m-k, n))}{\Shp(m, n)} \in \bigg[\frac{a_0s}{(m+n)^{1/3}}, \frac{A_0 s}{(m+n)^{1/3}}\bigg] \label{E78}
\end{align} 
for some constants $a_0 = a_0(\delta) > 0$ and $A_0 = A_0(\delta) > 0$. Therefore, 
\begin{align}
z-\wt{\Min} = (z-\Min) + (\Min - \wt{\Min}) &\in \bigg[\frac{a_0s-K}{(m+n)^{1/3}}, \frac{A_0 s+K}{(m+n)^{1/3}}\bigg] \subset \bigg[\frac{a_0s}{2(m+n)^{1/3}}, \frac{2A_0 s}{(m+n)^{1/3}}\bigg] \label{E82}
\end{align}
for $s \ge s_0$ for some sufficiently large constant $s_0 = s_0(\delta, K) > 0$. Then, after choosing $\eta$ small enough, 
\begin{align}
z-\wt{\Min}-2\lambda \ge \bigg(\frac{a_0}{2}-2\eta\bigg)\frac{s}{(m+n)^{1/3}} \ge \frac{a_0s}{4(m+n)^{1/3}} \quad \text{ for  } s \ge s_0. \label{E84}
\end{align}
Also, by Lemma \ref{LMinpmEst},  
\begin{align}
\bigg|\wt{\Min}_+^\lambda - \wt{\Min}-\frac{\lambda}{2}\bigg| \le C_0\lambda^2 \quad \text{ for some constant } C_0 = C_0(\delta) > 0. \label{E79}
\end{align}
Combining \eqref{E84} and \eqref{E79} gives 
\begin{align}
z-\lambda - \wt{\Min}_+^\lambda = (z-\wt{\Min}) + (\wt{\Min}-\wt{\Min}_+^\lambda)-\lambda \ge 2\lambda + \bigg(\frac{\lambda}{2} - C_0\lambda^2\bigg) - \lambda = \frac{\lambda}{2}-C_0\lambda^2 \ge 0. \label{E81}
\end{align}
The last inequality holds with sufficiently small $\epsilon$ since $\lambda \le \eta \epsilon$. 

As a consequence of \eqref{E81}, $z-\lambda \ge \wt{\Min}_+^\lambda$. Therefore, the middle term on the right-hand side of \eqref{E14} equals $\L^{\lambda}(m-k, n)$ by virtue of Lemma \ref{LMinpm}(a). Hence, 
\begin{align}
2\log \P\{\Eh^{z, \hor}(m, n) > k\} &\le k \log \bigg(\frac{z-\lambda}{z-2\lambda}\bigg) + \L^{\lambda}(m-k, n)- \Lb^{z, z-\lambda}(m, n) \nonumber\\
&= k \log \bigg(\frac{z-\lambda}{z-2\lambda}\bigg) - k \log\bigg(\frac{z}{z-\lambda}\bigg) + \L^{\lambda}(m-k, n)- \Lb^{z, z-\lambda}(m-k, n)\nonumber \\
&= -k \log \bigg(1-\frac{\lambda^2}{(z-\lambda)^2}\bigg)+ \L^{\lambda}(m-k, n)- \Lb^{z, z-\lambda}(m-k, n). \label{E17}
\end{align}
The first term on the right-hand side of \eqref{E17} is at most 
\begin{align}
\frac{2k\lambda^2}{(z-\lambda)^2} \le b_0k \lambda^2 \le b_0 \eta^2s^3 \quad \text{ for some constant } b_0=b_0(\delta) > 0 \label{E18}
\end{align}
on account of the inequality $-\log(1-t) \le 2t$ for $t \in [0, 1/2]$, the condition $4\lambda < c < z$, and the choices of $k$ and $\lambda$. To bound the contribution of the remaining terms in \eqref{E17}, apply Lemmas \ref{LLIBdEst} and \ref{LlMEst} remembering that $z, z-\lambda \in [c/2, 1-c/2]$ from \eqref{E19} and \eqref{E85}. Then, for some constant $C_0 = C_0(\delta) > 0$, 
\begin{align}
\L^{\lambda}(m-k, n) - \Lb^{z, z-\lambda}(m-k, n) &\le \bigg(\lambda \wt{\Shp} + \frac{\lambda^{3}\wt{\curv}^3}{12}\bigg) - \bigg(\lambda \wt{\Shp} + \frac{\wt{\curv}^3}{3}\bigg\{(z-\wt{\Min})^3-(z-\lambda-\wt{\Min})^3\bigg\}\bigg) \nonumber\\
&+ C_0(m+n)\lambda^4 \nonumber \\
&= \wt{\curv}^3\bigg(-\lambda (z-\wt{\Min})^2 + \lambda^2 (z-\wt{\Min})-\frac{\lambda^3}{4}\bigg) + C_0(m+n)\lambda^4. \label{E83}
\end{align}
Recalling that $z-\wt{\Min} \ge 2\lambda$ from \eqref{E84} and using Lemma \ref{LShpMinBnd}(c) lead to 
\begin{align}
\eqref{E83} \le -c_0 (m+n) \lambda (z-\wt{\Min})^2 \le -c_0 \eta a_0^2 s^3 \quad \text{ for } s \ge s_0 \label{E86}
\end{align}
for some constant $c_0 = c_0(\delta) > 0$ provided that $\epsilon$ is sufficiently small. Comparing the powers of $\eta$-factors in \eqref{E18} and \eqref{E86}, one concludes from \eqref{E17} that, with $\eta$ sufficiently small,  
\begin{align}
\P\{\Eh^{z, \hor}(m, n) \ge s(m+n)^{2/3}\} \le \exp\{-c_0s^3\} \quad \text{ for } s \in [s_0, \epsilon (m+n)^{1/3}]. \label{E87}
\end{align}

Bound \eqref{E87} is vacuously true if $s \ge (m+n)^{1/3}$, and can be extended to the interval $s \in (\epsilon (m+n)^{1/3}, (m+n)^{1/3}]$ after adjusting $c_0$ by a constant factor depending only on $\epsilon = \epsilon(\delta)$. 

The analogous bound for the vertical exit point $\Ev^{z}$ can be similarly established. Then a union bound completes the proof. 
\end{proof}

\begin{proof}[Proof of Proposition \ref{PGeoInStep}]
By symmetry, it suffices to only prove (a). Assume $z < \Min$ and write $\lambda = (\Min-z)/4 > 0$. In the computations below, the vertex is fixed at $(m, n)$. Using monotonicity, definition \eqref{Elppbd}, the Cauchy-Schwarz inequality, Proposition \ref{PLMId}, Lemma \ref{LLMBdUB}(b) and definition \eqref{ELhv} in this order, one arrives at 
\begin{align*}
\P\{\Gb_{1, 0}^z \le \Gb_{0, 1}^z\} &\le \P\{\Gb_{1, 0}^z \le \Gb_{0, 1}^{z+2\lambda}\} = \E[\exp\{\lambda\Gb_{0, 1}^{z+2\lambda}-\lambda\Gb^{z, z+2\lambda}\}\one\{\Gb_{1, 0}^z \le \Gb_{0, 1}^{z+2\lambda}\}] \\
&\le \E[\exp\{\lambda \Gb_{0, 1}^{z+2\lambda}-\lambda \Gb^{z, z+2\lambda}\}] \\ 
&\le \E[\exp\{2\lambda \Gb_{0, 1}^{z+2\lambda}\}]^{1/2}\E[\exp\{-2\lambda \Gb^{z, z+2\lambda}\}]^{1/2} \\
&= \E[\exp\{2\lambda \Gb_{0, 1}^{z+2\lambda}\}]^{1/2}\exp\bigg\{\frac{1}{2}\Lb^{z, z+2\lambda}\bigg\} \\
&\le \exp\bigg\{\frac{1}{2}\Lb^{z+4\lambda, z+2\lambda} + \frac{1}{2}\Lb^{z, z+2\lambda}\bigg\}.
\end{align*}
The last exponent can be bounded by means of Lemmas \ref{LShpMinBnd}(c) and \ref{LLIBdEst}(b) as follows: 
\begin{align*}
\Lb^{z+4\lambda, z+2\lambda} + \Lb^{z, z+2\lambda} &= \Lb^{\Min, \Min-2\lambda} - \Lb^{\Min-2\lambda, \Min-4\lambda} \\
&\le \bigg(2\lambda \Shp + \frac{8\lambda^3\curv^3}{3}\bigg) - \bigg(2\lambda \Shp + \frac{(-8\lambda^3 + 64\lambda^3)\curv^3}{3}\bigg) + C_0 (m+n) \lambda^4\\
& = -16 \lambda^3 \curv^3+ C_0 (m+n) \lambda^4 \\
&\le -2c_0 (m+n)\lambda^3  + C_0(m+n) \lambda^4 \\ 
&\le -c_0(m+n)\lambda^3 
\end{align*}
provided that $\lambda \le \epsilon$ for some constants $C_0, c_0, \epsilon > 0$ depending only on $\delta$. This completes the proof in the case $\lambda \le \epsilon$. When $\lambda \in (\epsilon, 1)$, the same bound also holds after adjusting $c_0$ by a constant factor dependent only on $\epsilon = \epsilon(\delta)$. 
\end{proof}

\section{Proofs of Theorems \ref{TBuse} and \ref{TCif}}

To prove Theorem \ref{TBuse}, we combine Proposition \ref{PGeoInStep} with LPP processes with \emph{northeast} boundary weights, as opposed to the \emph{southwest} boundary weights in \eqref{Ewbd}. 
To introduce these processes, first define the weights 
\begin{align}
\label{Ewne}
\wne^{w, z, m, n}(i, j) = \eta(i, j) \bigg(\one_{\{i \le m, j \le n\}} + \frac{\one_{\{i \le m, j = n+1\}}}{w} + \frac{\one_{\{i = m+1, j \le n\}}}{1-z}\bigg)
\end{align}
for $w, z \in (0, 1)$, $m, n \in \bbZ_{>0}$, $i \in [m+1]$ and $j \in [n+1]$. Then let 
\begin{align}
\label{ELppne}
\Gne^{w, z, m, n}_{p, q}(k, l) = \max \limits_{\pi \in \Pi_{k, l}^{p, q}} \sum_{(i, j) \in \pi} \wne^{w, z, m, n}(i, j) \quad \text{ for } p, k \in [m+1] \text{ and } q, l \in [n+1]. 
\end{align}
A comparison of \eqref{Ewbd}--\eqref{Elppbd} with \eqref{Ewne}--\eqref{ELppne} shows the distributional identity
\begin{align}
\label{ElppDisId}
\begin{split}
&\{\Gne^{w, z, m, n}_{p, q}(k, l): p, k \in [m+1], q, l \in [n+1]\} \\
&\stackrel{\text{dist.}}{=} \{\Gb^{w, z}_{m+1-p, n+1-q}(m+1-k, n+1-l): p, k \in [m+1], q, l \in [n+1]\}
\end{split}
\end{align}
Below we  use \eqref{ELppne} only with $w = z$ and as before write $z$ only once in the superscript.

\begin{lem}
\label{LSqz}
Let $m, n \in \bbZ_{>0}$, $k \in [m-1] \cup \{0\}$, $l \in [n-1] \cup \{0\}$, $s \in \bbR^k$, $t  \in \bbR^l$ and $z \in (0, 1)$. Write $\underline{\Min} = \Min(m-k, n)$ and $\overline{\Min} = \Min(m, n-l)$. There exist constants $c = c(\delta) > 0$ and $\epsilon = \epsilon(\delta) > 0$ such that 
\begin{align*}
\Bcdf_{k, l}^{m, n, \hor}(s, t) &\le \bcdf_{k, l}^{z, \hor}(s, t) + \exp\{-c(m+n)(\underline{\Min}-z)^3\} \qquad \text{ if } z < \underline{\Min} \\
\Bcdf_{k, l}^{m, n, \hor}(s, t) &\ge  \bcdf_{k, l}^{z, \hor}(s, t) - \exp\{-c(m+n)(z-\overline{\Min})^3\} \qquad \text{ if } z > \overline{\Min} \\
\Bcdf_{k, l}^{m, n, \ver}(s, t) &\le \bcdf_{k, l}^{z, \ver}(s, t) + \exp\{-c(m+n)(z-\overline{\Min})^3\} \qquad \text{ if } z > \overline{\Min} \\
\Bcdf_{k, l}^{m, n, \ver}(s, t) &\ge  \bcdf_{k, l}^{z, \ver}(s, t) - \exp\{-c(m+n)(\underline{\Min}-z)^3\} \qquad \text{ if } z < \underline{\Min}
\end{align*}
provided that $k \le \epsilon m$ and $l \le \epsilon n$
\end{lem}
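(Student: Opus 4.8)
The plan is to realize the four right-hand sides as \emph{exact} distributions of increments of an auxiliary LPP process with northeast boundary, and then to compare those increments with the bulk increments $\B^{\hor}_{i,1}(m,n)$, $\B^{\ver}_{1,j}(m,n)$ on a single high-probability event controlled by Proposition \ref{PGeoInStep}.

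\textbf{Step 1 (northeast model and Burke).} Couple all weights through $\{\eta(i,j)\}$ as in \eqref{ECplBlk}--\eqref{ECplBlkBd} and consider the process $\Gne^{z,m,n}$ of \eqref{Ewne}--\eqref{ELppne} rooted at the corner $(m+1,n+1)$; write $H_{p,q}=\Gne^{z,m,n}_{m+1,n+1}(p,q)$. By \eqref{ElppDisId} the whole array $(H_{p,q})$ is the reflection of the increment-stationary array $(\Gb^{z}(m+1-p,n+1-q))$, so Burke's property \eqref{EBurke} applies to it. Since the L-shaped lattice path $(1,l+1),\dots,(1,1),\dots,(k+1,1)$ is down-right, the increments $\{H_{i,1}-H_{i+1,1}:i\in[k]\}$, $\{H_{1,j}-H_{1,j+1}:j\in[l]\}$ are jointly independent with $H_{i,1}-H_{i+1,1}\sim\Exp(z)$ and $H_{1,j}-H_{1,j+1}\sim\Exp(1-z)$. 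Comparing with \eqref{Ebcdfh}--\eqref{Ebcdfv} yields the exact identities
\begin{align*}
\P\{H_{i,1}-H_{i+1,1}>s_i\ \forall i,\ H_{1,j}-H_{1,j+1}\le t_j\ \forall j\}&=\bcdf^{z,\hor}_{k,l}(s,t),\\
\P\{H_{i,1}-H_{i+1,1}\le s_i\ \forall i,\ H_{1,j}-H_{1,j+1}>t_j\ \forall j\}&=\bcdf^{z,\ver}_{k,l}(s,t).
\end{align*}

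\textbf{Step 2 (pathwise comparison on a good event).} Every up-right path from $(p,1)$ to $(m+1,n+1)$ leaves the bulk $[m]\times[n]$ through either the north boundary $\{j=n+1\}$ or the east boundary $\{i=m+1\}$, so
\[
H_{p,1}=\max\Bigl\{\max_{p\le a\le m}\bigl[\G_{p,1}(a,n)+c_a\bigr],\ \max_{1\le b\le n}\bigl[\G_{p,1}(m,b)+c'_b\bigr]\Bigr\},\qquad c_a=\tfrac1z\!\!\sum_{a'=a}^m\!\!\eta(a',n+1),\ \ c'_b=\tfrac1{1-z}\!\sum_{b'=b}^n\!\!\eta(m+1,b'),
\]
with $c_a,c'_b$ independent of the starting point (and likewise for starts $(1,q)$). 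In the regime $z<\underline\Min$ let $\hat{\mathcal G}=\{\text{the }\Gne^{z,m,n}\text{-geodesic from }(k+1,1)\text{ to }(m+1,n+1)\text{ enters the north boundary}\}$. By the deterministic monotonicity of exit points (Lemma \ref{LCros}), on $\hat{\mathcal G}$ the geodesics from every $(p,1)$, $p\le k+1$, and every $(1,q)$, $q\le l+1$, also enter the north boundary, so the first maximum above is the active one for all $H_{p,1}$ and $H_{1,q}$ in play. Writing $g(p)=H_{p,1}-\G_{p,1}(m,n)$, $\tilde g(q)=H_{1,q}-\G_{1,q}(m,n)$ and expanding via the north-boundary form, a termwise check using the increment monotonicities of Lemma \ref{LCros} ($\B^{\hor}_{i,j}(m,n)$ nonincreasing in $m$, nondecreasing in $n$; $\B^{\ver}_{i,j}(m,n)$ nondecreasing in $m$, nonincreasing in $n$) shows $p\mapsto g(p)$ is nonincreasing and $q\mapsto\tilde g(q)$ is nondecreasing. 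Since $H_{i,1}-H_{i+1,1}=\B^{\hor}_{i,1}(m,n)+g(i)-g(i+1)$ and $H_{1,j}-H_{1,j+1}=\B^{\ver}_{1,j}(m,n)+\tilde g(j)-\tilde g(j+1)$, this gives on $\hat{\mathcal G}$
\[
H_{i,1}-H_{i+1,1}\ge\B^{\hor}_{i,1}(m,n)\ (i\in[k]),\qquad H_{1,j}-H_{1,j+1}\le\B^{\ver}_{1,j}(m,n)\ (j\in[l]).
\]
For $z>\overline\Min$ one uses instead $\hat{\mathcal G}'=\{\text{the }\Gne^{z,m,n}\text{-geodesic from }(1,l+1)\text{ enters the east boundary}\}$; the east-boundary expansions with the same monotonicities reverse both inequalities on $\hat{\mathcal G}'$.

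\textbf{Step 3 (good-event probabilities and conclusion).} By \eqref{ElppDisId}, $\hat{\mathcal G}^c=\{\Ev^{z,\ver}(m-k,n)>0\}$, so Proposition \ref{PGeoInStep}(a) gives $\P\{\hat{\mathcal G}^c\}\le\exp\{-c(m+n)(\underline\Min-z)^3\}$ (here $k\le\epsilon m$, $l\le\epsilon n$ keep $(m-k,n)$ inside the ambient cone and keep $m-k+n$ comparable to $m+n$); symmetrically $\P\{\hat{\mathcal G}'^c\}\le\exp\{-c(m+n)(z-\overline\Min)^3\}$ via Proposition \ref{PGeoInStep}(b). Each of the four claims now follows from Steps 1--2 by a set inclusion valid on the good event; e.g.\ on $\hat{\mathcal G}$, $\{\B^{\hor}_{i,1}(m,n)>s_i\ \forall i,\ \B^{\ver}_{1,j}(m,n)\le t_j\ \forall j\}\subseteq\{H_{i,1}-H_{i+1,1}>s_i\ \forall i,\ H_{1,j}-H_{1,j+1}\le t_j\ \forall j\}$, whence $\Bcdf^{m,n,\hor}_{k,l}(s,t)\le\P\{\hat{\mathcal G}^c\}+\bcdf^{z,\hor}_{k,l}(s,t)$; the fourth claim uses the same $\hat{\mathcal G}$ with the $\bcdf^{z,\ver}$-identity and the inclusion run the other way, and the second and third are the $\hat{\mathcal G}'$ analogues.

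\textbf{Main obstacle.} The crux is the pathwise comparison of Step 2: one must check that on the good event the gaps $g,\tilde g$ are monotone in exactly the direction needed, which requires carefully combining the decomposition of $\Gne$ into a bulk piece plus a boundary tail, the precise directions of the increment monotonicities of Lemma \ref{LCros}, and the fact that the maxima defining $H_{p,1}$ range over $p$-dependent intervals. A secondary technical point is that the good event must be encoded through a single geodesic so that Proposition \ref{PGeoInStep} applies with no union bound over $i\le k$ or $j\le l$; this is precisely what the exit-point monotonicity together with $k\le\epsilon m$, $l\le\epsilon n$ provides.
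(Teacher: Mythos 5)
Your proof is correct, and its skeleton is the same as the paper's: realize $\bcdf^{z,\hor}$ and $\bcdf^{z,\ver}$ exactly via Burke's property for the northeast process rooted at $(m+1,n+1)$, control a single bad event with Proposition \ref{PGeoInStep} (using the geodesic ordering so that no union bound over $i,j$ is needed), and use Lemma \ref{LCros} for the comparison. Where you diverge is in how the comparison between the corner increments $H_{i,1}-H_{i+1,1}$, $H_{1,j}-H_{1,j+1}$ and the bulk increments $\B^{\hor}_{i,1}(m,n)$, $\B^{\ver}_{1,j}(m,n)$ is organized. The paper introduces the intermediate events $E_1,E_2$ with base points $(m,n+1)$ and $(m+1,n)$: Lemma \ref{LCros} sandwiches $\Bcdf^{m,n,\hor}_{k,l}$ deterministically between $\P\{E_2\}$ and $\P\{E_1\}$ (the increments to $(m,n+1)$ and $(m+1,n)$ involve only ordinary LPP with fixed weights, since only one boundary row or column is reachable), and then $E_1,E_2$ are compared to $E_0$ on the good event. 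You instead compare the corner process to the bulk directly, by decomposing $H_{p,1}$ into a bulk piece plus a boundary tail and proving monotonicity of the gaps $g,\tilde g$ on the good event; I checked that the required monotonicities ($D_a(p)=\G_{p,1}(m,n)-\G_{p,1}(a,n)$ nondecreasing in $p$, etc.) do reduce to exactly the inequalities \eqref{ECr1}--\eqref{ECr2} you cite, and that the $p$-dependent range of the north-boundary maximum works in the right direction. Your route yields a slightly stronger conclusion (pathwise domination of the increments on the good event, not just an inequality of probabilities), at the cost of a more delicate verification; the paper's route is shorter because the two deterministic sandwich inequalities require no decomposition of $H$ at all. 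Both are valid proofs of the lemma.
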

\begin{proof}
We prove the first two inequalities. The remaining two can be obtained in a similar manner. Write $E_0$ for the event 
\begin{align*}
\Gne^{z, m, n}_{m+1, n+1}(i, 1)- \Gne^{z, m, n}_{m+1, n+1}(i+1, 1) > s_i \quad \text{ for } i \in [k] \\
\Gne^{z, m, n}_{m+1, n+1}(1, j)-\Gne^{z, m, n}_{m+1, n+1}(1, j+1) \le t_j \quad \text{ for } j \in [l]. 
\end{align*}
On account of \eqref{EBurke}, \eqref{ElppDisId} and definition \eqref{Ebcdfh}, the probability of $E_0$ can be given exactly:  
\begin{align}
\P\{E_0\} = \bcdf_{k, l}^{z, \hor}(s, t). \label{E25}
\end{align}
Define the events $E_1$ and $E_2$ exactly as $E_0$ but  replace the base point $(m+1, n+1)$ with $(m, n+1)$ and $(m+1, n)$, respectively. From the union bound and \eqref{ElppDisId}--\eqref{E25}, one has 
\begin{align}
\P\{E_1\} &= \P\{E_1 \cap \{\Gne^{z, m, n}_{m, n+1}(k+1, 1) \ge \Gne^{z, m, n}_{m+1, n}(k+1, 1)\}\} \nonumber\\
&+ \P\{E_1 \cap \{\Gne^{z, m, n}_{m, n+1}(k+1, 1) < \Gne^{z, m, n}_{m+1, n}(k+1, 1)\}\}\nonumber\\
&\le \P\{E_0\} + \P\{\Gne^{z, m, n}_{m, n+1}(k+1, 1) < \Gne^{z, m, n}_{m+1, n}(k+1, 1)\} \nonumber\\ 
&= \P\{E_0\} + \P\{\Gb^{z}_{1, 0}(m-k, n) < \Gb^{z}_{0, 1}(m-k, n)\}. \label{E26}
\end{align}
In bounding the first term on the right-hand side above, we also utilized this consequence of planarity and a.s.\ uniqueness of geodesics: If the geodesic from $(m+1, n+1)$ to $(k+1, 1)$ visits $(m, n+1)$ then a.s.\ so does the geodesic from $(m+1, n+1)$ to any point in $\{(i, 1): i \in [k]\} \cup \{(1, j): j \in [l+1]\}$. 
Analogous reasoning gives 
\begin{align}
\P\{E_2\} &\ge \P\{E_0\}- \P\{\Gb^{z}_{1, 0}(m-k, n) < \Gb^{z}_{0, 1}(m-k, n)\}. \label{E27}
\end{align}

Pick $\epsilon = \epsilon(\delta) > 0$ sufficiently small to have $(m-k, n) \in S_{\delta/2}$. On the  two displayed lines below, the first  inequality holds   as a consequence of the definitions of the events $E_i$, $i \in \{0, 1, 2\}$, and the monotonicity in Lemma \ref{LCros}. The second inequality holds for a constant $c = c(\delta) > 0$, subject to the indicated restrictions on $z$ by virtue of \eqref{E26}--\eqref{E27} and Proposition \ref{PGeoInStep}. 
\begin{align*}
\Bcdf_{k, l}^{m, n, \hor}(s, t) &\le \P\{E_1\} \le \P\{E_0\} + \exp\{-c(m+n)(\underline{\Min}-z)^3\} \quad \text{ when } z < \underline{\Min} \\
\Bcdf_{k, l}^{m, n, \hor}(s, t) &\ge \P\{E_2\} \ge \P\{E_0\} - \exp\{-c(m+n)(z-\underline{\Min})^3\} \quad \text{ when } z > \overline{\Min}.
\end{align*}
These complete the proof in view of \eqref{E25}. 
\end{proof}

\begin{lem}
\label{LcdfLip}
Let $\delta > 0$, $k, l \in \bbZ_{\ge 0}$, $s \in \bbR^k$, $t  \in \bbR^l$ and $w, z \in (\delta, 1-\delta)$ with $w \ge z$. There exists a constant $C_0 = C_0(\delta) > 0$ such that 
\begin{align*}
0 &\le \bcdf_{k, l}^{z, \hor}(s, t) - \bcdf_{k, l}^{w, \hor}(s, t) \le C_0 (1+\log l) (w-z) \\
0 &\le \bcdf_{k, l}^{w, \ver}(s, t) - \bcdf_{k, l}^{z, \ver}(s, t) \le C_0 (1+\log k) (w-z).  
\end{align*}
\end{lem}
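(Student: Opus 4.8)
The plan is to argue directly from the closed forms \eqref{Ebcdfh}--\eqref{Ebcdfv}, treating the two factors of each product separately. For the horizontal estimate write $\bcdf_{k,l}^{z,\hor}(s,t)=A(z)B(z)$ with $A(z)=\prod_{i\in[k]}e^{-s_i^+z}=e^{-z\Sigma}$, $\Sigma=\sum_{i\in[k]}s_i^+\ge0$, and $B(z)=\prod_{j\in[l]}(1-e^{-t_j^+(1-z)})$. Both factors lie in $[0,1]$ and are non-increasing in $z$, so $z\mapsto\bcdf_{k,l}^{z,\hor}(s,t)$ is non-increasing, which gives the lower bound $\bcdf_{k,l}^{z,\hor}(s,t)-\bcdf_{k,l}^{w,\hor}(s,t)\ge0$ for $w\ge z$. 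For the upper bound I would telescope,
\begin{align*}
A(z)B(z)-A(w)B(w)=A(z)\bigl(B(z)-B(w)\bigr)+\bigl(A(z)-A(w)\bigr)B(w)\le\bigl(B(z)-B(w)\bigr)+\bigl(A(z)-A(w)\bigr),
\end{align*}
using $0\le A,B\le1$ and the monotonicity just noted, and then bound the two nonnegative differences. The $A$-part is immediate: by the mean value theorem and $xe^{-x}\le1/e$, $A(z)-A(w)=e^{-z\Sigma}-e^{-w\Sigma}\le(w-z)\Sigma e^{-z\Sigma}\le(w-z)\tfrac1{ez}\le\tfrac{w-z}{e\delta}$. (We may take $l\ge1$; for $l=0$ the factor $B$ is constant and only the $A$-estimate is needed.)

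The crux is the $B$-part, where the obvious bounds only produce a factor of order $l$ rather than $\log l$. Since $B$ is non-increasing, $B(z)-B(w)=\int_z^w|B'(u)|\,\dd u$, and differentiating the product gives $|B'(u)|=\sum_{j\in[l]}t_j^+e^{-t_j^+(1-u)}\prod_{i\ne j}(1-e^{-t_i^+(1-u)})$. Fixing $u$, writing $\tau=1-u$ and substituting $p_i=1-e^{-t_i^+\tau}\in[0,1)$, $q_i=1-p_i$, one has $t_j^+e^{-t_j^+\tau}=\tau^{-1}\psi(p_j)$ with $\psi(p)=-(1-p)\log(1-p)=-q\log q$, so $|B'(u)|=\tau^{-1}\sum_{j\in[l]}\psi(p_j)\prod_{i\ne j}p_i$. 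The point is then the elementary inequality
\begin{align*}
\sum_{j=1}^{l}\psi(p_j)\prod_{i\ne j}p_i\le1+\log l\qquad\text{for all }p_1,\dots,p_l\in[0,1],
\end{align*}
which, granted, yields $|B'(u)|\le\tau^{-1}(1+\log l)\le\delta^{-1}(1+\log l)$ on $u\in[z,w]\subset(\delta,1-\delta)$, hence $B(z)-B(w)\le(w-z)\delta^{-1}(1+\log l)$; combined with the $A$-bound this gives the horizontal estimate with, say, $C_0=2/\delta$, since $1+\log l\ge1$.

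To prove the displayed inequality I would first note that if some $q_i=1$ (i.e.\ $p_i=0$) then every summand vanishes, since the $j=i$ term has $\psi(0)=0$ and each $j\ne i$ term carries the factor $p_i=0$. So assume all $q_i\in[0,1)$ and put $Q=\sum_i q_i\le l$; the case $Q=0$ is trivial. Using $1-q\le e^{-q}$, bound $\prod_{i\ne j}p_i=\prod_{i\ne j}(1-q_i)\le e^{-(Q-q_j)}$, so the sum is at most $e^{-Q}\sum_j(-q_j\log q_j)e^{q_j}\le e^{1-Q}\sum_j(-q_j\log q_j)$. Since $q\mapsto-q\log q$ is concave on $[0,1]$, Jensen gives $\sum_j(-q_j\log q_j)\le l\bigl(-\tfrac Ql\log\tfrac Ql\bigr)=Q(\log l-\log Q)$, whence the sum is at most $e^{1-Q}Q(\log l-\log Q)=(\log l)(eQe^{-Q})-eQe^{-Q}\log Q$. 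Now $eQe^{-Q}\le1$ bounds the first term by $\log l$; for the second term $\log Q\ge0$ when $Q\ge1$, while for $0<Q<1$ one has $-eQe^{-Q}\log Q=e^{-Q}\bigl(-eQ\log Q\bigr)\le e^{-Q}\le1$, using $-Q\log Q\le1/e$. Thus the sum is at most $1+\log l$, as claimed.

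Finally, the vertical estimate follows by the symmetric argument: write $\bcdf_{k,l}^{z,\ver}(s,t)=\wt{A}(z)\wt{B}(z)$ with $\wt{A}(z)=\prod_{i\in[k]}(1-e^{-s_i^+z})$ and $\wt{B}(z)=\prod_{j\in[l]}e^{-t_j^+(1-z)}=e^{-(1-z)\Sigma'}$, $\Sigma'=\sum_{j\in[l]}t_j^+$; both factors are now non-decreasing in $z$ (giving the lower bound), the single-exponential factor $\wt{B}$ contributes $\le\tfrac{w-z}{e\delta}$ exactly as $A$ did, and the product factor $\wt{A}$ contributes $\le(w-z)\delta^{-1}(1+\log k)$ by the same substitution and elementary inequality with $k$ in place of $l$. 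The main obstacle throughout is the elementary inequality above: the naive estimates $\psi(p_j)\le1/e$ and $t_j^+e^{-t_j^+\tau}\le\tfrac1{e\tau}$ each lose a factor of $l$, and one genuinely needs to trade the smallness of $\psi(p_j)$ near $p_j=1$ against the smallness of $\prod_{i\ne j}p_i$, which is what the $q$-substitution, the bound $1-q\le e^{-q}$, extracting $e^{-Q}$, and Jensen for the concave entropy function together accomplish.
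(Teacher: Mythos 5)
Your proof is correct, and while the overall skeleton matches the paper's (monotonicity for the nonnegativity, then a mean-value/derivative bound whose crux is the uniform estimate $\sum_{j\in[l]}\bigl(-(1-p_j)\log(1-p_j)\bigr)\prod_{i\ne j}p_i\le 1+\log l$ on $[0,1]^l$), the way you establish that crux is genuinely different. The paper differentiates the full product $\bcdf_{k,l}^{z,\hor}$ in $z$, isolates the same two contributions you get from your $A$/$B$ telescoping, and then bounds the critical multivariate function (their $\psi_l$) by a calculus optimization: computing the gradient on $(0,1)^l$, showing the only interior critical point is the symmetric one $u_r=v$ solving the transcendental equation $1+(l-\tfrac{l-1}{v})\log(1-v)=0$, proving $v\le 1-\tfrac{1}{e^2l}$, handling the boundary by the recursion $\psi_l|_{u_j=1}=\psi_{l-1}$, and evaluating at the critical point. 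You instead prove the same bound by a short chain of elementary inequalities — $1-q\le e^{-q}$ to extract $e^{-Q}$, Jensen for the concave entropy $q\mapsto -q\log q$, and the bounds $Qe^{-Q}\le 1/e$, $-Q\log Q\le 1/e$ — which avoids the critical-point analysis entirely and is arguably cleaner and easier to check (no transcendental equation, no boundary induction, and the $Q\ge 1$ versus $Q<1$ split is transparent). What the paper's route buys is explicit information about where the maximum is attained (the symmetric point, with $v$ close to $1$), which your argument does not identify; what yours buys is brevity and robustness. The remaining ingredients — the lower bounds from monotonicity of each factor, the single-exponential factor contributing $\le (w-z)/(e\delta)$ via $xe^{-x}\le 1/e$, the $\tau\ge\delta$ bound, and the symmetric treatment of the vertical case — coincide with the paper's, and your constant bookkeeping (e.g.\ $C_0=2/\delta$, with the $l\in\{0,1\}$ degenerate cases noted) is sound.
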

\begin{proof}
Writing $s = (s_i)_{i \in [k]}$ and $t = (t_j)_{j \in [l]}$, the derivative of $\bcdf^z = \bcdf_{k, l}^{z, \hor}(s, t)$ is given by 
\begin{align}
\partial_z f^z = -f^z \sum_{i \in [k]} s_i^+ - f^z \sum_{j \in [l]}\frac{t_j^+e^{-t_j^+(1-z)}}{1-e^{-t_j^+(1-z)}}, \label{E108}
\end{align}
where the  $j$th term in the second sum is interpreted as zero when $t_j \le 0$. Since $f^z > 0$, \eqref{E108} shows that $f^z$ is nonincreasing in $z$. To obtain the second inequality of the lemma, first note that the absolute value of the first term in \eqref{E108} is at most 
\begin{align}
f^z\sum_{i \in [k]}s_i^+  \le \sum_{i \in [k]}s_i^+ \exp\bigg\{-\sum_{i \in [k]}s_i^+ z\bigg\} \le \frac{1}{z}\sup_{t \ge 0} \{te^{-t}\} = \frac{1}{ez}. \label{E110}
\end{align}
Next bound the second term in \eqref{E108} in absolute value from above by the function 
\begin{align}
\varphi_l(t) = \sum_{p \in [l]} t_p^+e^{-t_p^+(1-z)} \prod_{\substack{j \in [l] \\ j \neq p}} (1-e^{-t_j^+(1-z)}). \label{E109}
\end{align}
If $t_p \le 0$ for some $p \in [l]$ then all term vanish on the right-hand side. In the case $l \in \{0, 1\}$, one has $\varphi(t) \le e^{-1}(1-z)^{-1}$ similarly to \eqref{E110}. Assume that $l > 1$ and $t_p > 0$ for $p \in [l]$ from here on. Our objective is to maximize $\varphi$ over $\bbR^l$. To aid the next computation, change the variables via $u_p = 1-\exp\{-t_p(1-z)\} \in (0, 1)$ for $p \in [l]$. Then \eqref{E109} turns into the following function of $u = (u_p)_{p \in [l]} \in (0, 1)^l$: 
\begin{align}
\psi_l(u) = -\frac{1}{1-z}\sum_{p \in [l]} (1-u_p) \log (1-u_p) \prod_{\substack{j \in [l] \\ j \neq p}} u_j. \label{E114}
\end{align} 
Note that $\psi_l$ extends continuously to $[0, 1]^l$, and the boundary values are given by 
\begin{align}
\psi_l(u)|_{u_j = 0} = 0 \quad \text{ and } \quad \psi_l(u)|_{u_j = 1} = \psi_{l-1}(u^j) \quad \text{ for } j \in [l], \label{E117}
\end{align}
where $u^j \in \bbR^{l-1}$ is obtained from $u$ by deleting the $j$th coordinate. 

The partial derivatives of $\psi_l$ are given by 
\begin{align}
(1-z)\partial_r \psi_l(u) &= (1+\log (1-u_r))\prod_{\substack{j \in [l] \smallsetminus \{r\}}} u_j \nonumber\\
& -\sum_{p \in [l] \smallsetminus \{r\}}(1-u_p) \log (1-u_p)\prod_{\substack{j \in [l] \smallsetminus \{p, r\}}} u_j  \nonumber \\
&= \prod_{\substack{j \in [l] \smallsetminus \{r\}}} u_j \cdot \bigg(1 + \log (1-u_r) - \sum_{p \in [l] \smallsetminus \{r\}}\bigg(\frac{1}{u_p}-1\bigg)\log(1-u_p)\bigg). \label{E111} 
\end{align}
Note from \eqref{E111} that $u \in (0, 1)^l$ is a zero of the gradient $\nabla \psi$  if and only if 
\begin{align}
1+\frac{\log (1-u_r)}{u_r} = \sum_{p \in [l]}\bigg(\frac{1}{u_p}-1\bigg)\log(1-u_p) \quad \text{ for each } r \in [l].  \label{E112}
\end{align}
Since the function $x \mapsto x^{-1}\log (1-x)$ is strictly decreasing on $(0, 1)$, \eqref{E112} holds if and only if the coordinates $u_r = v$ for $r \in [l]$ for some $v \in (0, 1)$ such that   
\begin{align}
1 + \bigg(l-\frac{l-1}{v}\bigg) \log (1-v) = 0. \label{E113}
\end{align}
The left-hand side of \eqref{E113} defines a continuous function $g = g(v)$ on $(0, 1)$ that is decreasing since its derivative
\begin{align*}
g'(v) = \frac{(l-1)}{v^2}\log (1-v)-\bigg(l-\frac{l-1}{v}\bigg)\frac{1}{1-v} = -\frac{1}{1-v} + \frac{(l-1)}{v^2}\bigg(\log (1-v) + v\bigg) < 0, 
\end{align*}
where the last inequality can be seen from the expansion $\log (1-t) = -\sum_{i=1}^\infty t^i/i$. Furthermore, the limits of $g$ at the endpoints $0$ and $1$ can be computed as $l$ and $-\infty$, respectively. Therefore, there exists a unique $v \in (0, 1)$ such that \eqref{E113} holds. The next step is to verify that 
\begin{align}
v \le 1-\frac{1}{e^2l}. \label{E116}
\end{align}
Arguing by contradiction, suppose that \eqref{E116} is false. Then, by the monotonicity of $g$, 
\begin{align}
0 < g\bigg(1-\frac{1}{e^2l}\bigg) = 1 - \frac{1-\dfrac{1}{e^2}}{1-\dfrac{1}{e^2l}} (2 + \log l) \le 1 - 2\bigg(1-\frac{1}{e^2}\bigg) < 0, 
\end{align}
a contradiction. Therefore, \eqref{E116} holds. 

Now, setting $u_j = v$ for $j \in [l]$ in \eqref{E114} leads to 
\begin{align}
(1-z)\psi_l((v)_{j \in [l]}) &= -l(1-v) \log(1-v) v^{l-1} \nonumber \\
&\le -(1-v) \log(1-v)-(l-1)(1-v) \log(1-v) v^{l-1} \nonumber \\
&= -(1-v) \log(1-v)-(1+\log(1-v)) v^{l} \label{E115}\\
&\le \frac{1}{e} +(1+\log l)v^l \le 2 + \log l. \label{E118}
\end{align}
Line \eqref{E115} above comes from \eqref{E113}. The first inequality in \eqref{E118} bounds the two terms of \eqref{E115} separately and uses \eqref{E116}. 

Putting together the bounds \eqref{E110} and \eqref{E118}, using the positivity of \eqref{E114} and recalling the boundary values from \eqref{E117}, one concludes that 
\begin{align}
\psi_l(u) \le C_0 (1+\log l) \quad \text{ for some constant } C_0 = C_0(\delta) > 0. \label{E119}
\end{align}
The second inequality of the lemma follows from \eqref{E119} and the mean value theorem. The proofs of the remaining inequalities are similar. 
\end{proof}

\begin{proof}[Proof of Theorem \ref{TBuse}]
Let $(m, n) \in S_\delta$ and $k \in [m-1]\cup\{0\}, l \in [n-1]\cup\{0\}$ with $k, l \le \epsilon_0 (m+n)^{2/3}$. Let 
\begin{align*}
w = \overline{\Min} + \bigg(\frac{\eta \log (m+n)}{m+n}\bigg)^{1/3} \quad \text{ and } \quad z = \underline{\Min}-\bigg(\frac{\eta \log (m+n)}{m+n}\bigg)^{1/3}, 
\end{align*}
where $\underline{\Min} = \Min(m-k, n)$, $\overline{\Min} = \Min(m, n-l)$ and $\eta = \eta(\delta) > 0$ is a constant to be chosen below. Lemmas \ref{LShpMinBnd}(b) and \ref{LMinEst} combined with the assumptions $k, l \le \epsilon_0 (m+n)^{2/3}$ imply that $w, z \in (\epsilon, 1-\epsilon) > 0$ for some constant $\epsilon = \epsilon(\delta) > 0$, provided that $m, n \ge N_0$ for some sufficiently large $N_0 = N_0(\delta, \epsilon_0) > 0$. From the the choice of $z$ and the first inequalities in Lemmas \ref{LSqz} and \ref{LcdfLip}, one obtains that
\begin{align*}
\Bcdf_{k, l}^{m, n, \hor}(s, t) &\le \bcdf_{k, l}^{z, \hor}(s, t) + \exp\{-c(m+n)(\underline{\Min}-z)^3\} \\
&= \bcdf_{k, l}^{z, \hor}(s, t) + \frac{1}{(m+n)^{\eta c}} \\
&\le \bcdf_{k, l}^{\Min, \hor}(s, t) + C(1+\log l) (\Min-z) + \frac{1}{(m+n)^{\eta c}} \\
&= \bcdf_{k, l}^{\Min, \hor}(s, t) + C(1+\log l) \bigg\{(\Min-\underline{\Min}) + \bigg(\frac{\eta \log (m+n)}{m+n}\bigg)^{1/3}\bigg\} + \frac{1}{(m+n)^{\eta c}} \\
&\le \bcdf_{k, l}^{\Min, \hor}(s, t) + 2C (1+\log l) \bigg(\frac{\eta \log (m+n)}{m+n}\bigg)^{1/3} + \frac{1}{(m+n)^{\eta c}}
\end{align*}
for some constants $c = c(\delta) > 0$, $C = C(\delta) > 0$ and sufficiently large $N_0$. The last inequality above appeals to Lemma \ref{LMinEst} and the assumption $k \le \epsilon_0(m+n)^{2/3}$. Choosing $\eta = \dfrac{1}{3c}$ yields 
\begin{align*}
\Bcdf_{k, l}^{m, n, \hor}(s, t)  \le \bcdf_{k, l}^{\Min, \hor}(s, t) + C_0 (1+\log l) \bigg(\frac{\log (m+n)}{m+n}\bigg)^{1/3}
\end{align*}
for some constant $C_0 = C_0(\delta) > 0$. The complementary lower bound is established similarly. The second set of bounds in the theorem are also proved similarly. 
\end{proof}

\subsection{Proof of Theorem \ref{TCif}}

\begin{proof}[Proof of Theorem \ref{TCif}]
Let $n \in \bbZ_{>1}$ and $k \in [n-1]$. Restrict to the full probability event on which the competition interface $\cif$ is well-defined. In the next display, the first equality follows from \eqref{ETvmon} and definition \eqref{Ecif}, and the subsequent equalities are due to definitions \eqref{ETv} and \eqref{EBuse}.  
\begin{align}
\label{E121}
\begin{split}
\{\cif^{\hor}_n > k\} &= \{(k+1, n-k+1) \in \tree^{\ver}\} \\
&= \{\G_{2, 1}(k+1, n-k+1) < \G_{1, 2}(k+1, n-k+1)\} \\
&= \{\B^{\hor}_{1, 1}(k+1, n-k+1) > \B^{\ver}_{1, 1}(k+1, n-k+1\}. 
\end{split}
\end{align}

Put $\Min = \Min(k+1, n-k)$ and let $z > \Min$ to be chosen below. The next derivation begins with \eqref{E121}. The first inequality holds by virtue of Lemma \ref{LCros}. The second inequality applies a union bound using the following implication of planarity and the uniqueness of geodesics (as in the proof of Lemma \ref{LSqz}): If the vertex $(k+2, n-k+1)$ is on the geodesic from $(1, 2)$ to $(k+2, n-k+2)$ then it must also be on the two geodesics from $(1, 1)$ and $(2, 1)$ to $(k+2, n-k+2)$. In terms of the northeast LPP process, 
this means that the inequality $\Gne_{k+1, n-k+2}^{z}(1, 2) < \Gne_{k+2, n-k+1}^{z}(1, 2)$ implies that $\Gne_{k+2, n-k+1}^{z}(i, j) = \Gne_{k+2, n-k+2}^{z}(i, j)$ for $(i, j) \in \{(1, 2), (1, 1), (2, 1)\}$, where the vertex $(k+2, n-k+2)$ is omitted from the superscript for brevity. The second and third equalities in the derivation follow from \eqref{ElppDisId} and \eqref{EBurke}, respectively. 
\begin{align}
&\P\{\cif^{\hor}_n \le k\} \nonumber\\
&= \P\{\B^{\hor}_{1, 1}(k+1, n-k+1) \le \B^{\ver}_{1, 1}(k+1, n-k+1)\} \nonumber\\
&\le \P\{\Gne_{k+2, n-k+1}^{z}(1, 1)-\Gne_{k+2, n-k+1}^z(2, 1) \le \Gne_{k+2, n-k+1}^{z}(1, 1)-\Gne_{k+2, n-k+1}^z(1, 2)\}\nonumber\\
&\le \P\{\Gne_{k+2, n-k+2}^{z}(1, 1)-\Gne_{k+2, n-k+2}^z(2, 1) \le \Gne_{k+2, n-k+2}^{z}(1, 1)-\Gne_{k+2, n-k+2}^z(1, 2)\}\nonumber\\
&\qquad+ \P\{\Gne_{k+1, n-k+2}^{z}(1, 2) > \Gne_{k+2, n-k+1}^{z}(1, 2)\} \nonumber\\
&= \P\{\Gb^{z}(k+1, n-k+1)-\Gb^z(k, n-k+1) \nonumber\\
&\qquad\qquad\qquad\qquad
\le \Gb^{z}(k+1, n-k+1)-\Gb^z(k+1, n-k)\}\nonumber\\
&\qquad + \P\{\Gb_{1, 0}^{z}(k+1, n-k) > \Gb_{0, 1}^{z}(k+1, n-k)\} \nonumber\\
&= \int_0^\infty ze^{-zx}\int_x^\infty (1-z)e^{-(1-z)y} \dd y \dd x + \P\{\Eh^{z, \hor}(k+1, n-k) > 0\}\nonumber\\
&= z + \P\{\Eh^{z, \hor}(k+1, n-k) > 0\}. \label{E122}
\end{align}

To avoid a vacuous statement, assume that $\delta \in (0, 1/2)$. Work with $k \in [\delta n, (1-\delta) n]$ and $n \ge N_0$ for some sufficiently large $N_0 = N_0(\delta) > 0$ that ensures that the preceding interval contains some integers. Then, by the assumption $z>\Min$, Lemma \ref{LShpMinBnd}(c) and Proposition \ref{PGeoInStep},  
\begin{align}
\P\{\Eh^{z, \hor}(k+1, n-k) > 0\} \le \exp\{-cn(z-\Min)^3\}  \label{E123}
\end{align}
for some constant $c = c(\delta) > 0$.    Set $z = \Min + \bigg(\dfrac{\log n}{3cn}\bigg)^{1/3}$ after increasing $N_0$ if necessary to have $z \in (0, 1)$. Resuming from \eqref{E122} and using \eqref{E123}, one obtains that 
\begin{align}
\P\{\cif^{\hor}_n \le k\} \le \Min + C\bigg(\frac{\log n}{n}\bigg)^{1/3} \label{E124}
\end{align}
for some constant $C = C(\delta) > 0$. For $x \in [\delta, 1-\delta]$, setting $k = \lc nx \rc$ in \eqref{E123} and using Lemma \ref{LMinEst} yield
\begin{align*}
\P\{\cif^{\hor}_n \le nx\} &\le \P\{\cif^{\hor}_n \le \lc nx \rc\} \le \Min(\lc nx \rc+1, n-\lc nx \rc) + C\bigg(\frac{\log n}{n}\bigg)^{1/3} \\
&\le \Min(x, 1-x) + \frac{c_0}{n} +C\bigg(\frac{\log n}{n}\bigg)^{1/3} \le \frac{\sqrt{x}}{\sqrt{x}+\sqrt{1-x}} + C_0\bigg(\frac{\log n}{n}\bigg)^{1/3}
\end{align*}
for some constants $c_0 = c_0(\delta) > 0$ and $C_0 = C_0(\delta) > 0$. The last bound also holds for $n \in \{2, \dotsc, N_0\}$ after adjusting $C_0$. 

To prove complementary lower bound, use Lemma \ref{LCros} to replace 
the first inequality  of the long display  above with 
\begin{align*}
&\P\{\B^{\hor}_{1, 1}(k+1, n-k+1) \le \B^{\ver}_{1, 1}(k+1, n-k+1)\}  \\
&\ge \P\{\Gne_{k+1, n-k+2}^{z}(1, 1)-\Gne_{k+1, n-k+2}^z(2, 1) \le \Gne_{k+1, n-k+2}^{z}(1, 1)-\Gne_{k+1, n-k+2}^z(1, 2)\}
\end{align*} 
and follow similar steps.  
The proof of the second bound is completely analogous. 
\end{proof} 

\appendix

\section{}

\subsection{Some integral bounds}

\begin{lem}
\label{LGausTailBnd}
$
\displaystyle \int_x^\infty e^{-t^2/2} \dd t \le \frac{\sqrt{\pi}}{\sqrt{2}}e^{-x^2/2} 
$
\quad for $x \in \bbR_{>0}$. 
\end{lem}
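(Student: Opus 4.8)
The plan is to reduce the estimate to the elementary Gaussian integral identity $\int_0^\infty e^{-u^2/2}\,\dd u = \sqrt{\pi/2}$ by a single change of variables. Concretely, I would substitute $t = x+u$ in the integral on the left, which gives
\begin{align*}
\int_x^\infty e^{-t^2/2}\,\dd t = \int_0^\infty e^{-(x+u)^2/2}\,\dd u = e^{-x^2/2}\int_0^\infty e^{-u^2/2}\,e^{-xu}\,\dd u.
\end{align*}
Since $x \ge 0$ and the variable of integration satisfies $u \ge 0$, we have $e^{-xu} \le 1$, so the last integral is bounded above by $\int_0^\infty e^{-u^2/2}\,\dd u$. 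Recalling that this equals $\tfrac12\int_{-\infty}^\infty e^{-u^2/2}\,\dd u = \tfrac12\sqrt{2\pi} = \sqrt{\pi}/\sqrt{2}$ finishes the proof.

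An alternative route, should one prefer to avoid invoking the value of the Gaussian integral explicitly, is to set $h(x) = \tfrac{\sqrt{\pi}}{\sqrt 2}e^{-x^2/2} - \int_x^\infty e^{-t^2/2}\,\dd t$ and check the sign of $h'$. One computes $h'(x) = e^{-x^2/2}\bigl(1 - x\sqrt{\pi/2}\bigr)$, so $h$ is increasing on $(0,\sqrt{2/\pi})$ and decreasing on $(\sqrt{2/\pi},\infty)$; combined with $h(0)=0$ (which does use the Gaussian integral at the single point $x=0$) and $\lim_{x\to\infty}h(x)=0$, this forces $h \ge 0$ on $\bbR_{>0}$. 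I would go with the first argument since it is shorter and self-contained up to the standard Gaussian normalization.

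There is no real obstacle here: the only input beyond one-line calculus is the value $\int_0^\infty e^{-u^2/2}\,\dd u = \sqrt{\pi/2}$, which is standard. The inequality $e^{-xu}\le 1$ for $x,u\ge 0$ is what makes the bound tight precisely at $x=0$, matching the equality case implicit in the statement.
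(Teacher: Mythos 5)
Your first argument is correct and is essentially identical to the paper's proof: the substitution $t=x+u$, the bound $e^{-xu}\le 1$ for $x,u\ge 0$, and the value $\int_0^\infty e^{-u^2/2}\,\dd u=\sqrt{\pi/2}$ are exactly the steps used there. The alternative calculus argument is also fine but unnecessary.
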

\begin{proof}
\begin{equation*}
\int_x^\infty e^{-t^2/2}\dd t = e^{-x^2/2}\int_0^\infty e^{-sx - s^2/2} \dd s \le e^{-x^2/2} \int_0^\infty e^{-s^2/2} \dd s = \frac{\sqrt{\pi}e^{-x^2/2}}{\sqrt{2}}. \qedhere 
\end{equation*}
\end{proof}

\begin{lem}
\label{LIntEst}
Let $a, b, s > 0$ and $f(t) = \dfrac{2at^{3/2}}{3}-b \sqrt{s}(t-s)$ for $t \in \bbR_{\ge 0}$. Let $t_0 = \dfrac{b^2s}{a^2}$ and $0 < \epsilon < t_0$. Then 
\begin{align*}
\int_{0}^{t_0-\epsilon} e^{-f(t)}\dd t &\le \frac{\sqrt{\pi b}s^{1/4}}{a}\exp\bigg\{-s^{3/2}\bigg(b-\frac{b^3}{3a^2}\bigg)-\frac{\epsilon^2 a^2}{4b\sqrt{s}}\bigg\} \\
\int_{t_0+\epsilon}^\infty e^{-f(t)}\dd t &\le \bigg(\frac{\sqrt{2\pi b}s^{1/4}}{a} + \frac{1}{b\sqrt{s}}\bigg) \exp\bigg\{-s^{3/2}\bigg(b-\frac{b^3}{3a^2}\bigg)-\frac{\epsilon^2 a^2}{8b\sqrt{s}}\bigg\}. 
\end{align*}
\end{lem}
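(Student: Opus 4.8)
The plan is to treat this as a Laplace‑type estimate: localize near the minimizer of $f$, replace $f$ by a quadratic lower bound there, and control the far tail by exponential decay. First I would record the elementary facts about $f$: since $f'(t)=a\sqrt t-b\sqrt s$ and $f''(t)=\tfrac a2 t^{-1/2}>0$, the function $f$ is strictly convex with unique minimizer $t_0=b^2s/a^2$, at which $f'(t_0)=0$. A direct computation gives $f(t_0)=s^{3/2}\!\left(b-\tfrac{b^3}{3a^2}\right)$, which is exactly the exponent on both right‑hand sides, and I would also note the identity $b\sqrt s=a\sqrt{t_0}$, i.e. $\tfrac1{b\sqrt s}=\tfrac1{a\sqrt{t_0}}$; this is the source of the $1/(b\sqrt s)$ term in the second bound.

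\textbf{The left piece $\boldsymbol{[0,t_0-\epsilon]}$.} On $[0,t_0]$ the decreasing function $f''$ satisfies $f''(t)\ge f''(t_0)=\tfrac{a}{2\sqrt{t_0}}=\tfrac{a^2}{2b\sqrt s}$, so Taylor's theorem with Lagrange remainder and $f'(t_0)=0$ gives $f(t)\ge f(t_0)+\tfrac{a^2}{4b\sqrt s}(t-t_0)^2$ for $t\in[0,t_0]$. Hence, substituting $u=t_0-t$ and enlarging the domain, $\int_0^{t_0-\epsilon}e^{-f(t)}\,dt\le e^{-f(t_0)}\int_\epsilon^\infty e^{-\frac{a^2}{4b\sqrt s}u^2}\,du$. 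Rescaling by $v=u\sqrt{a^2/(2b\sqrt s)}$ and invoking Lemma \ref{LGausTailBnd} evaluates the Gaussian integral as $\tfrac{\sqrt{\pi b}\,s^{1/4}}{a}\exp\{-\tfrac{a^2\epsilon^2}{4b\sqrt s}\}$, which is the first claimed bound.

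\textbf{The right piece $\boldsymbol{[t_0+\epsilon,\infty)}$.} Here I would split at $4t_0$ (note $t_0+\epsilon<4t_0$ since $\epsilon<t_0$). On $[t_0,4t_0]$ one has $f''(t)\ge f''(4t_0)=\tfrac{a}{4\sqrt{t_0}}=\tfrac{a^2}{4b\sqrt s}$, so $f(t)\ge f(t_0)+\tfrac{a^2}{8b\sqrt s}(t-t_0)^2$; exactly the computation of the previous paragraph, now with coefficient $\tfrac{a^2}{8b\sqrt s}$, bounds $\int_{t_0+\epsilon}^{4t_0}e^{-f(t)}\,dt$ by $\tfrac{\sqrt{2\pi b}\,s^{1/4}}{a}\exp\{-f(t_0)-\tfrac{a^2\epsilon^2}{8b\sqrt s}\}$ — the split point $4t_0$ is chosen precisely so that this coefficient is $\tfrac{a^2}{8b\sqrt s}$, matching the target exponent. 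For $[4t_0,\infty)$: when $t\ge 4t_0$ we have $\sqrt{t_0}\le\tfrac12\sqrt t$, so $f'(t)=a(\sqrt t-\sqrt{t_0})\ge\tfrac a2\sqrt t$; integrating and using convexity of $r\mapsto r^{3/2}$ gives $f(t)\ge f(4t_0)+a\sqrt{t_0}\,(t-4t_0)$, whence $\int_{4t_0}^\infty e^{-f(t)}\,dt\le e^{-f(4t_0)}/(a\sqrt{t_0})=e^{-f(4t_0)}/(b\sqrt s)$. Finally $f(4t_0)-f(t_0)=\int_{t_0}^{4t_0}(a\sqrt r-b\sqrt s)\,dr=\tfrac53 a t_0^{3/2}$, and since $\epsilon<t_0$ yields $\tfrac{a^2\epsilon^2}{8b\sqrt s}=\tfrac{a\epsilon^2}{8\sqrt{t_0}}<\tfrac{a t_0^{3/2}}{8}<\tfrac53 a t_0^{3/2}$, we get $e^{-f(4t_0)}\le e^{-f(t_0)-\frac{a^2\epsilon^2}{8b\sqrt s}}$. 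Summing the two pieces gives the second claimed bound.

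\textbf{Main obstacle.} The delicate point is making the far‑tail contribution be controlled by the single term $1/(b\sqrt s)$ with \emph{no} extra multiplicative constant. The quadratic regime of $f$ only persists on scales $O(t_0)$, and a coarser split (say at $2t_0$) leaves a genuine constant $>1$ in front of $1/(b\sqrt s)$ that cannot be absorbed by $e^{-(f(2t_0)-f(t_0))}$ when $t_0$ is small. What saves the argument is the observation that $1/(b\sqrt s)=1/(a\sqrt{t_0})$ emerges naturally from integrating the exponential decay of rate $a\sqrt{t_0}$ on $[4t_0,\infty)$, that the choice of $4t_0$ simultaneously makes this rate exactly $a\sqrt{t_0}$ (so the prefactor is exactly $1$) and makes the quadratic coefficient on $[t_0,4t_0]$ exactly $\tfrac{a^2}{8b\sqrt s}$, and that $f(4t_0)-f(t_0)=\tfrac53 a t_0^{3/2}$ comfortably exceeds the target exponent $\tfrac{a^2\epsilon^2}{8b\sqrt s}$ once $\epsilon<t_0$.
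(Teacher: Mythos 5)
Your proof is correct and follows essentially the same route as the paper: a quadratic lower bound on $f$ near $t_0$ from the monotone second derivative, a split of the right tail at $4t_0$ with a linear lower bound on $f$ there (decay rate $a\sqrt{t_0}=b\sqrt{s}$), and the Gaussian tail estimate of Lemma \ref{LGausTailBnd}. The only differences are bookkeeping ones — you fix the split point $4t_0$ from the start and compute $f(4t_0)-f(t_0)=\tfrac{5}{3}at_0^{3/2}$ exactly, whereas the paper keeps a general $T$ and optimizes at the end — so nothing further is needed.
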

\begin{proof}
The first two derivatives of $f$ read $f'(t) = a\sqrt{t}-b\sqrt{s}$ and $f''(t) = \dfrac{a}{2\sqrt{t}}$ for $t \in \bbR_{> 0}$. These show that $f'(t_0) = 0$ and $f'(t) \ge \dfrac{a}{2\sqrt{t_0}}$ for $t \in (0, t_0]$. Therefore, 
\begin{align}
f(t) = f(t_0)-\int_{t}^{t_0} f'(s) \dd s = f(t_0)+\int_{t}^{t_0} \int_{s}^{t_0} f''(u) \dd u \ge f(t_0)+\frac{a(t-t_0)^2}{4\sqrt{t_0}} \label{E107}
\end{align}
for $t \in [0, t_0]$. From this lower bound and Lemma \ref{LGausTailBnd}, one obtains that 
\begin{align}
\int_{0}^{t_0-\epsilon} e^{-f(t)}\dd t &\le e^{-f(t_0)}\int_{-\infty}^{t_0-\epsilon} \exp\bigg\{-\frac{a(t-t_0)^2}{4\sqrt{t_0}}\bigg\} \dd t = \frac{\sqrt{2}t_0^{1/4}e^{-f(t_0)}}{\sqrt{a}}\int^{\infty}_{\epsilon a^{1/2}2^{-1/2}t_0^{-1/4}}e^{-s^2/2}\dd s \nonumber \\
&\le \frac{\sqrt{\pi}t_0^{1/4}}{\sqrt{a}}\exp\bigg\{-f(t_0)-\frac{a\epsilon^2}{4\sqrt{t_0}}\bigg\}. \label{E106}
\end{align}
Inserting the value of $t_0$ in \eqref{E106} gives the claimed bound for the first integral. 

To bound the second integral, introduce $T > t_0+\epsilon$ to be chosen below. Since $f'(t) \ge \dfrac{a}{2\sqrt{T}}$ for $t \in (0, T]$, proceeding as in \eqref{E107}, one has 
\begin{align*}
f(t) \ge f(t_0) + \frac{a(t-t_0)^2}{4\sqrt{T}} \quad \text{ for } t \in [0, T]. 
\end{align*}
Also, since $f'$ is positive on $[t_0, \infty)$ and is increasing, 
\begin{align*}
f(t) \ge f(T) + (t-T)f'(T) \quad \text{ for } t \ge T. 
\end{align*}
Utilizing these lower bounds and invoking Lemma \ref{LGausTailBnd} once more, one obtains that 
\begin{align}
\int_{t_0+\epsilon}^\infty e^{-f(t)}\dd t &\le e^{-f(t_0)}\int_{t_0+\epsilon}^{\infty} \exp\bigg\{-\frac{a(t-t_0)^2}{4\sqrt{T}}\bigg\}\dd t + e^{-f(T)}\int_{T}^\infty \exp\bigg\{-(t-T)f'(T)\bigg\}\dd t \nonumber \\
&\le \frac{\sqrt{\pi}T^{1/4}}{\sqrt{a}}\exp\bigg\{-f(t_0)-\frac{a\epsilon^2}{4\sqrt{T}}\bigg\} + \frac{\exp\{-f(T)\}}{f'(T)} \nonumber\\
&\le \frac{\sqrt{\pi}T^{1/4}}{\sqrt{a}}\exp\bigg\{-f(t_0)-\frac{a\epsilon^2}{4\sqrt{T}}\bigg\} + \frac{1}{a\sqrt{T}-b\sqrt{s}}\exp\bigg\{-f(t_0)-\frac{a(T-t_0)^2}{4\sqrt{T}}\bigg\} \nonumber\\
&\le \bigg(\frac{\sqrt{\pi}T^{1/4}}{\sqrt{a}}+\frac{1}{a\sqrt{T}-b\sqrt{s}}\bigg)\exp\bigg\{-f(t_0)-\frac{a\epsilon^2}{4\sqrt{T}}\bigg\}. \nonumber
\end{align}
The claimed bound follows upon taking $T = 4t_0$. 
\end{proof}

\subsection{Crossing (comparison) lemma}

The next lemma states a  well-known monotonicity of  planar first- and last-passage percolation.  Different proofs are given in Lemma 6.2  of \cite{rass-cgm-18} and in   Lemma 4.6 of \cite{sepp-cgm-18}  . 

\begin{lem}
\label{LCros}   Let the LPP values $\G_{p,q}(m, n)$ be defined as in \eqref{Elpp} with  arbitrary real weights $\{\w(i,j)\}$.   Then the following inequalities hold. 
\begin{align}
\begin{split}
\G_{i, j}(m+1, n)-\G_{i+1, j}(m+1, n) &\le  \G_{i, j}(m, n)-\G_{i+1, j}(m, n) \\
& \le
 \G_{i, j}(m, n+1)-\G_{i+1, j}(m, n+1) ;   \label{ECr1}
 \end{split} \\[3pt] 
\begin{split}
\G_{i, j}(m, n+1)-\G_{i, j+1}(m, n+1)   &\le  \G_{i, j}(m, n)-\G_{i, j+1}(m, n) \\
&\le  \G_{i, j}(m+1, n)-\G_{i, j+1}(m+1, n) . 
 \label{ECr2}
 \end{split} 
\end{align}
\end{lem}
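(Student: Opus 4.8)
The plan is to derive all four inequalities from a single path-crossing (surgery) argument. First I would cut down the number of cases. The coordinate-swap $(a,b)\mapsto(b,a)$ sends up-right paths to up-right paths and preserves path weights, so, applied to the transposed weights $\w'(i,j)=\w(j,i)$ (for which $\G'_{j,i}(n,m)=\G_{i,j}(m,n)$), it turns the two inequalities of \eqref{ECr1} into the two of \eqref{ECr2}. Hence it suffices to prove \eqref{ECr1}. We may assume that every path collection $\Pi_{p,q}^{m,n}$ appearing below is non-empty, since otherwise the corresponding $\G$-value equals $-\infty$ and the inequality holds by the conventions $\sup\emptyset=-\infty$; in particular a maximizing path then exists in each case.

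Rearranged, the two inequalities in \eqref{ECr1} read $\G_{i,j}(m+1,n)+\G_{i+1,j}(m,n)\le\G_{i,j}(m,n)+\G_{i+1,j}(m+1,n)$ and $\G_{i,j}(m,n)+\G_{i+1,j}(m,n+1)\le\G_{i,j}(m,n+1)+\G_{i+1,j}(m,n)$. For the first I would fix a maximizing path $\pi_1$ from $(i,j)$ to $(m+1,n)$ and a maximizing path $\pi_2$ from $(i+1,j)$ to $(m,n)$; for the second, a maximizing $\pi_1$ from $(i,j)$ to $(m,n)$ and a maximizing $\pi_2$ from $(i+1,j)$ to $(m,n+1)$. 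In both configurations $\pi_1$ and $\pi_2$ start at the same height $j$ with $\pi_2$ one column to the right, the terminal column of $\pi_2$ is at most that of $\pi_1$, and the terminal height of $\pi_1$ is at most that of $\pi_2$.

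The crux is that such $\pi_1,\pi_2$ must share a vertex. For a height $y$ between $j$ and the terminal height $d$ of $\pi_1$, let $[L_k(y),R_k(y)]$ be the (contiguous) set of columns visited by $\pi_k$ at height $y$; up-rightness gives $L_k(y+1)=R_k(y)$ and $R_k$ non-decreasing. If $\pi_1\cap\pi_2=\emptyset$ these intervals are disjoint for every such $y$. At $y=j$ one has $L_1(j)=i<i+1=L_2(j)$, so $\pi_2$ lies strictly right of $\pi_1$; at $y=d$ one has $R_2(d)\le(\text{terminal column of }\pi_2)\le(\text{terminal column of }\pi_1)=R_1(d)$, which forbids $\pi_2$ from lying strictly right there. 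Letting $y^*$ be the least height at which $\pi_2$ is not strictly right of $\pi_1$, the transition identity $L_k(y^*)=R_k(y^*-1)$ gives $L_2(y^*)>L_1(y^*)$, hence $R_2(y^*)\ge L_2(y^*)>L_1(y^*)$, so $\pi_2$ is not strictly left of $\pi_1$ at $y^*$ either, contradicting disjointness. Thus $\pi_1$ and $\pi_2$ meet at some vertex $v$. Splitting $\pi_1=A_1B_1$ and $\pi_2=A_2B_2$ at $v$, the concatenations $A_1B_2$ and $A_2B_1$ are up-right paths joining, in the first case, $(i,j)$ to $(m,n)$ and $(i+1,j)$ to $(m+1,n)$ (and analogously for the second inequality); since $v$ is counted exactly once in each, the combined weight of $A_1B_2$ and $A_2B_1$ equals that of $\pi_1$ and $\pi_2$ combined. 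The weight of $A_1B_2$ is at most the corresponding last-passage time and likewise for $A_2B_1$, so together with the weight identity this yields the claimed inequality.

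I expect the crossing step to be the main obstacle: converting the intuition ``$\pi_2$ starts strictly right, must end not strictly right, hence crosses'' into a rigorous discrete intermediate-value argument calls for careful bookkeeping of the column-intervals $[L_k(y),R_k(y)]$ and of the up-move transitions, and the asymmetric configuration (in which $\pi_2$ terminates one level above $\pi_1$) must be handled uniformly. The transposition reduction, the endpoint comparisons, the surgery and the weight accounting are routine once the crossing is in hand.
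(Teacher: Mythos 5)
Your argument is correct. Note that the paper itself supplies no proof of Lemma \ref{LCros}: it only cites Lemma 6.2 of \cite{rass-cgm-18} and Lemma 4.6 of \cite{sepp-cgm-18}, so there is no in-paper argument to compare against line by line. What you give is the standard planarity proof: rewrite each inequality in \eqref{ECr1} as superadditivity of passage times over "crossed" endpoint pairs, show the two maximizing paths must share a vertex by the discrete intermediate-value argument on the column intervals $[L_k(y),R_k(y)]$, and then swap tails at the common vertex; the transposition trick correctly reduces \eqref{ECr2} to \eqref{ECr1}, and your bookkeeping of the crossing step (including the asymmetric case where $\pi_2$ terminates one level higher) is sound. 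For comparison, the other route used in the cited literature avoids path surgery altogether: one proves \eqref{ECr1}--\eqref{ECr2} by induction on $(m,n)$ from the recursion $\G_{p,q}(m,n)=\w(m,n)+\max\{\G_{p,q}(m-1,n),\G_{p,q}(m,n-1)\}$, which is shorter but less geometric; your crossing proof has the advantage of making the planarity mechanism explicit, which is the same mechanism invoked elsewhere in the paper (e.g.\ in the proofs of Lemma \ref{LSqz} and Theorem \ref{TCif}). Only minor caveat: the degenerate cases where a path set is empty should be read with the convention of \eqref{Ehi}--\eqref{Evi} (increments equal to $+\infty$ at the boundary indices), under which the inequalities hold trivially, as you indicate.
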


\bibliographystyle{habbrv}
\bibliography{refs}

\end{document}